\theoremstyle{plain}
\newtheorem{theorem}{Theorem}[section]
\newtheorem{lemma}[theorem]{Lemma}
\newtheorem{corollary}[theorem]{Corollary}
\newtheorem{cor}[theorem]{Corollary}
\newtheorem{proposition}[theorem]{Proposition}
\newtheorem*{theorem*}{Theorem}
\theoremstyle{remark}
\newtheorem{definition}[theorem]{Definition}
\newcommand{\1}{{\bf 1}}
\newcommand{\Lf}{{L_f}}
\newcommand{\X}{\text {PAP}}
\newcommand{\spath}{{ \omega}}
\newcommand{\sloc}{s_{\spath}}
\newcommand{\possible}{\scwpplus}
\newcommand{\walk}{{{\omega}}}
\newcommand{\petrov}{{\text{Petrov}}}
\newcommand{\cone}{{\text{Weyl}}}
\newcommand{\mat}{{\text{Mat}}}
\newcommand{\prob}{{\mathbb P}}
\newcommand{\N}{{\mathbb N}}
\newcommand{\Z}{{\mathbb Z}}
\newcommand{\spacen}{\Omega_n}
\renewcommand{\P}{\mathbb{P}}
\newcommand{\E}{\mathbb{E}}
\renewcommand{\H}{\mathcal{H}}
\newcommand{\C}{\mathcal{C}}
\newcommand{\bu}{\mathbf{u}}
\newcommand{\cL}{\mathcal{L}}
\newcommand{\coup}{{L^\dagger}}
\newcommand{\zwalk}{{s}}
\newcommand{\R}{\mathbb{R}}
\newcommand{\ip}[2]{\left\langle #1, #2\right\rangle}
\newcommand{\cw}{CW}
\newcommand{\cwplus}{CW^+}
\newcommand{\cwpplus}{CW^{++}}
\newcommand{\cwminus}{CW^-}
\newcommand{\scwminus}{SCW^-}
\newcommand{\scwplus}{SCW^+}
\newcommand{\scwpplus}{SCW^{++}}
\newcommand{\msum}{\tilde M}
\newcommand{\Kstar}{\cone_{n^{.5-2\delta}}}
\newcommand{\Kstarred}{\cone_{n^{.5-\delta}}}
\newcommand{\snd}{\avn(\rho_d)}
\newcommand{\avn}{A\!v_n}
\newcommand{\av}{A\!v}
\newcommand{\posx}{\text{pos}_a}
\newcommand{\posy}{\text{pos}_b}
\newcommand{\ctx}{\text{count}_a}
\newcommand{\cty}{\text{count}_b}
\newcommand{\diff}{\text{diff}}
\newcommand{\pos}{\text{pos}}
\newcommand{\dis}{\text{Dis}}
\begin{document}

 \begin{frontmatter}
\title{Scaling limits of permutations avoiding long decreasing sequences}

\begin{aug}
\author[A]{\fnms{Christopher}~\snm{Hoffman}\ead[label=e1]{hoffman@math.washington.edu}},
\author[B]{\fnms{Douglas}~\snm{Rizzolo}\ead[label=e2]{drizzolo@udel.edu}}
\author[C]{\fnms{Erik}~\snm{Slivken}\ead[label=e3]{slivkene@uncw.edu}}
\address[A]{
Department of Mathematics, University of Washington, Seattle, WA, 98195\printead[presep={,\ }]{e1}}

\address[B]{
Department of Mathematical Sciences, University of Delaware, Newark, DE, 19716\printead[presep={,\ }]{e2}}

\address[C]{Department,
Department of Mathematics, University of North Carolina Wilmington, NC, 28403\printead[presep={,\ }]{e3}}
\end{aug}

\begin{abstract}
We determine the scaling limit for permutations conditioned to have longest decreasing subsequence of length at most $d$.  These permutations are also said to avoid the pattern $(d+1)d \cdots 2 1$ and they can be written as a union of $d$ increasing subsequences.  We show that these increasing subsequences can be chosen so that, after proper scaling, and centering, they converge in distribution.  As the size of the permutations tends to infinity, the distribution of functions generated by the permutations converges to the eigenvalue process of a traceless $d\times d$ Hermitian Brownian bridge. 
\end{abstract}

\begin{keyword}[class=MSC]
\kwd[Primary ]{60C05}
\kwd{60F17}
\kwd[; secondary ]{05A16}
\end{keyword}

\begin{keyword}
\kwd{Pattern avoiding permutations}
\kwd{Monotone subsequences of permutations}
\kwd{Random walks in cones}
\kwd{Eigenvalues of random matrix diffusions}
\end{keyword}

\end{frontmatter}

\tableofcontents

\section{Introduction}

In this paper, we consider random permutations without long decreasing subsequences as a model of non-intersecting paths.  It is a classical result that if the longest decreasing subsequence of a permutation $\pi$ has length $d$ then $\pi$ can be written as the union of $d$ increasing subsequences.  The origins of this result are hard to trace, but it goes back at least to \cite{GREENE1974254} where it is already noted as something that is not hard to see.  Our main result is that if $\sigma$ is a uniformly random permutation of $[n]=\{1,2,\dots, n\}$ conditioned on its longest decreasing subsequence having length at most $d$, then these decreasing subsequences can be chosen so that, after linearly interpolating, scaling, and centering, they converge in distribution, as $n$ tends to infinity, to the eigenvalue process of a traceless $d\times d$ Hermitian Brownian bridge.  Our results fall in the intersection of two lines of research that have received significant interest in the recent literature -- properties of random pattern-avoiding permutations and limit theorems for non-intersecting paths.

Let $\mathcal{S}_n$ denote the set of permutations of length $n$.  For $k \leq n$, $\rho\in \mathcal{S}_k$ and $\tau \in \mathcal{S}_n$ we say $\tau$ contains the pattern $\rho$ if there exists $1\leq i_1<i_2<\cdots i_k\leq n$ such that for $1\leq r< s\leq k$, $\tau(i_r) < \tau(i_s)$ if and only if $\rho(r) < \rho(s)$.  The permutation $\tau$ avoids $\rho$ if it does not contain the pattern $\rho.$  We denote the subset of $\mathcal{S}_n$ that avoids all permutations in a set $A \subset \mathcal{S}_k$ by $\avn(A).$  Taking $\rho_d = (d+1)d\cdots 21$, the decreasing pattern of length $d+1$, we have that $\avn(\rho_d)$ is the set of permutations of $[n]$ whose longest decreasing subsequence has length at most $d$.

Permutations whose longest decreasing subsequence has length at most $d$ (or whose longest increasing sequence has length at most $d$) have a long history both of being studied directly and of appearing in the study of other mathematical objects.  For example, permutations avoiding the pattern $123$ seem to have first been considered by MacMahon \cite{m}, who showed that they are counted by the Catalan numbers $C_n = \frac{1}{n+1}{2n\choose n}$.  Later, Knuth showed that permutations avoiding any fixed pattern of length three are also counted by the Catalan numbers, as are the number of rectangular standard Young tableaux with $2$ rows and $2n$ boxes.  Regev \cite{regev} used the RSK correspondence to give an asymptotic formula for the cardinality of $\avn(\rho_d)$ for $d\geq 2$.  Also using RSK, Novak \cite{Novak11} extended Knuth's result in an asymptotic sense to show that for any $d$, the cardinality of $\av_{dn}(\rho_d)$ is asymptotically equal to the rectangular standard Young tableaux with $d$ rows and $dn$ boxes.  Permutations whose longest decreasing subsequence has length at most $d$ have also appeared in random matrix theory and integrable probability.  For example, \cite{Rains98} shows that if $n>d$, then the $2n$'th moment of the trace of random $d$-dimensional unitary matrix equals the number of permutations of $[n]$ whose longest increasing subsequence is at most $d$.  Further connections to integrals over classical groups are established in, for example, \cite{BR01}.  In \cite{Forrester_2001} it was shown that the number of configurations in certain random-turns vicious walker models with $d$ walks and $n$ steps was equal to the number of permutations of $[n]$ whose longest increasing subsequence is at most $d$. These results generally rely on the RSK algorithm giving a bijection from these permutations to pairs of Young tableaux with the same shape with at most $d$ rows.

Recently there has been significant interest in understanding, for a fixed set of patterns $A$, the behavior of a uniformly random element of the set $\avn(A) \subset \mathcal{S}_n$ as $n\to \infty$, see e.g.\ \cite{MR2343720,bassino2017, bassino2016brownian, borga2018localsubclose, borga2019square,  Borga2019,notknuth, hoffman2017pattern,  HRS1, HRS2,  Ja14,  Ja321, Ja_multiple,   maazoun, madras2010random, madras_monotone, ML, madras_yildirim,   mansour_yildirim,  mp, mrs} for a sample of the available results.  Most of this literature focuses on permutations avoiding short patterns and significant attention has been given to understanding uniformly random elements of $\avn(123)$ and $\avn(321)$, see e.g.\ \cite{notknuth,hoffman2017pattern, HRS1, HRS2, Ja321,mp}.  Early work in this area was motivated by studying the longest increasing subsequence problem for pattern avoiding permutations \cite{MR2343720,notknuth,Sniady}.  The longest increasing subsequence of a uniformly random element of $\avn(\sigma)$ when $\sigma$ is a permutation of length $3$ was studied in \cite{notknuth} using exact enumeration methods. The main result of \cite{Sniady} shows that with appropriate centering and scaling the limit of the shape of the Young tableaux obtained by applying RSK to a uniformly random element of $\avn(\rho_d)$ is given by the eigenvalues of a $d\times d$ GUE matrix conditioned to have trace $0$.  Similar results for the shape of the tableaux obtained by applying RSK to a random word were obtained in \cite{TracyWidom,JohanssonDiscreteOrthogonal}.  A common feature of all of these results is the combinatorial nature of the analysis, relying on generating functions, bijections with Dyck paths, or colored trees, or similar well understood combinatorial structures.  In contrast, our methods here rely on an approximate bijection (in a sense made clear in our analysis) that allows us to closely couple the graph of a uniformly random permutation whose longest decreasing subsequence has length at most $d$ with graph of a bridge of a random walk in $\R^d$ conditioned to remain in a certain cone.  Using this coupling, we are able to leverage recent results on scaling limits of random walks in cones \cite{denisov2015random} to establish our results. 

Non-intersecting paths and their connections to random matrices, which go back to Dyson \cite{Dyson62}, have also featured prominently in the physics, combinatorics, and probability literature, see e.g. \cite{Baik2000, Fisher1984, Gorin2015, Guttmann_1998, Johansson2002, oconnell2002}.  Non-intersecting Brownian bridges specifically have arisen in several contexts \cite{Corwin2014,MR2849479, Johansson2013, nguyen2017}.  Random matrices conditioned to have trace equal to $0$ have also previously appeared in the literature \cite{Biane09, JohanssonDiscreteOrthogonal, MR2363394, Sniady}.  In our case, because we are working with Gaussian processes, conditioning to have trace equal to $0$ can be easily thought of as projection, which allows for the transfer of many results.  In these applications, the models of non-intersecting paths that have been studied have an integrable structure and the ability to analyze the exact formulas that come out of this plays a central role.  In contrast, the model of non-intersecting paths that comes from random permutations without long decreasing subsequences is not known to be integrable.  The non-intersecting paths derived from a permutation $\sigma$ are essentially an emergent phenomenon.  It is easy to see them in simulations for large $n$, but the increasing subsequences that $\sigma$ divides into have disjoint domains, so it is not obvious what it means for them not to intersect.

\section{Main Results}
\label{function} 

\subsection{From permutations to functions}
Every $\sigma \in \snd$  defines a $d$-tuple of non-intersecting functions functions on $C([0,1])$ as follows. First we note that each element in $\sigma \in \snd$ gives a natural partition of $[n]$ into $d$ sets, $\{A^i(\sigma)\}_{i\in [d]}$ as follows. 
Define 
$$A^1(\sigma)=\{i:\  \not \exists \ i'<i \ \text{ such that } \ \sigma(i')>\sigma(i)\}$$
and for $1<i \leq d$
$$A^i(\sigma)=\{i:\  \not \exists \ i'<i \ \text{ and $i' \not \in \bigcup_{j<i}A^j(\sigma)$ such that } \ \sigma(i')>\sigma(i)\}.$$
Thus the sequence $A^1(\sigma)$ consists of all $i$ which are the left right maxima of $\sigma$.
The sequence $A^2(\sigma)$ consists of all $i$ which are the left right maxima of $\sigma$ after removing the elements $(i,\sigma(i))$ with $i \in A^1(\sigma)$, etc.

Next define $d$ sequences 
$$\alpha^l(\sigma)=\{(i,\sigma(i))\}_{i \in A^l(\sigma)}$$
for $l\in [d]$.  These sequences give a unique way to construct pairs of words $\omega_\sigma \in [d]^n\times [d]^n$ where $\omega_\sigma(i) = (l_1,l_2)$ if $i \in A^{l_1}$ and $\sigma^{-1}(i) \in A^{l_2}$.  The pair of words $\omega_\sigma$ can be seen by projecting the labels of the points $\alpha^l(\sigma)$ either horizontally or vertically (see Figure \ref{proj3}). 

\begin{figure}
\includegraphics[scale=.4]{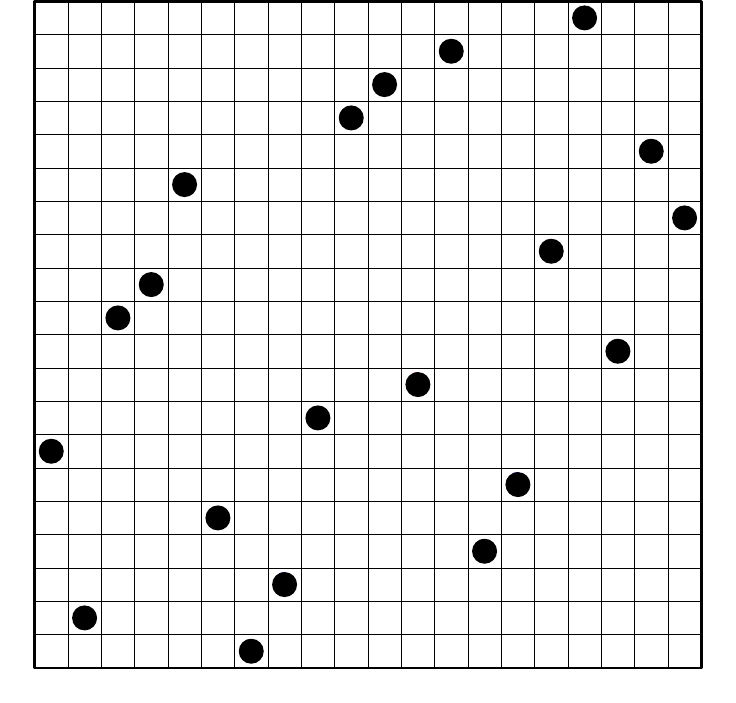}
\hspace{1cm}
\includegraphics[scale=.4]{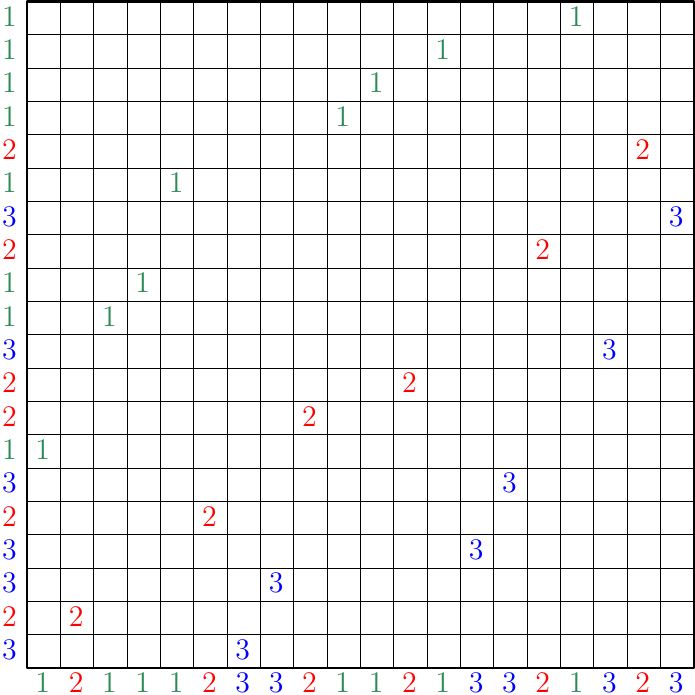}

\caption{On the left is a  permutation $\sigma\in A\!v_{20}(\rho_3)$. On the right we partition the points into three sets. The sequence on the $x$-axis is $\{a(i)\}_{i=1}^n$ and the sequence on the $y$-axis is $\{b(i)\}_{i=1}^n$.}
\label{proj3}
\end{figure}

%

For any sequence $\alpha=\{(a(i),b(i))\}_{i=1}^m$ and $n$ with $1 \leq a(1)<a(2)< \dots <a(m) \leq n$ we can form a continuous function $f(\alpha)$ on $[0,1]$ by linearly interpolating at constant speed between the points $(0,0)$, $(1,0)$ and
\[\left\{ \left(\frac{a(i)}{n+1},\frac{b(i)-a(i)}{\sqrt{2dn}} \right)\right\}_{i=1}^m.\]
For $\sigma \in \snd$ we take the $d$ sequences $\{\alpha^l(\omega)\}_{l \in [d]}$, and form
\[P_\sigma=\bigg(f(\alpha^1(\sigma)),\cdots,f(\alpha^d(\sigma))\bigg).\]
If follows from our definition of $A^i$ that $f(\alpha^1(\sigma)) \geq f(\alpha^2(\sigma)) \geq \cdots \geq f(\alpha^d(\sigma))$, so that $P_\sigma$ is a family of non-intersecting paths (See Figure \ref{perm and path}).  Our main result is an invariance principle for these paths.  

\begin{figure}
\includegraphics[scale=.4]{label_proj3}
\hspace{.8cm}
\includegraphics[scale=.4]{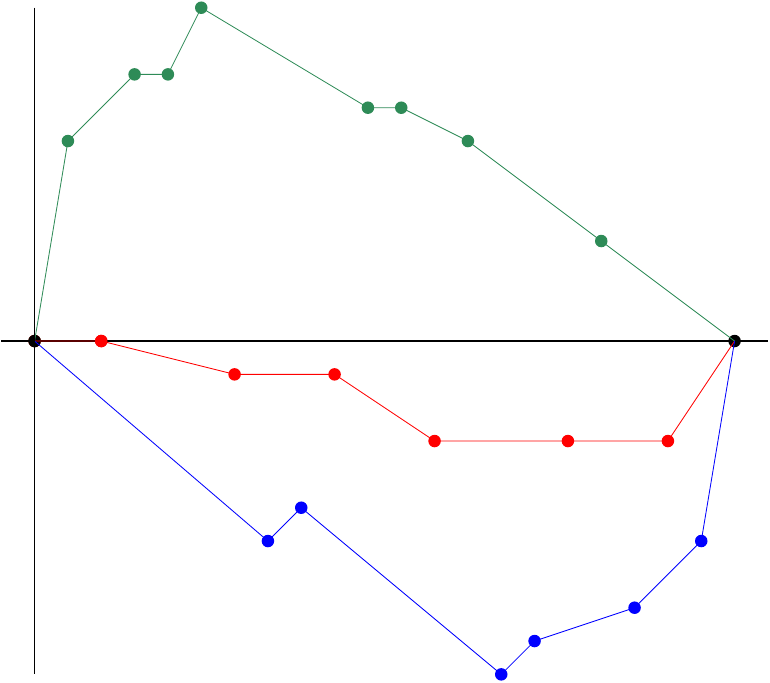}
\caption{On the left is a labeled plot of a permutation $\sigma\in A\!v_{20}(\rho_3)$. On the right is the corresponding collection of paths 
$$P_\sigma=(f(\alpha^1(\sigma)), f(\alpha^2(\sigma)),f(\alpha^3(\sigma)))$$
with $f(\alpha^1(\sigma))$ in green, $f(\alpha^2(\sigma))$ in red 
and $f(\alpha^3(\sigma))$ in blue.}
\label{perm and path}	
\end{figure}

\subsection{Traceless Dyson Brownian bridge}
In order to state our main result formally we need to introduce the limiting object, which is the process ranked eigenvalues of a $d$ by  $d$ Hermitian Brownian bridge conditioned to have trace equal to $0$ for all time.  


Let $\{Z_{ii}(t)\}_{i=1}^{d}$ be standard Brownian bridges on the time interval $[0,1]$ conditioned so that $\sum_{i=1}^{d}Z_{ii}=0$.
Let $\{Z_{ij}\}_{1\leq i <j \leq d}$ be independent standard complex Brownian bridges (i.e.\ $\sqrt{2}Re(Z_{ij})$ and $\sqrt{2}Im(Z_{ij})$ are independent standard Brownian bridges).  Finally let $Z_{ji} = \bar{Z}_{ij}.$
We use these random variables to define the following Hermitian matrix valued process:
\[
Z(t)= (Z_{ij}(t))_{1\leq i,j \leq d}.
\]
For a Hermitian matrix $M$, we let $\lambda_1(M) \geq \lambda_2(M) \geq\cdots \geq \lambda_d(M)$ be the eigenvalues of $M$ ranked in non-increasing order. Furthermore, we define
\[ \Lambda(M)=(\lambda_1(M),\lambda_2(M),\cdots,\lambda_d(M)).\]
Notice that 
$$\sum_{i=1}^{d}\lambda_i(Z_t)=\sum_{i=1}^{d}Z_{ii}(t)=0.$$
for all $t \in [0,1]$.    

The process $\Lambda(Z) = (\Lambda(Z(t)), 0\leq t\leq 1)$ is what appears as the limiting object in our main result. We sometimes refer to it as the traceless Dyson Brownian bridge.

\begin{theorem} \label{overwrite} If $\sigma$ is a uniformly random elements of $\snd$ then, as $n\to\infty$, the following convergence holds in distribution with respect to the supremum norm topology on $C([0,1],\R^d)$: 
$$P_\sigma \ \xrightarrow{\text{dist}} \  \Lambda(Z).$$
\end{theorem}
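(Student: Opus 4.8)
The plan is to reduce the theorem to a scaling-limit statement for a random walk bridge conditioned to stay in the type-$A$ Weyl chamber, and then to invoke the results of Denisov and Wachtel \cite{denisov2015random} together with the classical identification of Brownian motion conditioned to stay in that chamber with Dyson Brownian motion. In outline: (i) exhibit $P_\sigma$, up to $o(1)$ error in the supremum norm, as a rescaled bridge of a $d$-dimensional random walk that lives in the trace-zero hyperplane and stays in the closed Weyl chamber $\{x_1 \ge \cdots \ge x_d\}$; (ii) describe, via an approximate bijection, the law of this walk bridge when $\sigma$ is uniform on $\snd$, showing it is coupled with a tractable conditioned random walk bridge up to error negligible at scale $\sqrt n$; (iii) apply \cite{denisov2015random} to obtain convergence, in $C([0,1])$, of the rescaled conditioned bridge to the Doob $h$-transform of Brownian motion by the Vandermonde harmonic function $h(x) = \prod_{i<j}(x_i - x_j)$; and (iv) identify this $h$-process, restricted to the trace-zero hyperplane and pinned at $0$ at both times, with the traceless Dyson Brownian bridge $\Lambda(Z)$.

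For (i) I would work with the pair of words $\omega_\sigma = (a,b) \in [d]^n \times [d]^n$. Since within each block $\alpha^l(\sigma)$ the $k$-th smallest position must be matched with the $k$-th smallest value, $\omega_\sigma$ determines $\sigma$, so $\sigma \mapsto \omega_\sigma$ is injective. Let $N(i)$ and $M(i)$ be the $d$-vectors recording how many of the positions $\le i$, respectively the values $\le i$, lie in each of the $d$ piles, and set $\nu = N - (\cdot/d)\1$ and $\mu = M - (\cdot/d)\1$. The difference $W := \nu - \mu$ has increments $e_{a(i)} - e_{b(i)}$ (the drift cancels), it returns to $0$ at time $n$ because the piles-by-position and the piles-by-value have equal sizes, and it stays in the closed Weyl chamber --- a reflection of the non-intersection $f(\alpha^1(\sigma)) \ge \cdots \ge f(\alpha^d(\sigma))$ built into $P_\sigma$. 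A short computation, using that within pile $l$ the $k$-th position $i_k$ and $k$-th value $v_k$ satisfy $v_k - i_k = d\bigl(\nu_l(i_k) - \mu_l(v_k)\bigr)$ and that $|v_k - i_k| = O(\sqrt n)$, then gives $f(\alpha^l(\sigma))(t) = \sqrt{d/(2n)}\, W_l(tn) + o(1)$ uniformly in $t$; this is the reason the normalizing constant is $\sqrt{2dn}$, the factor $2$ coming from the superposition of the position- and value-count fluctuations and the $d$ from the per-step covariance of the count walks.

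Steps (iii)--(iv) are then standard: \cite{denisov2015random} gives that a random walk conditioned to stay in a cone, run for $n$ steps and rescaled by $\sqrt n$, converges in $C([0,1])$ to Brownian motion conditioned to stay in the cone, with the bridge version following by combining this with the local limit theorem at the endpoint; for the type-$A$ Weyl chamber this limit is Dyson Brownian motion, and pinning it at $0$ inside the trace-zero hyperplane gives the traceless Dyson Brownian bridge. Matching covariances --- using that the ordered eigenvalues of a traceless Hermitian Brownian bridge are exactly Brownian motion in the trace-zero hyperplane conditioned to stay ordered and pinned at $0$ at both ends --- identifies this process with $\Lambda(Z)$. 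Combined with (i)--(ii) this yields $P_\sigma \xrightarrow{\text{dist}} \Lambda(Z)$ in the supremum-norm topology.

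I expect step (ii) --- making the ``approximate'' bijection precise --- to be the main obstacle. The subtlety is that the law of $W$ induced by a uniform $\sigma$ is \emph{not} that of a generic walk with steps $e_p - e_q$ conditioned to stay in the chamber: the admissibility condition (that the greedy pile decomposition applied to the permutation reconstructed from $(a,b)$ reproduces $(a,b)$) is genuinely restrictive and couples $a$ and $b$, and it is precisely this that must be analyzed to pin down the correct step distribution and to verify that the resulting $h$-process is $\Lambda(Z)$ rather than a process with the wrong covariance. Quantifying that the inadmissible configurations, and the variation in how many permutations correspond to a given walk, are negligible on scale $\sqrt n$ requires real enumerative control of $\snd$ --- an RSK-type identity or a direct count, together with concentration of the pile sizes near $(n/d)\1$. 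Two further points are secondary but non-trivial: the estimates of \cite{denisov2015random} degenerate near the walls and the vertex of the chamber, so one must restrict to the event that the bridge stays macroscopically inside the chamber away from its two endpoints and show this has probability $1 - o(1)$; and one must carry the coupling through the linear interpolation and the time change $i \leftrightarrow \sigma(i)$ uniformly, which is what makes the $o(1)$ error in (i) legitimate.
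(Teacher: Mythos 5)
There is a genuine gap, and it sits exactly where the paper has to work hardest. In your step (i) you assert that the count-difference walk $W$ (the paper's $s_{\omega_\sigma}$, whose $l$th coordinate is $\ctx^l-\cty^l$) ``stays in the closed Weyl chamber --- a reflection of the non-intersection $f(\alpha^1(\sigma))\ge\cdots\ge f(\alpha^d(\sigma))$.'' This is false: the ordering of the interpolated functions compares pile heights at a common horizontal location, while $W_l(m)$ compares counts by position and by value up to the same index $m$, and these are offset in time; the paper exhibits an explicit $\sigma\in A\!v_{20}(\rho_3)$ whose path leaves $\cone$ (Figure \ref{outside_cone}). The fact that $s_{\omega_\sigma}$ is only \emph{close} to the chamber (within $t^{.4}$ when the Petrov conditions hold, Lemma \ref{permsRgreat}) is the reason the paper needs the whole apparatus of $\cwminus$, $\cwplus$, $\cwpplus$, the Petrov conditions, and the harmonic-function/optional-stopping estimates of Sections \ref{minuseleven} and \ref{birs} comparing paths near the chamber with paths inside it. With your claim as stated, none of that machinery appears necessary, which is a sign the reduction has been short-circuited rather than achieved.

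Relatedly, your step (ii) is where the theorem actually lives, and you leave it as an acknowledged obstacle rather than solving it. The induced law of $W$ under uniform $\sigma$ fails to be the conditioned walk bridge in \emph{both} directions: the walk can exit $\cone$ (above), and most chamber walks are not admissible (not ``proper'' in the paper's sense). The paper resolves this not by an RSK-type enumeration of admissible configurations, as you suggest, but by a surgery argument: Lemma \ref{middleman} shows that any path lying in the restricted set $\scwminus$ on the bulk can be glued to permutation-derived initial and final segments to produce a genuine $\sigma\in\snd$, and Section \ref{peaches} (Lemmas \ref{may day}, \ref{wash that man}, \ref{right outa my hair}, resting on Lemmas \ref{chaplain_one}--\ref{spontini3} and Regev's lower bound for $|\snd|$) shows the exceptional permutations and exceptional walks each have relative measure $o(1)$, after which a coupling in the pseudo-metric $d_K$ and Lemma \ref{tahini} (which makes your $o(1)$ sup-norm comparison precise, but only on the good event) transfer the Denisov--Wachtel/traceless-Dyson limit of Section \ref{walksincones} to $P_\sigma$. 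Your steps (iii)--(iv) do match the paper's Section \ref{walksincones} and Appendix, but without a correct (i) and a worked-out (ii) the proposal does not constitute a proof.
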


This is proved in Section \ref{sec main proof}. Note that the map from $\sigma$ to $P_\sigma$ is not canonical. (For instance
we could have defined the functions by the orthogonal distance to the diagonal instead of the vertical distance.) It is easy to modify our result to prove an appropriate limit theorem for the modified function.  Figure \ref{exampski} gives an example of a large permutation $\sigma \in \avn(654321)$ and the corresponding $P_\sigma.$

\begin{figure}
\centering
\includegraphics[scale=.4]{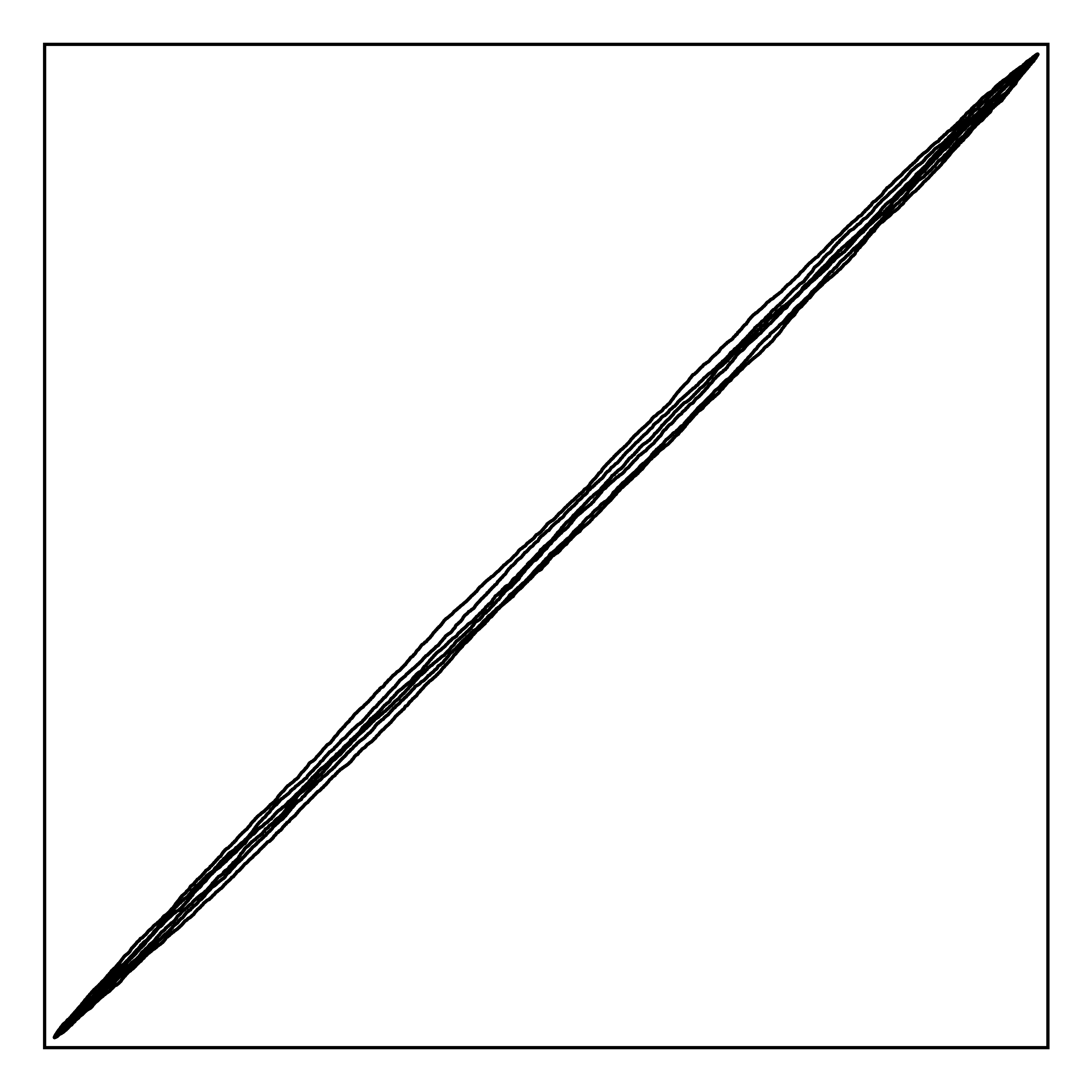}
\hspace{1cm}
\includegraphics[scale=.25]{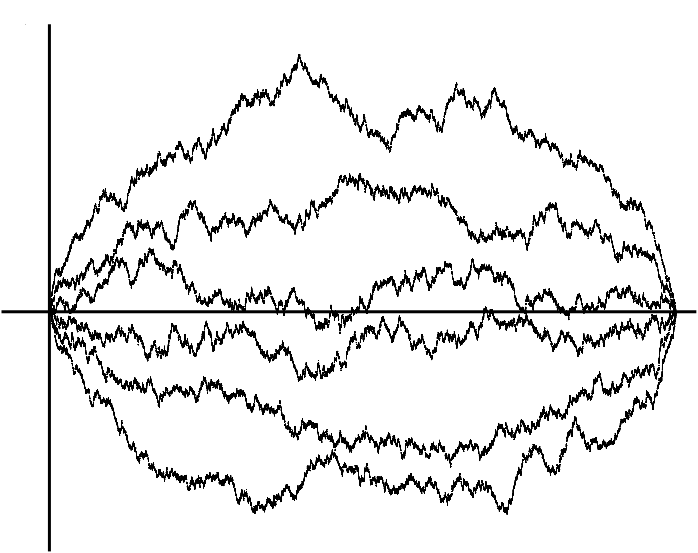}
\caption{A permutation $\sigma\in A\!v_{100000}$ generated uniformly at random and the corresponding collection of scaled paths $P_\sigma$.}
\label{exampski}	
\end{figure}

\subsection{Connection to previous results}
Theorem \ref{overwrite} can be viewed as an extension of Theorem 1.2 of \cite{HRS1} as follows. That theorem considered the case of $d=2$.  Let $(\mathbf{e}_t)_{t\in [0,1]}$ be standard Brownian excursion.
It proves that
\begin{itemize}
\item $\sqrt{2}f(\alpha^1(\sigma)) \ \xrightarrow{\text{dist}}\ (\mathbf{e}_t)$  and
\item $f(\alpha^1(\sigma)) +f(\alpha^2(\sigma))   \ \xrightarrow{\text{dist}}\ 0.$ 
\end{itemize}
The convergence is in the supremum norm topology on $C([0,1],\R)$.

To derive this result from Theorem \ref{overwrite} we let $(B_j(t))_{t\in [0,1]}$ be independent standard Brownian bridges.  Then
\[ Z =_d \frac{1}{\sqrt{2}}  \begin{pmatrix} B_1 & B_2+iB_3  \\ B_2-iB_3& -B_1 \end{pmatrix}.\]
A direct computation shows that
\[ \lambda_1(Z) \stackrel{d}{=} \frac{1}{\sqrt{2}}\sqrt{B_1^2+B_2^2+B_3^2} \quad \textrm{and} \quad  \lambda_2(Z) \stackrel{d}{=} -\frac{1}{\sqrt{2}}\sqrt{B_1^2+B_2^2+B_3^2}.\]
Using the identity in law between  Brownian excursion and the $3$-dimensional Bessel bridge \cite[Chapter XII]{RevuzYor} shows that $(\lambda_1(Z), \lambda_2(Z)) \stackrel{d}{=} (2^{-1/2}\mathbf{e}_t,  -2^{-1/2} \mathbf{e}_t)_{t\in [0,1]} $, where $ (\mathbf{e}_t)_{t\in [0,1]}$ is a standard Brownian excursion.  This is the conclusion of  \cite[Theorem 1.2]{HRS1} (with a slightly different normalization).  See Figure \ref{excursion} for an example.

\begin{figure}
\includegraphics[scale=.25]{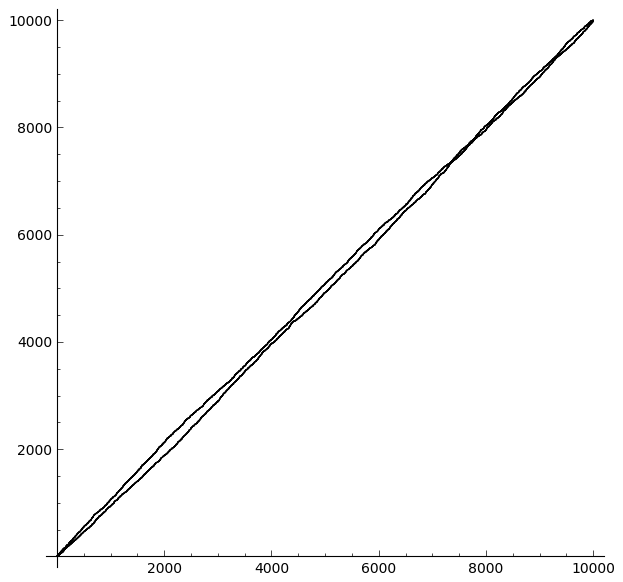}
\hspace{1cm}
\includegraphics[scale=.25]{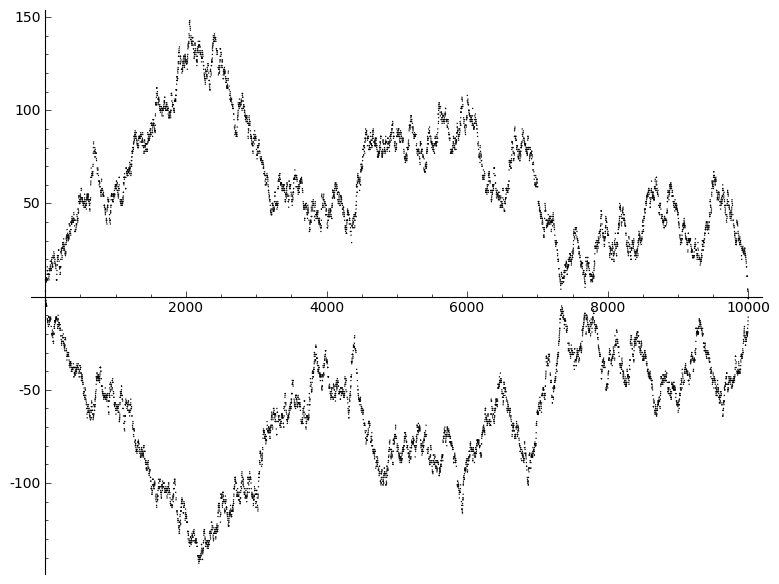}
\caption{On the left is a permutation $\sigma$ in $A\!v_{10000}(321)$.On the right are the functions $f(\alpha^1(\sigma))$ and $f(\alpha^2(\sigma)).$ They are approximated by a Brownian excursion $\mathbf{e}_t$ and $-\mathbf{e}_t$, respectively. }
\label{excursion}	
\end{figure}

\section{Outline}

%
%
%

In Section \ref{function} we showed one way to take a permutation $\sigma \in \snd$ to get an $\R^d$ valued function $P_\sigma$ on $[0,1]$. In 
Section \ref{divest} we will show a different way to map $\snd$ to $\R^d$ valued functions on $[0,1]$.  In particular, we first take $\sigma \in \snd$ and map it to a path $s_{\omega_\sigma}$ on $\Z^d$ and then space time scaling pushes this forward to an $\R^d$ valued function $\hat s_{\omega_\sigma}$ on $[0,1]$.  If $\sigma \in \snd$ is uniformly random, we will analyze $s_{\omega_\sigma}$ using techniques developed to study random walks in cones.  We are interested in a cone in $\Z^d$ that, by a slight abuse of terminology, we call the Weyl Chamber
$$\cone = \{(x_1,\cdots,x_d) \in \Z^d: x_1 \geq \cdots \geq x_d\}.$$
Ideally we would have liked our paper to have consisted of the following steps.  

\begin{itemize}
\item Find a random walk $S$ such that if $\sigma \in \snd$ is uniformly random then $s_{\omega_\sigma}$ is distributed like $(S_i)_{i=1}^n$ conditioned to stay in the Weyl Chamber and start and end at the origin.
\item Prove that for every $\sigma \in \snd$ the functions $P_\sigma$ and  $\hat s_{\omega_\sigma}$ are close in the supremum norm.
\item Prove that the scaling limit of a bridge of the random walk $S$ conditioned to remain in the Weyl Chamber is distributed like the eigenvalues of a traceless Dyson Brownian bridge. 
\item Combine these three statements to prove our main theorem. 
\end{itemize}

Unfortunately the claims in the first two bullet points above cannot be accomplished.  Fortunately some appropriate modification of each of these steps is achievable. 
And these modifications are sufficiently strong to allow us to prove Theorem \ref{overwrite}.

In Section \ref{divest} we show how to take $\sigma \in \snd$ and map it to a path $s_{\omega_\sigma}$ on $\Z^d$.  We also introduce a random walk $S(t)$ such that $s_{\omega_\sigma}$ is in the range of $S(t)$ and discuss some typical properties of the paths of $s_{\omega_\sigma}$.  
 
 In Section \ref{wcc} we do much of the work connecting pattern avoiding permutations and the paths of random walks.
First in Lemma \ref{permsRgreat} we define a subset of $\sigma \in \snd$ such that $s_{\omega_\sigma}$ spends most of the time in the Weyl Chamber.
Next in Lemma \ref{middleman} we define a subset of paths in the Weyl Chamber that start and end at the origin where for each $s_\omega$ in this subset, there is a $\sigma \in \snd$ such that $s_{\omega_\sigma}(m)=s_\omega(m)$ for most $m$. Finally in Lemma \ref{tahini} we define a set of $\sigma \in \snd$ such that the functions $P_\sigma$ and  $\hat s_{\omega_\sigma}$ are close in the supremum norm.

In Section \ref{peaches} 
we show that the size of the above subsets is $1-\epsilon$ times the size of the respective spaces.
Thus we have the adaptations of the first three bullet points. We also show how to combine these results with
the scaling limit of our random walk in the Weyl Chamber to prove Theorem \ref{overwrite}.

Sections  \ref{minuseleven}, \ref{walksincones} and \ref{birs} are auxiliary sections.
In Section  \ref{minuseleven} we define some technical lemmas about random walks close to the Weyl Chamber that will be used in Section \ref{peaches}. These lemmas are proven in Section \ref{birs}.
In Section \ref{walksincones} we adapt the previous literature to show that
the scaling limit of our random walk in the Weyl Chamber is distributed like the eigenvalues of a traceless Hermitian Brownian bridge. The proofs in Section \ref{walksincones} are independent of the results in the rest of the paper.

\section{Notation} \label{divest}

\begin{figure}
\includegraphics[scale=.45]{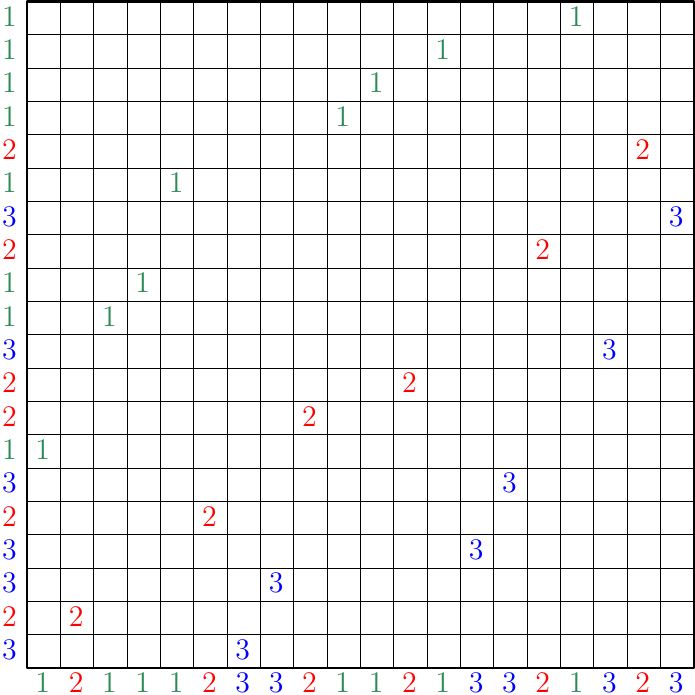}
\hspace{1cm}
\includegraphics[scale=.45]{walk_walk}
   \caption{On the left is permutation \(\sigma\in A\!v_{20}(\rho_3)\) with corresponding projections.  
On the right are the coordinate projections of 
\[s_{\walk_\sigma}(i)=\sum_{j=1}^i e_{a(j)}-e_{b(j)}.\]}
\label{proj3_redux_redux}
\end{figure}

\subsection{Paths on $\Z^d$}
Let $e_l$ be the $d$-dimensional vector with a one in the $l$th coordinate and zero everywhere else.  For the norm on $\Z^d$, we use the $L^1$ norm.  

\begin{definition}
Let $I$ be a connected subset of $\N$. A {\it path} is a function $s: I \to \Z^d$ where $$s(t+1)-s(t) \in \{e_i-e_j\}_{1\leq i,j \leq d}$$ for all $t$ such that $t, t+1 \in I$ and
$$s(t) \in \left\{(z_1,\cdots , z_d ) \in \Z^d \ : \ \sum_{i=1}^d  z_i=0\right\}$$
for all $t \in I$.
\end{definition}
As $s(t+1)-s(t) \in  \{e_i-e_j\}_{1 \leq i,j \leq d}$, the sum of the coordinates of a path $s$ is constant as a function of $t$.  We will be primarily interested in paths taking values in the codimension $1$ subpace  $\left\{(z_1,\cdots , z_d ) \in \Z^d \ : \ \sum_{i=1}^d  z_i=0\right\}$ of $\R^n$.

We consider a path $s$ to be a lattice path on a lattice whose vertices are the points of $\Z^d$ and whose edges are given by the relation $x\sim y$ if $x-y \in \{e_i-e_j\}_{1 \leq i,j \leq d}$.  In particular, we define the boundaries of sets relative to this lattice.

\begin{definition}
For $A\subseteq \Z^d$, the boundary of $A$ is 
\[\partial A = \left\{x\in A : x+e_i-e_j\in A^c \textrm{ for some } 1\leq i,j\leq d\right\}.\]
\end{definition}


\subsection{Defining our probability space}

Let $\Omega_{\N}=[d]^{\N} \times [d]^{\N} $ and let $\P = \mu^\N\times \mu^\N$ be the product measure on $\Omega_{\N}$, where $\mu$ is the uniform distribution on $[d]$. This will be our probability space for much of the paper. (We will also consider uniform distribution on a number of subsets of permutations.) For $\omega \in \Omega_{\N}$ we write $\omega =(a,b)$ where $a=(a(t))_{t=1}^\infty$ is the projection onto the first sequence and $b=(b(t))_{t=1}^\infty$ is the projection onto the second sequence.  When it will not cause confusion, we will use $\omega$ both for an arbitrary element in $\Omega_{\N}$ and the canonical random variable given by the identity map on $(\Omega_{\N}, \P)$.  We also let $\omega(t) = (a(t),b(t))$.  Observe that if $\omega$ is distributed like $\P$ then the random variables $\{\omega(t)\}_{t \in \N}$ are i.i.d.\ with 
$$\prob(\omega(t)=(i,j))=1/d^2$$ for all $i,j \in [d].$

For $\omega=(a,b) \in \Omega_\N$ we define the path $s_\omega$ by 
\[ s_\omega(i) =\sum_{j=1}^i e_{a(j)}-e_{b(j)}.\]

Note that if $\omega$ has distribution $\P$, then $\sloc$ is a lazy random walk such that $\sloc(t+1)-\sloc(t)=0$ with probability $1/d$ and $\sloc(t+1)-\sloc(t)=e_j-e_k$ with probability $1/d^2$ for each $ i \neq j$ with $1 \leq i,j \leq d$.

For a given starting time $t \in \N$ and $x \in \Z^d$ we use the notation $\P_{(t,x)}$ for the conditional probability on the set $\{\omega \in \Omega_{\N} \ : \ s_{\omega}(t)=x\}$ and $\E_{(t,x)}$ for the expectation with respect to $\P_{(t,x)}$. This can also be used when $t$ is a stopping time.  For the remainder of the paper, we will implicitly restrict to $(t,x)$ such that $\P(\sloc(t)=x)>0$.


Given a finite set $A$ a function $g$ on $A$ we use the notation
$(g(a) \ :\ a \in A)$ to denote the empirical distribution of the collection.  That is,  
\begin{equation}\label{eq empi} (g(a) \ :\ a \in A)=\frac{1}{|A|}\sum_{a \in A} \delta_{{g(a)}}\end{equation}
where  $\delta_{g(a)}$ is a point mass at $g(a)$ and $|A|$ is the cardinality of $A$.  This is used because we will frequently need to couple empirical distributions on parts of paths defined in terms of permutations with those from parts of paths defined in terms of random walks (see, e.g, for Corollary \ref{Elsa}).
\subsection{$\cw((0,0),(n,0))$}
For $\omega \in \Omega_\N$ with $\omega(i)=(a(i),b(i))$  and $l \in [d]$
define $$\ctx^l(m)=\sum_{j=1}^{m} \1_{a(j)=l}, \ \ \ \ \text{ and } \ \ \ \ 
\cty^l(m)=\sum_{j=1}^{m} \1_{b(j)=l}.  $$
Observe that the $l$th coordinate of $s_\omega(m)$ is 
\begin{equation} \label{tunnel mountain}
\diff^l(m)=\ctx^l(m)-\cty^l(m).
\end{equation}

Let
\begin{equation}\label{byeweek}
\spacen =\big\{(a,b) \in  [d]^n \times [d]^n: \diff^l(n)=0\  \forall \ l \in [d]\big\}. 
\end{equation}

For any $n$ we can, with a slight abuse of notation, think of $\Omega_n$ as the subset of $\Omega_{\N}$ given by all infinite sequences that extend a finite sequence in $\Omega_n$. Similarly we can have a set of $\Omega_{\N}$ that is defined by only finitely many coordinates and think of this as a finite object.

\begin{definition} \label{63yards}
We define $\cw((i,v),(j,w))$ to be the set of $\omega \in \Omega_{\N}$ such that $s_{\omega}(i)=v$, $s_{\omega}(j)=w$ and $s_{\omega}(\ell) \in \cone$ for $i\leq \ell \leq j$. 
\end{definition}



\subsection{The function $P_\omega$}
For a sequence $a$, and an interval $I\subset [n]$ with $I = [i',j')$ we let $\ctx^l(I)= \sum_{i\in I} \1_{a(i) = l}$ and define $\cty^l(I)$ similarly for the sequence $b$.    
For $i>0$ define the function $\posx^l(i) := \inf\{t: \ctx^l(t) = i\}$ if $i \leq \ctx^l(n)$ and $\posx^l(i):=n$ otherwise.   Similarly define $\posy^l(i)$.  

Note that any $\omega \in \spacen$ defines $d$ increasing sequences $\alpha^1, \cdots \alpha^d$, in the following way. Let 
$$\alpha^l(m)=(\posx^l(m),\posy^l(m)).$$

The condition that all the $\diff^l(n)=0$ ensures that the sum of the lengths of the sequences is $n$. 
Using the function $f(\alpha)$ defined at the beginning of section 2 we convert these $d$ sequences into piecewise linear functions.  In this way any element $\omega \in \spacen$ generates a $\R^d$ valued function $P_\omega$.

\subsection{Petrov conditions} \label{pc}

We now describe a family of events that are moderate deviation conditions on $\Omega_{\N}$.
We refer to these as the Petrov conditions and they play a critical role in the paper.


\begin{definition}[Petrov Conditions] \label{seven eleven}
Fix $m \in \N$.  For $\omega \in \Omega_{\N}$, we say $\omega$ has property $\petrov(m)$ if the following properties are satisfied for each $l\in [d]$:\\

\noindent For all intervals $[i,j] \subseteq [0,m]$ with $|j-i| > m^{.1}$
\begin{enumerate}
	\item $|\ctx^l(j) - \ctx^l(i) - \frac{1}{d}(j-i) | < (2d)^{-2}(j-i)^{.6},$
	\item $|\cty^l(j) - \cty^l(i) - \frac{1}{d}(j-i)| < (2d)^{-2}(j-i)^{.6}.$
\end{enumerate}
For all intervals $[i,j] \subseteq [0,m]$ with $|j-i| < m^{.4}$
\begin{enumerate}
	\item $|\ctx^l(j) - \ctx^l(i) - \frac{1}{d}(j-i) | < (2d)^{-2}m^{.25},$
	\item $|\cty^l(j) - \cty^l(i) - \frac{1}{d}(j-i) | < (2d)^{-2}m^{.25}.$
\end{enumerate}

\end{definition}

Note that any interval of length less than $\kappa$ alone cannot cause $\petrov(m)$ to not be satisfied.  The following lemma extends the Petrov conditions to the functions $\posx^l$ and $\posy^l$.

\begin{lemma}\label{extendo}
Suppose $\omega \in \Omega_{\N}$ has property $\petrov(m)$.  Let $[i,j]\subseteq[0,m]$ with\\ $ \max\{\posx^l(j), \posy^l(j)\} \leq m$.  If $|j-i|>m^{.1}$, then
\begin{enumerate}
	\item $\left|\posx^l(j) - \posx^l(i) - d(j-i)\right| < (2d)^{-1}(j-i)^{.6},$
	\item $|\posy^l(j) - \posy^l(i) - d(j-i)| < (2d)^{-1}(j-i)^{.6}.$
\end{enumerate}
If $|j-i|<m^{.4}$, then 
\begin{enumerate}
	\item $\left|\posx^l(j) - \posx^l(i) - d(j-i)\right| < (2d)^{-1}m^{.25},$
	\item $|\posy^l(j) - \posy^l(i) - d(j-i)| < (2d)^{-1}m^{.25}.$
\end{enumerate}	
\end{lemma}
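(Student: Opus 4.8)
The plan is to invert the count functions and transfer the Petrov deviation bounds from $\ctx^l, \cty^l$ to $\posx^l, \posy^l$. First I would observe the duality: for a nondecreasing step function like $\ctx^l$ that increases by $0$ or $1$ at each step, the functions $\ctx^l$ and $\posx^l$ are generalized inverses of one another, so $\posx^l(\ctx^l(t))\le t$ and $\ctx^l(\posx^l(i))=i$ for $i\le \ctx^l(m)$. The key quantitative consequence is that a statement ``$\ctx^l$ stays within $\epsilon$ of the line of slope $1/d$ on an interval of time-length $L$'' is equivalent, up to constants, to ``$\posx^l$ stays within $d\epsilon$ of the line of slope $d$ on the corresponding interval of index-length $L/d$''.

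The main steps, in order, are as follows. Fix $l$ and an interval $[i,j]\subseteq[0,m]$ with $\max\{\posx^l(j),\posy^l(j)\}\le m$; write $u=\posx^l(i)$ and $v=\posx^l(j)$, so $u,v\in[0,m]$ and $\ctx^l(u)=i$, $\ctx^l(v)=j$ (using $i\le j\le \ctx^l(m)$, which follows from $\posx^l(j)\le m$). I want to bound $|v-u-d(j-i)|$. Apply $\petrov(m)$ on the time interval $[u,v]$: this gives $|\ctx^l(v)-\ctx^l(u)-\tfrac1d(v-u)|=|(j-i)-\tfrac1d(v-u)|<(2d)^{-2}(v-u)^{.6}$ in the long-interval regime (and the analogous $m^{.25}$ bound in the short regime). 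Multiplying through by $d$ yields $|d(j-i)-(v-u)|<(2d)^{-1}(v-u)^{.6}$. It then remains to replace $(v-u)^{.6}$ on the right-hand side by $(j-i)^{.6}$, which is where a short bootstrapping argument is needed: since $v-u \approx d(j-i)$ up to the lower-order error, we have $v-u \le d(j-i) + (2d)^{-1}(v-u)^{.6}$, and for $v-u$ not too small this forces $v-u \le 2d(j-i)$, say, whence $(v-u)^{.6}\le (2d)^{.6}(j-i)^{.6}$; absorbing the constant (and using $d\ge 1$) keeps us under the stated $(2d)^{-1}(j-i)^{.6}$ threshold after possibly adjusting which Petrov regime we invoke. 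The same computation with $\cty^l$ in place of $\ctx^l$ handles the $\posy$ statements. One must also check the regime bookkeeping: if $|j-i|>m^{.1}$ then $v-u$ is comparable to $d|j-i|>m^{.1}$, so the long-interval Petrov bounds apply on $[u,v]$; if $|j-i|<m^{.4}$ then $v-u<2dm^{.4}$, and here one either invokes the short-interval Petrov bound (when $v-u<m^{.4}$) or covers $[u,v]$ by $O(d)$ subintervals of length $<m^{.4}$ and sums the errors, each contributing $<(2d)^{-2}m^{.25}$, for a total under $(2d)^{-1}m^{.25}$.

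The main obstacle I anticipate is the regime-matching at the two boundaries, together with the constant-chasing in the bootstrap. The Petrov conditions are phrased for intervals of time-length $>m^{.1}$ or $<m^{.4}$, with a gap in between, so when I translate an index-interval $[i,j]$ into a time-interval $[u,v]=[\posx^l(i),\posx^l(j)]$ I need to be careful that the translated length still falls into a regime where I have a usable bound; the factor-$d$ stretching means an index-interval near the $m^{.4}$ boundary can produce a time-interval slightly above it, which is why the subdivision trick is needed in the short-interval case. The other delicate point is that the inverse $\posx^l$ can jump (by more than $d$ steps) when $\ctx^l$ is flat, but $\petrov(m)$ already precludes long flat stretches — a flat run of length $>m^{.1}$ would violate the lower bound in condition (1) — so $\posx^l$ has controlled increments and the duality argument goes through cleanly. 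None of these points is deep; the proof is essentially a careful transcription, so I would present it as: state the inverse duality, apply $\petrov(m)$ on $[\posx^l(i),\posx^l(j)]$, multiply by $d$, bootstrap to replace $(v-u)^{.6}$ by $(j-i)^{.6}$, and handle the short regime by subdivision, then repeat verbatim for $\posy^l$.
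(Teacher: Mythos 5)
Your proposal is correct and follows essentially the same route as the paper's proof: set $u=\posx^l(i)$, $v=\posx^l(j)$ so that $\ctx^l(u)=i$, $\ctx^l(v)=j$, apply $\petrov(m)$ on the time interval $[u,v]$, bootstrap to $v-u\leq 2d(j-i)$, and substitute back to trade $(v-u)^{.6}$ for a multiple of $(j-i)^{.6}$, with the same treatment for $\posy^l$ and for the short regime. Your subdivision trick for the case where the time interval slightly exceeds $m^{.4}$ is a small extra care the paper dismisses with ``a similar argument,'' and note there is actually no gap between the two Petrov regimes since $>m^{.1}$ and $<m^{.4}$ overlap.
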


\begin{proof} 
Let $0 \leq i<j \leq m$ satisfy $s = \posx^l(i)$ and $t = \posx^l(j) \leq m$.  Then $\ctx^l(s) = i$ and $\ctx^l(t) =j$.  If $j-i>m^{.1}$ then $t-s>m^{.1}$ and by the Petrov conditions,
\begin{equation}\label{brain}
	|\ctx^l(t) - \ctx^l(s) - d^{-1}(t-s)| < (2d)^{-2}(t-s)^{.6}.
\end{equation} 
By the Triangle Inequality
\begin{equation*}
	d^{-1}(t-s) - (\ctx^l(t) - \ctx^l(s)) < (2d)^{-2}(t-s)
\end{equation*}
or
\begin{equation*}
(t-s)(1- (2d)^{-1}) < d(\ctx^l(t) - \ctx^l(s))	
\end{equation*}
giving, in terms of $i$ and $j$,
\begin{equation}\label{pinky}
\posx^l(j) - \posx^l(i) <  2d(j-i).	
\end{equation}
Rewriting inequality \eqref{brain} in terms of $i$ and $j$ gives
\begin{equation}\label{ralph}
	|(j-i) - d^{-1}(\posx^l(j) - \posx^l(i))| < (2d)^{-2}(\pos^l(j) - \posx^l(i))^{.6}.
\end{equation}
Then, by \eqref{pinky}, we have 
$$|(j-i) - d^{-1}(\posx^l(j)-\posx^l(i))| < (2d)^{-2}|2d(j-i)|^{.6} < (2d)^{-1}|j-i|^{.6}.$$
The exact same argument works for $\posy^l$, and a similar argument works when $j-i < m^{.4}$, finishing the proof.
\end{proof}
Throughout this paper we assume the results of Lemma \ref{extendo} when we cite the Petrov conditions. 

Fix $n \in \N$. For a pair of sequences $\omega = (a,b)$, let $\omega^*=(a^*,b^*)$ denote the pair given by the reverse of the first $n$ elements of the two sequences.  That is $a^*(i)=a(n+1-i)$ and  $b^*(i)=b(n+1-i)$.  Similarly for a path $s = \{s(i)\}_{i=0}^n$ on $\Z^d$ of length $n$ let $s^*$ denote the reverse of the path $\{s(n-i)\}_{i=0}^n.$ Although it does not matter we can set $\omega^*(m)=\omega(n)$ for all $m>n$.

\begin{definition} \label{eight ten}
We say $\omega$ has property $\petrov^*(m)$ if $\omega^*$ has property $\petrov(m).$  
\end{definition}

\begin{lemma}\label{conditionedpetrov}  \label{pathpetrov}
There exists $\gamma> 0$ and $C$  such that for all $t<m$ and $x \in \Z^d$
$$\prob_{(t,x)}(\petrov(m)^C) \leq C e^{-m^{\gamma}}.$$
$$\prob_{(t,x)}(\petrov(m)^C \ | \ \Omega_n) \leq C e^{-m^{\gamma}}.$$
\end{lemma}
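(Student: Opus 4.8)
The plan is to prove both bounds simultaneously by establishing moderate-deviation estimates for the individual count processes $\ctx^l$ and $\cty^l$ and then taking a union bound over the (polynomially many) relevant intervals. Recall that under $\prob_{(t,x)}$, the increments $\{\omega(s)\}_{s>t}$ are still i.i.d.\ uniform on $[d]^2$ (conditioning on $s_\omega(t)=x$ only constrains the prefix, which does not affect $\ctx^l(j)-\ctx^l(i)$ for $[i,j]$ with $i\ge t$; for $i<t$ one splits the increment at $t$ and uses that the prefix count is a fixed deterministic quantity once $x$ is fixed, up to the at most $t$ coordinates, which is handled since $\ctx^l(t)$ is determined by $x$ only modulo the ambiguity that does not enter $\diff^l$ — more carefully, one works with the two-sided increments and notes $\ctx^l(j)-\ctx^l(i)$ is a sum of i.i.d.\ indicators once both $i,j\ge t$, and the finitely many remaining cases where $i<t\le j$ are treated by the same Bernstein bound applied to $[t,j]$ plus a deterministic term). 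So the core estimate is: for a sum $X_N=\sum_{r=1}^N \1_{a(r)=l}$ of $N$ i.i.d.\ Bernoulli$(1/d)$ variables,
\[
\prob\!\left(\,\bigl|X_N - \tfrac{N}{d}\bigr| \ge \lambda\right) \le 2\exp\!\left(-\frac{\lambda^2}{2(N/d + \lambda/3)}\right)
\]
by Bernstein's inequality.

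First I would handle the long intervals ($|j-i|>m^{.1}$). For a fixed such interval of length $N=j-i$, the required deviation is $\lambda = (2d)^{-2}N^{.6}$, so Bernstein gives a bound of order $\exp(-c N^{1.2}/N)=\exp(-cN^{.2})\le \exp(-c m^{.02})$. There are at most $m^2$ pairs $(i,j)$ and $d$ choices of $l$ and two processes, so the union bound costs a factor $2dm^2$, which is absorbed by choosing $\gamma$ slightly smaller than $.02$: $2dm^2 e^{-cm^{.02}}\le Ce^{-m^{\gamma}}$. Next, for the short intervals ($|j-i|<m^{.4}$), the required deviation is $\lambda=(2d)^{-2}m^{.25}$ while $N<m^{.4}$, so the Bernstein exponent is of order $m^{.5}/(m^{.4}+m^{.25})\asymp m^{.1}$, giving $\exp(-cm^{.1})$ per interval, and again the $m^2$-fold union bound is harmless. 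Taking $\gamma$ to be, say, $.01$ makes both regimes work with a single constant.

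For the second inequality, conditioning additionally on $\Omega_n$ (i.e.\ on $\diff^l(n)=0$ for all $l$), the subtlety is that the bridge conditioning is a constraint of probability only polynomially small in $n$ — indeed $\prob(\Omega_n)\ge c n^{-(d-1)/2}$ by a local central limit theorem for the walk $s_\omega$ (the increments span a codimension-one lattice, so the standard multidimensional LCLT applies). Hence for any event $E$, $\prob_{(t,x)}(E\mid \Omega_n)\le \prob_{(t,x)}(E)/\prob_{(t,x)}(\Omega_n) \le C n^{(d-1)/2}\prob_{(t,x)}(E)$, and since $m\le n$ would make $n^{(d-1)/2}e^{-m^\gamma}$ not obviously decaying, one instead only uses this crude bound when $m$ is comparable to $n$; when $m\ll n$ one conditions step by step. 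Actually the clean way: if $m \le n^{1/2}$ say, then $n^{(d-1)/2} e^{-m^\gamma}$ need not be small, so instead observe that for $t<m\le n$ one can write the conditional law given $\Omega_n$ and $s_\omega(t)=x$ as a bridge, and apply the first inequality to the bridge directly using that a bridge of length $n$ restricted to times $\le m$ with $m\le n/2$ has Radon–Nikodym derivative against the unconditioned walk bounded by a constant (again by the LCLT, the ratio of $\prob(s_\omega(n)=0 \mid s_\omega(m))$ to $\prob(s_\omega(n)=0)$ is bounded uniformly when $m\le n/2$), and for $m>n/2$ use the polynomial bound $Cn^{(d-1)/2}$ and absorb it into $Ce^{-m^\gamma}$ at the cost of shrinking $\gamma$, since $m\asymp n$ there.

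The main obstacle is this last point — transferring the moderate-deviation bound from the free walk to the conditioned (bridge) walk uniformly over all $t<m\le n$ and all valid $x$. The cleanest route is a two-case split at $m\le n/2$ versus $m>n/2$ as above, relying in both cases on a uniform local-CLT estimate for $s_\omega$ (which is standard but must be stated carefully because $s_\omega$ lives on the sublattice $\{\sum z_i=0\}$ and is lazy, so one checks aperiodicity and nondegeneracy of the covariance on that sublattice). Everything else is routine Bernstein plus union bounds.
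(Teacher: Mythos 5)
Your proposal follows essentially the same route as the paper's proof: the unconditioned bound is the standard moderate-deviation estimate (which the paper simply cites) obtained via Bernstein plus a union bound over the polynomially many intervals, and the conditioned bound uses the same two ingredients as the paper — a local-CLT argument showing that conditioning on $\Omega_n$ changes the law of the first $m$ steps by at most a bounded factor in one regime, and the polynomial lower bound on $\P(\Omega_n)$ absorbed into the exponential by shrinking $\gamma$ in the other. The only difference is the split point ($m\le n/2$ versus the paper's $m\le n^{.1}$), which is immaterial since both regimes are handled by the same tools.
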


\begin{proof}
Standard moderate deviation estimates imply that there exists $\gamma'$ such that
the set of $(a,b) \in [d]^n \times [d]^n$ in $\petrov^C(m)$ is bounded by $C e^{-m^{\gamma}}.$ See Lemma 2.4 of \cite{hoffman2017pattern} for more details.

We get a similar bound when we condition on $(a,b) \in \Omega_n$ as follows. As $\prob_{(0,0)}(\Omega_n)\geq n^{-k}$ for some $k$ if $m>n^{.1}$ then we only need to lower $\gamma$. If  $m\leq n^{.1}$ then the local central limit theorem implies
that conditioning on any sequence $\{(a(i),b(i))\}_{i=1}^m$ the probability of being in $\Omega_n$ differs by at most a factor of 2. Thus 
\[\prob_{(t,x)}( \petrov(m)^C \ | \ \Omega_n ) \leq C' e^{-m^{\gamma}}. \qedhere\]
\end{proof}

%
%

\begin{lemma} \label{considiff}

Let $\omega \in \Omega_{\N}$ satisfy $\petrov(m)$.  The following hold:

\begin{align}
	&|\ctx^l(m) - \cty^l(m) | < d^{-2}m^{.6},\label{doobie}\\
	&|s_{\omega}(m)| < d^{-1}m^{.6},  \label{maximus} 
\end{align}
If $[j,j'] \subset [0,m]$ and $|j-j'|> m^{.3}$ then 
\begin{equation}\label{cardiff}
|s_{\omega}(j')-s_{\omega}(j)|< d^{-1}|j'-j|^{.6}.	
\end{equation}

\end{lemma}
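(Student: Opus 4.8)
The plan is to get all three bounds as immediate consequences of the Petrov conditions (Definition~\ref{seven eleven}) together with the triangle inequality, using the one structural fact that the $L^1$ norm on $\Z^d$ splits coordinatewise: since the $l$th coordinate of $s_\omega(m)$ is $\diff^l(m) = \ctx^l(m) - \cty^l(m)$, we have $|s_\omega(m)| = \sum_{l\in[d]} |\diff^l(m)|$, and likewise $|s_\omega(j') - s_\omega(j)| = \sum_{l\in[d]} |\diff^l(j') - \diff^l(j)|$. So the whole lemma is a bookkeeping exercise: bound each coordinate via Petrov, then sum over the $d$ coordinates.

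First I would prove \eqref{doobie}. Apply the ``long interval'' clause of $\petrov(m)$ to $[0,m]$ (valid once $m > m^{.1}$), and note $\ctx^l(0) = \cty^l(0) = 0$, so $|\ctx^l(m) - m/d| < (2d)^{-2} m^{.6}$ and $|\cty^l(m) - m/d| < (2d)^{-2} m^{.6}$; subtracting cancels the common term $m/d$ and the triangle inequality gives $|\ctx^l(m) - \cty^l(m)| < 2(2d)^{-2} m^{.6} = (2d^2)^{-1} m^{.6} < d^{-2} m^{.6}$. Then \eqref{maximus} follows by summing \eqref{doobie} over $l \in [d]$: $|s_\omega(m)| = \sum_{l=1}^d |\diff^l(m)| < d \cdot (2d^2)^{-1} m^{.6} = (2d)^{-1} m^{.6} < d^{-1} m^{.6}$. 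For \eqref{cardiff}, since $|j'-j| > m^{.3} > m^{.1}$ the same ``long interval'' clause applies to $[j,j']$, giving $|\ctx^l(j') - \ctx^l(j) - d^{-1}(j'-j)| < (2d)^{-2} |j'-j|^{.6}$ and the analogous bound for $\cty^l$; subtracting cancels the linear term $d^{-1}(j'-j)$, so $|\diff^l(j') - \diff^l(j)| < 2(2d)^{-2}|j'-j|^{.6}$, and summing over $l$ yields $|s_\omega(j') - s_\omega(j)| < d\cdot 2(2d)^{-2}|j'-j|^{.6} = (2d)^{-1}|j'-j|^{.6} < d^{-1}|j'-j|^{.6}$. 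All the Petrov bounds are strict and there are only finitely many coordinates, so the final inequalities are strict as stated, and the numerical slack ($(2d^2)^{-1} < d^{-2}$, $(2d)^{-1} < d^{-1}$) is comfortable.

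I do not expect a genuine obstacle here; the only point requiring a word of care is the range of small $m$ for which the interval $[0,m]$ (or $[j,j']$) is too short to be constrained by $\petrov(m)$ — there the assertion is vacuous or trivial, consistent with the remark after Definition~\ref{seven eleven} that short intervals cannot violate $\petrov(m)$. If one prefers to avoid even mentioning this, one simply states the lemma for $m$ larger than the implicit threshold $\kappa$ already fixed in the Petrov conditions, which is how the lemma is used in the sequel.
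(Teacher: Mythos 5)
Your proposal is correct and follows essentially the same route as the paper's own proof: apply the long-interval Petrov clause to $[0,m]$ (resp.\ $[j,j']$, valid since $|j'-j|>m^{.3}>m^{.1}$), cancel the common linear term by the triangle inequality, and then pass from the coordinatewise bound to the $L^1$ norm by summing (the paper uses $d$ times the maximum coordinate, which is the same bookkeeping). Your extra remark about very small $m$ being vacuous is a harmless clarification the paper leaves implicit.
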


\begin{proof}

If $\omega$ satisfies $\petrov(m)$ then $|\ctx^l(m) - d^{-1}m| < (2d)^{-2}m^{.6}$.  Similarly $|\cty^l(m) - d^{-1}m| < (2d)^{-2}m^{.6}$ and thus by the triangle inequality $$|\ctx^l(m) - \cty^l(m)| \leq |\ctx^l(m) - d^{-1}m|+|\cty^l(m) - d^{-1}m| < d^{-2}m^{.6},$$ proving \eqref{doobie}.  This provides a uniform bound for each of the $d$ coordinates of $s_\omega(m)$ and thus also proves \eqref{maximus}.  

For Equation \eqref{cardiff} we have
\begin{align*}
|s_{\omega}(j') - s_\omega(j) | &\leq d \max_{l\in [d]} | \ctx^l(j')-\cty^l(j') - ( \ctx^l(j) - \cty^l(j)) |	\\
& \leq d \max_{l\in [d]} |  \ctx^l(j') - \ctx^l(j) - ( \cty^l(j') - \cty^l(j)) |\\
&\leq d \left( d^{-1}(j'-j) + (2d)^{-2}|j'-j|^{.6} - d^{-1}(j'-j) + (2d)^{-2}|j'-j|^{.6} \right)\\
& \leq d^{-1}|j'-j|^{.6}.
\end{align*}

\end{proof}

\begin{lemma}\label{fiasco}

Let $\omega \in \Omega_{\N}$ satisfy $\petrov(m)$.  For $l\in [d]$, let $i\leq m$ be such that $\ctx^l(m) = \cty^l(i)$.  Then $m-i < m^{.6}$
	A similar statement holds if $\ctx^l(i) = \cty^l(m)$.
\end{lemma}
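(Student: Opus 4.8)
The plan is to read this off directly from the Petrov estimates, which say that on $[0,m]$ both $\ctx^l$ and $\cty^l$ grow at rate $1/d$ up to an error of order $m^{.6}$. Since $\ctx^l(m)$ is within $(2d)^{-2}m^{.6}$ of $m/d$, and $\cty^l$ increases at average rate $1/d$, the first time $i$ at which $\cty^l$ attains the value $\ctx^l(m)$ cannot lie more than $O(m^{.6})$ before $m$.

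Concretely, here is how I would carry it out. We may assume $m$ is large enough that $m^{.1}<m^{.6}$. First I would dispose of the case $m-i\le m^{.1}$, in which $m-i\le m^{.1}<m^{.6}$ and there is nothing to prove. So assume $m-i>m^{.1}$; since $i\ge 0$ this also forces $m>m^{.1}$, so that all of the intervals below are long enough for the first (long-interval) clause of $\petrov(m)$ to apply. Applying that clause to $[0,m]$ gives
\[
\ctx^l(m) > d^{-1}m - (2d)^{-2}m^{.6}, \qquad \cty^l(m) < d^{-1}m + (2d)^{-2}m^{.6},
\]
and applying it to $[i,m]$ (whose length exceeds $m^{.1}$) gives
\[
\cty^l(m)-\cty^l(i) > d^{-1}(m-i) - (2d)^{-2}(m-i)^{.6} \ge d^{-1}(m-i) - (2d)^{-2}m^{.6}.
\]
Subtracting the last inequality from the upper bound on $\cty^l(m)$ yields $\cty^l(i) < d^{-1}i + 2(2d)^{-2}m^{.6}$.

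Now I would use the hypothesis $\ctx^l(m)=\cty^l(i)$ to chain these together:
\[
d^{-1}m - (2d)^{-2}m^{.6} \;<\; \ctx^l(m) \;=\; \cty^l(i) \;<\; d^{-1}i + 2(2d)^{-2}m^{.6},
\]
so that $d^{-1}(m-i) < 3(2d)^{-2}m^{.6}$, i.e. $m-i < \frac{3}{4d}m^{.6}\le \frac{3}{4}m^{.6}<m^{.6}$, which is the claim. The ``similar statement'' with $\ctx^l(i)=\cty^l(m)$ is proved by the identical argument with the roles of $\ctx^l$ and $\cty^l$ interchanged, since the Petrov conditions are symmetric in the two sequences $a$ and $b$.

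There is no serious obstacle here; it is a short deterministic computation from the Petrov conditions. The two points to watch are (i) that the useful Petrov estimate on $[i,m]$ is only available once we know $|m-i|>m^{.1}$, which is exactly why the short-interval case must be peeled off first, and (ii) keeping the constants tight enough that the final bound is strictly below $m^{.6}$ — it is cleanest to use the two one-sided bounds on $\ctx^l(m)$ and $\cty^l(m)$ separately, rather than the looser combined bound \eqref{doobie}, since the latter shortcut produces a constant that is not smaller than $1$ when $d=1$.
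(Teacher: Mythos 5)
Your proof is correct and is essentially the paper's argument: both deduce the bound by applying the Petrov conditions to the intervals $[0,m]$ and $[i,m]$ and using the hypothesis $\ctx^l(m)=\cty^l(i)$ to force $d^{-1}(m-i)$ below a constant times $m^{.6}$ (the paper simply routes the $[0,m]$ step through Lemma \ref{considiff} and uses the uniform error $(2d)^{-2}m^{.6}$, which covers short intervals via the $m^{.25}$ clause, so your peeling of the case $m-i\le m^{.1}$ is an equivalent bookkeeping choice). Your observation about constants is a fair nicety but immaterial here since $d\ge 2$ throughout the paper.
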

\begin{proof}

By Lemma \ref{considiff} $|\ctx^l(m) - \cty^l(m)| < d^{-2}m^{.6}.$ Under our assumptions we may replace $\ctx^l(m)$ with 
   $\cty^l(i)$, giving 
$$|\cty^l(i) - \cty^l(m)| < d^{-2}m^{.6}.$$  
   By $\petrov(m)$ we have 
$$|\cty^l(m) - \cty^l(i) - d^{-1}(m-i)|<(2d)^{-2}m^{.6}.$$  Thus combined we have 
$$|d^{-1}(m-i)| < (2d)^{-2}m^{.6} + d^{-2}m^{.6} < d^{-1}m^{.6}.$$  Multiplying by $d$ finishes the proof.     
 \end{proof}

\begin{lemma}\label{messy messi}

Let $\omega \in \Omega_{\N}$ satisfy $\petrov(m)$.  Then for all $l$
$$
\posx^l(\ctx^l(m)) \in ( m - m^{.19} , m]
$$
and 
$$
\posy^l(\cty^l(m)) \in ( m - m^{.19} , m].
$$

\end{lemma}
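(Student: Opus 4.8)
The plan is to apply the large-interval part of the Petrov conditions to the interval running from $\posx^l(\ctx^l(m))$ up to $m$, across which the counting function $\ctx^l$ is, by construction, constant.

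Fix $l \in [d]$ and set $s = \posx^l(\ctx^l(m)) = \inf\{t : \ctx^l(t) = \ctx^l(m)\}$ (in all our applications $\ctx^l(m) \leq \ctx^l(n)$, so this is the relevant branch of the definition of $\posx^l$). Since $m$ itself belongs to the set over which this infimum is taken, we get $s \leq m$, which already gives the right endpoint of the claimed interval. Moreover $\ctx^l$ is non-decreasing and increases by at most $1$ per step, so $\ctx^l(s) = \ctx^l(m)$, and hence $\ctx^l(j) = \ctx^l(m)$ for every $j \in [s,m]$; in particular $\ctx^l(m) - \ctx^l(s) = 0$. Now suppose, for contradiction, that $m - s > m^{.1}$. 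Then $[s,m] \subseteq [0,m]$ is an interval of length exceeding $m^{.1}$, so the first of the large-interval Petrov estimates yields
$$\left| \ctx^l(m) - \ctx^l(s) - \frac{1}{d}(m-s) \right| < (2d)^{-2}(m-s)^{.6}.$$
Substituting $\ctx^l(m) - \ctx^l(s) = 0$ gives $\frac{1}{d}(m-s) < (2d)^{-2}(m-s)^{.6}$, i.e.\ $(m-s)^{.4} < \frac{1}{4d} \leq \frac14$, so $m - s < 1$, contradicting $m - s > m^{.1} \geq 1$. Hence $m - s \leq m^{.1} < m^{.19}$ (the last inequality for $m \geq 2$, the cases $m \leq 1$ being trivial), which is precisely $\posx^l(\ctx^l(m)) \in (m - m^{.19}, m]$.

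The bound for $\posy^l(\cty^l(m))$ follows by the identical argument with $b$, $\cty^l$, $\posy^l$ in place of $a$, $\ctx^l$, $\posx^l$, using part (2) of the large-interval Petrov conditions instead of part (1). There is no genuine obstacle in this argument; the only point deserving a moment's care is the observation that $\ctx^l$ is \emph{constant} on all of $[\posx^l(\ctx^l(m)), m]$, since that is what licenses plugging $0$ into the Petrov inequality, together with the routine dispatch of the degenerate small-$m$ cases.
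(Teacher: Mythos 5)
Your proof is correct, and it takes a more direct route than the paper's. The paper proves the lemma by a two-window count: it applies the large-interval Petrov estimates to the auxiliary intervals $I_1=(m-m^{.3}-m^{.19},\,m-m^{.19}]$ and $I_2=(m-m^{.3}-m^{.19},\,m]$ (both of length at least $m^{.3}$) and subtracts to conclude that the count of $l$'s in $(m-m^{.19},m]$ is strictly positive, so the last occurrence of $l$ before $m$ lands in that window. You instead apply the Petrov inequality once, directly to the gap interval $[\posx^l(\ctx^l(m)),m]$ on which $\ctx^l$ is constant, and derive a contradiction if the gap exceeds $m^{.1}$; this is legitimate because $\petrov(m)$ is a deterministic condition holding for \emph{every} interval of length exceeding $m^{.1}$, including one with a random endpoint, and your key observation that $\ctx^l$ is constant on the gap is exactly what lets you plug in $0$. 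Your argument is shorter, avoids the auxiliary windows, and in fact yields the stronger bound $m-\posx^l(\ctx^l(m))\leq m^{.1}$ (the paper only needs, and only proves, $<m^{.19}$), which is more than enough everywhere the lemma is invoked. The only points needing the care you already noted are that $\ctx^l(m)>0$ (which $\petrov(m)$ forces for $m\geq 2$, so the branch $\posx^l(i)=\inf\{t:\ctx^l(t)=i\}$ applies) and the trivial small-$m$ cases, which the paper's own proof also glosses over.
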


\begin{proof}
	
	Consider the intervals $I_1= (m-m^{.3}-m^{.19}, m-m^{.19}]$ and $I_2 = (m-m^{.3}-m^{.19},m]$.  Both intervals have size at least $m^{.3}$ and therefore by the Petrov conditions
\begin{equation*}
|\ctx^l(I_1) - d^{-1}m^{.3}| < (2d)^{-2}m^{.18}	
\end{equation*}
while
\begin{equation*}
|\ctx^l(I_2) - d^{-1}(m^{.3} + m^{.19})| < (2d)^{-2}|m^{.3} + m^{.19}|^{.6} < d^{-2}m^{.18}.
\end{equation*}
These together imply that the interval $I_3 = (m-m^{.19}, m]$ satisfies
\begin{equation} \label{lasty}
	\ctx^l(I_3)  = \ctx^l(I_2) - \ctx^l(I_1) \geq d^{-1}m^{.19} - 2(d)^{-2}m^{.18} > 0.
\end{equation}

The value $\posx^l(\ctx^l(m))$ is the position of the last occurrence of $l$ at or before position $m$.  Inequality \eqref{lasty} shows that there is at least one $l$ in $I_3$ and thus this last  occurrence must occur somewhere in $I_3.$  
The same argument holds for $pos_y^l(\cty^l(m)).$
\end{proof}

\section{From permutations to paths close to the Weyl Chamber}

\label{wcc}

\subsection{Definitions}
Remember that we have defined 
$$\cone = \{(x_1,\cdots,x_d) \in \Z^d: x_1 \geq \cdots \geq x_d\}.$$
Heuristically we think of the path $s_{\walk_\sigma}$ associated with a permutation in $\sigma \in \avn(\rho_d)$ as being a path in $\cone$. But the reality is that it may not necessarily stay within $\cone$. (See Figure \ref{outside_cone}.) And not every 
$\omega$ such that 
 $s_\omega(t) \in \cone$ for all $t$ is the image $s_{\walk_\sigma}$ for some $\sigma \in \snd$.  Because of this we need to consider other family of paths.

We define $\cone_a$ to be $\cone$ shifted so that its apex is at $(dk,(d-1)k,\dots,k,0)$ where $k=[ a ]$ is the integer part of $a$. That is
\begin{equation} \label{divertida}
\begin{split} \cone_a & =(dk,(d-1)k,\dots,k,0)+\cone\\
& = \{(x_1,\cdots,x_d) \in \Z^d: x_i \geq x_{i+1}+k ,\ \forall i=\{1,\dots,d-1\} \}.
\end{split}
\end{equation}
Recall the definition of $\cw((i,v),(j,v'))$ from Definition \ref{63yards}. Based on this we define 
\begin{itemize}
\item $\cwminus((i,v),(j,v'))$ to be all $\omega \in \Omega_{\N}$ such that 
\begin{enumerate}
\item $s_{\omega}(i)=v$, $s_{\omega}(j)=v'$, 
\item $s_{\omega}(m) \in \cone_{m^{.4}}$ for all $m \in [i,j]$ and  
\item for every $m \in [i,j]$, every $l \in [d]$ and every interval $[i',j'] \subset [i+1,m]$ the conditions from the definition of $\petrov(m)$ (Definition \ref{seven eleven}) are satisfied.
\end{enumerate}
\end{itemize}
Note that the definition can be checked by knowing $s_{\omega}(i)$ and $\omega(k)$ for $k \in [i+1,j]$.
This definition is very useful because in Lemma \ref{middleman} we show that if  $s_{\omega}\in \cwminus((t,x),(n-t^*,x^*))$ (for certain choices of $t, t^*, x$ and $x^*$) then it can be extended so that it equals $s_{\walk_\sigma}$ for some $\sigma \in \snd$.
We also define
\begin{itemize}
\item $\cwplus((i,v),(j,v'))$ to be all $\omega \in \Omega_{\N}$ such that 
\begin{enumerate}
\item $s_{\omega}(i)=v$, $s_{\omega}(j)=v'$, and 
\item $s_{\omega}(k)\in \cone_{-m^{.4}}$ for all $k \in [i,j]$,
\end{enumerate}
\item $\cwpplus((i,v),(j,v'))$ to be all $\omega \in \Omega_{\N}$ such that  
\begin{enumerate}
\item $s_{\omega}(i)=v$, $s_{\omega}(j)=v'$, and 
\item for all $m \in [i,j]$ either 
$s_{\omega}(m)\in \cone_{-m^{.4}}$  or 
there exists $l \in [d]$ and an interval $[i',j'] \subset [i,m]$ such that the conditions from the definition of $\petrov(m)$ (Definition \ref{seven eleven}) are not satisfied.
\end{enumerate}
\end{itemize}
In Lemma \ref{permsRgreat} we will show that if $\sigma \in \snd$ then the associated path $s_{\walk_\sigma} \in \cwminus((0,0),(n,0))$.
To understand $\cone_{\pm i^{.4}}$ we note that if we let $\dis$ be the $L^1$ distance on $\R^{d}$
then we see that 
\begin{enumerate}
\item if $\dis(x,\cone) < i^{.4}$
then $x\in \cone_{-i^{.4}}$ and 
\item  if $\dis(x,\cone) >d^2 i^{.4},$
then $x\not \in \cone_{-i^{.4}}$.
\end{enumerate}
Also we see that 
\begin{enumerate}
\item if $\dis(x,\partial \cone) < i^{.4}$
then $x\in \cone_{i^{.4}}$ and 
\item  if $\dis(x,\partial \cone) >d^2 i^{.4},$
then $x\not \in \cone_{i^{.4}}$.
\end{enumerate}

In all the preceeding notation we can replace $v$ and $v'$ by $\cdot$ which represents taking a union. 
For example $$\cwminus((j,\cdot),(n,\cdot))=\bigcup_{v,v'}\cwminus((j,v),(n,v'))$$ and
$$\cwplus((j,\cdot),(n,v'))=\bigcup_{v}\cwplus((j,v),(n,v')).$$

Fix $n$. We also want to consider symmetric version of these sets. 
By the definition of $\cw$ it is already symmetric. By this we mean that if $\omega \in \cw((0,0),(n,0))$  then so is 
$\omega^*$. 
The corresponding statements are not true for $\omega \in \cwminus((0,0),(n,0))$ or $\omega \in \cwpplus((0,0),(n,0))$.

We define
$$\scwpplus((i,v),(j,v'))$$ to be all paths $\omega$ such that $s_{\omega}(i)=v$, $s_{\omega}(j)=v'$, and for all $m \in [i,j] \cap [0,n/2]$ either  
$s_{\omega}(i)\in \cone_{-m^{.4}}$  or there exists $l \in [d]$ and an interval $[i',j'] \subset [i,m]$ such that the conditions from the definition of $\petrov(m)$ (Definition \ref{seven eleven}) are not satisfied.
Also for all $m \in [i,j] \cap [n/2,n]$ either 
$s_{\omega}(m)\in \cone_{-m^{.4}}$ or there exists $l \in [d]$ and an interval $[i',j'] \subset [i,m]$ such that the conditions from the definition of $\petrov^*(n-m)$ (Definition \ref{seven eleven}) are not satisfied.


\begin{figure}
\includegraphics[scale=.75]{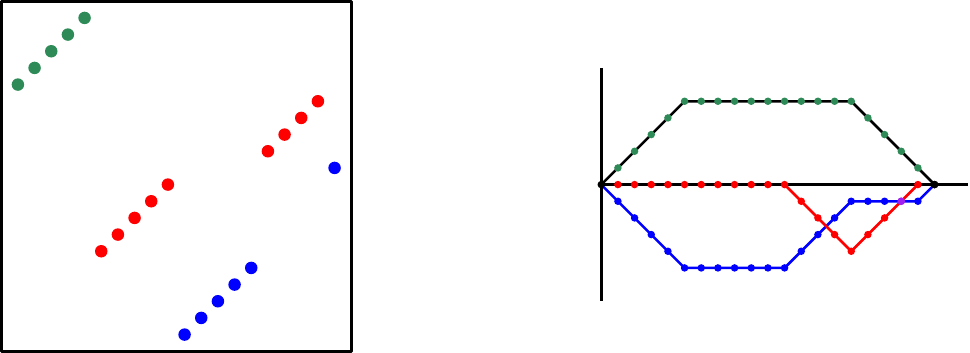}
\caption{A permutation $\sigma \in A\!v_{20}(\rho_3)$ whose associated path $s_{\omega_\sigma}$ is not in $\cone$.}
\label{outside_cone}
\end{figure}

The connection between $\avn(\rho_d)$ and $\scwpplus((0,0),(n,0))$ is summarized in the following lemma.  

\begin{lemma}\label{permsRgreat}
For any $\sigma\in \avn(\rho_d)$, ${\walk_\sigma} \in \scwpplus((0,0),(n,0)).$  	
\end{lemma}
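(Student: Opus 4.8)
The plan is to check the three ingredients in the definition of $\scwpplus((0,0),(n,0))$: the two endpoint identities and the family of estimates indexed by $i$. The endpoint identities are immediate: since $\sigma\in\avn(\rho_d)$ is the disjoint union of the increasing sequences $\alpha^1(\sigma),\dots,\alpha^d(\sigma)$, the layer $A^l(\sigma)$ contains equally many positions and values, so $\ctx^l(n)=\cty^l(n)$ and $\diff^l(n)=0$ for every $l$; with $s_{\walk_\sigma}(0)=0$ this gives $s_{\walk_\sigma}(0)=s_{\walk_\sigma}(n)=0$ and $\walk_\sigma\in\spacen$. So fix $i\le n/2$ with $\petrov(i)$; I must produce a dimensional constant $C$ with $d(s_{\walk_\sigma}(i),\cone)\le Ci^{.4}$ (the range $i\ge n/2$ with $\petrov^*(i)$ is handled at the end). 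The key device is a quadrant decomposition of $s_{\walk_\sigma}(i)$. Writing $T_i=\{(u,w):u\le i<w\}$ and $B_i=\{(u,w):w\le i<u\}$ for the parts of the plane strictly above-and-left, resp.\ below-and-right, of the cell $(i,i)$, one reads off from \eqref{tunnel mountain} that $(j,\sigma(j))$ contributes $+e_l$ to $s_{\walk_\sigma}(i)$ exactly when $j\in A^l(\sigma)$ and $(j,\sigma(j))\in T_i$, and $-e_l$ exactly when $j\in A^l(\sigma)$ and $(j,\sigma(j))\in B_i$; hence the $l$th coordinate is $x_l:=|\alpha^l(\sigma)\cap T_i|-|\alpha^l(\sigma)\cap B_i|$. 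Since $\alpha^l(\sigma)$ is increasing it cannot meet both $T_i$ and $B_i$; and if $\alpha^l(\sigma)$ meets $B_i$ at $(j,\sigma(j))$ while $\alpha^{l'}(\sigma)$ meets $T_i$ at $(j',\sigma(j'))$ then $j'<j$ and $\sigma(j')>\sigma(j)$, so the longest decreasing subsequence ending at $j$ is strictly longer than the one ending at $j'$, forcing $l'<l$. Consequently there is a threshold $r=r(i)$ with $x_l=|\alpha^l(\sigma)\cap T_i|\ge0$ for $l\le r$ and $x_l=-|\alpha^l(\sigma)\cap B_i|\le0$ for $l>r$; in particular all nonnegative coordinates of $s_{\walk_\sigma}(i)$ precede all nonpositive ones, so $(x_{r+1}-x_r)^+=0$, and it suffices to bound $(x_{l+1}-x_l)^+$ for $l$ in the block $\{1,\dots,r-1\}$ and in the block $\{r+1,\dots,d-1\}$: then, since $y_l:=x_l-l\max_j(x_{j+1}-x_j)^+$ lies in $\cone$, we get $d(s_{\walk_\sigma}(i),\cone)\le\tfrac{d(d+1)}{2}\max_l(x_{l+1}-x_l)^+$.

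\textbf{Handling the two blocks.} For the first block I use the defining ``predecessor'' property of the greedy peeling: every $j\in A^{l+1}(\sigma)$ admits $j'\in A^l(\sigma)$ with $j'<j$ and $\sigma(j')>\sigma(j)$ (take $j'$ to be the position realizing the largest value of $\sigma$ among $A^l(\sigma)$-positions before $j$). Since such a $j'$ with $j\le i$ automatically satisfies $j'<i$ and $\sigma(j')>\sigma(j)>i$, it again lies in $T_i$; and the set of admissible $j'$ for a given $j$ is an interval of the layer-$l$ staircase. So for $l<r$ a Hall/greedy argument produces a near-matching of $\alpha^{l+1}(\sigma)\cap T_i$ into $\alpha^l(\sigma)\cap T_i$ and bounds $x_{l+1}-x_l$ by the matching defect. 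A symmetric consideration — using the reverse peeling, together with the constraint that for every position $j$ the lengths of the longest decreasing subsequences ending at $j$ and starting at $j$ sum to at most $d+1$ (this is exactly where $\sigma\in\avn(\rho_d)$ enters) — produces, for $l>r$, an analogous near-matching of $\alpha^{l}(\sigma)\cap B_i$ into $\alpha^{l+1}(\sigma)\cap B_i$ and likewise bounds $x_{l+1}-x_l=|\alpha^l(\sigma)\cap B_i|-|\alpha^{l+1}(\sigma)\cap B_i|$ by a defect.

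\textbf{The crux, and the second half.} Everything now reduces to showing that, under $\petrov(i)$, each of these defects is $O(i^{.4})$. This is the main obstacle, because the Petrov conditions by themselves only control each $|\diff^l(i)|$ to within $i^{.6}$ (Lemma \ref{considiff}), which is too weak; one must genuinely use the decreasing-subsequence structure to see that the failure of monotonicity is supported in a small window around the cell $(i,i)$ and only then measure that window. Concretely, Lemma \ref{messy messi} (with Lemmas \ref{considiff} and \ref{fiasco}) shows the last layer-$l$ position $\le i$ and the last layer-$l$ value $\le i$ lie within $i^{.19}$ of $i$, while Lemma \ref{extendo} pins down $\posx^l$ and $\posy^l$ near rank $i/d$; combining these with the short-interval clauses of $\petrov(i)$ (whose slack is only $i^{.25}$) bounds the number of layer points responsible for each defect, giving $\max_l(x_{l+1}-x_l)^+\le Ci^{.4}$ and hence $d(s_{\walk_\sigma}(i),\cone)\le Ci^{.4}$. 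Finally, for $i\ge n/2$ with $\petrov^*(i)$, apply the entire argument to the reversed word $\walk_\sigma^*$: by Definition \ref{eight ten}, $\petrov^*(i)$ for $\walk_\sigma$ is $\petrov(i)$ for $\walk_\sigma^*$, and $s_{\walk_\sigma}(i)=-s_{\walk_\sigma^*}(n-i)$, so the quadrant decomposition and the predecessor/successor properties applied to the rotated picture yield $d(s_{\walk_\sigma}(i),\cone)=d\bigl(s_{\walk_\sigma^*}(n-i),-\cone\bigr)\le C(n-i)^{.4}$, completing the verification.
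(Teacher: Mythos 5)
Your setup is sound as far as it goes: the endpoint identities, the observation that the $l$th coordinate of $s_{\walk_\sigma}(i)$ equals $\diff^l(i)=|\alpha^l(\sigma)\cap T_i|-|\alpha^l(\sigma)\cap B_i|$, the threshold structure coming from the fact that the greedy layer of a point is the length of the longest decreasing subsequence ending there, the reduction of $d(s_{\walk_\sigma}(i),\cone)$ to $\max_l(\diff^{l+1}(i)-\diff^l(i))^+$, and the reversal identity $s_{\walk_\sigma}(i)=-s_{\walk_\sigma^*}(n-i)$ for the second half. But the step you yourself label ``the crux'' --- that under $\petrov(i)$ each defect is $O(i^{.4})$ --- is asserted, not proved. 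Saying that Lemmas \ref{considiff}, \ref{fiasco}, \ref{messy messi} and \ref{extendo} ``bound the number of layer points responsible for each defect'' names the right tools but supplies no mechanism; nothing in your sketch explains why the failure of monotonicity of the quadrant counts is ``supported in a small window around $(i,i)$,'' and indeed the defect is not a priori localized. Likewise the ``Hall/greedy near-matching'' claim is not established: the predecessor map sending a layer-$(l+1)$ point to a layer-$l$ point above-left of it is not injective, and Hall's condition for the quadrant sets fails without further input (one can exhibit configurations, not avoiding $\rho_d$, with more layer-$(l+1)$ than layer-$l$ points in $T_i$), so quantifying the matching defect is exactly where the work lies --- and your scaffolding does not reduce it to anything easier than the original quantity $(\diff^{l+1}(i)-\diff^l(i))^+$.

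For comparison, the paper closes this gap by a squeeze argument run by contradiction. Assuming $\diff^l(i)-\diff^{l+1}(i)<-i^{.4}$ and $\petrov(i)$, it looks only at the \emph{last} layer-$(l+1)$ position $j\le i$ and the last layer-$l$ position $k<j$, for which the predecessor property gives $\sigma(k)>\sigma(j)$. Lemma \ref{extendo} (Petrov transferred to $\posy^l$) converts the count gap into a gap of $\sigma$-values: $[\posy^l(\ctx^l(i))-\posy^l(\cty^l(i))]-[\posy^{l+1}(\ctx^{l+1}(i))-\posy^{l+1}(\cty^{l+1}(i))]<-i^{.4}+i^{.36}$, while Lemma \ref{messy messi} places $\posy^l(\cty^l(i))$ and $\posy^{l+1}(\cty^{l+1}(i))$ within $i^{.19}$ of $i$ and the inequality $\posy^l(\ctx^l(i))\ge\sigma(k)>\sigma(j)=\posy^{l+1}(\ctx^{l+1}(i))$ forces the same expression to exceed $-2i^{.19}$, a contradiction (with a case analysis on whether these values fall before or after $i$, and a mirrored argument, using the rotated picture as you indicate, for $i>n/2$). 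Your proposal would need to rediscover precisely this squeeze (or an equivalent quantitative argument) to bound the defects; as written, the decisive step is missing.
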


\begin{proof}
 
%
%
%
%
%

We will show that if 
\begin{itemize}
\item $\sigma\in \avn(\rho_d)$, 
\item $\dis(s_{\walk_\sigma}(i),\cone) > i^{.4}$ and 
\item $\petrov(i)$ occurs
\end{itemize}
then we have a contradiction.  For $\walk_\sigma$ to be distance greater than $i^{.4}$ from $\cone$, there must be some $1\leq l < d$ such that,
\begin{equation}\label{farfromcone}	
\ctx^l(i)-\cty^l(i)-\ctx^{l+1}(i) + \cty^{l+1}(i) < -i^{.4}. 
\end{equation}

Let $$j = \posx^{l+1}(\ctx^{l+1}(i)),$$ the closest position of an $l+1$ in $a$ that occurs at or before $i$.  Let $$k = \posx^l(\ctx^l(j)),$$ the closest position of an $l$ that occurs strictly before position $j$ (which is the position of an $l+1$) in $a$.  With these definitions we have 
$$\sigma(k)=\posy^{l}(\ctx^l(k)) > \posy^{l+1}(\ctx^{l+1}(j))=\sigma(j).$$

It is possible that $k$ is the position of the closest $l$ that occurs at or before $i$ in $A$.  However it is also possible that some $l$ occurs between $j$ and $i$.  In either case, $\posx^l(\ctx^l(i))$ is the position of the closest $l$ to $i$ and $\posy^l(\ctx^l(i)) \geq \sigma(k) > \sigma(j)$.  

We have three cases to consider.  Either 
\begin{enumerate}
 \item $\sigma(j) < \sigma(k) \leq \posy^l(\ctx^l(i)) \leq i$ \label{case1},
 \item $\sigma(j)  < i < \posy^l(\ctx^l(i)$, or
 \item $i \leq \sigma(j) < \sigma(k) \leq \posy^{l}(\ctx^l(i))$.
\end{enumerate}
 
 The arguments for the first and last case are exactly the same with a slight change in the definition of $j$ and $k$.  The argument for the middle case requires only a slightly modified approach.  We will proceed by considering the first case (see Figure \ref{ijk}), leaving the other two cases to the reader.    
\begin{figure}
	\includegraphics[scale=.4]{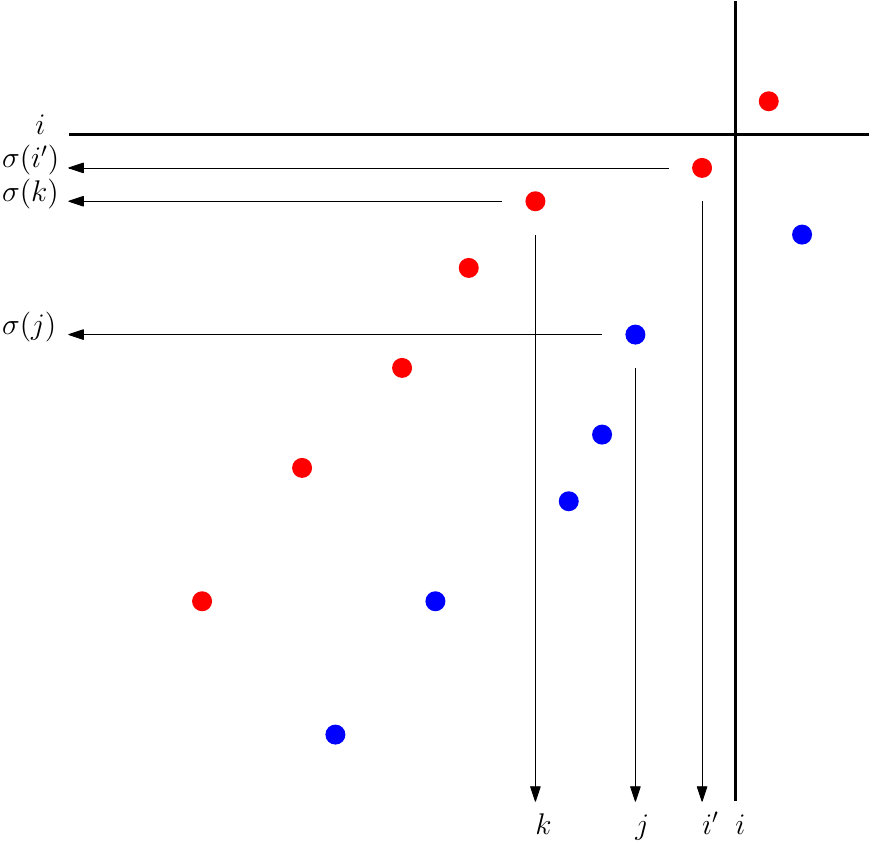}
	\caption{Points from increasing sequences.  Red points correspond to points that project to $l$ and blue points correspond to points that project to $l+1$.  The the last point labeled $l$ that occurs before the vertical line $x=i$ also lies below the horizontal line $y=i$.}
	\label{ijk}
\end{figure} 
 
By Lemma \ref{messy messi} we have 
\begin{equation}\label{close call}
	i-k < 2i^{.19}.
\end{equation}


Lemma \ref{considiff} implies $\ctx^l(i)$ and $\cty^l(i)$ differ by at most ${d}^{-2}i^{.6}$.  We are assuming that $\posy^l(\ctx^l(i)) \leq i$.  Thus by Lemma \ref{extendo} we may use $\petrov(i)$ to claim 
$$
|\posy^l(\ctx^l(i)) - \posy^l(\cty^l(i)) - d(\ctx^l(i)-\cty^l(i)) | $$ 
$$< |\ctx^l(i) - \cty^l(i)|^{.6}\leq \frac{1}{2}i^{.36},$$
and therefore 
\begin{equation} \label{mls cup}
\posy^l(\ctx^l(i) - \posy^l(\cty^l(i)) = d(\ctx^l(i) - \cty^l(i)) + \epsilon^l_i
\end{equation}
where 
$|\epsilon_i^l|  \leq \frac{1}{2}i^{.36}.$
Similarly, 
\begin{equation}\label{quartz2}
\posy^{l+1}(\ctx^{l+1}(i)) - \posy^{l+1}(\cty^{l+1}(i)) = d(\ctx^{l+1}(i)-\cty^{l+1}(i)) + \epsilon_i^{l+1},	
\end{equation}
with $|\epsilon_i^{l+1}| \leq \frac{1}{2}i^{.36}$.

Combining \eqref{mls cup} and \eqref{quartz2} with \eqref{farfromcone} we have
\begin{align} 
&\left[\posy^l(\ctx^l(i)) - \posy^l(\cty^l(i))\right] - \left [\posy^{l+1}(\ctx^{l+1}(i)) - \posy^{l+1}(\cty^{l+1}(i))\right] \nonumber\\
&\qquad\qquad= d\left(\ctx^l(i)-\cty^l(i)-\ctx^{l+1}(i) + \cty^{l+1}(i)\right) + \epsilon_i^l + \epsilon_i^{l+1} \nonumber \\
&\qquad\qquad< -di^{.4} + i^{.36} \nonumber \\
&\qquad\qquad< -i^{.4}. \label{horseface}	
\end{align}
On the other hand by Lemma \ref{messy messi}, 
\begin{equation}\label{pete} \text{
$0<i - \posy^l(\cty^l(i)) < i^{.19}$ \ \ and \ \ $0< i - \posy^{l+1}(\cty^{l+1}(i)) < i^{.19}$. } 
\end{equation}
 By construction $\posy^l(\ctx^l(i)) \geq \posy^l(\ctx^l(k)) > \posy^{l+1}(\ctx^{l+1}(i)),$ so along with \eqref{pete} we have
\begin{align} 
	&\left[\posy^l(\ctx^l(i)) - \posy^l(\cty^l(i))\right] - \left[\posy^{l+1}(\ctx^{l+1}(i)) - \posy^{l+1}(\cty^{l+1}(i))\right] \nonumber \\
	& \qquad\qquad\geq \left[\posy^l(\ctx^l(k)) - \posy^{l+1}(\ctx^{l+1}(i))\right] \nonumber\\
	&\qquad\qquad\qquad\qquad + \left[\posy^{l+1}(\cty^{l+1}(i)) - \posy^l(\cty^l(i)) \right] \nonumber\\
	&\qquad \qquad> 0 + \left[ - 2i^{.19} \right].\label{donkey}
\end{align}

Both inequalities \eqref{horseface} and \eqref{donkey} cannot simultaneously be true.  Therefore we can conclude that for $\sigma \in \avn(\rho_d)$ both $\dis(s_{\walk_\sigma}(i),\cone) > i^{.4}$ and $\petrov(i)$ cannot both be true.  A symmetric argument will work using $\petrov^*(i)$ for $i > \lfloor n/2 \rfloor.$  	
\end{proof}

%

For any $\omega \in \spacen$ we have the matrix given by 
$$\text{Mat}(\omega)(i,j)=\begin{cases} l &\mbox{if } (i,j) =(\posx^l(m),\posy^l(m)) \text{ for some $m$ and $l$}\\
0 & \mbox{else}. \end{cases}$$
This map $\text{Mat}$ is clearly one-to-one. For notational convenience we often refer to $\spacen$ when we consider the set of matrices which are the image of $\spacen$ under the map $\text{Mat}$.


We say the $ij$th entry of $\mat(\walk)$ is \emph{proper} if $\mat(\walk)_{ij} = l$ and 
\begin{itemize}
\item for every $0< l' < l$, there exists $i'<i$ and $j'>j$ such that $\mat(\walk)_{i'j'} = l'$, and 
\item for every $l' \geq l > 0$ and $i'<i$ and $j'>j$, $\mat(\walk)_{i'j'} \neq l'$.
\end{itemize}

We say $\mat(\walk)$ is \emph{proper} if every nonzero $ij$th entry is proper.  See Figure \ref{penguins} for an example of a proper labeling.

\begin{figure}
\includegraphics[scale=.4]{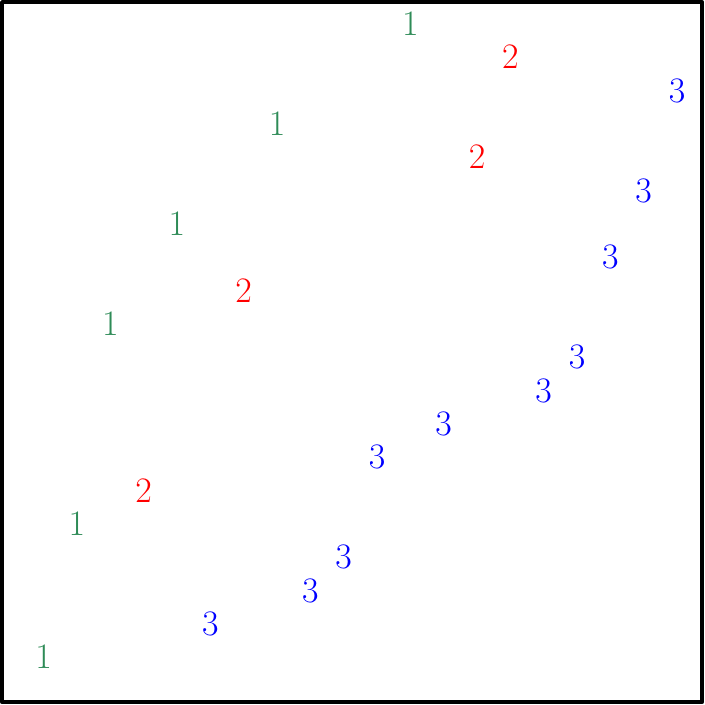}
\hspace{1cm}
\includegraphics[scale=.4]{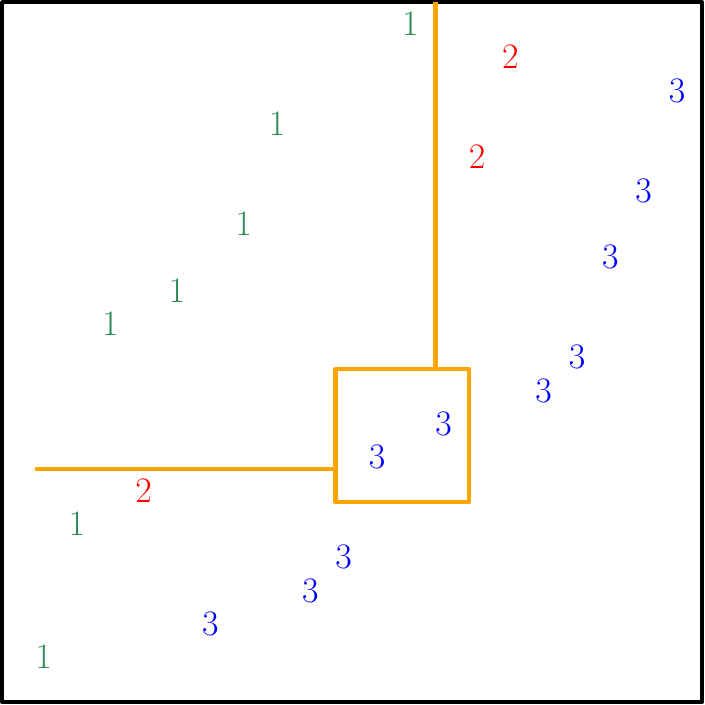}	
\caption{On the left, a properly labeled permutation in $A\!v_{20}(\rho_3)$.  On the right, an improperly labeled permutation in in $A\!v_{20}(\rho_3)$.  If both $3$s in the smaller box were relabeled with $2$s, then the permutation on the right would be properly labeled.}
\label{penguins}
\end{figure}

For $\sigma \in \avn(\rho_d)$, we let $\walk_\sigma \in \Omega_n$ be the pair of sequences given by projection of the non-zero entries of the matrix $\mat(\sigma)$ onto the $x$ and $y$ axis.  Conversely for $\walk \in \Omega_n$ let $\sigma_\walk$ denote the permutation in $\mathcal{S}_n$ where $(i,\sigma_\walk(i))$ is constructed by finding unique values $t$ and $l$ such that $\posx^l(t) = i$ and $\posy^l(t) = \sigma_\walk(i).$  We say $\walk\in \Omega_n$ is \emph{minimal} if and only if there exists a $\sigma\in \avn(\rho_d)$ such that $\walk = \walk_{\sigma}.$

\begin{lemma} \label{iowa}
$\walk\in \Omega_n$ is minimal if and only if $\mat(\walk)$ is proper.  
\end{lemma}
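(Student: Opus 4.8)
The plan is to read minimality off the permutation $\sigma_\walk$ together with the combinatorial meaning of the entries of $\mat(\walk)$. First I would record two easy preliminary facts. (i) For $\walk\in\Omega_n$ the constraint $\diff^l(n)=0$ forces each column and each row of $\mat(\walk)$ to contain exactly one nonzero entry, so $\sigma_\walk$ is a genuine permutation; and since the nonzero entries labelled $l$ are the points of the increasing sequence $\alpha^l$, the graph of $\sigma_\walk$ is a union of $d$ increasing sequences, whence $\sigma_\walk\in\avn(\rho_d)$. (ii) For $\sigma\in\avn(\rho_d)$ one has $\sigma_{\walk_\sigma}=\sigma$: each class $A^l(\sigma)$ is an increasing subsequence, so its $m$-th point in position order has value equal to the $m$-th smallest value attained on $A^l(\sigma)$, i.e.\ $\posy^l(m)=\sigma(\posx^l(m))$, and unwinding the definition of $\sigma_{\walk_\sigma}$ returns $\sigma$. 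Combining (i), (ii) and the injectivity of $\mat$, the lemma reduces to the claim that $\mat(\walk)$ is proper if and only if $\mat(\walk)$ is the matrix obtained from $\sigma_\walk$ by labelling each point $(i,\sigma_\walk(i))$ with the index of the $A$-class containing $i$.

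The second ingredient is the identification of the $A$-classes with longest decreasing subsequences: for any $\tau\in\avn(\rho_d)$, one has $i\in A^l(\tau)$ if and only if the longest strictly decreasing subsequence of $\tau$ ending at position $i$ has length exactly $l$. I would prove this by induction on $l$, the case $l=1$ being the characterization of left-to-right maxima; for the inductive step, once $A^1\cup\dots\cup A^{l-1}$ is known to be $\{i:L_\tau(i)<l\}$, the definition of $A^l$ says $i\in A^l$ iff $L_\tau(i)\ge l$ and no strictly up-and-to-the-left point $i'$ has $L_\tau(i')\ge l$, and this pair of conditions is precisely $L_\tau(i)=l$. Writing $L(\cdot)$ for this length on the graph of $\sigma_\walk$, it remains to show that $\mat(\walk)$ is proper if and only if every nonzero entry $p$ of $\mat(\walk)$ satisfies $\mat(\walk)_p=L(p)$.

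For the forward implication, suppose $\mat(\walk)_p=L(p)$ for every $p$, and let $p$ have label $l$. Pick a decreasing chain $p_1,\dots,p_l=p$; one checks $L(p_j)=j$ for each $j$ (the inequality $\ge$ from the subchain $p_1,\dots,p_j$, the inequality $\le$ from $L(p_{j+1})\ge L(p_j)+1$ telescoped down from $L(p_l)=l$), so $p_{l'}$ is a point strictly above and to the left of $p$ with label $l'$ for every $l'<l$, and any point $q$ strictly above and to the left of $p$ has label $L(q)\le L(p)-1<l$; these are exactly the two defining conditions of properness at $p$. For the reverse implication, assume $\mat(\walk)$ is proper and induct on the label value $l$. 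A point $p$ of label $1$ can have nothing strictly above and to the left of it, by the second condition of properness, so $L(p)=1$. For $l>1$, the first condition of properness gives a point $q$ of label $l-1$ strictly above and to the left of $p$, and $L(q)=l-1$ by the inductive hypothesis, so $L(p)\ge l$; and if $L(p)\ge l+1$ then some $q$ strictly above and to the left of $p$ has $L(q)\ge l$, which forces $\mat(\walk)_q\ge l$ (if $\mat(\walk)_q<l$ the inductive hypothesis would give $L(q)=\mat(\walk)_q<l$), contradicting the second condition of properness at $p$; hence $L(p)=l$. This proves $\mat(\walk)_p=L(p)$ for all $p$, i.e.\ $\mat(\walk)$ equals the $A$-labelling of $\sigma_\walk$, so $\walk=\walk_{\sigma_\walk}$ is minimal.

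I expect the only real obstacle to be bookkeeping: one must keep straight the three objects $\walk$, $\mat(\walk)$, and $\sigma_\walk$, and in particular observe that each nonzero entry of $\mat(\walk)$ carries a single label which is simultaneously its $x$-projection label and its $y$-projection label, so that the word-level identity $\walk_{\sigma_\walk}=\walk$ is genuinely equivalent to the entrywise identity $\mat(\walk)_p=L(p)$. Once that translation is in place, both implications are short inductions.
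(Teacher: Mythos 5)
Your proposal is correct. The paper gives no real argument here—its proof is a single sentence asserting that the lemma follows from the definition of minimality and the construction of the sets $A^i$ in Section 2—so your write-up is a complete version of what the paper treats as immediate: you make explicit the standard identification of the greedy left-to-right-maxima decomposition with the statistic ``length of the longest decreasing subsequence ending at this point,'' verify that $\sigma_\walk$ is a well-defined element of $\avn(\rho_d)$ with $\sigma_{\walk_\sigma}=\sigma$, and then check that properness of $\mat(\walk)$ is exactly the entrywise characterization of that statistic. The only caution is notational: the paper's displayed definition of $A^l(\sigma)$ omits the (clearly intended) requirement $i\notin\bigcup_{j<l}A^j(\sigma)$, and your reading of it as the left-to-right maxima of the remaining points is the one needed for the partition and for your induction to work.
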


\begin{proof}
This follows from the definition of minimal and the construction of sets $A^i$ in Section \ref{function}.
\end{proof}

For $\walk\in \Omega_n$ and times $t,t^* < \lfloor n/2\rfloor$ we define the decomposition of $\walk$ by 
$$\walk = \walk^1\oplus\walk^2\oplus\walk^3$$ where $\walk^1$ denotes the beginning of $\walk$ until time $t$, $\walk^2$ the portion of $\walk$ from $t$ to $n-t^*$, and $\walk^3$ the portion of $\walk$ from $n-t^*$ to $n$. 
 

For any $j,k<n/2$ and $x,y\in \Z^d$ define 
 $\scwminus((j,x),(n-k,y))$ to be all paths $\spath$ such that
\begin{enumerate}
\item $\sloc(i) \in \cwminus((j,x),(\lfloor n/2 \rfloor,\cdot)$ and
\item $\sloc(n-i) \in \cwminus((k,y),(\lfloor n/2 \rfloor,\cdot)$
\end{enumerate}

\begin{lemma}\label{middleman} 


Fix $n$, $L>0$ and let $x,y \in \cone_{2^L}$.  Let $t$ and $t^*$ be bounded by both $4.1^L$ and $n/2$.  Let $\walk \in \scwminus((t,x),(n-t^*,y)).$ 
Let $\sigma \in \avn(\rho_d)$ be a permutation such that $\walk_\sigma$ satisfies $\petrov(t)$ and $\petrov^*(t^*)$, $s_{\walk_\sigma}(t) = x$ and $s_{\walk_{\sigma}}(n-t^*) = y$.  Finally, let $\omega' \in \Omega_n$ be given by  
$$\omega'=\walk^1_\sigma\oplus\walk^2\oplus\walk^3_\sigma.$$  
That is, the pair of sequences whose initial component until position $t$ and final component from position $n-t^*$ until $n$ are both obtained from $\walk_\sigma$ and whose middle component is obtained from $\walk$.  Then there exists $\sigma \in \snd$ such that $\omega'=\walk_\sigma$.   
\end{lemma}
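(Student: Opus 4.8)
The plan is to verify that the spliced pair of sequences $\omega' = \walk_\sigma^1 \oplus \walk^2 \oplus \walk_\sigma^3$ is \emph{minimal} in the sense of the excerpt, i.e.\ that $\mat(\omega')$ is proper; by Lemma \ref{iowa} this is equivalent to the existence of some $\sigma' \in \snd$ with $\omega' = \walk_{\sigma'}$, which is exactly the conclusion. First I would check that $\omega' \in \Omega_n$, i.e.\ that $\diff^l(n) = 0$ for all $l$: this is immediate because the $a$-counts and $b$-counts are additive over the three pieces, $\walk_\sigma$ and $\walk$ agree at the splice times $t$ and $n-t^*$ on the value of $s_\omega$ (by the hypotheses $s_{\walk_\sigma}(t) = x = s_\walk(t)$ and $s_{\walk_\sigma}(n-t^*) = y = s_\walk(n-t^*)$), and $s_{\walk_\sigma}(n) = 0$ since $\sigma \in \snd$ forces $\walk_\sigma \in \Omega_n$.

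The core of the argument is the properness of $\mat(\omega')$. I would argue entry by entry, splitting according to which of the three time-blocks a nonzero entry lives in. For entries in the first block (positions $x \le t$), properness is inherited directly from $\walk_\sigma$, which is proper by Lemma \ref{iowa}, \emph{provided} one checks that no entry from the middle block $\walk^2$ can interfere with the two defining conditions of properness for a first-block entry — the first condition only references entries with smaller $x$-coordinate and larger $y$-coordinate, and one must rule out that a middle-block point with small $x$-index but very large $y$-index creates a bad configuration. This is where the cone and Petrov hypotheses enter: the hypothesis $\walk \in \scwminus((t,x),(n-t^*,y))$ forces $s_{\walk}(m) \in \cone_{m^{.4}}$ together with the Petrov conditions on the middle block, and the matching boundary values $x,y \in \cone_{2^L}$ with $t,t^* \le 4.1^L$ control how far the middle-block path can stray, so that the geometric separation between the labels is maintained across the splice. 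Symmetrically, entries in the third block inherit properness from $\walk_\sigma$ via the starred Petrov conditions. For entries in the \emph{middle} block, the key point is that the properness conditions for a point at position $(\posx^l(m), \posy^l(m))$ with $m$ in the middle range only ``see'' points with smaller $x$ and larger $y$, and because $s_{\walk}$ stays in $\cone_{m^{.4}}$ on the whole middle block and matches $\walk_\sigma$ (which is proper) at the endpoints, the labels $l' < l$ required by the first condition all already appear in the first block or early middle block, and no label $l' \ge l$ appears in the forbidden region. Here I would lean on Lemmas \ref{considiff}, \ref{fiasco}, and \ref{messy messi} to translate the bound $d(s_\walk(m),\cone) \le C m^{.4}$ into statements about $\posx, \posy$ exactly as in the proof of Lemma \ref{permsRgreat}.

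The main obstacle I anticipate is the bookkeeping at the two splice interfaces — positions near $t$ and near $n-t^*$ — where a label $l$ on the $\walk_\sigma$ side must be shown to be consistent with the labels appearing on the $\walk^2$ side, since the two pieces were generated by genuinely different sequences and only agree on the aggregate $s_\omega$ values, not pointwise. The resolution should be that properness is a ``local-to-global'' condition that depends only on the relative order of the label-regions (down-left vs.\ up-right), and the cone/Petrov control guarantees that the $l$-labelled points stay weakly above the $(l+1)$-labelled points throughout, with the error $O(m^{.4})$ swamped by the geometric gaps of size $2^L \gg 4.1^L{}^{.4}$ coming from $x,y \in \cone_{2^L}$; making the constants line up (the $2^L$ versus $4.1^L$ versus powers of $t$) is the delicate quantitative step. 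Once properness is established, Lemma \ref{iowa} closes the proof.
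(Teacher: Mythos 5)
Your plan follows the paper's own route: reduce to properness of $\mat(\omega')$ and invoke Lemma \ref{iowa}, inherit properness on the initial and final blocks from $\walk_\sigma$, and rule out an improper entry in the middle range by playing the Petrov estimates (Lemmas \ref{considiff}, \ref{fiasco}, \ref{messy messi}) against the cone condition, exactly as in the argument for Lemma \ref{permsRgreat}. One small correction: the hypothesis $\walk\in\scwminus$ gives $s_{\walk}(m)\in\cone_{m^{.4}}$, i.e.\ the path stays at distance at least $m^{.4}$ \emph{inside} the Weyl chamber (as you state at first), so the quantitative input is a lower bound on $d(s_{\walk}(m),\partial\cone)$, not the upper bound $d(s_{\walk}(m),\cone)\le Cm^{.4}$ that you write later, which is the $\scwpplus$ condition.
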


\begin{proof}
We will first show that $\mat(\omega')$ is proper. By Lemma \ref{iowa} this is sufficient to prove the lemma. We work by contradiction and
suppose $M=\mat(\omega')$ is not proper.  Then there exists $1 < l \leq d$ and $(i,j)$ such that $M_{ij} = l$ and all points labeled $l-1$ that occur to the left of $i$ occur below $j$.  

If $(i,j) \in [0,t]^2$ then $M_{ij} = l$ if and only if $\mat(\walk_\sigma)_{ij} = l.$  Thus properness of the entries of $\mat(\walk_\sigma)$ ensures properness of $M_{ij}$ in this range.

Now consider points $(i,j)$ not in $[0,t]^2.$  First we will assume that $t\leq  j\leq n/2$ and $i\leq j$.  If $M_{ij} = l$ then 
$$\ctx^l(i) = \cty^l(j).$$  
If $\sigma_{\omega'} \in \scwminus((t,x),(n-t^*,y))$ then 
\begin{equation}\label{frank ocean}
j^{.4} < \dis(\sigma_{\omega'}(j), \partial \cone) \leq \min_{l'} \{\diff^{l'}(j) - \diff^{l'+1}(j)\}.  
\end{equation}

Suppose that no point above and to the left of $(i,j)$ is labeled $l-1$.  This implies that \begin{equation} \label{new hampshire} \ctx^{l-1}(i) = \cty^{l-1}(j)\end{equation} 
since $(i,j)$ is labeled $l$.  Similarly $\ctx^l(i) = \cty^l(j)$, as the point $(i,j)$ is labeled $l$.  By Lemma \ref{fiasco}, if $\petrov(j)$ occurs, then $|j-i| < j^{.6}$ and for all $1\leq l'\leq d$, 
\begin{equation}\label{almost_there}
d^{-1}(j-i) - (2d)^{-2}j^{.36} \leq \ctx^{l'}(j) - \ctx^{l'}(i)  \leq d^{-1}(j-i)+ (2d)^{-2}j^{.36}.
\end{equation}
As $\dis(s_{\omega'}(j) , \cone) > j^{.4}$ 
\begin{align*}
j^{.4} &< \diff^{l-1}(j) - \diff^l(j) \\
& = \ctx^{l-1}(j) - \cty^{l-1}(j) - \left ( 	\ctx^l(j) - \cty^l(j) \right) \\
& = \ctx^{l-1}(j) - \ctx^{l-1}(i) - \left ( \ctx^l(j) - \ctx^l(i)\right) \\
& \leq d^{-1}(j-i) + (2d)^{-2}j^{.36} - (d^{-1}(j-i) - (2d)^{-2}j^{.36}) \\
& \leq j^{.36}.
\end{align*}
This is a contradiction to \eqref{new hampshire} as it implies that if $\petrov(j)$ occurs and $\dis(s_{\omega'}(j),\cone) > j^{.4}$, then $\ctx^{l-1}(i) \neq \cty^{l-1}(j).$  

%

A similar argument works if we assume $t\leq j \leq i \leq n/2$.  We may also apply the same argument for $(i,j)$ such that $t^* \leq n-j \leq n/2$ or $t^* \leq n-i \leq n/2$.  
We also must consider the cases when $i\leq n/2 \leq j$ or $j\leq n/2\leq i$.  
Those two cases are very similar to the argument above and we leave the details to the reader.

The only points not covered are those in $[0,t]\times [n-t^*,n]$ or $[n-t^*,n]\times [0,t]$.  The conditions $\petrov(t)$ and $\petrov^*(t^*)$ guarantee that all points in these regions are labeled $0$ and thus cannot make $M({\omega'})$ non-proper.  Then we may conclude that $\mat({\omega'})$ is proper and therefore ${\omega'}$ is minimal. Thus Lemma \ref{iowa} implies there exists $\sigma \in \snd$ such that ${\omega'}=\omega_\sigma$. 
\end{proof}

For any function $s:[n] \to \Z^d$ let $\hat s$ be the scaled and linearly interpolated function on $[0,1]$ given by $\hat s(t) = \frac{s(\lfloor nt \rfloor)}{\sqrt{2n/d}}$.
 The following lemma gives conditions such that $\hat{s}_{\walk_\sigma}(t)$ and $P_\sigma(t)$ are close in the sup norm. (See Figure \ref{compare} for a comparison).
Recall that $\dis$ is the $L^1$ distance on $\R^{d}$.
\begin{figure}
\includegraphics[scale=.45]{fluc_walk.pdf} \hspace{.25cm}\includegraphics[scale=.45]{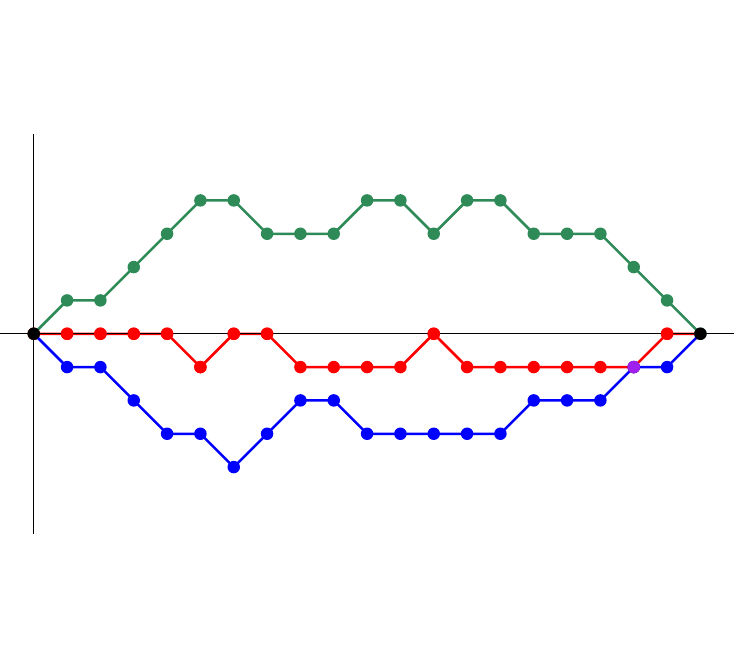}
\caption{A comparison of $P_\sigma(t)$ (left) and ${s}_{\walk_\sigma}(t)$ (right) for the permutation $\sigma \in A\!v_{20}(\rho_3)$ from the permutation from Figure \ref{proj3}. }
\label{compare2}	
\end{figure}

\begin{lemma} \label{tahini}
Fix $T,T'$ and let $n$ be sufficiently large.  Let $\sigma\in \avn(\rho_d)$ with ${\walk_\sigma} \in \scwminus((T,\cdot),(n-T',\cdot))$.
  Then
$$\sup_{t \in [0,1]} \dis(P_\sigma(t), \hat s_{\omega_\sigma}(t)) \leq n^{-.1}. $$
\end{lemma}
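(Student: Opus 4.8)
The plan is to compare the two piecewise-linear functions coordinate-by-coordinate, i.e. to bound $\sup_t |f(\alpha^l(\sigma))(t) - \hat s^{\,l}_{\omega_\sigma}(t)|$ for each $l \in [d]$, and then sum (using that $D$ is the $L^1$ norm). Recall that the $l$-th coordinate of $s_{\omega_\sigma}(m)$ is $\diff^l(m) = \ctx^l(m)-\cty^l(m)$, so $\hat s^{\,l}_{\omega_\sigma}(t)$ at $t = m/n$ is $\diff^l(m)/\sqrt{2n/d}$. On the other hand, $f(\alpha^l(\sigma))$ interpolates the points $\big(\posx^l(i)/(n+1),\,(\posy^l(i)-\posx^l(i))/\sqrt{2dn}\big)$. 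So the comparison is really between the value $\diff^l(m)$ tracked at "time = position along the $x$-axis" and the value $\posy^l(i)-\posx^l(i)$ tracked at "time = $i$-th occurrence of label $l$". Both are, up to scaling, the "height above the diagonal" of the same set of points $\{(\posx^l(i),\posy^l(i))\}_i$; the discrepancy comes only from (a) reparametrizing between counting occurrences and counting positions, and (b) the denominators $\sqrt{2n/d}$ versus $\sqrt{2dn}$ — note $\sqrt{2n/d}\cdot d = \sqrt{2dn}\cdot\sqrt{d}$... wait, actually $\sqrt{2dn} = d \cdot \sqrt{2n/d}$, so there is a clean factor-of-$d$ relationship that must be reconciled using that label $l$ occurs roughly $m/d$ times by time $m$.

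Concretely, the steps I would carry out are: (1) Fix $l$ and $m \le n$, and let $i = \ctx^l(m)$, so $m$ is close to $\posx^l(i)$ — in fact by Lemma \ref{messy messi}, $\posx^l(\ctx^l(m)) \in (m - m^{.19}, m]$, so $\posx^l(i)$ and $m$ differ by at most $m^{.19}$. (2) Use the Petrov conditions (Lemma \ref{extendo}) to control $\posy^l(i) = \posy^l(\cty^l(\cdot))$ near $m$: we have $\cty^l(m) = i + O(m^{.6})$ by \eqref{doobie}, and Lemma \ref{extendo} then gives $\posy^l(i)$ close to $m + d\cdot(\text{small})$. More precisely I expect to show $\posy^l(i) - m = d\cdot(\ctx^l(m) - \cty^l(m)) + O(m^{.6}) = d\cdot\diff^l(m) + O(m^{.6})$, which after dividing by $\sqrt{2dn} = d\sqrt{2n/d}$ gives $(\posy^l(i)-\posx^l(i))/\sqrt{2dn} \approx (\posy^l(i)-m)/\sqrt{2dn} \approx \diff^l(m)/\sqrt{2n/d} = \hat s^{\,l}_{\omega_\sigma}(m/n)$, with error $O(m^{.6}/\sqrt{n}) = O(n^{.1})$... which is not small enough, so I'll need to be careful — the $O(m^{.6})$ from \eqref{doobie} is too crude and must be replaced by the genuine $\diff^l$ term rather than an absolute bound, i.e. the identity $\posy^l(\cty^l(m)) = m + O(m^{.19})$ from Lemma \ref{messy messi} combined with $\posy^l(i) = \posy^l(\cty^l(m)) + d(i - \cty^l(m)) + O((\cdot)^{.6})$ from Lemma \ref{extendo}, and $i - \cty^l(m) = \ctx^l(m)-\cty^l(m) = \diff^l(m)$ exactly. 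That makes the error $O(m^{.19}) + O((d^{-2}m^{.6})^{.6}) = O(m^{.36})$, and $m^{.36}/\sqrt n \ll n^{-.1}$, fine. (3) Handle the reparametrization/interpolation error: the function $f(\alpha^l(\sigma))$ at argument $t$ uses the largest $i$ with $\posx^l(i)/(n+1) \le t$, whereas $\hat s^{\,l}$ uses $\lfloor nt\rfloor$; since between consecutive occurrences of $l$ on the $x$-axis the $x$-gap is $O(\log n)$ w.h.p.—actually this needs $\scwminus$, but more simply the Petrov conditions bound gaps of length $> m^{.1}$ and short gaps contribute negligibly to the interpolated height—one shows the piecewise-linear interpolant changes by at most $O(n^{-.1})$ over such a reparametrization, using \eqref{cardiff} (or its $\posx,\posy$ analogue) to bound the total variation of $s_{\omega_\sigma}$ over short time windows. (4) Sum over $l \in [d]$ and over the $d$ coordinates; since $d$ is fixed, the bound stays $O(n^{-.1})$, and for $n$ large enough this is $\le n^{-.1}$ after absorbing constants — here I'd actually want to carry a sharper exponent (say $n^{-.15}$) through steps (2)–(3) so that the final constant factors can be absorbed into getting exactly $n^{-.1}$.

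The main obstacle I anticipate is step (2) done with the right precision: naively plugging the bound $|\ctx^l(m)-\cty^l(m)| < d^{-2}m^{.6}$ into the conversion between the $\diff^l$ parametrization and the $\posx^l,\posy^l$ parametrization loses exactly the information one is trying to track (the height $\diff^l$ itself is of order up to $m^{.6}$ near the cone boundary, comparable to the scale one wants to preserve), so one must instead treat $\diff^l(m) = \ctx^l(m) - \cty^l(m)$ as the exact quantity $i - \cty^l(m)$ and only apply Petrov/Lemma \ref{extendo} to the genuinely small corrections (the $O(m^{.19})$ offsets from Lemma \ref{messy messi} and the sublinear fluctuation terms). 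A secondary technical nuisance is making the comparison uniform in $t\in[0,1]$ — one cannot just check it at the breakpoints of one function, since the two functions have different breakpoints; the cleanest route is to bound $|f(\alpha^l(\sigma))(t)-\hat s^{\,l}_{\omega_\sigma}(t)|$ at all breakpoints of both interpolations and then use that both functions are piecewise linear with controlled slopes between consecutive (merged) breakpoints, so the sup is attained (up to $o(1)$) at a breakpoint. The hypothesis ${\walk_\sigma}\in\scwminus((T,*),(n-T',*))$ is used precisely to guarantee Petrov holds at every relevant scale $m$ with $T \le m \le n - T'$, and the fixed cutoffs $T, T'$ handle the two ends where $m$ is too small for the moderate-deviation estimates to bite, contributing only an $O(T/\sqrt n) = o(n^{-.1})$ boundary error for $n$ large.
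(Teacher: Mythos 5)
Your proposal follows essentially the same route as the paper's proof: the core step is the identical decomposition $\posy^l(\ctx^l(m))-\posx^l(\ctx^l(m)) = d\,(\ctx^l(m)-\cty^l(m)) + \epsilon$, with $\epsilon$ controlled by Lemma \ref{extendo} applied to the index pair $(\cty^l(m),\ctx^l(m))$ together with Lemma \ref{messy messi} (the paper bounds $|\epsilon| < n^{.4}$, your $O(m^{.36})$ is the same estimate done slightly more sharply), the scaling mismatch $\sqrt{2dn}=d\sqrt{2n/d}$ absorbed exactly as in the paper, the ends $t\leq T/n$ and $t\geq 1-T'/n$ handled by the fixed cutoffs, and the second half by the reversed Petrov conditions. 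Your extra care about the differing breakpoints of the two interpolants is a point the paper passes over silently, but it does not change the argument.
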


\begin{figure}
\includegraphics[scale=.30]{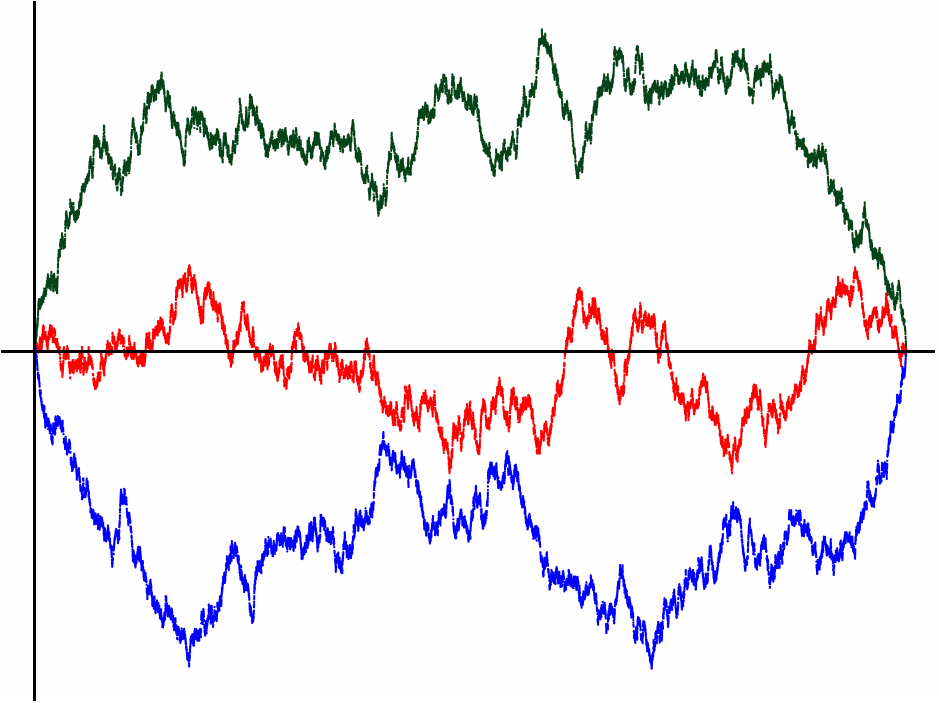} \hspace{1cm}\includegraphics[scale=.30]{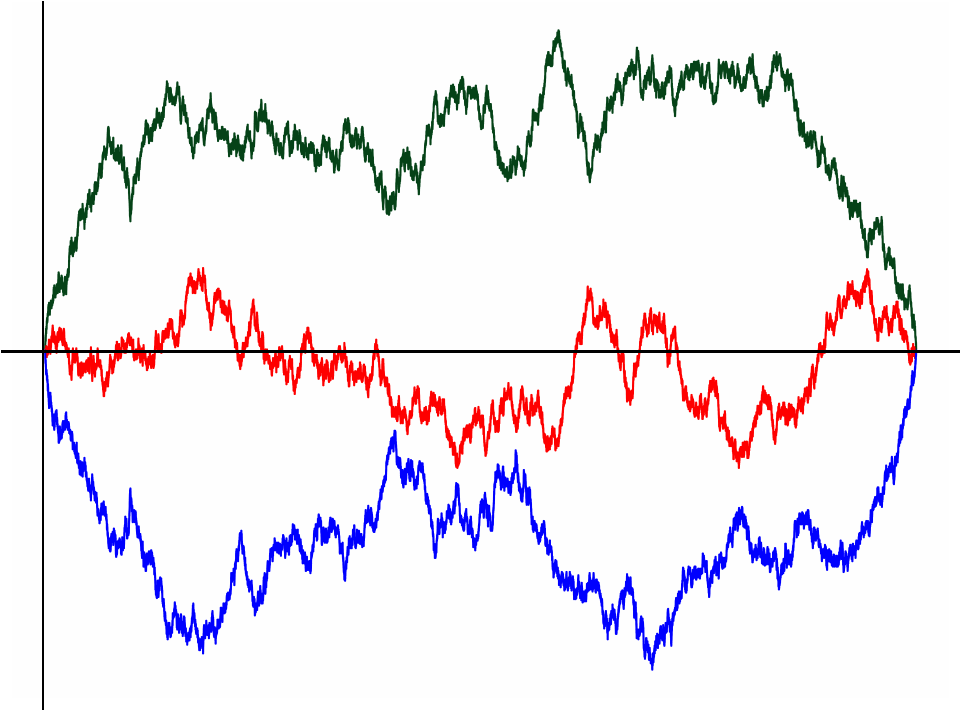}
\caption{A comparison of $P_\sigma(t)$ (left) and $\hat{s}_{\walk_\sigma}(t)$ (right) for a permutation $\sigma \in A\!v_{100000}(\rho_3)$. Lemma \ref{tahini} gives conditions on $\walk \in \Omega_n$ which insures that if $n$ is large then the two sets of  functions are close.}
\label{compare}	
\end{figure}

\begin{proof}
Let $n^{.39}>\max(T,T')$. As ${\walk_\sigma} \in \scwminus((T,\cdot),(n-T',\cdot))$ the Petrov conditions in Definition \ref{seven eleven} hold.
For $t \leq T/n$ (using $i=0$, $j=\min(tn,T^{.1})$ and $m=T$ in the first two conditions of Definition \ref{seven eleven})
the maximum over $l \in [d]$ of the component $f(\alpha^l(\sigma))(t)$ is at most $4T/\sqrt{n}$ and therefore $|P_\sigma(t)| < 4dT/\sqrt{n}$.  As $n^{.39}>\max(T,T')$ we have 
$|P_\sigma(t)| < 4dn^{-.11} < \frac{1}{2}n^{-.1}$.  By Lemma \ref{considiff}, $\petrov(T)$ guarantees that $|s_{\sigma}(nt)| < |(nt)|^{.6}$, thus for $t \leq T/n < n^{-.61}$, $|s_{\sigma}(nt)| < n^{.25}$.  Scaling by $\sqrt{2n/d}$ to obtain $\hat{s}_{\omega_\sigma}(t)$ we see that $\hat{s}_{\omega_\sigma}(t) < \frac{1}{2}n^{-.1}$. Combining these bounds shows that for small $t$, 
$$\dis(P_\sigma(t), \hat s_{\omega_\sigma}(t)) \leq n^{-.1}.$$  

For $t \in [T/n,1/2]$, the $l$th component of $P_\sigma(t)$ is  obtained by linear interpolation between the points
$$\left(\frac{1}{(2dn)^{1/2}}\left(\posy^l(\ctx^l(i)) - \posx^l(\ctx^l(i))\right)\right).$$
The $l$th component of  $\hat{s}_{\omega_\sigma}(t)$ is given by the  values 
$\frac{1}{(2n/d)^{1/2}}(\ctx^l(i) - \cty^l(i)).$

By the Petrov conditions 
\begin{multline*}
 \frac{1}{(2dn)^{1/2}}(\posy^l(\ctx^l(i))-\posx^l(\ctx^l(i))) \\
 =  \frac{1}{(2dn)^{1/2}}(\posy^l(\ctx^l(i))-\posy^l(\cty^l(i)) + \posy^l(\cty^l(i))-\posx^l(\ctx^l(i))) \\
  = \frac{1}{(2dn)^{1/2}}(d(\ctx^l(i)-\cty^l(i)) + \epsilon^l(i))
\end{multline*}
where by Lemmas \ref{considiff} and \ref{messy messi}
$$|\epsilon^l(i)| \leq \left(|\ctx^l(i)-\cty^l(i)|^{.6} + d|2i^{.3}|\right)< n^{.4}.$$ 
Thus for large enough $n$ and for any $t \in [0,1/2]$
$$\dis(P_\sigma(t),\hat{s}_{\omega_\sigma}(t))  \leq n^{-.1}.$$   For $t\in [1/2,1]$ a similar argument shows that for both $n/2 < nt < n-T'$ and $n-T' \leq nt \leq n$, $\dis(P_\sigma(t),\hat{s}_{\omega_\sigma}(t))\leq n^{-.1}$ for large enough $n$, and therefore the bound holds for all $t\in[0,1]$.
\end{proof}

  \section{Random walks close to the Weyl Chamber}

\label{minuseleven}

In this section we state some results about this walk conditioned to remain close to a cone that we will need. 
The proofs of these results are intricate and are delayed until  Sections \ref{walksincones} and \ref{birs}.

The following proposition allows us to import lemmas from \cite{denisov2015random} even though our random walk $\sloc(t)$ does not 
satisfy all of the hypothesis in \cite{denisov2015random}.

\begin{proposition} \label{grievance}
Suppose that $\omega$ is distributed like $\P$.  There is a linear transformation $H:\R^d\to \R^{d-1}$, invertible on the span of $\{e_i-e_j\}_{1\leq i,j \leq d}$, such that the random walk $H(\sloc)$ and the interior of the cone generated by $H(\mathrm{Weyl})$ satisfy the hypotheses of \cite{denisov2015random}.  In particular, 
\begin{enumerate}
\item The coordinates of each step of $H(s_\omega)$ are uncorrelated, normalized to have mean $0$ and variance $1$, and have finite moments of all order \cite[p. 995]{denisov2015random}. 
\item  $H(s_\omega)$ takes its values in a lattice in $\R^{d-1}$ that is a non-degenerate linear transformation of $\Z^{d-1}$ and that is a communicating class for $H(s_\omega)$ \cite[p. 999]{denisov2015random}. 
\item The cone generated by $H(\mathrm{Weyl})$ is convex and there is a harmonic function $u$ on $\R^{d-1}$, positive on the interior of the cone and vanishing on its boundary, as well as a constant $\kappa>0$, such that for all $x$ in the cone
\[ \P(\tau^B_x >t) \sim \kappa \frac{u(x)}{t^{d(d-1)/4}},\quad t\to\infty,\]
where $\tau^B_x$ is the first time a standard Brownian motion started from $x$ exits the cone \cite[p. 994]{denisov2015random}.
\end{enumerate}
\end{proposition}

See Proposition \ref{grievance two} for a detailed statement with a specific choice for $H$.  For our purposes, the existence of $H$ is typically more important than any particular choice for it.

We define the sequence $$T_l=\min\{\inf \{t: \sloc(t) \in \cone_{2^l}\},\lfloor 4.1^l\rfloor\}.$$
For $z \in \Z^d$ we define 
$$U(z)=\prod_{i<j}(z_i-z_j). $$
The function $U$ is harmonic for $\sloc$, see \cite{konig2001non}.

\begin{lemma} \label{chaplain_one}
For any $l>l_0>L$, $ x \in \partial \cone_{2^{l_0}}$, $T <4.1^{l_0}$, and $|x|\leq 2T$,

\begin{equation} \label{city lights one}
\sum_{x'}U(x')
\P_{(T,x)}\big( \spath \in \cwplus((T,x),(T_l,x'))  \big) \leq K U(x)
\end{equation}
where $K= 1+\prod_{j=0}^{\infty}(1+.01\cdot (.95)^{l_0+j})$
\end{lemma}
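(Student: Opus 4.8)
The plan is to prove the inequality by conditioning on the value of the walk at the stopping time $T_l$ and using the harmonicity of $U$ together with a supermartingale-type argument driven by the one-step expansion of $U$ near the boundary of the shifted cones $\cone_{2^{l_0}}, \cone_{2^{l_0+1}}, \dots$. First I would observe that the left-hand side is exactly $\E_{(T,x)}\big[ U(s_\omega(T_l)) \1_{\{\omega \in \cwplus((T,x),(T_l,\cdot))\}}\big]$, i.e. the expected value of $U$ at time $T_l$ restricted to the event that the walk stays in the enlarged cones $\cone_{-m^{.4}}$ on the whole time interval. Since $U$ is harmonic for $\sloc$ (as cited, following \cite{konig2001non}), the process $m \mapsto U(s_\omega(m))$ is a martingale; the issue is that the restriction event is \emph{not} a stopping-time event in the usual way because the cone $\cone_{-m^{.4}}$ grows with $m$, so $U$ can go slightly negative near the boundary. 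The quantity $K$, written as a convergent product $1 + \prod_{j=0}^\infty (1 + .01 (.95)^{l_0+j})$, strongly suggests the intended argument: decompose the time interval $[T, T_l]$ into the dyadic-type windows $[T_{l_0}, T_{l_0+1}], [T_{l_0+1}, T_{l_0+2}], \dots$ delimited by the stopping times $T_i = \min\{\inf\{t : \sloc(t)\in \cone_{2^i}\}, \lfloor 4.1^i\rfloor\}$, and on each window bound the conditional expectation of $U$ at the next time by $(1 + .01(.95)^{l_0+j})$ times its value at the start of the window, then multiply the factors.

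The key steps, in order, would be: (1) Rewrite the sum as an expectation and set up the telescoping over the stopping times $T_{l_0} \le T_{l_0+1} \le \cdots \le T_l$ with $T_{l_0}$ taken to be $T$ (possible since $T < 4.1^{l_0}$ and $x\in\partial\cone_{2^{l_0}}$, so $T=T_{l_0}$ in this conditional world, or a trivial first factor appears). (2) For a generic window, use the strong Markov property at $T_{l_0+j}$ to reduce to estimating, for a starting point $x'$ on or near $\partial\cone_{2^{l_0+j}}$ with $|x'|$ controlled (of order $4.1^{l_0+j}$ by the definition of $T_{l_0+j}$), the quantity $\sum_{x''} U(x'') \P_{(T_{l_0+j}, x')}(\omega \in \cwplus(\cdot, (T_{l_0+j+1}, x'')))$, and show it is at most $(1 + .01(.95)^{l_0+j}) U(x')$. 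This local estimate is where the geometry of $U = \prod_{i<j}(z_i - z_j)$ enters: on $\cone_{2^{l_0+j}}$ all gap factors $z_i - z_{i+1}$ are at least $2^{l_0+j}$, so $U(x') \gtrsim (2^{l_0+j})^{\binom d 2}$, while the "bad'' contribution from the walk straying into $\cone_{-m^{.4}}\setminus\cone$ over a time window of length $\sim 4.1^{l_0+j}$ moves coordinates by at most $\sim (4.1^{l_0+j})^{.4+o(1)}$, which is a relative perturbation of size $\sim (4.1^{.4}/2)^{l_0+j}$; choosing the exponents so that $4.1^{.4}/2 < .95$ yields the geometric factor. One also needs that the $U$-martingale, stopped appropriately, does not blow up: here $\E_{(T_{l_0+j},x')}[U(s_\omega(T_{l_0+j+1}))] = U(x')$ by optional stopping (both $T$'s are bounded), and the restricted expectation differs from this by the small boundary correction, giving the $(1+\epsilon_j)$ factor rather than an inequality with a large constant. (3) Finally, multiply over $j = 0, 1, \dots$, bound the telescoped product by $\prod_{j\ge 0}(1 + .01(.95)^{l_0+j})$, add the leading $1$ to absorb the initial window or any $j=0$ edge effect, and obtain $K\, U(x)$.

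The main obstacle I expect is step (2): making the local per-window estimate rigorous, i.e. controlling $\sum_{x''}U(x'')\P_{(T_{l_0+j},x')}(\cdots)$ by $(1+\epsilon_j)U(x')$ with the \emph{sharp} small constant $.01(.95)^{l_0+j}$ rather than some cruder bound. This requires (a) a careful application of optional stopping to the signed harmonic function $U$ along the enlarged-cone event — one must check $U$ is genuinely harmonic for the lazy walk $\sloc$ and that the stopped process is uniformly integrable over the bounded window, perhaps via the moment bounds implicit in Proposition \ref{grievance} allowing import of the estimates of \cite{denisov2015random}; and (b) quantitative control of how far below $\cone$ (hence how negative $U$ can be) the walk travels in a window, which is exactly the kind of moderate-deviation/Petrov-type bound already developed, combined with the fact that $T_{l_0+j}$ caps the elapsed time by $4.1^{l_0+j}$. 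A secondary subtlety is bookkeeping the relationship between the time the walk \emph{first} enters $\cone_{2^{i}}$ and the deterministic cap $\lfloor 4.1^i\rfloor$ in the definition of $T_i$, so that the windows genuinely tile $[T,T_l]$ and the starting points $x'$ of each window lie where the local estimate applies; I would handle this by noting that on the event in question the walk is close to $\cone$ throughout, so it does enter each $\cone_{2^i}$ reasonably quickly, and in the worst case the deterministic cap just makes the window shorter, which only helps the estimate.
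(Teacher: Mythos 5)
Your overall strategy is the same as the paper's: the paper (in the proof of Lemma \ref{chaplain}) sets $H_{l_0,k}(T,x)=\sum_{x'}U(x')\P_{(T,x)}(\spath\in\cwplus((T,x),(T_k,x')))$, uses optional stopping of the martingale $U(\sloc(\cdot))$ at the bounded stopping time ``exit the enlarged cone or reach $T_k$,'' and controls the increment $H_{l_0,k+1}-H_{l_0,k}$ window by window, with the per-window boundary correction $.01(.95)^{k}U(y')$ supplied by Lemma \ref{charlie} (one gap factor at the exit point is at most $(4.1^{k+1})^{.4}$ while all gaps of the entrance point $y'\in\partial\cone_{2^{k}}$ are at least $2^{k}$, combined with the geometric displacement tail of Lemma \ref{inauguration}); solving the resulting first-order recurrence yields the product $\prod_j(1+.01(.95)^{l_0+j})$. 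Your steps (1)--(3) and the mechanism in (2) are exactly this, just phrased as a conditional-expectation telescoping rather than a recurrence.

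The genuine gap is your dismissal of the time-cap case. When $T_{k}=\lfloor 4.1^{k}\rfloor$, i.e.\ the walk has not entered $\cone_{2^{k}}$ by the deterministic cap, the starting point of the next window is \emph{not} in $\cone_{2^{k}}$: it can sit within $t^{.4}$ of $\partial\cone$ (or slightly outside $\cone$), where $U$ is small or negative. There the multiplicative per-window bound $(1+.01(.95)^{l_0+j})U(x')$ is not available --- the relative-perturbation argument needs all gaps of $x'$ to be at least $2^{l_0+j}$, and with $U(x')$ tiny or negative the claimed inequality can simply fail --- so it is not true that the cap ``only helps.'' The paper handles this case with two separate inputs: Lemma \ref{seemsuseful}, giving $\P(T_{k}-T\ge \lfloor 4.1^{k-1}\rfloor)\le\gamma^{(1.025)^{k-1}}$ on the $\cwplus$ event, and Lemma \ref{charlie brown}, giving a polynomial (in $4.1^{k}$) bound on the $U$-mass emitted from such a starting point; their product is a super-exponentially small \emph{additive} error, summable in $k$, and it is precisely this sum (bounded using $U(x)\ge 2^{l_0\,d(d-1)/2}$) that produces the extra ``$1+$'' in $K$, not an initial-window artifact. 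Without this branch your induction does not close, so you need to add these two estimates (or equivalents) to make the argument complete.
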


\begin{proof}
This is proved in Lemma \ref{chaplain} in Section \ref{birs}.
\end{proof}

 For any $l\geq l'>L$, $T<4.1^{L}$, $|x| \leq T$ and  $x \in \partial \cone_{2^{L}}$ 
let $\hat E_1(l',T,x)$ be the event that 
\begin{enumerate}
\item $\spath \in \cwplus((T,x),(T_l,\cdot)$
\item there exists $t \in (T_{l'-1},T_{l'})$ such that $\dis(\sloc(t), \partial \cone) \leq t^{.4}$ and
\item $\dis(\sloc(t), \partial \cone) > t^{.4}$
for all $t \in(T_{l'},T_{l})$. 
\end{enumerate}

\begin{lemma} \label{magnus_one}
For any $l>l_0>L$, $T\geq \lfloor 4.1^{l_0}\rfloor$ and $x$ with $|x|\leq 2T$ and $x \notin  \cone_{2^{l_0}}$ 

\begin{equation} \label{carlsen_one}
\sum_{x'}U(x')
\P_{(T,x)}\big( \spath \in \cwplus((T,x),(T_l,x'))  \big) \leq K( 2T)^{d(d-1)/2}
\end{equation}
where $K= \prod_{j=0}^{\infty}(1+.02\cdot (.95)^{l_0+j})$.
\end{lemma}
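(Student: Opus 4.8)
The plan is to reduce this statement to Lemma \ref{chaplain_one} by conditioning on the first time the walk enters the cone $\cone_{2^{l_0}}$. Since $x \notin \cone_{2^{l_0}}$ and we are asking for paths that stay in $\cone_{-m^{.4}}$ (the defining condition of $\cwplus$), the walk starting at $(T,x)$ must at some random time $\tau$ first hit $\cone_{2^{l_0}}$ — or more precisely its boundary $\partial\cone_{2^{l_0}}$, since the increments are of the form $e_i-e_j$ and the walk moves on the codimension-$1$ lattice, so it enters $\cone_{2^{l_0}}$ through $\partial\cone_{2^{l_0}}$. (One must check that conditionally on $\spath \in \cwplus((T,x),(T_l,*))$ this entry actually occurs before $T_l$; this should follow because $T_l \geq \lfloor 4.1^l\rfloor$ with $l > l_0$ is large, the walk has positive drift-free fluctuations of order $t^{1/2}$, and staying in $\cone_{-t^{.4}}$ for all of $[T,T_l]$ without ever reaching $\cone_{2^{l_0}}$ has negligible probability — alternatively, one can simply decompose the event into the piece where $\tau < T_l$ and the piece where $\tau \geq T_l$, and bound the latter crudely.)

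First I would write, by the strong Markov property at the stopping time $\tau := \inf\{t \geq T : \sloc(t) \in \partial\cone_{2^{l_0}}\}$,
\begin{align*}
\sum_{x'} U(x') \,\P_{(T,x)}\big(\spath \in \cwplus((T,x),(T_l,x'))\big)
&= \sum_{x''} \sum_{s} \P_{(T,x)}\big(\tau = s,\ \sloc(s) = x'',\ \spath\restriction_{[T,s]} \text{ in } \cone_{-t^{.4}}\big) \\
&\qquad \times \sum_{x'} U(x') \,\P_{(s,x'')}\big(\spath \in \cwplus((s,x''),(T_l,x'))\big).
\end{align*}
For each fixed $(s,x'')$ appearing with positive weight we have $x'' \in \partial\cone_{2^{l_0}}$, and one checks the hypotheses of Lemma \ref{chaplain_one} are met for the pair $(s,x'')$ in place of $(T,x)$: we need $s < 4.1^{l_0}$ and $|x''| \leq 2s$. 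The time bound holds because $\tau < T_{l_0} \leq \lfloor 4.1^{l_0}\rfloor$ on the event that the walk reaches $\cone_{2^{l_0}}$ at all (here $T_{l_0}$ is exactly the truncation time at level $l_0$, and by definition of $T_{l_0}$ the walk is forced into $\cone_{2^{l_0}}$ by time $\lfloor 4.1^{l_0}\rfloor$ regardless). The norm bound $|x''| \leq 2s$ should follow from the displacement estimate \eqref{cardiff} of Lemma \ref{considiff} together with $|x| \leq 2T \leq 2\lfloor 4.1^{l_0}\rfloor$ — the walk cannot travel farther than $O((\tau - T)^{.6}) + |x|$, which is $\leq 2\tau$ once $\tau$ is of order $4.1^{l_0}$; the borderline small-time case needs a direct check but is harmless. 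Applying Lemma \ref{chaplain_one} gives, for each such $(s,x'')$,
$$\sum_{x'} U(x')\, \P_{(s,x'')}\big(\spath \in \cwplus((s,x''),(T_l,x'))\big) \leq K_0\, U(x''),$$
with $K_0 = 1 + \prod_{j=0}^\infty (1 + .01\cdot(.95)^{l_0+j})$.

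It then remains to bound $\sum_{x''} \sum_s U(x'')\,\P_{(T,x)}(\tau = s, \sloc(s) = x'', \spath\restriction_{[T,s]} \text{ in }\cone_{-t^{.4}}) \leq (2T)^{d(d-1)/2}$ up to a constant, i.e. to show $\E_{(T,x)}[U(\sloc(\tau))\,\1_{\tau < T_l}] \lesssim (2T)^{d(d-1)/2}$. Here I would use that $U$ is a polynomial of degree $d(d-1)/2$ in the coordinates, that on the event in question the walk has stayed in $\cone_{-t^{.4}}$ so each coordinate gap $z_i - z_j$ is at least $-t^{.4} \geq -T^{.4}$ and at most of order the total displacement $|x| + \tau^{.6} = O(T)$, whence $|U(\sloc(\tau))| \leq (O(T))^{d(d-1)/2}$ deterministically on this event. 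Absorbing the various multiplicative constants into $K$ — and noting $K_0$ has the correct form, while the extra factors coming from the $O(\cdot)$'s can be chosen so that the product telescopes into $\prod_{j=0}^\infty(1 + .02\cdot(.95)^{l_0+j})$ as in the statement — completes the argument. The main obstacle is the bookkeeping around the stopping time $\tau$: one must argue carefully that on $\cwplus$ the walk genuinely enters $\cone_{2^{l_0}}$ before $T_l$ with the complementary event contributing negligibly (or is absorbed into the estimate), and one must verify the norm hypothesis $|x''| \leq 2s$ of Lemma \ref{chaplain_one} holds at the (random) entry point $x'' = \sloc(\tau)$ at the (random) time $s = \tau$, rather than just at the deterministic start. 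Both of these are handled by the Petrov-type displacement bounds, but they require attention to the regime where $\tau$ is comparable to $T$ rather than much larger.
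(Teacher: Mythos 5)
There is a genuine gap at the heart of your reduction: the hypotheses of Lemma \ref{chaplain_one} cannot be met at your stopping time. Lemma \ref{chaplain_one} requires the starting time to be $<4.1^{l_0}$, but in Lemma \ref{magnus_one} the walk already starts at time $T\geq \lfloor 4.1^{l_0}\rfloor$, so the first entry time $\tau$ into $\partial\cone_{2^{l_0}}$ satisfies $\tau\geq T\geq\lfloor 4.1^{l_0}\rfloor$ and the time hypothesis fails (except possibly in the degenerate case $\tau=T$). Your justification that ``$\tau<T_{l_0}\leq\lfloor 4.1^{l_0}\rfloor$'' misreads the definition $T_{l_0}=\min\{\inf\{t:\sloc(t)\in\cone_{2^{l_0}}\},\lfloor 4.1^{l_0}\rfloor\}$: the minimum is a cap on the stopping time, not a guarantee that the walk enters $\cone_{2^{l_0}}$ by time $\lfloor 4.1^{l_0}\rfloor$, and in any case no event can force an entry time to precede the start time. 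The time restriction in Lemma \ref{chaplain_one} is not cosmetic: its proof (via Lemma \ref{charlie}) bounds the distance from $\cone$ at the exit of $\cwplus$ by $(4.1^{l_0+1})^{.4}$ and compares it to the cone scale $2^{l_0}$, so starting at a much later time would destroy those estimates. Replacing $l_0$ by a larger level $l_0'$ matched to $\tau$ does not rescue the step, because $\sloc(\tau)$ lies on $\partial\cone_{2^{l_0}}$, not on $\partial\cone_{2^{l_0'}}$.

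Two further steps are also not sound as written. First, your ``deterministic'' bound $|U(\sloc(\tau))|\leq (O(T))^{d(d-1)/2}$ invokes the displacement estimate \eqref{cardiff} of Lemma \ref{considiff}, but that estimate is conditional on the Petrov conditions, which are not part of the defining event of $\cwplus$ (unlike $\cwminus$); on $\cwplus$ the displacement can only be controlled probabilistically (Lemmas \ref{moderate1} and \ref{inauguration}), with an exceptional event that must be carried through the sum against $U$. Second, the event that the walk never enters $\cone_{2^{l_0}}$ before $T_l$ cannot be ``bounded crudely'': there $T_l$ may equal $\lfloor 4.1^l\rfloor$ with $l\gg l_0$, the terminal values $U(\sloc(T_l))$ can be far larger than $(2T)^{d(d-1)/2}$, and the smallness comes only from the doubly exponential tail $\gamma^{(1.025)^{k}}$ of Lemma \ref{seemsuseful} beating the polynomially growing $U$-values level by level. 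This is exactly the bookkeeping the paper performs: its proof of Lemma \ref{magnus_one} (Lemma \ref{magnus}) does not condition on the entry into $\cone_{2^{l_0}}$ at all, but reruns the optional-stopping recursion of Lemma \ref{chaplain} with the stopping times $R_k$, using Lemma \ref{charlie brown} (the variant of Lemma \ref{charlie} for starting points outside $\cone_{2^{l_0}}$) for the correction terms and replacing the initial value $U(x)$ by the bound $|U(x)|\leq (2T)^{d(d-1)/2}$ coming from $|x|\leq 2T$. Your plan would need to be reorganized along those lines rather than as a one-shot strong-Markov reduction to Lemma \ref{chaplain_one}.
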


\begin{proof}
This is proved in Lemma \ref{magnus} in Section \ref{birs}.
\end{proof}

\begin{lemma} \label{benalla_one}
There exists a function $H(L)=o(1)$ such that  for any $L, l>L$, $T<4.1^L$, $|x| \leq 2T$ and  $x \in \partial \cone_{2^L}$ 
\begin{equation} \label{les enfants one}
\sum_y U(y)\prob_{(T,x)}(\spath \in \cwpplus((T,x),(T_l,y))\setminus \cwminus((T,x),(T_l,y)) ) \leq H(L)U(x).
\end{equation}
\end{lemma}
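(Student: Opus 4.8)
The plan is to observe that $\cwminus((T,x),(T_l,y))\subseteq\cwpplus((T,x),(T_l,y))$, so that the set in question equals $\cwpplus\setminus\cwminus$, and then to decompose this difference by scale. A path lies in $\cwpplus((T,x),(T_l,y))$ but not in $\cwminus((T,x),(T_l,y))$ precisely when it is in $\cwpplus$ and, at some time $m\in[T,T_l]$, either $\sloc(m)\notin\cone_{m^{.4}}$ (a near-boundary approach) or $\petrov(m)$ fails on a subinterval of $[T+1,m]$. Since $x\in\partial\cone_{2^{L}}$ forces $|x|\gtrsim 2^{L}$ while $|x|\le 2T$, we have $T\asymp 2^{L}$, so only time scales $\ge L$ can contribute. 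The key structural point is that if no Petrov condition fails then the ``or'' in the definition of $\cwpplus$ forces $\sloc(m)\in\cone_{-m^{.4}}$ for every $m$, i.e.\ $\spath\in\cwplus((T,x),(T_l,*))$; decomposing the near-boundary approaches according to the \emph{last} scale $l'$ at which $d(\sloc(t),\partial\cone)\le t^{.4}$ occurs (up to the obvious edge cases at the checkpoints $T_{l'}$) then identifies that part of the bad event with $\bigcup_{l'=L+1}^{l}\hat E_1(l',T,x)$. Thus it suffices to bound, uniformly in $l$, the $U$-weighted probabilities of $\hat E_1(l',T,x)$ and of the Petrov-failure-within-$\cwpplus$ event by quantities that are summable in $l'$ and proportional to $U(x)$.

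For the $\hat E_1(l',T,x)$ term I would apply the strong Markov property at the checkpoint $T_{l'}$: before $T_{l'}$ the path is a $\cwplus((T,x),(T_{l'},w))$ path that makes a near-boundary approach somewhere in $(T_{l'-1},T_{l'})$, and after $T_{l'}$ it is a $\cwplus$ path staying at distance $>t^{.4}$ from $\partial\cone$. The $U$-weighted endpoint mass of the post-$T_{l'}$ part is at most $KU(w)$ when $w\in\partial\cone_{2^{l'}}$, by Lemma~\ref{chaplain_one} at scale $l'$, and at most $K(2\cdot 4.1^{l'})^{d(d-1)/2}$ when $w\notin\cone_{2^{l'}}$, by Lemma~\ref{magnus_one}. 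This reduces the estimate to bounding
\[
\sum_{w}\big(KU(w)\ \text{or}\ K(2\cdot 4.1^{l'})^{d(d-1)/2}\big)\,\prob_{(T,x)}\big(\spath\in\cwplus((T,x),(T_{l'},w)),\ \text{near-boundary approach in }(T_{l'-1},T_{l'})\big),
\]
which I would compare against the unrestricted bound $\sum_w U(w)\,\prob_{(T,x)}(\spath\in\cwplus((T,x),(T_{l'},w)))\le KU(x)$ from Lemmas~\ref{chaplain_one}--\ref{magnus_one}. The per-scale saving of a factor $c\,4.1^{-\epsilon l'}$ should come from two sources: a near-boundary approach at time $t^\ast\asymp 4.1^{l'}$ forces some consecutive gap of $U(w)$ down to $O((4.1^{l'})^{.4})$, a $4.1^{-.1l'}$-fraction of its typical size $\asymp\sqrt{4.1^{l'}}$; and the alternative $w\notin\cone_{2^{l'}}$ forces $\sloc$ to remain outside $\cone_{2^{l'}}$ for a whole scale, which is exponentially unlikely.

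For the Petrov-failure term I would decompose by the \emph{first} scale $l''>L$ at which a Petrov condition fails. Up to the checkpoint $T_{l''-1}$ the path has no Petrov failure, hence lies in $\cwplus$, so Lemmas~\ref{chaplain_one} and~\ref{magnus_one} bound the $U$-weighted mass of its state $z$ at time $T_{l''-1}$ by $KU(x)$ (possibly after a short initial stretch to match the apex scale); conditionally on reaching $z$, Lemma~\ref{conditionedpetrov} bounds the probability of a failure within scale $l''$ by $Ce^{-(2^{l''-1})^{\gamma}}$; and the remaining portion of $\spath$ from $T_{l''}$ to $T_l$ again lies in $\cwpplus$, so the same estimate iterates on it. Summing the resulting products over $l''>L$, the first-failure factor dominates and yields a bound of the form $Ce^{-(2^{L})^{\gamma'}}U(x)=o(1)\,U(x)$.

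Combining the two contributions one may take $H(L)=C'4.1^{-\epsilon L}+Ce^{-(2^{L})^{\gamma'}}$, which is $o(1)$ and uniform in $l$, $T$, and $x$. The hard part will be the per-scale inequality
\[
\sum_{w}U(w)\,\prob_{(T,x)}\big(\spath\in\cwplus((T,x),(T_{l'},w)),\ \text{near-boundary approach in }(T_{l'-1},T_{l'})\big)\ \le\ c\,4.1^{-\epsilon l'}\,U(x):
\]
because $U$ is harmonic (not sub- or super-harmonic) for $\sloc$, the total $U$-mass is essentially conserved by Lemma~\ref{chaplain_one}, so the near-boundary contribution cannot simply be discarded; one must prove a quantitative bound saying that the $U$-weighted harmonic measure (for the walk killed on $\partial\cone$) of a $t^{.4}$-neighbourhood of $\partial\cone$ is small. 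This is exactly the content that the paper defers to Section~\ref{birs} --- in effect the $\cwplus$-refinements behind Lemmas~\ref{chaplain_one} and~\ref{magnus_one} that keep track of the boundary-distance factor --- and in the proof proper I would invoke that companion estimate rather than reprove it here.
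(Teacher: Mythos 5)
Your high-level decomposition is the same as the paper's: on the complement of a Petrov failure the $\cwpplus$ disjunction forces the path into $\cwplus$, and the near-boundary approaches are organized by the last scale $l'$ at which $d(\sloc(t),\partial\cone)\le t^{.4}$ occurs, i.e.\ by the events $\hat E_1(l',T,x)$; the Petrov-failure part is handled separately with Lemma \ref{conditionedpetrov}. That matches the skeleton of the paper's proof (Lemma \ref{benalla}, with its events $E_1,E_2,E_3$). But there is a genuine gap at exactly the point you flag as ``the hard part'': the per-scale inequality $\sum_y U(y)\prob_{(T,x)}(\hat E_1(l',T,x),\,\sloc(T_l)=y)\lesssim c^{\,l'}U(x)$ with $c<1$. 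You propose to ``invoke that companion estimate rather than reprove it here,'' but that estimate is not one of the available inputs (Lemmas \ref{chaplain_one}, \ref{magnus_one}, \ref{conditionedpetrov}); it is the core of the proof of the present lemma, carried out in the paper as Lemmas \ref{cassablanca} and \ref{we rise together}. Deferring it leaves the lemma unproved.

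Moreover, the heuristic you give would not by itself yield that inequality. A near-boundary approach in $(T_{l'-1},T_{l'})$ makes $U$ small \emph{at the approach time}, not at the checkpoint: on $\hat E_1(l',T,x)$ the walk stays far from $\partial\cone$ after $T_{l'}$, so $U(\sloc(T_{l'}))\ge (2^{l'})^{d(d-1)/2}$ and $U(\sloc(T_l))$ can be large. To convert smallness at the approach time into a bound on the $U$-weighted endpoint mass one must stop the walk at the approach (the stopping time $\hat T_1$ of Lemma \ref{we rise together}), bound the $U$-weighted hitting mass of the $t^{.4}$-neighbourhood (Lemma \ref{cassablanca}), and then restart a Lemma \ref{chaplain}-type estimate from the approach point. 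Because $U$ is exactly harmonic, this requires the optional-stopping bookkeeping with a careful subtraction of the contribution of paths that instead exit far from $\cone$ (the event $F_2$) and of the time-cutoff case $T_{l'-1}=\lfloor 4.1^{l'-1}\rfloor$ (handled via Lemma \ref{seemsuseful}, also not among your inputs, though that piece is routine). Your Petrov-failure term has a related soft spot: after the first failure the remaining path is only in $\cwpplus$, so Lemma \ref{chaplain_one} does not apply to it directly, and ``the same estimate iterates'' is essentially an appeal to the lemma being proved; the paper avoids this by decomposing on the \emph{last} failure time $M$ and the last cutoff scale $J$, after which the path genuinely lies in $\cwplus$. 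So the architecture is right, but the quantitative heart of the argument is missing.
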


\begin{proof}
This is proved in Lemma \ref{benalla} in Section \ref{birs}.
\end{proof}

Choose $\Lf$ to be the smallest integer such that 
\begin{equation} \label{back flip one}
4^\Lf>n^{1-.08/d(d-1)}.
\end{equation} 
Also let $\delta = .02/d(d-1) < .01.$  Then $2^\Lf > n^{.5-2\delta} > n^{.49}.$
Thus

Remember that for any $j,k<n/2$ and $x,y\in \Z^d$ define 
 $\scwminus((j,x),(n-k,y))$ to be all paths $\spath$ such that
\begin{enumerate}
\item $\sloc(i) \in \cwminus((j,x),(\lfloor n/2 \rfloor,\cdot)$ and
\item $\sloc(n-i) \in \cwminus((k,y),(\lfloor n/2 \rfloor,\cdot)$
\end{enumerate}
$\scwplus((j,x),(n-k,y))$ and $\scwpplus((j,x),(n-k,y))$ are defined in an analogous way.

\begin{lemma} \label{spontini3}
For any $\epsilon>0$ there exists $l$ such that if $x,y \in \cone_{2^l}$ and
$T,T^* \leq (4.1)^l$ then for any $n$ sufficiently large
$$\prob_{(T,x)}\bigg( \scwminus((T,x),(n-T^*,y)) \ \bigg| \ \scwpplus((T,x),(n-T^*,y))  \bigg)>1-\epsilon$$
which implies 
$$\prob_{(T,x)}\bigg( \scwminus((T,x),(n-T^*,y)) \ \bigg| \ \cw((T,x),(n-T^*,y))  \bigg)>1-\epsilon.$$
\end{lemma}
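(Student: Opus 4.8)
The plan is to bound the numerator $\prob_{(T,x)}(\scwpplus\setminus\scwminus)$ and the denominator so that their ratio is at most $C(H(l)+o(1))$, where $H$ is the function of Lemma \ref{benalla_one}, $C$ is a constant, and $o(1)\to0$ as $n\to\infty$ with $l$ fixed; choosing $l$ then $n$ large gives the first inequality. The second is then free: a path in $\scwminus$ has $\sloc(m)\in\cone_{m^{.4}}\subseteq\cone$ for $T\le m\le\lfloor n/2\rfloor$ and $\sloc(m)\in\cone$ for $\lfloor n/2\rfloor\le m\le n-T^*$, so $\scwminus\subseteq\cw((T,x),(n-T^*,y))\subseteq\scwpplus$ (the last inclusion because $d(\sloc(m),\cone)=0$ satisfies the defining inequalities of $\scwpplus$); hence $\prob_{(T,x)}(\scwminus\mid\cw)=\prob_{(T,x)}(\scwminus)/\prob_{(T,x)}(\cw)\ge\prob_{(T,x)}(\scwminus)/\prob_{(T,x)}(\scwpplus)=\prob_{(T,x)}(\scwminus\mid\scwpplus)$. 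Since $\scwminus\subseteq\scwpplus$, it suffices to show $\prob_{(T,x)}(\scwpplus\setminus\scwminus)<\epsilon\,\prob_{(T,x)}(\scwminus)$.

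I would cut at the midpoint. If a path lies in $\scwpplus$ but not $\scwminus$, at least one of its halves — the restriction to $[T,\lfloor n/2\rfloor]$, or the reversal of the restriction to $[\lfloor n/2\rfloor,n-T^*]$ — lies in the relevant $\cwpplus$-set but not the relevant $\cwminus$-set, while the other half lies in its $\cwpplus$-set; write $\mathcal A$ for the event that the forward half fails and $\mathcal A'$ for the event that the backward half fails, so $\scwpplus\setminus\scwminus\subseteq\mathcal A\cup\mathcal A'$. Reversing the first $n$ coordinates exchanges $\mathcal A$ with $\mathcal A'$ and the data $(T,x)$ with $(T^*,y)$, and all the bounds below are uniform in the admissible data, so it is enough to estimate $\mathcal A$. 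Conditioning on $z=\sloc(\lfloor n/2\rfloor)$ gives
\[
\prob_{(T,x)}(\mathcal A)=\sum_z\prob_{(T,x)}\!\big(\sloc|_{[T,\lfloor n/2\rfloor]}\in\cwpplus\setminus\cwminus,\ \sloc(\lfloor n/2\rfloor)=z\big)\,g(z),
\]
where $g(z)$ is the probability that from $(\lfloor n/2\rfloor,z)$ the reversed path lies in $\cwpplus((T^*,y),(\lfloor n/2\rfloor,*))$; and $\prob_{(T,x)}(\scwminus)=\sum_z\prob_{(T,x)}(\sloc|_{[T,\lfloor n/2\rfloor]}\in\cwminus,\ \sloc(\lfloor n/2\rfloor)=z)\,\tilde g(z)$ with $\tilde g$ the corresponding $\cwminus$-quantity.

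There are two quantitative inputs. The first is the Denisov--Wachtel local limit theorem for $H(\sloc)$ in the cone $H(\cone)$, available through Proposition \ref{grievance} and Section \ref{walksincones}: there are an exponent $p$ and constants $0<c_1<c_2$ such that, over a time interval of length $\asymp n$, the probability of staying near or in the cone with prescribed start- and endpoints among $x,y,z$ equals, up to the factors $c_1$ and $c_2$, the product of the two $U$-values times $n^{-p}$, uniformly over endpoints carrying all but an $o(1)$-fraction of the mass; enlarging the cone by $m^{.4}=o(\sqrt m)$ and, by Lemma \ref{conditionedpetrov}, discarding the exponentially rare Petrov failures affect this only through a $1+o(1)$ factor. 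In particular $c_1U(z)U(y)n^{-p}\le\tilde g(z)\le g(z)\le c_2U(z)U(y)n^{-p}$ and $\prob_{(T,x)}(\sloc|_{[T,\lfloor n/2\rfloor]}\in\cwpplus,\ \sloc(\lfloor n/2\rfloor)=z)\le c_2U(x)U(z)n^{-p}$, so attaching either half amounts, up to constants, to weighting the joining midpoint by $U$. The second input controls the bad forward $U$-mass: Lemma \ref{benalla_one} applied on $[T,T_{\Lf}]$, with its parameter equal to the index $L\ge l$ for which $x\in\partial\cone_{2^L}$ (so $H(L)\le H(l)$), bounds the $U$-mass of the part of $\cwpplus\setminus\cwminus$ already realized by time $T_{\Lf}$ by $(H(l)+o(1))U(x)$; and on $[T_{\Lf},\lfloor n/2\rfloor]$, where $\sloc(T_{\Lf})\in\cone_{2^{\Lf}}$ with $2^{\Lf}>n^{.5-2\delta}$, the quantity $m^{.4}$ is much smaller than the typical gap of a near-cone walk throughout the stretch, so such a walk keeps all gaps above $m^{.4}$ with probability $1-O(n^{-c})$; hence the total forward $\cwpplus\setminus\cwminus$ $U$-mass is $\le(H(l)+o(1))U(x)$, and by reversal the backward one is $\le(H(l)+o(1))U(y)$. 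Finally, the forward $\cwminus$ $U$-mass is $\ge c_0U(x)$ for a fixed $c_0>0$: by harmonicity of $U$ and optional stopping the $U$-mass staying in a slight enlargement of $\cone$ is $(1+o(1))U(x)$, and, for $x\in\cone_{2^l}$ with $l$ large, a path conditioned to stay near the cone keeps all its gaps above $m^{.4}$ throughout $[T,\lfloor n/2\rfloor]$ with probability close to $1$ (the start is a distance $2^l$ from $\partial\cone$ and $m^{.4}\le 2^l$ for $m\le 2^{2\Lf}$, while for larger $m$ the typical gap $\asymp\sqrt m$ dominates $m^{.4}$), so restricting to $\cone_{m^{.4}}$ with Petrov holding costs only a bounded fraction.

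Putting these together, $\prob_{(T,x)}(\mathcal A)\le c_2n^{-p}U(y)\,(H(l)+o(1))U(x)$, symmetrically $\prob_{(T,x)}(\mathcal A')\le c_2n^{-p}U(x)\,(H(l)+o(1))U(y)$, and $\prob_{(T,x)}(\scwminus)\ge c_1n^{-p}U(y)\,c_0U(x)$, whence
\[
\frac{\prob_{(T,x)}(\scwpplus\setminus\scwminus)}{\prob_{(T,x)}(\scwminus)}\ \le\ \frac{2c_2}{c_1c_0}\,(H(l)+o(1)).
\]
Since $H(l)\to0$ as $l\to\infty$ we fix $l$ making $\tfrac{2c_2}{c_1c_0}H(l)<\tfrac\epsilon2$ and then take $n$ large so that the $o(1)$ is $<\tfrac\epsilon2$; with the first paragraph this proves both inequalities. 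The main obstacle is the Denisov--Wachtel input together with the bridging on $[T_{\Lf},\lfloor n/2\rfloor]$: one must run the local limit theorem with the perturbed cones $\cone_{\pm m^{.4}}$ and the Petrov conditioning in place of the exact cone, \emph{uniformly} over all admissible start- and endpoints in $\cone_{2^l}$ and times $T,T^*\le 4.1^l$ (in particular down to points very close to $\partial\cone$, where $U$ is small), and one must handle the stretch $[T_{\Lf},\lfloor n/2\rfloor]$ that the checkpoint Lemmas \ref{chaplain_one}--\ref{benalla_one} do not reach; the reductions, the harmonic-function bookkeeping, and the checkpoint estimates themselves are comparatively routine.
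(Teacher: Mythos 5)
There is a genuine gap. Your reduction to the two halves and the final deduction of the second inequality from $\scwminus\subseteq\cw\subseteq\scwpplus$ are fine, but the two ``quantitative inputs'' you rely on are asserted rather than available, and the first one fails as stated. You cut at the midpoint $\lfloor n/2\rfloor$ and claim a bridge/local-limit bound of the form $g(z)\le c_2U(z)U(y)n^{-p}$ with constants uniform in $n$, for a half-path of length $\asymp n$ started at $y\in\cone_{2^l}$ (a point at \emph{bounded} distance from $\partial\cone$) and allowed to wander in the enlarged cone $\cone_{-m^{.4}}$ (or to fail Petrov). Your justification -- that enlarging the cone by $m^{.4}=o(\sqrt m)$ costs only a $1+o(1)$ factor -- is exactly where this breaks: the available estimate (the paper's Lemma \ref{les bleus}, via Lemma 28 of Denisov--Wachtel) bounds such probabilities by $C\,U\big(y+(n^{.4},\dots,n^{.4})+x_0\big)U\big(z+\cdots\big)n^{-p}$, and for $y$ at distance $O(1)$ from $\partial\cone$ the shifted value $U(y+(n^{.4},\dots,n^{.4}))$ exceeds $U(y)$ by a factor polynomial in $n$. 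The comparison $U(y+n^{.4}\mathbf 1)\asymp U(y)$ is only legitimate when every gap of $y$ is already $\gtrsim n^{.4}$, which is precisely why the paper's Lemma \ref{les bleus} and Lemma \ref{current_one} are stated only for endpoints in $\cone_{2^{\coup}}$, resp.\ $\cone_{n^{.5-2\delta}}$. The same issue infects your extension of the $U$-mass bound of Lemma \ref{benalla_one} from the checkpoint time $T_{\Lf}$ all the way to $\lfloor n/2\rfloor$, and your lower bound $\prob(\scwminus)\ge c_0c_1U(x)U(y)n^{-p}$ (which, absent an independent proof, is essentially circular, since it is what the lemma delivers). You flag this cluster of estimates yourself as ``the main obstacle,'' but the proof stands or falls on it.

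The paper's proof is organized precisely to avoid ever needing such estimates at $O(1)$ distance from the boundary or at the midpoint. It cuts not at $n/2$ but at the entry times $T_{\Lf}$ and $n-T^*_{\Lf}$ into $\cone_{2^{\Lf}}$ with $2^{\Lf}>n^{.49}$: the initial and final stretches are controlled by the checkpoint machinery (Lemmas \ref{chaplain_one} and \ref{benalla_one} give total and ``bad'' $U$-mass bounds $KU(x)$ and $H(L)U(x)$ at the checkpoint, after first discarding the exponentially unlikely events $T_{\Lf}>.01n$ or $|\sloc(T_{\Lf})|>n^{.5-\delta}$ via Lemmas \ref{seemsuseful}--\ref{inauguration}); the bridge estimate Lemma \ref{current_one} is then applied only between points deep in the cone, where the $n^{.4}$-perturbation of $U$ really is a bounded factor; and the remaining possibility -- that the path first leaves $\cwminus$ on the middle stretch $[T_{\Lf},n-T^*_{\Lf}]$ -- is handled by a separate estimate (Lemma \ref{kiera}), which supplies an extra $n^{-\delta}$ and is itself proved by a one-dimensional martingale/exit argument, not by the heuristic ``typical gaps $\asymp\sqrt m$ dominate $m^{.4}$'' that you invoke. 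Finally, the paper compares the bad event to the lower bound on $\prob(\cw)$ from Lemma \ref{current_one} rather than to $\prob(\scwminus)$, which removes the need for your constant $c_0$. To repair your argument you would either have to reproduce this checkpoint-plus-deep-bridge structure (at which point it coincides with the paper's proof) or prove the uniform enlarged-cone local limit theorem from scratch, which is a substantial piece of work not contained in the cited literature.
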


\begin{proof}
This is proved in Lemma \ref{spontini2} (and Corollary \ref{Anna}) in Section \ref{birs}.
\end{proof}

\begin{lemma} \label{current_one}
There exist $C''$ such that for all $n$, $R \in [n/2,n]$, $T \leq n/4$ and 
for all $x,y \in \Kstar$ such that $|x|,|y| \leq n^{.5-\delta}$,

$$
\prob_{(T,x)}(\spath \in \cw((T,x),(R,y)) )
\geq C''U(x)U(y)n^{-d(d-1)/2}  \cdot n^{-(d-1)/2}. $$
There also exists $C'''$ such that for all $n$, all $R \in [n/2,n]$ and 
for all $x,y \in \Kstar$ such that $|x|,|y| \leq n^{.5-\delta}$
$$
\prob_{(T,x)}(\spath \in \cw((T,x),(R,y)) )
\leq C'''U(x)U(y)n^{-d(d-1)/2}  \cdot n^{-(d-1)/2}. $$
\end{lemma}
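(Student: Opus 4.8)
The plan is to establish the two-sided bound on $\prob_{(T,x)}(\spath \in \cw((T,x),(R,y)))$ by combining the local central limit theorem for the random walk $H(\sloc)$ with the harmonic function estimates for walks conditioned to stay in the cone, following the strategy of \cite{denisov2015random}. Recall from Proposition \ref{grievance} that $H$ conjugates $\sloc$ to a genuine (non-degenerate) random walk in $\R^{d-1}$ whose cone is $H(\cone)$, so all statements may be transferred to that setting. The key input is that for a random walk in a cone satisfying the Denisov--Wachtel hypotheses, the probability of a bridge from $x$ to $y$ of length $N$ staying in the cone behaves like $c\, U(x) U(y) N^{-p/2 - h}$ where $p = d-1$ is the dimension and $h = d(d-1)/2$ is the homogeneity degree of the harmonic function $U$; here $N \asymp R - T \asymp n$.

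First I would reduce to a bridge estimate: writing $\cw((T,x),(R,y))$ as the event that $\sloc$ goes from $x$ at time $T$ to $y$ at time $R$ while remaining in $\cone$, I would use the Markov property to split at the midpoint $\lfloor n/2 \rfloor$, or more cleanly invoke the reversal symmetry of $\cw$ together with the already-developed machinery (Lemmas \ref{chaplain_one}, \ref{magnus_one}, \ref{current_one} analogues) that handle the first half $[T, \lfloor n/2\rfloor]$ and the time-reversed second half $[\lfloor n/2 \rfloor, R]$. For the \emph{upper} bound, I would sum over the midpoint value $z$: $\prob_{(T,x)}(\cw((T,x),(R,y))) = \sum_z \prob_{(T,x)}(\cw((T,x),(\lfloor n/2\rfloor,z))) \cdot \prob_{(\lfloor n/2\rfloor,z)}(\cw((\lfloor n/2\rfloor,z),(R,y)))$, bound each half-probability by the Denisov--Wachtel asymptotics $\lesssim U(x)U(z) n^{-h - p/2}$ and $\lesssim U(z)U(y)(R-\lfloor n/2\rfloor)^{-h-p/2}$, and then use the local limit theorem to control $\sum_z U(z)^2 \cdot (\text{Gaussian density at } z)$-type sums — this sum is dominated by $z$ at scale $\sqrt n$ where $U(z)^2 \asymp n^{h}$, and after normalization by the total mass one recovers the extra factor $n^{-p/2}$, yielding the claimed $U(x)U(y) n^{-h} \cdot n^{-p/2}$. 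For the \emph{lower} bound, I would restrict the sum to a macroscopic set of good midpoints $z \in \Kstar$ with $|z|$ of order $\sqrt n$ and $U(z) \gtrsim n^{h/2}$, where the half-bridge probabilities are bounded below by the same order (using the lower bounds already available, e.g. the first inequality in Lemma \ref{current_one} applied on each half); summing over $\asymp n^{(d-1)/2}$ such $z$ and tracking the powers gives the matching lower bound.

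The main obstacle I anticipate is making the dependence on the endpoints $x, y$ \emph{uniform} over the allowed range $x, y \in \Kstar$ with $|x|, |y| \leq n^{.5-\delta}$, rather than just asymptotic for fixed $x,y$. The Denisov--Wachtel estimates are stated as $n \to \infty$ asymptotics with $x,y$ fixed, whereas here the endpoints are allowed to grow polynomially (though strictly slower than $\sqrt n$). Handling this requires the moderate-deviation / coupling refinements — precisely the role of the Petrov conditions and the cone $\cone_k$ with growing apex $k = m^{.4}$ — to show that starting from $x$ with $|x| \leq n^{.5-\delta}$ at time $T \leq n/4$, the walk relaxes to the stationary "$h$-transform" behavior on a time scale $o(n)$, so that the leading-order constant is insensitive to $x$ beyond the factor $U(x)$. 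I would quote Lemma \ref{current_one} (which already gives exactly this uniform two-sided control for $R \in [n/2,n]$ and one endpoint) and bootstrap: apply it on $[T, \lfloor n/2 \rfloor]$ to pass from $x$ to an intermediate point, apply its time-reversed version on $[\lfloor n/2\rfloor, R]$, and then absorb the midpoint sum as above. A secondary technical point is that $U$ restricted to $\cone_{2^l}$ versus $\cone$ differ only by lower-order shifts of the apex, which affects $U$ by a multiplicative factor $1 + o(1)$ uniformly in the relevant range — this needs to be checked but should follow from the explicit Vandermonde form of $U$ and the bound $2^l \leq n^{.5 - \delta}$ being smaller than the scale $\sqrt n$ on which the bridge lives.

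Finally, I would remark that since the second inequality of Lemma \ref{current_one} (the upper bound) is what is stated to be proved here together with the lower bound, and both are asserted there to be deduced "in Section \ref{birs}," the honest version of this proof is: cite Proposition \ref{grievance} to move to the Denisov--Wachtel setting, cite the bridge asymptotics of \cite{denisov2015random} together with their uniformity over polynomially-bounded endpoints (established there under the conditions verified in Section \ref{birs}), and then perform the midpoint decomposition and the $U(z)$-weighted local-limit-theorem sum described above to produce the two constants $C''$ and $C'''$.
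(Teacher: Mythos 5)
Your lower-bound argument has a genuine circularity. You propose to split at the midpoint and bound each half-bridge from below ``using the lower bounds already available, e.g.\ the first inequality in Lemma \ref{current_one} applied on each half'' — but the first inequality of Lemma \ref{current_one} is exactly the statement you are asked to prove, so this is not available. Your fallback — citing the bridge asymptotics of \cite{denisov2015random} ``together with their uniformity over polynomially-bounded endpoints'' — assumes precisely the missing ingredient: the local and bridge asymptotics in \cite{denisov2015random} are stated for fixed endpoints (or endpoints at scale $o(\sqrt n)$ entering only through the asymptotic constant $V(x)$), and the whole difficulty here, which you correctly identify as the main obstacle, is to get a bound uniform over $x,y\in\Kstar$ with $|x|,|y|\le n^{.5-\delta}$ and over $T\le n/4$, $R\in[n/2,n]$. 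A point-to-point lower bound of the form $\prob_{(T,x)}(\cw((T,x),(\lfloor n/2\rfloor,z)))\gtrsim U(x)U(z)n^{-d(d-1)/2-(d-1)/2}$, uniform in this range, is not something you can quote; it is of the same nature as the target inequality, so your midpoint scheme does not reduce the problem.

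The paper's proof (Lemma \ref{current} in Section \ref{birs}) avoids ever needing a uniform point-to-point lower bound. It uses a three-block decomposition of $[T,R]$: on the first block, of length roughly $.45n-(n-R)/2$, it applies Lemma 20 of \cite{denisov2015random} (via Proposition \ref{grievance}) to get the \emph{integrated} estimate $\prob(\tau_x\ge m,\ x+\sloc(m)\in V_n')\ge C'U(x)n^{-d(d-1)/4}$, where $V_n'=\Z^d\cap\sqrt n\,V'$ is a fixed macroscopic window at distance of order $\sqrt n$ from $\partial\cone$; such integrated bounds, unlike local ones, come with the factor $U(x)$ uniformly. The last block is the time reversal of the same estimate started from $y$. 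The middle block, of length $.1n$, is handled by the free local CLT lower bound $\prob(r+\sloc(.1n)=s)>bn^{-(d-1)/2}$ for $r,s\in V_n'$, combined with Lemma 29 of \cite{denisov2015random} to show that, because $V'$ is chosen far enough from $\partial\cone$, the contribution of paths exiting the cone during the middle stretch is at most half of this, so the confined transition probability is still $\ge(b/2)n^{-(d-1)/2}$. Multiplying the three blocks gives $C''U(x)U(y)n^{-d(d-1)/2}n^{-(d-1)/2}$. The upper bound is not proved by a midpoint sum at all: it is quoted directly from Lemma 28 of \cite{denisov2015random}, which is a non-asymptotic bound and hence already uniform in the endpoints (your power counting for the midpoint sum is correct, but it is unnecessary, and it too would need the uniform half upper bounds that Lemma 28 supplies). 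To repair your write-up you should replace the circular half-bridge lower bounds with an argument of this integrated-plus-free-LCLT type, or some other mechanism that produces the factor $U(x)U(y)$ uniformly without invoking the lemma itself.
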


\begin{proof}
This is proved in Lemma \ref{current} in Section \ref{birs}.
\end{proof}

We define a pseudo-metric $D_K$ between paths.  Fix $K$ and $n>2K$ and two paths $s$ and $s'$ in $\Z^d$. We say $D_{K}(s,s')=0$ if $s(t)=s'(t)$ for all $i \in [K,n-K]$. Otherwise we say $D_{K}(s,s')=1$. We can extend $D_K$ to distributions on paths. For two measures on paths $\mu$ and $\mu'$ we set 
$D_K(\mu,\mu')$ to be the infimum over all couplings $\nu$ of $\mu$ and $\mu'$ of 
$\E_{\nu} ( D_{K}(s,s'))$. We also recall from Equation \eqref{eq empi} the notation 
\[ (g(a) \ :\ a \in A)=\frac{1}{|A|}\sum_{a \in A} \delta_{{g(a)}}.\]

\begin{cor} \label{Elsa}
Fix $\epsilon>0$ and $K$. Let $M$ be a measure on quadruples 
$(s,x)$ and $(t,y)$ such that with probability one have $0 \leq s,t \leq K$, $x,y \in \cone$ and $|x|,|y|\leq K$. 
For $n>2K$ define $\hat M_n$ to be the measure generated by picking 
$(s,x)$ and $(t,y)$ according to $M$ and then sampling $\walk$ from
$\cw((s,x),(n-t,y))$.
There is a $K'$ such that for any $M$ 
$$D_{K'}\bigg(\hat M_n,(\sloc : \omega \in \cw ((0,0),(n,0)))\bigg)<\epsilon.$$
\end{cor}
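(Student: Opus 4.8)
The plan is to peel off a bounded window at each end by the Markov property, and then reduce to a \emph{uniform} invariance principle: after rescaling, the walk conditioned to stay in $\cone$ on a bulk interval has the same limit regardless of which bounded Weyl chamber endpoints it is pinned to. That uniform statement is what Lemma~\ref{current_one}, together with Section~\ref{walksincones}, is meant to supply. Fix $\eta\in(0,\epsilon/3)$ throughout.

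\emph{Step 1: Markov decomposition.} Fix a large integer $K_0\ge K$; eventually $K'=K_0$. For $(s,x),(t,y)$ in the support of $M$ and $\walk\in\cw((s,x),(n-t,y))$ --- nonempty for $n$ large since $\sloc$ is lazy and $x,y\in\cone$ --- put $z=\sloc(K_0)$, $w=\sloc(n-K_0)$; then $z,w\in\cone$ with $|z|,|w|\le 3K_0$, and $\walk=\walk^1\oplus\walk^2\oplus\walk^3$ along $[s,K_0]$, $[K_0,n-K_0]$, $[n-K_0,n-t]$. Since membership in $\cw((K_0,z),(n-K_0,w))$ is determined by $\sloc(K_0)$ and $\{\walk(\ell):\ell\in(K_0,n-K_0]\}$, the Markov property of $\P$ shows that conditionally on $(\sloc(K_0),\sloc(n-K_0))=(z,w)$ the law of $\walk^2$ is $\mu_n^{(z,w)}:=(\sloc:\omega\in\cw((K_0,z),(n-K_0,w)))$; the identical statement holds for $\mu_n^\star:=(\sloc:\omega\in\cw((0,0),(n,0)))$. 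Hence, read on $[K_0,n-K_0]$, both $\hat M_n$ and $\mu_n^\star$ are mixtures of the single family $\{\mu_n^{(z,w)}\}$ over mixing measures $q_n$, resp.\ $q_n^0$, each supported on $B:=\{(z,w)\in\cone^2:|z|,|w|\le 3K_0\}$, with $q_n^0$ well-defined because $\cw((0,0),(n,0))\ne\emptyset$ by Lemma~\ref{current_one}. Enlarging $K_0$ (still uniformly in $M$) we may also assume that all but a fraction $\eta$ of the mass of $q_n$ and of $q_n^0$ sits on $(z,w)$ with both coordinates interior to $\cone$, the conditioned walk leaving $\partial\cone$ within a bounded number of steps.

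\emph{Step 2: the bulk kernels agree in the limit.} The core claim is that there is $n_0=n_0(K_0,\eta)$ with: for $n\ge n_0$ the family $\{\mu_n^{(z,w)}:(z,w)\in B,\ z,w\ \text{interior}\}$ has $d_{K'}$-diameter at most $\eta$. Decompose each $\mu_n^{(z,w)}$ again at $T_1:=\lceil n^{1-3\delta}\rceil$ and $n-T_1$. With probability $1-o(1)$ under the conditioning --- the conditioned walk being diffusive at scale $\sqrt{T_1}=n^{.5-1.5\delta}$, which lies strictly between $n^{.5-2\delta}$ and $n^{.5-\delta}$ --- one has $\sloc(T_1)\in\Kstar$ with $|\sloc(T_1)|\le n^{.5-\delta}$, and likewise for $\sloc(n-T_1)$; also $T_1\le n/4$ and $n-T_1\in[n/2,n]$ for $n$ large, so Lemma~\ref{current_one} gives $\prob(\omega\in\cw((T_1,u),(n-T_1,v)))\asymp U(u)U(v)\,n^{-d(d-1)/2-(d-1)/2}$, uniformly over such $(u,v)$. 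Since $z$ is interior, the local limit theorem for walks in cones --- available via Proposition~\ref{grievance} and \cite{denisov2015random} --- gives $\prob_{(K_0,z)}(\omega\in\cw((K_0,z),(T_1,u)))=(1+o(1))\,c\,U(z)\,\psi_{T_1}(u)$ with $\psi_{T_1}$ independent of $z$, and similarly from the right end. Multiplying the three factors and normalizing, the conditional law of $(\sloc(T_1),\sloc(n-T_1))$ under $\mu_n^{(z,w)}$ factorizes and is, up to $1+o(1)$, proportional to $\psi_{T_1}(u)U(u)\cdot\psi_{T_1}(v)U(v)$, hence free of $(z,w)$; and given those endpoints the law of $\walk$ on $[T_1,n-T_1]$ is $(\sloc:\omega\in\cw((T_1,u),(n-T_1,v)))$, also free of $(z,w)$. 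So the restrictions $\mu_n^{(z,w)}|_{[T_1,n-T_1]}$ agree up to $o(1)$. Finally the residual windows $[K_0,T_1]$ and $[n-T_1,n-K_0]$ occupy rescaled time $[K_0/n,T_1/n]$ and its mirror, which collapse to $\{0\}$ and $\{1\}$, and on them the rescaled path has size $O(\sqrt{T_1\log T_1}/\sqrt n)=o(1)$ with probability $1-o(1)$ (standard fluctuation bounds, the interval having length $o(n)$), so they contribute $o(1)$ to $d_{K'}$. The diameter bound follows.

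\emph{Step 3: recombination, and the main obstacle.} Take $K_0$ as in Step~1, $n_0$ as in Step~2, and $K'=K_0$. For $n\ge n_0$, discarding the $\eta$-mass on boundary endpoints, $\hat M_n$ and $\mu_n^\star$ read on $[K_0,n-K_0]$ are, up to $d_{K'}$-error $\le\eta$, mixtures over the common family $\{\mu_n^{(z,w)}:(z,w)\in B,\ \text{interior}\}$ of $d_{K'}$-diameter $\le\eta$, so $d_{K'}(\hat M_n,\mu_n^\star)\le 3\eta<\epsilon$; the flaps $[s,K_0]$ and $[n-K_0,n-t]$ are immaterial, and $K'$ depends only on $K$ and $\epsilon$, not on $M$. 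The main obstacle is Step~2 --- promoting the corner-to-corner invariance principle of Section~\ref{walksincones} to one uniform over all bounded Weyl chamber boundary data, with a rate --- and this is precisely where Proposition~\ref{grievance}, \cite{denisov2015random}, and the matching two-sided estimate of Lemma~\ref{current_one} are indispensable; Steps~1 and~3 are bookkeeping about mixtures.
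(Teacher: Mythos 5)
Your Steps~1 and~3 (decompose both $\hat M_n$ and the bridge measure into mixtures of the kernels $\mu_n^{(z,w)}$ at the bounded times $K_0$ and $n-K_0$, compare kernels pairwise, recombine) are exactly the paper's strategy: its proof of this statement (Lemma \ref{red hen}) does the same bookkeeping and then invokes Theorem \ref{ice} for the pairwise comparison. The gap is in your Step~2, which is supposed to replace Theorem \ref{ice} but proves a strictly weaker statement that does not match the corollary. Recall how $d_{K'}$ is defined here: $d_{K'}(s,s')=0$ only if $s(i)=s'(i)$ for \emph{all} $i\in[K',n-K']$, and otherwise $d_{K'}=1$; on measures it is the infimum over couplings of the expectation of this indicator, i.e.\ essentially total variation of the unrescaled lattice paths restricted to $[K',n-K']$, with $K'$ fixed independently of $n$. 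Your argument cuts at $T_1=\lceil n^{1-3\delta}\rceil$ and only produces exact agreement of the paths on $[T_1,n-T_1]$; on the windows $[K_0,T_1]$ and $[n-T_1,n-K_0]$ you argue that the \emph{rescaled} path is of size $o(1)$ in sup norm and conclude these windows "contribute $o(1)$ to $d_{K'}$." That inference is false for this pseudo-metric: sup-norm smallness after rescaling gives no exact agreement of lattice paths, and indeed for $z\neq z'$ the kernels $\mu_n^{(z,w)}$ and $\mu_n^{(z',w')}$ satisfy $d_{K_0}=1$ under \emph{every} coupling, since the paths already disagree at time $K_0=K'$ (and, generically, on a mesoscopic initial stretch). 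Your scheme would only prove the statement with $K'$ replaced by a cutoff growing like $n^{1-3\delta}$, which is not what is claimed and not what is used later (the main proof needs agreement outside windows of bounded length to get the $Cn^{-.5}$ sup-norm coupling error).

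The missing ingredient is precisely the content of Theorem \ref{ice}: one must show that for bounded endpoint data $x,y$ and $x',y'$ the conditioned bridges restricted to $[k,n-k]$ converge in total variation to each other as $n\to\infty$ and then $k\to\infty$, with $k$ \emph{fixed relative to $n$}. This requires more than a local CLT at a mesoscopic time; the paper's proof of Theorem \ref{ice} writes the bulk law as a ratio involving $\P_x(S_k=u,\tau_x>k)$, $\P_y(S_k=w,\tau_y>k)$ and $\P_x(S_n=y,\tau_x>n)$, and then uses the Denisov--Wachtel local limit theorem (via Proposition \ref{grievance}) to show the ratio limits $\P_x(S_k=u,\tau_x>k)/\P_{x'}(S_k=u,\tau_{x'}>k)\to V(x)/V(x')$ uniformly over the relevant $u$ as $k\to\infty$, so that all endpoint dependence cancels with the bridge normalization. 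Your factorization at $T_1$ (where, incidentally, the prefactor is $V(z)$ rather than $U(z)$ for $z$ at bounded distance from $\partial\cone$) is the easy, $n$-dependent version of this and cannot be converted into the fixed-$K'$ statement without the $k\to\infty$ ratio analysis. So as written the proposal does not prove the corollary; patching Step~2 essentially amounts to reproving Theorem \ref{ice} and then following the paper's route.
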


\begin{proof}
This is proved in Corollary \ref{red hen} in Section \ref{walksincones}.
\end{proof}

\begin{theorem}\label{cone scaling limit two}
Suppose that $x\in \C$.  Then for all bounded, continuous functions $f: D([0,1], \R^{d}) \to \R$ we have
\[\E\left( f\left( \frac{x+\sloc(n\cdot)}{\sqrt{2n/d}}\right) \middle| \tau_x >n, \sloc(n)=0 \right)  \rightarrow \E[ f(\Lambda(Z))].\]
\end{theorem}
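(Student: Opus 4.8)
The plan is to identify the left-hand side as the law of a bridge of the lazy random walk $\sloc$ in the Weyl chamber $\cone$ and to transport this, via the linear map $H$ of Proposition \ref{grievance}, to the setting of \cite{denisov2015random}, where the corresponding scaling limit is the $(d-1)$-dimensional diffusion obtained by Doob-transforming Brownian motion in the cone $H(\cone)$ by the harmonic function $U$. The final identification step is then to recognize this $h$-transformed Brownian bridge in the cone as the law of the ranked eigenvalue process $\Lambda(Z)$ of the traceless Hermitian Brownian bridge; this is the classical fact that Dyson Brownian motion is Brownian motion conditioned (in the Doob sense, via the Vandermonde $U(z)=\prod_{i<j}(z_i-z_j)$) to stay in the Weyl chamber, specialized here to the traceless hyperplane $\{\sum z_i = 0\}$ and to the bridge.

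First I would set up the unconditioned functional limit theorem: since $\sloc$ is a mean-zero, finite-variance lazy random walk supported on the hyperplane $\{\sum_i z_i = 0\}$ with step covariance a multiple of the projection onto that hyperplane, Donsker's theorem gives $\sloc(n\cdot)/\sqrt{2n/d} \Rightarrow W$, where $W$ is standard $(d-1)$-dimensional Brownian motion living on that hyperplane (the normalization $\sqrt{2n/d}$ being exactly calibrated so the limit is standard). Next I would invoke Proposition \ref{grievance}: applying $H$ makes $(H(\sloc), H(\cone))$ fit the hypotheses of \cite{denisov2015random}, and the main results there (their functional limit theorem for walks in cones conditioned to stay in the cone, together with the bridge version --- conditioning additionally on $\sloc(n)=0$) give that the conditioned, rescaled walk converges to the normalized $U$-transform of Brownian motion in the cone, run as a bridge from the apex back to the apex on $[0,1]$. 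Because $H$ is invertible on the relevant span, pulling back by $H^{-1}$ identifies the limit in the original coordinates as the $U$-Doob-transformed Brownian bridge in $\cone$.

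Then I would carry out the random-matrix identification. The process $\Lambda(Z)$ is, by construction, the ranked eigenvalue process of a Hermitian Brownian bridge $Z(t)$ conditioned to have $\operatorname{tr} Z(t)=0$ for all $t$. For a Hermitian Brownian motion, the eigenvalue process is Brownian motion in the Weyl chamber Doob-transformed by the Vandermonde determinant --- this is the standard derivation of Dyson's process via the Harish-Chandra/Itzykson--Zuber formula or directly from the generator computation (see \cite{konig2001non}, from which we already know $U$ is $\sloc$-harmonic, and its Brownian analogue). Conditioning the matrix process to be traceless corresponds, since everything is Gaussian, to orthogonal projection onto $\{\sum z_i = 0\}$, i.e.\ to the same Doob transform performed inside that hyperplane; and conditioning the matrix process to be a bridge corresponds to the eigenvalue process being the corresponding $h$-transformed bridge. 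Matching the diffusion coefficients (the $\sqrt{2dn}$ versus $\sqrt{2n/d}$ normalizations, and the $1/\sqrt{2}$ in the off-diagonal entries of $Z$) against the Donsker normalization above shows the two limiting processes coincide in law. Since $f$ is bounded and continuous on $D([0,1],\R^d)$ and the convergence is weak convergence of the relevant laws, $\E[f(\cdots)] \to \E[f(\Lambda(Z))]$ follows.

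The main obstacle is the bridge conditioning: \cite{denisov2015random} primarily treats walks conditioned only to stay in the cone, so I expect the real work to be establishing the joint convergence of the walk conditioned both to stay in $\cone$ and to return to the origin at time $n$, i.e.\ a local-limit-type statement for $\P(\sloc(n)=0,\ \tau_x>n)$ with the right polynomial rate (of order $n^{-d(d-1)/2}\cdot n^{-(d-1)/2}$, matching Lemma \ref{current_one}) together with tightness of the bridge and identification of its finite-dimensional limits as the $U$-transformed Brownian bridge. This is exactly the content deferred to Section \ref{walksincones}; the ingredients are Lemma \ref{current_one} (two-sided bounds on the cone-confinement-plus-endpoint probability), Corollary \ref{Elsa} (robustness of the bridge law to the initial and terminal data), and the entrance-law/harmonic-function asymptotics from \cite{denisov2015random}. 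Once the bridge FLT in the cone is in hand, the matrix-theoretic identification is essentially bookkeeping with Gaussian conditioning and normalization constants.
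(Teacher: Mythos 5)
Your overall route matches the paper's: push the walk through the Householder map $H_{\bu}$ of Proposition \ref{grievance two} so that the results on random walks in cones apply in $\R^{d-1}$, obtain the scaling limit of the conditioned walk as an $h$-transformed (Doob/Vandermonde) Brownian object in the cone, pull back by the isometry, and identify the limit law with $\Lambda(Z)$ via the Dyson description of eigenvalues as Brownian motion $h$-transformed by $U$, with tracelessness handled as a Gaussian projection; this last identification is exactly what the paper's appendix (culminating in Equation \eqref{Imhof}) provides.

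However, there is a genuine gap at the step you yourself flag as ``the real work.'' The theorem asks for a functional limit theorem for the walk conditioned \emph{both} to stay in the cone \emph{and} to return to $0$ at time $n$, and your proposal does not establish it: you outline a program (endpoint local limit estimate, tightness of the bridge, identification of finite-dimensional distributions) and point to Lemma \ref{current_one} and Corollary \ref{Elsa} as ingredients, but those do not suffice. Lemma \ref{current_one} gives only two-sided bounds (not asymptotics) on $\P(\tau_x>n,\ \sloc(n)=y)$, and Corollary \ref{Elsa} is a total-variation statement about the insensitivity of the \emph{middle} of the bridge to its endpoint data (proved via Theorem \ref{ice}, in the same section as the present theorem, so leaning on it here also risks circularity); neither yields tightness nor the functional convergence of the rescaled bridge. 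The paper closes this gap by invoking the excursion invariance principle of Duraj and Wachtel, \cite[Theorem 4]{duraj2015invariance}, which directly gives convergence of $\bar S$ conditioned on $\{\bar\tau_y>n,\ \bar S(n)=0\}$ to ``Brownian excursion in $\tilde\C$,'' and then identifies that excursion with $\Lambda(Z)$ using the $h$-transform formulas of \cite{duraj2015invariance} together with the absolute-continuity identity \eqref{Imhof} relating $\Lambda(Z)$ to $B^0_>$ weighted by $e^{-|B^0_>(t)|^2/(2(1-t))}$. To complete your argument you would either need to cite such a bridge/excursion invariance principle or actually carry out the fdd-plus-tightness construction from the local limit theorems of \cite{denisov2015random}, which is substantial work not contained in your sketch.
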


\begin{proof}
This is proved in Theorem \ref{cone scaling limit} in Section \ref{walksincones}.
\end{proof}

\section{The scaling limit}
\label{peaches}

In this section we connect the set of $\snd$ with paths on $\Z^d$. We fix $n$ and $d$ and (as we will be adding subscripts and superscripts) we  write $\snd$ as $\X$. Then we calculate the scaling limit.
We will frequently use the set we defined in \eqref{byeweek}
$$\spacen =\bigg\{(a,b) \in  [d]^n \times [d]^n: \#\{i:a_i=l\}=\#\{i':b_{i'}=l\}\  \forall \ l \in [d]\bigg\}. $$
Consider 
$\walk=\{(a,b)\} \in\spacen.$
In Section \ref{divest} we showed how a pair of sequences $\walk$ maps to a path $s_\walk$ in $\Z^d$.

We can compose the embedding of $\X$ into $\spacen$ and the map from $\spacen$ to paths on $\Z^d$ to 
get a map from a $\X$ to paths on $\Z^d$.  For $\sigma\in \X$ we let $s_{\walk_\sigma}$ denote this path.  If we linearly interpolate and scale  
$s_{\walk_\sigma}$ properly  we then get an ordered collection of $d$ functions on $[0,1]$ that start and end at 0. We call this scaled function $\hat s_{\walk_\sigma}$.
We also have another map $P_\sigma$ from $\X$ to a collection of $d$ functions on $[0,1]$ 
that start and end at 0. 
We will show that the scaled version of
$s_{\walk_\sigma}$ is usually very close to $P_\sigma$. 
In fact we will show that $P_\sigma$ and $\hat s_{\walk_\sigma}$ are sufficiently close so that when $\sigma$ is chosen uniformly from $\snd$ they have the same scaling limit. 

Fix some large integer $L$.  Define
$$R_L=\inf \{t:\ \petrov(t) \cap\{ \spath(t) \in \cone_{2^L}\}\}$$ and 
$$R^*_L= \inf \{t:\ \petrov^*(t) \cap \{ \spath^*(t) \in \cone_{2^L}\}\}.$$
Note that $R_L$ is a stopping time and $R^*_L$ is a stopping time for the reverse walk. 

For $n>100\cdot 4.1^L$ we divide the set of $\X$ into three disjoint subsets.
Define 
$$\X_1=\bigg\{\sigma: \max(R_L({\walk_\sigma}),R^*_L({\walk_\sigma}))<\lfloor 4.1^L\rfloor, \walk_\sigma \in \scwminus((R_L,\cdot)(n-R_L,\cdot))\bigg\},$$

$$\X_2=\bigg \{\sigma: \max(R_L({\walk_\sigma}),R^*_L({\walk_\sigma}))<\lfloor 4.1^L\rfloor, \walk_\sigma \not \in \scwminus((R_L,\cdot)(n-R^*_L,\cdot))\bigg \}$$
and
 
$$\X_3=\left\{\sigma: \max(R_L({\walk_\sigma}),R^*_L({\walk_\sigma}))\geq\lfloor 4.1^L\rfloor\right\}.$$

Our strategy is (roughly) as follows. We will show that if $L$ is large then the scaling limit of $s_{\walk_\sigma}$ for 
$\sigma \in \X_1$ is close to the traceless Dyson Brownian Bridge (TDBB). From Lemma \ref{tahini} we showed that the scaled version, $\hat s_{\omega_\sigma}$, of  
$s_{\walk_\sigma}$ is close to $P_\sigma$ for $\sigma \in \X_1$.
Thus we can determine the scaling limit of $P_\sigma$ for $\sigma \in \X_1$.
Then we will show that for any $\epsilon>0$ there exists an $L$ such that
$|\X_1|>(1-\epsilon)|\X|$.
Thus the scaling of $P_\sigma$ for $\sigma \in \X_1$
is the same as the scaling of $P_\sigma$ for $\sigma \in \X$. We use the connection with random walks in a cone to show that they 
both are given by TDBB.

\subsection{$|\X_3|=o(|\X|)$}

\begin{lemma} \label{perisic}
There exists $c>0$ such that for all $n$
$$|\X| \geq c d^{2n} n^{-(d^2-1)/2}.$$
\end{lemma}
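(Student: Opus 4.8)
The plan is to give a lower bound on $|\X| = |\avn(\rho_d)|$ by exhibiting enough proper matrices, or equivalently by relating $|\X|$ to the number of paths $\omega \in \spacen$ whose associated walk $s_\omega$ stays in (a translate of) the Weyl chamber and starts and ends at the origin. Recall that $\X$ embeds into $\spacen$ via $\sigma \mapsto \walk_\sigma$ (Lemma~\ref{iowa}: the image consists of exactly the $\omega$ with $\mat(\omega)$ proper), and that $\spacen$ has size comparable to $d^{2n} n^{-(d-1)}$ by the local central limit theorem (it is the set of pairs of $[d]$-words with matching letter counts, a codimension-$(d-1)$ condition on a random walk of $n$ steps). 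So it suffices to show that a polynomially-small (in $n$, with exponent $(d^2-1)/2 - (d-1) = (d-1)(d)/2 \cdot \ldots$) fraction of $\spacen$ consists of minimal $\omega$, or rather to directly produce $\gtrsim d^{2n} n^{-(d^2-1)/2}$ proper matrices.

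First I would reduce to counting walks in the cone: if $\omega \in \spacen$ is such that $s_\omega(m) \in \cone_{m^{.4}}$ for all $m$ (say) — or more simply, if $s_\omega$ stays strictly inside the Weyl chamber $\cone$ away from the first and last $O(1)$ steps — then $\mat(\omega)$ is automatically proper by the same contradiction argument used in Lemma~\ref{middleman}: a non-proper entry labeled $l$ with no $l-1$ above-and-left forces $\diff^{l-1}(j) = \diff^l(j)$ at the relevant column $j$, contradicting that $s_\omega(j)$ is in the interior of the cone. Thus every such $\omega$ is minimal, i.e.\ lies in the image of $\X$. So $|\X| \geq \#\{\omega \in \spacen : s_\omega(m) \in \cone \text{ for all } m, \text{ with strict interior away from endpoints}\}$.

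Next I would lower-bound this count using the random walk estimates already imported. The walk $\sloc$ under $\P$ is the lazy walk with steps $e_i - e_j$; conditioned on $\Omega_n$ (return to origin at time $n$) its probability of staying in the Weyl chamber is, by the Denisov–Wachtel type asymptotics of \cite{denisov2015random} (available to us via Proposition~\ref{grievance}), of order $n^{-d(d-1)/2}$ up to constants — this is precisely the kind of bound appearing in Lemma~\ref{current_one}, which gives $\prob_{(T,x)}(\spath \in \cw((T,x),(R,y))) \asymp U(x)U(y) n^{-d(d-1)/2} n^{-(d-1)/2}$ for $x,y$ deep in the cone. Combining $\prob(\Omega_n) \asymp n^{-(d-1)/2}$, the cost $n^{-d(d-1)/2}$ of staying in the cone, and the total mass $d^{2n}$ of $[d]^n \times [d]^n$, one gets $|\X| \gtrsim d^{2n} \cdot n^{-(d-1)/2} \cdot n^{-d(d-1)/2} = d^{2n} n^{-(d^2-1)/2}$, since $(d-1)/2 + d(d-1)/2 = (d-1)(d+1)/2 = (d^2-1)/2$. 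Handling the "strict interior near the endpoints" is a lower-order adjustment: one conditions on starting at and returning to, say, a fixed point $x_0 \in \cone_{2^L}$ after $T$ steps and symmetrically at the end, paying only constant factors, exactly as in the machinery of Section~\ref{minuseleven}.

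The main obstacle I expect is bookkeeping the endpoint conditions and the cone-interior requirement so that minimality genuinely holds for every counted $\omega$ — one must be careful that paths touching $\partial\cone$ in the bulk could fail to be proper, so the cleanest route is to count only paths in a slightly smaller cone $\cone_{m^{.4}}$ (or even a fixed translate $\cone_{2^L}$ after an initial excursion), and then verify that the random-walk lower bound still yields the same power of $n$ — which it does, since shifting the cone by a sublinear amount does not change the polynomial order of the survival probability. The harmonic function estimates ($U(x) = \prod_{i<j}(x_i - x_j)$ being harmonic for $\sloc$) and Lemma~\ref{current_one} are exactly what make this go through without recomputing the Denisov–Wachtel asymptotics from scratch.
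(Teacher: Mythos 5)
Your route is genuinely different from the paper's: the paper proves this lemma in one line, by citing the exact enumeration of $\avn(\rho_d)$ in \cite[Theorem 2.10]{regev}, whereas you try to re-derive the lower bound from the walk-in-cone machinery. That is a reasonable ambition, but as written your first step has a real gap: it is not true that $s_\omega$ staying inside $\cone$ (even inside $\cone_{m^{.4}}$) forces $\mat(\omega)$ to be proper. The contradiction argument in Lemma \ref{middleman} uses the Petrov conditions essentially, not just cone membership: non-properness at a label-$l$ point $(i,j)$ only gives the cross-index relations $\ctx^{l}(i)=\cty^{l}(j)$ and $\ctx^{l-1}(i)\le\cty^{l-1}(j)$, and converting these into a bound on $\diff^{l-1}(j)-\diff^{l}(j)$ requires the Petrov estimates controlling $|i-j|$ (Lemma \ref{fiasco}) and the letter-count fluctuations over the window between $i$ and $j$; the cone condition says nothing about either. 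Concretely, for $d=3$ let $a$ begin $1^{3K}3$ and $b$ begin $3^{2K}2^{K}1$, completed to an element of $\spacen$ while remaining in the cone: the walk has margins growing linearly, so it lies in $\cone_{m^{.4}}$ at all relevant times, yet the first label-$3$ point, at position $3K+1$ and value $1$, has no label-$2$ point above and to its left, so $\mat(\omega)$ is improper and $\omega$ is not minimal. Hence your claimed inequality $|\X|\ge\#\{\omega\in\spacen:\ s_\omega \text{ deep in }\cone\}$ fails; the correct statement (Lemma \ref{permsRgreat} and its converse direction) inherently involves the Petrov conditions.

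The second gap is at the endpoints. Minimality must hold for every entry, including those produced by the first and last segments, and the walk necessarily starts and ends at the origin, which lies on $\partial\cone$. Conditioning on reaching a fixed $x_0\in\cone_{2^L}$ after $T$ steps does not by itself guarantee that the entries created during those $T$ steps are proper, nor that gluing an arbitrary good middle onto them preserves properness; that is precisely the content of Lemma \ref{middleman}, whose hypotheses include the Petrov conditions and initial/final segments coming from an actual permutation. A repaired version of your argument would count middles satisfying both the deep-cone and Petrov conditions, splice them onto one explicitly exhibited proper initial/final segment via Lemma \ref{middleman}, and then invoke the lower bound of Lemma \ref{current_one}; this can be made to work, but it reproduces a substantial part of Sections 5--6 rather than being the constant-factor bookkeeping you describe. (Two minor points: $|\spacen|\asymp d^{2n}n^{-(d-1)/2}$, not $d^{2n}n^{-(d-1)}$, and the exponent identity you want is $(d-1)/2+d(d-1)/2=(d^2-1)/2$, which you do use correctly later.) Given that the needed asymptotics are available as a citation, the paper's one-line proof is the efficient choice.
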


\begin{proof}
This follows  from \cite[Theorem 2.10]{regev} where an exact enumeration is given. 
\end{proof}

Let $g(m) = \sum_{k>4.1^m}k^{2d^2} e^{-ck^\beta}.$  Given $\epsilon$ and $d$ choose $L$ such that 
\begin{equation} \label{roger}
(4.1^L)^{d^2}g(L)<\epsilon
\end{equation} 
and
\begin{equation} \label{rafa}
g(L)^2<\epsilon.
\end{equation}

Let $\beta=.15.$ 
Consider the following events:  

\begin{itemize}
	\item $E^0:= \{R_L \leq \lfloor 4.1^L\rfloor$ and $\spath \in \cwpplus((0,0),(R_L,\cdot))\}.$
	\item $E^1_k:=$ $\{R_L=k$ and $\spath \in \cwpplus((0,0),(R_L,\cdot))\}.$
	\item $F:=$ $\{R_L\geq n^{1-\beta}$ and $\spath \in \cwpplus((0,0),(n^{1-\beta},\cdot))\}$.
\end{itemize}

We can also similarly define $E^{0,*}$, $E^{1,*}_k$ and $F^*$ based on the path $S^*$. 

\begin{lemma}\label{obsidian}
If  $\sigma \in \X$ and $R_L,R^*_L<n/2$  then
         $$\spath_\sigma \in   \cwpplus((0,0),(R_L,\cdot)),$$  and
	 $$\spath^*_\sigma \in   \cwpplus((0,0),(R^*_L,\cdot))$$ 
	 and for any $0\leq s\leq t \leq n$ we have  
	 	 $$\spath_\sigma \in   \possible((s,\cdot),(t,\cdot)).$$

	 \end{lemma}

\begin{proof}
By Lemma \ref{permsRgreat} we have that 
         \begin{equation}\spath_\sigma \in   \possible((0,0),(n,0)).\label{vandermonde}\end{equation} 
This combined with the definition of $\possible$ implies that for any $0 \leq s<t \leq n$ 
 $$\spath_\sigma \in   \possible((s,\cdot),(t,\cdot)).$$ 
 which proves the third claim.
 Line \eqref{vandermonde}, the definition of $\cwpplus((0,0),(t,\cdot))$ and the fact that  $R_L \leq n/2$ implies that 
         $$\spath_\sigma \in   \cwpplus((0,0),(R_L,\cdot)).$$ 
As $R_L<n/2$ this proves the first claim. The second claim is proven in an identical manner.
\end{proof}


We define $T^*_L$ based on the definition of $T_L$ (before Lemma \ref{chaplain_one}) using the walk $\spath^*_\sigma$.

\begin{lemma}\label{breakdown}
	If  $\sigma \in \X$    has
	$\max(R_L,R^*_L)\geq \lfloor 4.1^L \rfloor$ for $\spath_\sigma$, then $\spath_\sigma$ must be in one of the following events:
	\begin{enumerate}
\item $F$ 
\item $F^*$ 
\item $$\bigcup_{k \in (\lfloor 4.1^L \rfloor,n^{1-\beta})} E^1_k \cap E^{0,*}  \cap   \possible((k,\cdot),(n-T^*_{L},\cdot))$$
\item $$\bigcup_{k^* \in (\lfloor 4.1^L \rfloor,n^{1-\beta})} E^0 \cap E^{1,*}_{k^*}  \cap   \possible((T_L,\cdot),(n-k^*,\cdot))$$
\item $$\bigcup_{k,k^* \in (\lfloor 4.1^L \rfloor,n^{1-\beta})} E^1_k \cap E^{1,*}_{k^*}  \cap   \possible((k,\cdot),(n-k^*,\cdot))$$
\end{enumerate}
\end{lemma}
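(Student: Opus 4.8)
The plan is a case analysis on the entrance time $R_L=R_L(\spath_\sigma)$ of the shrunk chamber $\cone_{2^L}$ and its time-reversed analogue $R^*_L=R^*_L(\spath_\sigma)$, feeding the conclusions of Lemma \ref{permsRgreat} and Lemma \ref{obsidian} into the five target sets. Recall $\beta=.15$, so $n^{1-\beta}=n^{.85}<n/2$ for $n$ large; in particular, whenever $R_L<n^{1-\beta}$ we may invoke Lemma \ref{obsidian}.

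First I would dispose of the two \emph{slow-start} cases. If $R_L\geq n^{1-\beta}$, then by Lemma \ref{permsRgreat} we have $\spath_\sigma\in\scwpplus((0,0),(n,0))$; restricting the defining condition of $\scwpplus$ to times $m\in[0,n^{1-\beta}]\subseteq[0,n/2]$ and matching it term by term against the definition of $\cwpplus$ gives $\spath_\sigma\in\cwpplus((0,0),(n^{1-\beta},*))$, hence $\spath_\sigma\in F$, which is case (1). Symmetrically, if $R_L<n^{1-\beta}$ but $R^*_L\geq n^{1-\beta}$, then $\spath_\sigma\in F^*$, case (2).

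There remains the case $R_L,R^*_L<n^{1-\beta}$, where the hypothesis $\max(R_L,R^*_L)\geq\lfloor 4.1^L\rfloor$ now forces at least one of the two to lie in $[\lfloor 4.1^L\rfloor,n^{1-\beta})$. Since both times are $<n/2$, Lemma \ref{obsidian} applies and gives
$$\spath_\sigma\in\cwpplus((0,0),(R_L,*)),\qquad \spath_\sigma\in\possible\big((R_L,*),(n-R^*_L,*)\big),\qquad \spath^*_\sigma\in\cwpplus((0,0),(R^*_L,*)).$$
I would then split into three sub-cases according to whether each of $R_L$ and $R^*_L$ is \emph{short} (below $\lfloor 4.1^L\rfloor$, in which case the corresponding $E^0$ or $E^{0,*}$ holds by the first resp.\ third membership above) or \emph{intermediate} (in $[\lfloor 4.1^L\rfloor,n^{1-\beta})$, in which case the corresponding $E^1_k$ or $E^{1,*}_{k'}$ holds); both short is excluded by the hypothesis. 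If $R_L$ is intermediate and $R^*_L$ short, set $k=R_L$: the displayed memberships supply the $\cwpplus$ parts of $E^1_k$ and $E^{0,*}$ and the $\possible(\cdot,\cdot)$ factor, while the entrance-time inequalities supply the remaining parts, so $\spath_\sigma$ lies in the set of case (3). The mirror choice ($R_L$ short, $R^*_L$ intermediate, $k'=R^*_L$) gives case (4); here one also uses the $\cwpplus((0,0),(R_L,*))$ factor of $E^0$ to push the $\possible$ condition from argument $R_L$ back to $T_L\leq R_L$. The remaining choice (both intermediate, $k=R_L$, $k'=R^*_L$) gives case (5).

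I expect the only friction to be bookkeeping: matching the defining conditions of $\scwpplus$ restricted to an initial or final segment with $\cwpplus$, reconciling the various endpoint markers (the $*$'s, the occasional $0$'s, and $T_L$ versus $R_L$), and choosing the thresholds $\lfloor 4.1^L\rfloor$ and $n^{1-\beta}$ consistently so that the five $(R_L,R^*_L)$-configurations are genuinely exhaustive. All the probabilistic content has been extracted by Lemmas \ref{permsRgreat} and \ref{obsidian}; the present lemma is just a combinatorial partition of the bad event into the five pieces estimated in the next subsections.
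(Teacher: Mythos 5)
Your proposal is correct and follows essentially the same route as the paper: split on whether $\max(R_L,R^*_L)$ exceeds $n^{1-\beta}$ (giving $F$ or $F^*$ via Lemma \ref{permsRgreat}), and otherwise feed the three memberships from Lemma \ref{obsidian} into a short/intermediate case analysis on $R_L$ and $R^*_L$ to land in cases (3)--(5). The paper's proof is just a terser version of this same decomposition.
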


\begin{proof}
If  $\max(R_L,R^*_L) \geq n^{1-\beta}$  then $\spath_\sigma \in F \cup F^*$. 
If  $\max(R_L,R^*_L) \leq n^{1-\beta}$  then the hypotheses of Lemma \ref{obsidian} are satisfied and we have that 
         $$\spath_\sigma \in   \cwpplus((0,0),(R_L,\cdot)),$$ 
	 $$\spath_\sigma \in   \possible((R_L,\cdot),(n-R^*_L,\cdot))$$ and 
	 $$\spath^*_\sigma \in   \cwpplus((0,0),(R^*_L,\cdot)).$$ 
Using this plus the hypothesis that $\max(R_L,R^*_L)$ is bounded below by $ \lfloor 4.1^L \rfloor$ we get that at least one of $E^1_k\cap E^{0,*}$, $E^0\cap E^{1,*}_{k^*}$, or $E^1_k \cap E^{1,*}_{k^*}$ must occur for some $\lfloor 4.1^L \rfloor \leq k,k^* \leq n^{1-\beta}$. 
As in the previous lemma $\sigma \in \X$ by Lemma \ref{permsRgreat} $\spath_\sigma \in \possible((0,0),(n,0))$. Thus by the definition of $\possible((0,0),(n,0))$ we have that  $\spath_\sigma \in \possible((i,*),(j,*))$ for any $0 \leq i<n/2$ and $n/2<j\leq n$. 
\end{proof}

\begin{lemma} \label{umtiti}
There exist $C_1$, $c_2$ and $\delta_1 > 0$ such that
 $$\prob_{(0,0)}(F), \prob_{(0,0)}(F^*)  \leq C_1e^{-c_2n^{\delta_1}}.$$
\end{lemma}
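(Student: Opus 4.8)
The event $F$ asks that $\petrov$ holds up through time $n^{1-\beta}$ (in the form needed to define $\cwpplus$) and yet the walk, scaled against the cone boundary at level $2^L$, has not entered $\cone_{2^L}$ by time $n^{1-\beta}$. The intuition is that $\sloc$ is a mean-zero random walk with nondegenerate covariance on the codimension-one subspace, so after $t$ steps its fluctuations are of order $t^{1/2}$, which is enormous compared to $2^L$ once $t \gg 4^L$; hence by time $n^{1-\beta}$ the walk has had many independent ``chances'' to land deep inside $\cone_{2^L}$, each with probability bounded below by a constant depending only on $L$ and $d$. Failing all of them should cost an exponentially small probability.

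First I would set up the epochs. Using $T_l = \min\{\inf\{t : \sloc(t)\in\cone_{2^l}\}, \lfloor 4.1^l\rfloor\}$ from the previous section (or more simply the deterministic times $t_j = \lfloor 4.1^{L+j}\rfloor$ for $j = 1,2,\dots$), partition $[0, n^{1-\beta}]$ into roughly $N \asymp \log n$ disjoint blocks $[t_{j-1}, t_j]$. On the event $F$, the walk is in $\cwpplus((0,0),(n^{1-\beta},*))$, which in particular forces $\petrov(n^{1-\beta})$ to hold along the way, and forces $\sloc(t)\notin\cone_{2^L}$ for all $t\le n^{1-\beta}$. The key estimate is: conditioned on $\mathcal F_{t_{j-1}}$ (and on $\petrov$ not having failed, and on the walk still being outside $\cone_{2^L}$, so that in particular $|\sloc(t_{j-1})|$ is at most $(t_{j-1})^{.6}\ll t_j^{1/2}$ by Lemma \ref{considiff}), there is a constant $p = p(d,L)>0$ such that with probability at least $p$ the walk lands in $\cone_{2^L}$ at time $t_j$ — and indeed lands deep enough that it cannot be pushed back out, i.e. enters and then can be made to stay consistent with the remaining $\cwpplus$ constraints. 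This is a local CLT / moderate-deviation statement for $H(\sloc)$: by Proposition \ref{grievance} the transformed walk satisfies the hypotheses of \cite{denisov2015random}, and over $\Theta(t_j)$ steps the increment $H(\sloc(t_j)) - H(\sloc(t_{j-1}))$ has a density bounded below on a ball of radius $\asymp t_j^{1/2}\gg 2^L$, so it places $\Theta(1)$ mass on the (fixed-size, relative to that ball) region mapping into the interior of $\cone_{2^L}$.

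Granting this, I would conclude by the usual super-multiplicative estimate:
\[
\prob_{(0,0)}(F) \le \prob_{(0,0)}\big(\petrov(n^{1-\beta}) \text{ holds, } \sloc(t)\notin\cone_{2^L} \ \forall t\le n^{1-\beta}\big) \le (1-p)^{N} + \prob_{(0,0)}(\petrov(n^{1-\beta})^C),
\]
where $N \asymp \log_{4.1}(n^{1-\beta}) - L \asymp \log n$. The second term is $\le Ce^{-n^{\gamma}}$ by Lemma \ref{conditionedpetrov}. For the first term, $(1-p)^{c\log n} = n^{c\log(1-p)} = n^{-c'}$ is only polynomially small, which is \emph{not} enough — so the argument as stated needs one more idea: instead of one ``chance'' per block of geometrically growing length, I should use the fact that already within the single block $[\tfrac12 n^{1-\beta}, n^{1-\beta}]$, of length $\Theta(n^{1-\beta})$, one can chop into $\Theta(n^{1-\beta}/\log n)$ (or $\Theta(n^{1-\beta - \beta'})$) sub-blocks of polynomially growing length where the same entering-probability-bounded-below estimate applies, giving $(1-p)^{n^{\delta_1}}$ with $\delta_1 = 1-\beta-\beta' > 0$; here one must take care that the walk hasn't drifted too far (so $|\sloc|$ stays $\ll$ block-length${}^{1/2}$, which is exactly what $\petrov$ guarantees, $|\sloc(t)|< t^{.6}$) and that staying outside $\cone_{2^L}$ across one sub-block still has probability $\le 1-p$ uniformly. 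This chopping inside a long interval, combined with $\beta = .15 < 1/2$, yields $\prob_{(0,0)}(F) \le C_1 e^{-c_2 n^{\delta_1}} + Ce^{-n^\gamma}$, and absorbing constants gives the claim; $F^*$ is identical by the reverse-walk symmetry.

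**Main obstacle.** The crux is proving the uniform lower bound $p(d,L)>0$ on the probability that, over a block of length $\ell$ with starting point at distance $o(\ell^{1/2})$ from $\cone_{2^L}$, the walk enters $\cone_{2^L}$ — uniformly over the (random, adversarial) starting configuration consistent with $\petrov$ and with not-yet-entered. This is where one genuinely needs the nondegeneracy from Proposition \ref{grievance} and a local-CLT-type statement valid on blocks whose length grows polynomially (not just the first block $\asymp 4^L$); everything else is bookkeeping and invoking Lemma \ref{conditionedpetrov} for the $\petrov$-failure term.
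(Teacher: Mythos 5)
Your overall strategy (split off the event that a Petrov condition fails late, then extract many ``independent chances'' for the walk to enter $\cone_{2^L}$ and multiply $(1-p)$ factors) is the same shape as the paper's argument, but the step you yourself flag as the crux --- the uniform per-block lower bound $p$ --- is not established, and the control you propose for it cannot work. You justify the per-block estimate by requiring that the walk ``hasn't drifted too far,'' quantified as $|\sloc(t)|\ll(\text{block length})^{1/2}$ and guaranteed by $\petrov$ via $|\sloc(t)|<t^{.6}$. Two problems: first, at the relevant times $t\asymp n^{1-\beta}=n^{.85}$ you have $t^{.6}\approx n^{.51}$, while no admissible block inside $[0,n^{.85}]$ has $(\text{length})^{1/2}$ exceeding $n^{.425}$, so the hypothesis you impose is vacuous --- it can never hold, and in particular the flexible exponent $\delta_1=1-\beta-\beta'$ you advertise is not attainable this way. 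Second, $|\sloc(t)|$ is the wrong quantity: what the local-CLT lower bound actually needs is that the \emph{distance from $\sloc(t)$ to $\cone_{2^L}$} be $O((\text{block length})^{1/2})$, and a bound on $|\sloc(t)|$ alone (of order $n^{.51}$) would force blocks of length $\gtrsim n^{1.02}$, longer than the whole time horizon, so your scheme would not produce even one constant-probability chance.

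The missing ingredient is precisely what the paper uses: on the part of $F$ where the Petrov conditions hold past time $n^{.2}$, membership in $\cwpplus((0,0),(n^{1-\beta},*))$ forces $\sloc(m)\in\cone_{-m^{.4}}$, and $R_L\geq n^{1-\beta}$ forces $\sloc(m)\notin\cone_{2^L}$ whenever $\petrov(m)$ holds; together these confine the walk to within roughly $n^{.4}$ of $\partial\cone$ throughout $[n^{.2},n^{.85}]$. That confinement is what makes a per-block escape/entry probability bounded below by a constant, and it also pins down the block length: you need length about $n^{.8}$ so that the CLT fluctuation $n^{.4}$ matches the width of the slab, which yields about $n^{.85}/n^{.8}=n^{.05}$ blocks and hence the stretched-exponential bound $e^{-cn^{.05}}$ (so $\delta_1=.05$, not a tunable $1-\beta-\beta'$). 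The paper's proof is exactly this: treat a late Petrov failure separately via Lemma \ref{pathpetrov}, and otherwise chop $[n^{.2},n^{.85}]$ into $n^{.05}$ pieces of length $n^{.8}$, each giving a constant probability of leaving the $n^{.4}$-neighborhood of $\partial\cone$. A smaller bookkeeping point: conditioning on $\petrov(n^{1-\beta})$ holding at the terminal time does not by itself give $\sloc(t)\notin\cone_{2^L}$ for all $t\leq n^{1-\beta}$ (the definition of $R_L$ only excludes times at which $\petrov(t)$ itself holds), so you either need a union bound over failures of $\petrov(t)$ for $t\geq n^{.2}$ or the paper's ``last failure time'' decomposition; this is fixable, unlike the per-block estimate as you stated it.
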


\begin{proof}
Let $m$ be the last time before $R_L$ such that $\petrov(m)$ fails. If $m^5>n$ then by Lemma \ref{pathpetrov} this occurs with probability at most $c_3e^{-m^{\gamma}} < c_3e^{-n^{\gamma'}}$. If $m^5 \leq n$, then from $n^{.2}$ until $n^{.85}=n^{1-\beta}$, $s_{\spath_\sigma}$ must be within $n^{.4}$ of the boundary of $\cone$.  By standard arguments about random walk, the probability the fluctuation over the time scale $n^{.85}=n^{.8}n^{.05}$ is bounded by $e^{-c_4n^{.05}}$ for some positive constant $c_4$.  Then choose $\delta_1, C_1$ and $c_2$ so that $\min(c_3e^{-n^{\gamma'}},e^{-c_4n^{.05}}) < C_1 e^{-c_2n^{\delta_1}}.$  By symmetry the second inequality also holds.
\end{proof}

Let $\coup$ be the largest $l$ such that $4.1^l<n/10$.

\begin{lemma} \label{les bleus}
There exists $C<\infty$ such that for all $x,y \in \cone_{2^{\coup}}$, $T_{\coup}$and $T^*_{\coup}$
$$\P_{(T_{\coup},x)}(\possible((T_{\coup},x),(n-T^*_{\coup},y))) \leq C U(x)U(y) n^{-(d^2-1)/2}.$$
\end{lemma}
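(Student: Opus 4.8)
The plan is to prove the upper bound on $\P_{(T_{\coup},x)}(\possible((T_{\coup},x),(n-T^*_{\coup},y)))$ by splitting the walk at the two times $T_{L_f}$ and $n - T^*_{L_f}$ (where $L_f$ is the scale from \eqref{back flip one}), and handling the three resulting pieces — the initial piece from $T_{\coup}$ to $T_{L_f}$, the bulk from $T_{L_f}$ to $n - T^*_{L_f}$, and the final piece symmetrically — separately. For the initial piece, I would start from the observation that on the event $\possible(\cdot)$ the walk is in $\cwpplus$ over every relevant subinterval, so I can first apply Lemma \ref{magnus_one} (with $l_0 = \coup$, $l = L_f$) to get
\[
\sum_{x'} U(x')\,\P_{(T_{\coup},x)}\big(\spath \in \cwplus((T_{\coup},x),(T_{L_f},x'))\big) \leq K\,(2T_{\coup})^{d(d-1)/2},
\]
and, crucially, combine this with Lemma \ref{benalla_one} to replace $\cwplus$ by $\cwpplus$ (the extra mass in $\cwpplus \setminus \cwminus$ contributes a lower-order $H(L)U(x)$ term that vanishes as $L \to \infty$). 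Since $x \in \cone_{2^{\coup}}$ we have $U(x) \asymp$ a power of $2^{\coup}$, and $T_{\coup} \leq 4.1^{\coup} \leq n/10$, so this first factor is under control up to a multiplicative constant times $U(x)$ times a bounded power of $n$; I would want to track exponents so that the $U(x)$ appears to the first power as in the statement.

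The main estimate is the bulk piece from $T_{L_f}$ to $n - T^*_{L_f}$. Here I would use the upper bound half of Lemma \ref{current_one}: for $R \in [n/2, n]$ and $x', y' \in \Kstar$ with $|x'|,|y'| \leq n^{.5-\delta}$,
\[
\P_{(T,x')}\big(\spath \in \cw((T,x'),(R,y'))\big) \leq C''' U(x')U(y')\,n^{-d(d-1)/2}\cdot n^{-(d-1)/2}.
\]
The Petrov conditions built into $\possible$ (via $\cwpplus$, through Lemma \ref{considiff}) force $|s_\spath(T_{L_f})| \lesssim T_{L_f}^{.6} \ll n^{.5-\delta}$ and similarly at the other end, so the endpoints $x', y'$ of the bulk piece indeed lie in the required range; the scale $L_f$ was chosen in \eqref{back flip one} precisely so that $2^{L_f} > n^{.5-2\delta}$, putting the endpoints inside $\Kstar = \cone_{n^{.5-2\delta}}$. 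Then I would sum over the intermediate endpoints $x', y'$, using the harmonicity/summability of $U$ against the transition probabilities of the first and third pieces (the same sums bounded in Lemmas \ref{magnus_one} and \ref{chaplain_one}) to absorb the $U(x')$ and $U(y')$ factors. The symmetric third piece is handled exactly like the first, using $\petrov^*$ and the reversed walk.

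Assembling: the product of the three bounds gives something of the shape $C \cdot (\text{const depending on } L) \cdot U(x) U(y) \cdot n^{-d(d-1)/2} \cdot n^{-(d-1)/2}$, and since $d(d-1)/2 + (d-1)/2 = (d-1)(d+1)/2 \cdot \tfrac{1}{?}$ — more precisely $d(d-1)/2 + (d-1)/2 = (d-1)(d+1)/2 \cdot \ldots$ wait, $d(d-1)/2 + (d-1)/2 = (d-1)(d+1)/2 = (d^2-1)/2$ — this is exactly the exponent $(d^2-1)/2$ in the statement. So the power of $n$ matches, and the constant $C$ can be taken uniform once $\coup$ is pinned down as the largest $l$ with $4.1^l < n/10$ (so $T_{\coup}, T^*_{\coup} \leq n/10$, and the bulk piece has length $\geq n/2$, placing it in the regime $R \in [n/2,n]$ of Lemma \ref{current_one}). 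The hard part will be bookkeeping the exponents on the initial and final segments: naively the factor $(2T_{\coup})^{d(d-1)/2}$ from Lemma \ref{magnus_one} is not obviously $O(U(x))$, so I expect to need the more refined statement — that the relevant sum is bounded by $K\,U(x)$ when $x \in \partial\cone_{2^{L}}$ (Lemma \ref{chaplain_one}), applied after first waiting until the walk enters $\cone_{2^{L_f}}$ — and to interleave Lemmas \ref{chaplain_one}, \ref{magnus_one} and \ref{benalla_one} carefully over the dyadic scales between $\coup$ and $L_f$. Getting the $U(x)$-linear dependence right, rather than a cruder polynomial bound, is where the real work lies.
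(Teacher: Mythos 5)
There is a genuine gap, and it is concentrated in the bulk step. Lemma \ref{current_one} requires the endpoints of the middle segment to satisfy $|x'|,|y'|\leq n^{.5-\delta}$ in addition to lying in $\Kstar$, but Lemma \ref{les bleus} must be uniform over \emph{all} $x,y\in\cone_{2^{\coup}}$ with no bound on $|x|,|y|$ (in its application, $x=\sloc(T_{\coup})$ with $T_{\coup}$ as large as $4.1^{\coup}\approx n/10$, so $|x|$ can be of order $n$). Your justification that the Petrov conditions force $|\sloc(T_{\Lf})|\lesssim T_{\Lf}^{.6}\ll n^{.5-\delta}$ fails on two counts: the Petrov conditions inside $\possible$ control count increments (hence the displacement of the walk relative to its starting point $x$), not the absolute position, and being close to $\cone$ puts no bound at all on the norm; moreover $4.1^{\Lf}$ is of order $n^{(1-.08/d(d-1))\log 4.1/\log 4}$, which is comparable to or larger than $n$, so even the increment bound $T_{\Lf}^{.6}$ need not be $\ll n^{.5-\delta}$. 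The auxiliary lemmas for the initial piece are also invoked outside their hypotheses: Lemma \ref{magnus_one} assumes $x\notin\cone_{2^{l_0}}$ with $T\geq\lfloor 4.1^{l_0}\rfloor$, and Lemma \ref{chaplain_one} assumes $x\in\partial\cone_{2^{l_0}}$ with $T<4.1^{l_0}$, whereas here the walk starts deep inside $\cone_{2^{\coup}}$; for $d\geq 3$ one even has $2^{\coup}=n^{\log 2/\log 4.1}<n^{.5-2\delta}$, so $\cone_{2^{\coup}}\not\subset\Kstar$ and the passage to scale $2^{\Lf}$ is itself a nontrivial (and time-consuming) event rather than a short prefix.

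The paper's proof is much more direct and avoids any decomposition: on $\possible((T_{\coup},x),(n-T^*_{\coup},y))$, either the path stays in the enlarged cone $\cone_{-n^{.4}}$ on the whole interval, or some Petrov condition fails at a scale $m>2^{\coup}>n^{.2}$, and the latter has probability at most $e^{-cn^{\gamma'}}$ by Lemma \ref{conditionedpetrov}, hence is negligible. For the former, Proposition \ref{grievance} together with Lemma 28 of \cite{denisov2015random}, applied in the cone shifted by $(n^{.4},\dots,n^{.4})$, gives a bound $C(x,y)\,n^{-(d^2-1)/2}$ with $C(x,y)\leq C\,U(x+(n^{.4},\dots,n^{.4})+x_0)\,U(y+(n^{.4},\dots,n^{.4})+x_0)$ (see \cite{varopoulos1999potential}); since every gap coordinate of $x,y\in\cone_{2^{\coup}}$ exceeds $2^{\coup}>n^{.4}$, this is at most $2^{d(d-1)}U(x)U(y)$, and no upper bound on $|x|,|y|$ is ever needed because the Gaussian factor in that local estimate is simply bounded by a constant. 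That uniformity in $|x|,|y|$ is precisely what your route cannot supply; repairing it would require a version of Lemma \ref{current_one} without the norm restriction, which in effect is the same appeal to \cite{denisov2015random} that the paper makes in one step.
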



\begin{proof}
If a path is in $\scwpplus((T_{\coup},x),(n-T^*_{\coup},y))$ then either it is in $\cone_{-n^{.4}}$ from $T_\coup$ to $T^*_{\coup}$
 or $\petrov(m)$ fails for some $m>2^{\coup}>n^{.2}$. We will show that the probability of the former is bounded by 
$C' U(x)U(y) n^{-(d^2-1)/2}.$
By Proposition \ref{grievance} 
 and Lemma 28 in \cite{denisov2015random} this probability is bounded by 
 $C(x,y) n^{-(d^2-1)/2}.$
The function 
$C(x,y)$ 
is bounded by
$C U(x+(n^{.4},\dots,n^{.4})+x_0) U(y+(n^{.4},\dots,n^{.4})+x_0)$ for some fixed $x_0$. (See the note at the bottom of page 10 in \cite{denisov2015random} 
or Theorem 4 of \cite{varopoulos1999potential}.)
As every coordinate of $x$ and $y$ are at least $2^{\coup}>n^{.4}$ then 
$$U(x+(n^{.4},\dots,n^{.4})+x_0) U(y+(n^{.4},\dots,n^{.4})+x_0)\
    \leq 2^{d(d-1)}U(x)U(y).$$
Thus the probability of this event is at most $C U(x)U(y) n^{-(d^2-1)/2}$.

By Lemma \ref{conditionedpetrov} the probability that $\petrov(m)$ fails for some
$m$ in this region is at most $e^{-cn^{\gamma'}}$ for some $\gamma'>0$. 
The lemma follows by putting these two estimates together with the union bound.
\end{proof}

\begin{lemma} \label{belgium}
There exists $r$, $C$ and $\alpha> 0$ such that for all $k \in [\lfloor 4.1^L\rfloor,T_{\coup})$

$$\E_{(0,0)} \left( \1_{E^1_k \cap \{ k \leq T_{\coup}\}} 
 U(\sloc(T_{\coup}))\right)\leq C k^r e^{-ck^{\alpha}}.$$ 	
\end{lemma}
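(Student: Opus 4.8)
The plan is to reduce the bound first to an estimate on the probability of $E^1_k$, and then to show that $E^1_k$ is exponentially unlikely in $k$. First I would observe that $E^1_k\cap\{k\le T_{\coup}\}$ is $\mathcal F_k$-measurable, since $R_L$ and $T_{\coup}$ are stopping times and membership in $\cwpplus((0,0),(k,*))$ depends only on $\omega(1),\dots,\omega(k)$. On $\{k\le T_{\coup}\}$ we have $T_{\coup}\le\lfloor 4.1^{\coup}\rfloor<n$, so $T_{\coup}$ is an a.s.\ bounded stopping time; since $U$ is harmonic for $\sloc$ and $\sloc$ has bounded increments, $\big(U(\sloc(t\wedge T_{\coup}))\big)_t$ is a bounded martingale, and the optional stopping theorem gives $\E_{(k,x)}\big(U(\sloc(T_{\coup}))\big)=U(x)$ for every admissible $x$ (on $\{k=T_{\coup}\}$ this is immediate). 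Applying the Markov property at time $k$ therefore yields $\E_{(0,0)}\big(\1_{E^1_k\cap\{k\le T_{\coup}\}}U(\sloc(T_{\coup}))\big)=\E_{(0,0)}\big(\1_{E^1_k\cap\{k\le T_{\coup}\}}U(\sloc(k))\big)$. On $E^1_k$ we have $R_L=k$, so $\petrov(k)$ holds and hence $|\sloc(k)|<d^{-1}k^{.6}$ by \eqref{maximus}; since also $\sloc(k)\in\cone_{2^L}\subseteq\cone$, we get $0\le U(\sloc(k))=\prod_{i<j}(\sloc^i(k)-\sloc^j(k))\le(2d^{-1}k^{.6})^{d(d-1)/2}=C_1k^{r}$ with $r:=\tfrac{3}{10}d(d-1)$. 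Thus the left side of the lemma is at most $C_1 k^{r}\,\P_{(0,0)}(E^1_k)$, and it remains to prove $\P_{(0,0)}(E^1_k)\le C_2e^{-c_2k^{\alpha}}$ for some $\alpha>0$.

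To estimate $\P_{(0,0)}(E^1_k)$ I would set $I=[\lceil k/2\rceil,k]$ and split $E^1_k$ into the part on which some $\petrov(m)$ with $m\in I$ fails and its complement. The first part has probability at most $\sum_{m\in I}Ce^{-m^{\gamma}}\le C'e^{-c'k^{\gamma}}$ by Lemma \ref{conditionedpetrov}. On the complement — where $\petrov(m)$ holds for every $m\in I$ — the definition of $\cwpplus((0,0),(k,*))$ forces $\sloc(t)\in\cone_{-t^{.4}}$ for all $t\in I$ (its Petrov-failure clause being vacuous there), while $R_L=k$ together with $\petrov(t)$ holding forces $\sloc(t)\notin\cone_{2^L}$ for $t\in I\setminus\{k\}$; hence on this event $\sloc(t)\in\cone_{-k^{.4}}\setminus\cone_{2^L}$ for every $t\in[\lceil k/2\rceil,k-1]$. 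I would then partition $[\lceil k/2\rceil,k-1]$ into $J\ge\lfloor k^{.2}/4\rfloor$ disjoint blocks of length $\lceil k^{.8}\rceil$ and use the following escape estimate: from any admissible $y\in\cone_{-k^{.4}}$, $\P_{(u,y)}\big(\sloc(u+\lceil k^{.8}\rceil)\in\cone_{2^L}\big)\ge c_0$ for a constant $c_0=c_0(L)>0$ and all $k$ beyond some $K_1(L)$. This holds because, with $N=\lceil k^{.8}\rceil$, $\sloc(u+N)\in\cone_{2^L}$ follows once each of the $d-1$ coordinate gaps of the increment $\sloc(u+N)-\sloc(u)$ exceeds $2^L+k^{.4}$, and since $\sqrt N\asymp k^{.4}$ while $2^L$ is fixed, Proposition \ref{grievance} and the central limit theorem show that, rescaling the increment by $\sqrt N$, this event converges to ``a non-degenerate Gaussian on $\{\sum x_i=0\}$ has all $d-1$ gap-coordinates at least $1$,'' which has positive probability; the bound is uniform in $y$ because $y$ enters only through those thresholds, which lie in $[-k^{.4},\infty)$. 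Conditioning block by block (each block-start lies in $\cone_{-k^{.4}}$ and each block-end lies outside $\cone_{2^L}$ on the event considered) gives probability at most $(1-c_0)^{J}\le C''e^{-c''k^{.2}}$. Combining, $\P_{(0,0)}(E^1_k)\le C_2e^{-c_2k^{\alpha}}$ with $\alpha=\min(\gamma,.2)>0$ once $k\ge K_1(L)$; for the finitely many $k$ in $[\lfloor4.1^L\rfloor,K_1(L))$ the asserted inequality holds trivially after enlarging the constant (which may depend on $L$ and $d$). Plugging this into the previous paragraph gives $\E_{(0,0)}\big(\1_{E^1_k\cap\{k\le T_{\coup}\}}U(\sloc(T_{\coup}))\big)\le Ck^{r}e^{-ck^{\alpha}}$.

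The step I expect to be the main obstacle is the uniform escape estimate: that from every point of $\cone_{-k^{.4}}$ the walk enters the \emph{fixed} translated cone $\cone_{2^L}$ within $k^{.8}$ steps with a probability bounded below independently of the starting point and of the block. This is exactly where the mean-zero, full-rank structure of the increments of $\sloc$ on the hyperplane $\{\sum x_i=0\}$ — equivalently the hypotheses verified in Proposition \ref{grievance} — is essential, since it guarantees that the limiting Gaussian is genuinely $(d-1)$-dimensional and so does not charge any coordinate hyperplane; without it one could not rule out a degenerate situation in which the gaps cannot all be made large at once. The remaining ingredients — the optional-stopping reduction, the crude bound on $U(\sloc(k))$ via $\petrov(k)$, the union bound over Petrov failures from Lemma \ref{conditionedpetrov}, and the block decomposition — are routine, requiring only some care with the measurability of events at the relevant times and with the interaction between the Petrov-failure clause built into $\cwpplus$ and the Petrov conditions themselves.
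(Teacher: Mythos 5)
Your first reduction is correct and genuinely different from the paper's. The paper conditions at time $k$ and invokes Lemma \ref{magnus_one} to bound $\sum_y U(y)\,\P\big(\spath\in\cwplus((k,x),(T_{\coup},y))\big)$ by $K(2k)^{d(d-1)/2}$; you instead use that $U$ is harmonic and $T_{\coup}$ is a bounded stopping time to replace $U(\sloc(T_{\coup}))$ by $U(\sloc(k))$ via optional stopping, and then use $\petrov(k)$ (which holds on $E^1_k$ because $R_L=k$) together with \eqref{maximus} to get $U(\sloc(k))\le Ck^{.3d(d-1)}$. That is clean and valid (the measurability and martingale checks are fine), and your splitting of $\prob_{(0,0)}(E^1_k)$ into a Petrov-failure part (Lemma \ref{conditionedpetrov}) and a confinement part is the same route the paper takes, mirroring Lemma \ref{umtiti}.

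The genuine gap is in the escape estimate you yourself flagged as the main obstacle. With blocks of length $N=\lceil k^{.8}\rceil$ you need every gap of the $N$-step increment to exceed $2^L+k^{.4}$, and you argue by the CLT that this has probability bounded below ``since $2^L$ is fixed.'' On the stated range this fails at the lower end: $k\ge\lfloor 4.1^L\rfloor$ only gives $\sqrt N\asymp k^{.4}\ge 4.1^{.4L}\approx 1.76^L$, and since $4.1^{.4}<2$ the offset $2^L$ dominates $k^{.4}$ for all $k\in[4.1^L,2^{2.5L})$; there the rescaled threshold $1+2^Lk^{-.4}$ diverges with $L$, the limiting event is not the unit-threshold Gaussian event, and the per-block probability $c_0$ decays in $L$ so fast that $(1-c_0)^J$ with $J\asymp k^{.2}$ gives nothing. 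Your patch (require $k\ge K_1(L)\approx 2^{2.5L}$ and absorb the smaller $k$ into an $L$-dependent constant) saves the literal statement for a fixed $L$, but only with $C=C(L)$ of size roughly $e^{cK_1(L)^\alpha}$; the lemma is used downstream (Lemmas \ref{recall} and \ref{anthony}, via the choice of $L$ in \eqref{roger}) precisely through the fact that $\sum_{k\ge 4.1^L}Ck^re^{-ck^\alpha}\to 0$ as $L\to\infty$, which requires constants uniform in $L$. The repair is the paper's own device (Lemmas \ref{gerrymander} and \ref{seemsuseful}): take block length $N_k=\max(4^L,\lceil k^{.8}\rceil)$, so that the required jump satisfies $2^L+k^{.4}\le 2\sqrt{N_k}$ in every case and the per-block entry probability is bounded below uniformly in $L$ and $k$; the number of blocks is at least $c\min(k4^{-L},k^{.2})\ge ck^{\alpha}$ for any $\alpha\le 1-\log 4/\log 4.1\approx .017$ because $k\ge 4.1^L$, which restores the bound $e^{-ck^{\alpha}}$ with absolute constants.
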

\begin{proof}

Fix $k >4.1^L.$  
First we show that $\P_{(0,0)}(E^1_k) \leq e^{-ck^{\alpha}}$. This works in exactly the same way as Lemma \ref{umtiti}.
Let $m$ be the largest integer less than or equal to $k$ such that $\petrov(m)$ does not occur. If $m \geq k^{1/5}$ we use the bound on the probability of $\petrov(m)$ failing. If $m \leq k^{1/5}$ then the path stays close to the boundary of $\cone$ without entering $\cone_{2^L}$ between $k^{1/5}$ and $k-1$. This also has low probability, which can be bounded by a similar argument to that used in Lemma \ref{umtiti}

If $E^{1}_{k}$ occurs and $k<T_{\coup}$ then we let $x=\sloc(k)$. Then we have $|x| \leq 2k$ and 
$x \not \in INT(\cone_{2^L})$. Thus Lemma \ref{magnus_one} applies.
The portion of the path after $k$ is independent of the portion before $k$. So

\begin{eqnarray*}
\lefteqn{\E_{(0,0)} \left( \1_{E^1_k } 
 U(\sloc(T_{\coup}))\right)}&&\\
 &=&\sum_{x \not \in INT(\cone_{2^L})} \prob_{(0,0)}(E^1_k \cap \sloc(k)=x)\cdot \\
 & &\qquad\sum_{y} \P_{(0,0)}\big( \spath \in \cwplus((T,x),(T_{\coup},y)) \ | \ \sloc(T)=x \big) U(\sloc(y)) \\
 &\leq & \P_{(0,0)}(E^1_k ) k^r \\
& \leq & C k^r e^{-ck^{\alpha}}. 
\end{eqnarray*}
The equality is because the portion of the path after $k$ is independent of the portion before $k$. 
For the next line we have $|x| \leq 2k$ and 
$x \not \in INT(\cone_{2^L})$. Thus Corollary \ref{magnus_one} justifies the first inequality.
 \end{proof}

\begin{lemma} \label{red devils}
There exists $c>0$ such that for all  $n$ and $L$ (and thus all $\coup$)
$$\E_{(0,0)} \left( \1_{E^0  }
U(\sloc(T_{\coup}))\right)\leq c (2\cdot 4.1^L)^{d(d-1)/2}$$ 	
and
$$\E_{(0,0)} \left( \1_{E^{0,*}  }
U(\sloc^*(T^*_{\coup}))\right)\leq c (2\cdot 4.1^L)^{d(d-1)/2}.$$ 	

\end{lemma}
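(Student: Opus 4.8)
The plan is to exploit the harmonicity of the Vandermonde $U(z)=\prod_{i<j}(z_i-z_j)$ for the lazy walk $\sloc$ (recalled above, following \cite{konig2001non}): $(U(\sloc(t)))_{t\ge0}$ is a martingale for the natural filtration $(\mathcal F_t)$. Since each step of $\sloc$ changes the $L^1$-norm by at most $2$, we have $|\sloc(t)|_{L^1}\le2t$ deterministically, hence $|z_i-z_j|\le|\sloc(t)|_{L^1}\le2t$ for every pair of coordinates, so $\sup_{t\le N}|U(\sloc(t))|\le(2N)^{d(d-1)/2}$ for every $N$. In particular this martingale is bounded on any finite window, so optional stopping applies at any bounded stopping time; recall $T_\coup\le\lfloor4.1^\coup\rfloor$ is one.

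I would first note that $R_L$ is a stopping time, that $E^0$ is $\mathcal F_{R_L}$-measurable (it depends only on the path up to $R_L$), and that $\{T_\coup\le R_L\}\in\mathcal F_{R_L}$ (on $\{R_L=r\}$ this is the $\mathcal F_r$-event that $\sloc$ has reached $\cone_{2^\coup}$ by time $r$ or that $\lfloor4.1^\coup\rfloor\le r$). Then I split $\E_{(0,0)}(\1_{E^0}U(\sloc(T_\coup)))$ according to $\{T_\coup\le R_L\}$ and $\{T_\coup>R_L\}$. On $E^0\cap\{T_\coup\le R_L\}$ one has $T_\coup\le R_L\le4.1^L$, so $|\sloc(T_\coup)|_{L^1}\le2\cdot4.1^L$ and hence $U(\sloc(T_\coup))\le(2\cdot4.1^L)^{d(d-1)/2}$ pointwise, contributing at most $(2\cdot4.1^L)^{d(d-1)/2}\,\P_{(0,0)}(E^0\cap\{T_\coup\le R_L\})$. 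On $E^0\cap\{T_\coup>R_L\}$ we have $R_L<\lfloor4.1^\coup\rfloor$, and on $\{R_L=r\}$ the time $T_\coup=r+\nu_r$ with $\nu_r:=\inf\{s\ge0:\sloc(r+s)\in\cone_{2^\coup}\}\wedge(\lfloor4.1^\coup\rfloor-r)$ a bounded stopping time for the walk shifted to time $r$; the Markov property at $r$ together with optional stopping for $U(\sloc)$ gives $\E_{(0,0)}(\1_{E^0\cap\{T_\coup>R_L\}}U(\sloc(T_\coup)))=\E_{(0,0)}(\1_{E^0\cap\{T_\coup>R_L\}}U(\sloc(R_L)))$. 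On $E^0$, $\sloc(R_L)\in\cone_{2^L}$ with $|\sloc(R_L)|_{L^1}\le2R_L\le2\cdot4.1^L$, so $0<U(\sloc(R_L))=\prod_{i<j}(\sloc(R_L)_i-\sloc(R_L)_j)\le(2\cdot4.1^L)^{d(d-1)/2}$, and the second piece is at most $(2\cdot4.1^L)^{d(d-1)/2}\,\P_{(0,0)}(E^0\cap\{T_\coup>R_L\})$. Adding,
\[
\E_{(0,0)}\bigl(\1_{E^0}U(\sloc(T_\coup))\bigr)\ \le\ (2\cdot4.1^L)^{d(d-1)/2}\,\P_{(0,0)}(E^0)\ \le\ (2\cdot4.1^L)^{d(d-1)/2},
\]
so the claim holds with $c=1$, for all $L$ and $n$. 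The second inequality of the lemma follows from the identical argument applied to the time-reversed walk $\sloc^*$ (which has the same coordinate-symmetric increment law, so $U$ is harmonic for it too), with $T_\coup,R_L$ replaced by $T^*_\coup,R^*_L$.

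The substantive input is just the harmonicity of $U$; no cone estimate (Lemma \ref{chaplain_one}, \ref{magnus_one}, or \ref{les bleus}) is needed here, which is why the bound is a constant depending on $L$ alone and is valid for every $n$. The only mildly delicate points are the measurability bookkeeping and the verification that ``$T_\coup$ after time $R_L$'' is genuinely a bounded stopping time of the shifted walk so that optional stopping legitimately yields $\E_{(0,0)}(U(\sloc(T_\coup))\mid\mathcal F_{R_L})=U(\sloc(R_L))$ on $E^0\cap\{T_\coup>R_L\}$ — both routine — so I expect no real obstacle.
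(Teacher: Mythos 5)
Your proof is correct for the statement as written, but it takes a genuinely different (and lighter) route than the paper. The paper's argument notes that on $E^0$ one has $R_L\le 4.1^L$, hence $U(\sloc(R_L))\le (2\cdot 4.1^L)^{d(d-1)/2}$, and then propagates this bound from time $R_L$ to time $T_\coup$ by invoking the cone estimates for the $U$-weighted hitting sums, Lemmas \ref{chaplain_one} and \ref{benalla_one}; that machinery is what produces the constant $c$ (essentially the constant $K$ of Lemma \ref{chaplain_one}). You instead observe that $E^0$, as defined, constrains the path only up to the stopping time $R_L$, so under $\P_{(0,0)}$ the walk is unconditioned afterwards; since $U(\sloc(t))$ is a martingale (Theorem \ref{costume}) which is deterministically bounded by $(2t)^{d(d-1)/2}$ on the bounded window $t\le \lfloor 4.1^\coup\rfloor$, optional sampling at the ordered bounded stopping times $R_L\le T_\coup\vee R_L$ transfers the bound at $R_L$ directly to $T_\coup$ on $\{T_\coup>R_L\}$, while on $\{T_\coup\le R_L\}\subseteq\{T_\coup\le 4.1^L\}$ the pointwise bound applies; this yields the claim with $c=1$ and avoids Lemmas \ref{chaplain_one} and \ref{benalla_one} entirely. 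The measurability points you check ($E^0\in\mathcal F_{R_L}$, $\{T_\coup\le R_L\}\in\mathcal F_{R_L}$) are the right ones, and the reversed-walk case is indeed symmetric. What the paper's heavier route buys is robustness to the reading in which the path is additionally required to stay in $\cwplus$/$\cwpplus$ between $R_L$ and $T_\coup$ (which is how such $U$-weighted sums are assembled in Lemma \ref{recall}): under that extra restriction the plain martingale identity no longer gives an upper bound, since $U$ changes sign off the cone and restricting to a sub-event can inflate the signed average, and then Lemmas \ref{chaplain_one} and \ref{benalla_one} are genuinely needed. For the lemma as stated, however, your argument is complete.
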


\begin{proof}
Every point $z$ such that $\sloc(T_{R_L})=z$ has $U(z) \leq (2\cdot 4.1^L)^{d(d-1)/2}$.
Then we apply Lemmas \ref{chaplain_one} and \ref{benalla_one}.	
The second statement follows in the same way.
\end{proof}

\begin{lemma} \label{recall}
There exists $C''$, $c$, $r'$ and $\alpha$ such that for all $L$
$$\sum_{ k>4.1^L}\P_{(0,0)} \bigg( E^1_k \cap E^{0,*}  \cap  \possible((T_{\coup},\cdot),(n-T^*_{\coup},\cdot))\bigg)
 \leq n^{-(d^2-1)/2} \sum_{k \geq 4.1^L} C' k^r e^{-ck^\alpha}  .$$
\end{lemma}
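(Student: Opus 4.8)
The plan is to estimate the probability in question by conditioning on the value $\sloc(T_{\coup}) = x$ and using independence of the path before and after time $T_{\coup}$, exactly as in the proof of Lemma \ref{belgium}. First I would sum over the possible values $k > \lfloor 4.1^L\rfloor$ of $R_L$ and, for each $k$, over the possible values $x$ of $\sloc(T_{\coup})$. Since the event $E^1_k$ concerns only the path up to time $k$ and (if $k < T_{\coup}$) the path from $k$ to $T_{\coup}$, while the event $E^{0,*} \cap \possible((T_{\coup},*),(n-T^*_{\coup},*))$ concerns only what happens from $T_{\coup}$ onward, the probability factors as a product once we fix $x = \sloc(T_{\coup})$ (and the analogous $y = \sloc^*(T^*_{\coup})$ for the reversed walk). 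I would then write
\begin{align*}
&\P_{(0,0)}\bigg( E^1_k \cap E^{0,*} \cap \possible((T_{\coup},*),(n-T^*_{\coup},*))\bigg)\\
&\qquad = \sum_{x,y} \P_{(0,0)}(E^1_k \cap \sloc(T_{\coup})=x)\,\P_{(0,0)}(E^{0,*}\cap \sloc^*(T^*_{\coup})=y)\,\P_{(T_{\coup},x)}(\possible((T_{\coup},x),(n-T^*_{\coup},y))).
\end{align*}

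Next I would bound the last factor using Lemma \ref{les bleus}, which gives $\P_{(T_{\coup},x)}(\possible((T_{\coup},x),(n-T^*_{\coup},y))) \leq C\,U(x)U(y)\,n^{-(d^2-1)/2}$, valid for $x,y \in \cone_{2^{\coup}}$. Substituting and pulling the $n^{-(d^2-1)/2}$ out, the sum over $x$ becomes $\E_{(0,0)}(\1_{E^1_k} U(\sloc(T_{\coup})))$ and the sum over $y$ becomes $\E_{(0,0)}(\1_{E^{0,*}} U(\sloc^*(T^*_{\coup})))$. The latter is bounded by $c(2\cdot 4.1^L)^{d(d-1)/2}$ by Lemma \ref{red devils}, a constant depending only on $L$ and $d$, which can be absorbed into the constant $C'$ (and the polynomial factor $k^r$). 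The former is bounded by $Ck^r e^{-ck^\alpha}$ by Lemma \ref{belgium}. Multiplying and summing over $k > 4.1^L$ then yields
$$\sum_{k > 4.1^L}\P_{(0,0)}\bigg( E^1_k \cap E^{0,*} \cap \possible((T_{\coup},*),(n-T^*_{\coup},*))\bigg) \leq n^{-(d^2-1)/2}\sum_{k \geq 4.1^L} C' k^{r'} e^{-ck^\alpha},$$
as desired, after renaming constants and exponents.

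The main technical point to be careful about, rather than a genuine obstacle, is the bookkeeping around which walk ($\sloc$ versus its reversal $\sloc^*$) each event constrains, and ensuring that the restriction to $x,y \in \cone_{2^{\coup}}$ needed for Lemma \ref{les bleus} is automatic: on the event $E^1_k \cap E^{0,*}$ with $k < T_{\coup}$ we have $\sloc(T_{\coup}) \in \cone_{2^{\coup}}$ by definition of $T_{\coup}$, and similarly $\sloc^*(T^*_{\coup}) \in \cone_{2^{\coup}}$; if $k \geq T_{\coup}$ the event $E^1_k$ cannot coexist with $R_L = k$ in the required range, so that case is vacuous or handled by the same argument as in Lemma \ref{belgium}. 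One also needs the conditional independence across time $T_{\coup}$, which holds because $\spath$ has i.i.d.\ increments and $T_{\coup}$ is a stopping time; the reversed-walk events are handled by applying the same reasoning to $\spath^*$, using that $T^*_{\coup}$ is a stopping time for the reversed filtration. Everything else is a routine combination of the cited lemmas.
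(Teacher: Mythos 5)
Your proposal is correct and follows essentially the same route as the paper: decompose over the values $x=\sloc(T_{\coup})$ and $y=\sloc^*(T^*_{\coup})$, factor by independence of the three pieces of the path, bound the middle factor by $CU(x)U(y)n^{-(d^2-1)/2}$ (Lemma \ref{les bleus}), and control the resulting $U$-weighted sums with Lemmas \ref{belgium} and \ref{red devils} before summing over $k$. No substantive differences from the paper's argument.
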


\begin{proof}
Fix $k$. We sample $\spath$ as follows. First we sample $\spath$ from 0 to $T_{\coup }$ 
then we sample $\spath^*$
from $0$ to $T^*_{\coup }$ then we sample $\spath$ from $T_{\coup }$ to $n-T^*_{\coup }$. We only get a valid path in the desired set if 
$\spath \in E^1_k$, $\spath \in E^{0,*}$ and 
$$\sloc(n-T^*_{\coup })=\sloc^*(T^*{(\coup )}).$$
 The first two of these events are independent and the third is independent conditioned on $\sloc(T_{\coup })=x$.
Thus using Lemma \ref{current_one} and Lemmas \ref{belgium} and \ref{red devils} we get
\begin{eqnarray*}
\lefteqn{\P_{(0,0)} \left( E^1_k \cap E^{0,*}  \cap  \possible((T_{\coup },\cdot),(n-T^*_{\coup },\cdot)\right)}
&&\\
&= &
\sum_{x,y} \P_{(0,0)}\bigg(
   E^1_k  \cap \{ k \leq T_{\coup }\} \cap 
   \sloc(T_{\coup })=x\bigg) 
\P_{(0,0)} \bigg (E^{*,0}  \cap \sloc^*(T^*_L,\cdot)=y\bigg) \\
&& \cdot \P_{(0,0)} \bigg( \spath \in \scwpplus((T_{\coup },x),(n-T^*_{\coup },y) \ |  \ \sloc(T_{\coup })=x)\bigg)\\
&\leq &
\sum_{x,y} \P_{(0,0)}\bigg(
   E^1_k  \cap \{ k \leq T_{\coup }\} \cap \sloc(T_{\coup })=x \bigg) 
   \\
&& \hspace{.5in}  \cdot	
\P_{(0,0)} \bigg (E^0  \cap \sloc(T_{\coup })=y\bigg) 
\cdot CU(x)U(y)n^{-(d^2-1)/2}\\
&\leq & n^{-(d^2-1)/2} \left(\sum_{x}  \1_{E^1_k \cap \{ k \leq T_{\coup }\} \cap \sloc(T_{\coup })=x}U(x)\right)
\left( \sum_{y}\1_{E^{0,*}  \cap \sloc^*(T^*_{\coup })=y} U(y) \right)\\
& \leq & (n^{-(d^2-1)/2})(C' k^r e^{-ck^\alpha})  (2k)^{d(d-1)/2} \\
& \leq & C'' k^{r'} e^{-ck^\alpha}  n^{-(d^2-1)/2}.
\end{eqnarray*}
Summing up over $k$ gives us 
\begin{eqnarray*}
\sum_{ k>4.1^L}\P_{(0,0)} ( E^1_k \cap E^{0,*}  \cap  \possible((T_{\coup },\cdot),(n-T^*_{\coup },\cdot))
&\leq &  n^{-(d^2-1)/2} \sum_{k \geq 4.1^L} C'' k^{r'} e^{-ck^\alpha}. 
\end{eqnarray*}
\end{proof}

\begin{lemma} \label{economia}
\begin{multline}
\sum_{ k,k'>4.1^L}\P_{(0,0)} ( E^1_k \cap E^{1,*}_{k'}  \cap  \possible((T_{\coup },\cdot),(n-T^*_{\coup },\cdot)) \\
\leq
n^{-(d^2-1)/2} \sum_{k,k' \geq 4.1} C^2 k^r e^{-ck^\alpha}(k')^{r'} e^{-ck'^\alpha}.
\end{multline}
\end{lemma}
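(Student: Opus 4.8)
The plan is to run the two-sided version of the proof of Lemma \ref{recall}: there one endpoint of the middle segment sat in a ``good'' state ($E^{0,*}$, controlled by Lemma \ref{red devils}), while here both endpoints sit in ``bad'' states and both sides are controlled by Lemma \ref{belgium}. Fix $k,k'>4.1^L$. First I would sample $\spath$ in three independent pieces exactly as in Lemma \ref{recall}: the forward walk on $[0,T_\coup]$, then the reversed walk $\spath^*$ on $[0,T^*_\coup]$, then the forward walk on $[T_\coup,n-T^*_\coup]$. Since $T_\coup$ is a stopping time for $\spath$, $T^*_\coup$ is a stopping time for $\spath^*$, and both are at most $n/2$, the first two pieces depend on disjoint coordinates and are independent; conditionally on the endpoints $\sloc(T_\coup)=x$ and $\sloc(n-T^*_\coup)=\sloc^*(T^*_\coup)=y$ the middle piece is independent of the other two. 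The event $E^1_k$ together with $\{k\le T_\coup\}$ is measurable with respect to the first piece, $E^{1,*}_{k'}$ together with $\{k'\le T^*_\coup\}$ with respect to the second, and, given $x$ and $y$, membership of $\spath$ in $\possible((T_\coup,x),(n-T^*_\coup,y))$ with respect to the third.

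With this decomposition the computation I would carry out is
\begin{align*}
&\P_{(0,0)}\big(E^1_k\cap E^{1,*}_{k'}\cap\possible((T_\coup,*),(n-T^*_\coup,*))\big)\\
&\quad=\sum_{x,y}\P_{(0,0)}\big(E^1_k\cap\{k\le T_\coup\}\cap\sloc(T_\coup)=x\big)\,\P_{(0,0)}\big(E^{1,*}_{k'}\cap\{k'\le T^*_\coup\}\cap\sloc^*(T^*_\coup)=y\big)\\
&\qquad\qquad\times\P_{(T_\coup,x)}\big(\spath\in\possible((T_\coup,x),(n-T^*_\coup,y))\big)\\
&\quad\le Cn^{-(d^2-1)/2}\,\E_{(0,0)}\big(\1_{E^1_k\cap\{k\le T_\coup\}}U(\sloc(T_\coup))\big)\,\E_{(0,0)}\big(\1_{E^{1,*}_{k'}\cap\{k'\le T^*_\coup\}}U(\sloc^*(T^*_\coup))\big),
\end{align*}
where the inequality is Lemma \ref{les bleus} applied to the middle segment followed by regrouping the sums over $x$ and $y$. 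Then I would bound the first expectation by $C'k^re^{-ck^\alpha}$ via Lemma \ref{belgium} and the second by $C'(k')^{r'}e^{-ck'^\alpha}$ via its reversed-walk (symmetric) version; absorbing constants yields $\P_{(0,0)}(E^1_k\cap E^{1,*}_{k'}\cap\possible(\dots))\le C^2k^re^{-ck^\alpha}(k')^{r'}e^{-ck'^\alpha}n^{-(d^2-1)/2}$, and summing over $k,k'>4.1^L$ gives the stated estimate.

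I do not expect a genuinely new obstacle here: all of the analytic work has already been packaged into Lemmas \ref{belgium} and \ref{les bleus}, and the independence/factorization bookkeeping is identical to that in Lemma \ref{recall}. The only point requiring a word of care is that Lemma \ref{les bleus} is stated for $x,y\in\cone_{2^\coup}$; this is automatic when $T_\coup$ (resp.\ $T^*_\coup$) equals the hitting time of $\cone_{2^\coup}$, and the complementary boundary case, in which the walk is still $\possible$ at time $\lfloor 4.1^\coup\rfloor$ without having entered $\cone_{2^\coup}$, is disposed of exactly as the analogous boundary case in the proofs of Lemmas \ref{recall} and \ref{umtiti}: it forces the path to stay within $O(i^{.4})$ of $\partial\cone$ over a stretch of length $\asymp n$ while $\petrov$ holds, so by Lemma \ref{conditionedpetrov} together with standard random-walk fluctuation bounds it contributes at most $n^{2}e^{-c n^{\delta_1}}$, which for large $n$ is negligible next to the right-hand side above.
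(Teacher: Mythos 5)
Your proposal is correct and follows essentially the same route as the paper, whose proof of Lemma \ref{economia} is simply to repeat the argument of Lemma \ref{recall} with the $E^{0,*}$/Lemma \ref{red devils} endpoint replaced by $E^{1,*}_{k'}$ and the (reversed) Lemma \ref{belgium} bound, exactly as you do. Your extra remark handling the boundary case where $T_\coup$ is capped at $\lfloor 4.1^\coup\rfloor$ is a reasonable point of care that the paper leaves implicit, but it does not change the argument.
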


\begin{proof}
The proof of this goes in exactly the same way as the proof of Lemma \ref{recall}.
\end{proof}

We combine the preceding results to conclude

\begin{lemma} \label{anthony}

$|\X_3| =o(|\X|)$.
\end{lemma}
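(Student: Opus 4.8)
The plan is to bound $|\X_3|$ by summing the probabilities from Lemma \ref{breakdown} and comparing with the lower bound on $|\X|$ from Lemma \ref{perisic}. Recall that every $\sigma \in \avn(\rho_d)$ satisfies $\walk_\sigma \in \scwpplus((0,0),(n,0))$ by Lemma \ref{permsRgreat}, and that $\X_3$ consists precisely of those $\sigma$ with $\max(R_L(\walk_\sigma), R^*_L(\walk_\sigma)) = \lfloor 4.1^L \rfloor$. By Lemma \ref{breakdown}, for such $\sigma$ the path $\spath_\sigma$ lies in one of the five sets: $F$, $F^*$, or one of the three "intermediate exit" events $E^1_k \cap E^{0,*} \cap \possible$, $E^0 \cap E^{1,*}_{k'} \cap \possible$, or $E^1_k \cap E^{1,*}_{k'} \cap \possible$. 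So first I would write
\[
|\X_3| \leq \#\{\walk \in \spacen : s_\walk \in F\} + \#\{\walk : s_\walk \in F^*\} + (\text{three intermediate terms}),
\]
keeping in mind that since $|\spacen| \leq d^{2n}$ and $\P$ is uniform on single steps, counting elements of $\spacen$ in an event is (up to the $\prob_{(0,0)}(\spacen) \geq c\, n^{-(d^2-1)/2}$ normalization from the local CLT, cf.\ the proof of Lemma \ref{perisic} via \cite{regev}) comparable to $d^{2n}$ times the probability of that event under $\P_{(0,0)}(\cdot \mid \spacen)$.

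Second, I would estimate each of the five contributions. For $F$ and $F^*$, Lemma \ref{umtiti} gives $\prob_{(0,0)}(F), \prob_{(0,0)}(F^*) \leq C_1 e^{-c_2 n^{\delta_1}}$, which is super-polynomially small and hence dwarfed by the $n^{-(d^2-1)/2}$ factor. For the three intermediate events, Lemma \ref{recall} and Lemma \ref{economia} give bounds of the form $n^{-(d^2-1)/2} \sum_{k > 4.1^L} C' k^r e^{-ck^\alpha}$ (and the analogous double sum), together with the symmetric statement for $E^0 \cap E^{1,*}_{k'}$. Writing $f(m) = \sum_{k > 4.1^m} k^{2d^2} e^{-ck^\beta}$ as in the paragraph before Lemma \ref{perisic} (with $\beta = .15$, noting $\alpha$ can be taken to be this $\beta$ and $r, r' \leq 2d^2$ after absorbing constants), each intermediate sum is at most $n^{-(d^2-1)/2}\, C\, f(L)$ and the doubly-indexed one at most $n^{-(d^2-1)/2}\, C\, f(L)^2$. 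Therefore
\[
\frac{|\X_3|}{d^{2n}} \leq C\, n^{-(d^2-1)/2}\big( e^{-c_2 n^{\delta_1}} + f(L) + f(L)^2 \big).
\]

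Third, I would combine this with Lemma \ref{perisic}, which gives $|\X| \geq c\, d^{2n} n^{-(d^2-1)/2}$, to obtain
\[
\frac{|\X_3|}{|\X|} \leq \frac{C}{c}\big( e^{-c_2 n^{\delta_1}} + f(L) + f(L)^2 \big).
\]
Now invoke the defining inequalities \eqref{roger} and \eqref{rafa} of $L$: for any $\epsilon > 0$ we chose $L$ so that $(4.1^L)^{d^2} f(L) < \epsilon$ (in particular $f(L) < \epsilon$) and $f(L)^2 < \epsilon$; taking $n$ large makes the exponential term negligible. Hence $|\X_3|/|\X| \leq C'\epsilon$ for $n$ large, which is the assertion $|\X_3| = o(|\X|)$ (more precisely, for each $\epsilon$ there is $L$ and then $n_0$ so that the ratio is below $\epsilon$ for $n \geq n_0$).

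The main obstacle I anticipate is the bookkeeping in the first step — converting the counting statement $|\X_3| = \#\{\sigma \in \X : \ldots\}$ into a probability estimate under $\P_{(0,0)}$, since $\X$ sits inside $\spacen$ via the \emph{minimal}/proper labeling embedding (Lemma \ref{iowa}), not as a naive subset, so one must be careful that the events $F$, $E^1_k$, etc., which are defined on paths $s_\walk$, correctly capture all $\sigma \in \X_3$; this is exactly what Lemmas \ref{permsRgreat}, \ref{obsidian}, and \ref{breakdown} were set up to handle, so the work is really just assembling them. The estimates themselves (Lemmas \ref{umtiti}, \ref{recall}, \ref{economia}) are already in hand, so the remaining effort is purely organizational: matching the constants $r, r', \alpha$ to the definition of $f$ and checking the geometric-sum convergence $\sum_{k > 4.1^L} k^{2d^2} e^{-ck^{.15}} < \infty$, which is immediate.
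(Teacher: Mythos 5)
Your proposal matches the paper's proof essentially step for step: decompose $\X_3$ via Lemma \ref{breakdown}, bound the five pieces using Lemmas \ref{umtiti}, \ref{recall} and \ref{economia}, and compare against the lower bound of Lemma \ref{perisic}, with the choice of $L$ in \eqref{roger}--\eqref{rafa} delivering the $\epsilon$ bound. The only detail the paper makes explicit that you gloss over is the one-line remark that a $\sigma$ in the breakdown events but outside the events actually bounded in Lemmas \ref{recall} and \ref{economia} forces a failure of $\petrov(m)$ or $\petrov^*(m)$ at some scale $m>n^{.4}$, which has exponentially small probability and is therefore harmless.
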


\begin{proof}
By Lemma \ref{perisic}  $$|\X| \geq c d^{2n}n^{-(d^2-1)/2}.$$
By Lemma \ref{breakdown} we have that $$\X_3 \subset \bigcup_{i=1}^5\X_{3,i}$$ where the sets 
$\X_{3,i}$	are the sets defined in the statement of Lemma \ref{breakdown}.
By Lemma \ref{umtiti} we get that for any $\epsilon>0$ we can find a large $L$ such that for all $n$ sufficiently large
$$|\X_{3,1}|, |\X_{3,2}|  \leq \epsilon d^{2n}n^{-(d^2-1)/2}.$$

If $\sigma \in \X$ but not in the union of sets in Lemma \ref{recall} then $\petrov(m)^C$ or $\petrov^*(m)^C$ 
must occur for some $m>n^{.4}$. This has probability at most $Ce^{-n^\alpha}$ for some positive $C$ and $\alpha$.
Thus by Lemma \ref{recall} we get that for any $\epsilon>0$ we can find a large $L$ such that for all $n$ sufficiently large
$$|\X_{3,3}|, |\X_{3,4}|  \leq \epsilon d^{2n}n^{-(d^2-1)/2}.$$
Similarly by Lemma \ref{economia}
we get that for any $\epsilon>0$ we can find a large $L$ such that for all $n$ sufficiently large
$$|\X_{3,5}| \leq \epsilon d^{2n}n^{-(d^2-1)/2}.$$
Thus the lemma follows.
\end{proof}

By Lemma \ref{permsRgreat} the path that is the image of a permutation in $\X$ is in $\scwpplus$.  We will show that the cardinality of $$\scwpplus ((T_L,\cdot),(n-T^*_L,\cdot))\cap \{\max(T_L,T^*_L)< \lfloor 4.1^L \rfloor\}$$ is $1+o(1)$ times the cardinality of $\scwminus((T_L,\cdot),(n-T^*_L,\cdot))\cap \{\max(T_L,T^*_L)< \lfloor 4.1^L \rfloor\}$.

The image of the set $\X_1$ is not exactly the set of paths that we can use our previous results to calculate the scaling limit. But the size of the 
symmetric difference between the set we will describe and $\X_1$ will be small
in comparison with $|\X_1|$. Thus the two sets will have the same scaling limits.
We first describe which paths we want to exclude from $\X_1$. We want paths that satisfy the Petrov conditions at both $T_L$ and $T^*_L$. Let 
$$\X'_1=\bigg\{\sigma: \sigma \in \X_1, \spath \text{ satisfies $\petrov(T_L)$}, \spath^* \text{ satisfies $\petrov(T^*_L)$} )\bigg\}$$


\begin{lemma} \label{may day}
$|\X \setminus \X_1'| =o(|\X|).$
\end{lemma}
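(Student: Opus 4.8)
The plan is to show that $\X_1 \setminus \X_1'$ is negligible, since we have already (by Lemma \ref{anthony}) that $|\X_3| = o(|\X|)$, and by Lemma \ref{perisic} that $|\X| \geq c\, d^{2n} n^{-(d^2-1)/2}$, so it suffices to bound $|\X_1 \setminus \X_1'|$ by $\epsilon\, d^{2n} n^{-(d^2-1)/2}$ for any $\epsilon$ once $L$ and then $n$ are large. By definition, $\sigma \in \X_1 \setminus \X_1'$ means $\sigma \in \X_1$ but either $\spath_\sigma$ fails $\petrov(T_L)$ or $\spath_\sigma^*$ fails $\petrov(T^*_L)$. On $\X_1$ we have $\max(R_L, R^*_L) < \lfloor 4.1^L\rfloor$, so $T_L = R_L < 4.1^L$ and similarly $T^*_L < 4.1^L$; these are bounded (depending only on $L$, not $n$).

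First I would handle the event that $\spath_\sigma$ fails $\petrov(T_L)$. Here the subtlety is that $T_L = R_L$ is itself defined using the Petrov condition: $R_L = \inf\{t : \petrov(t) \cap \spath(t) \in \cone_{2^L}\}$. So at time $R_L$, $\petrov(R_L)$ holds by construction — that is, the walk at its first Petrov-good time that lands in $\cone_{2^L}$ automatically satisfies $\petrov$. Wait — this means the set where $\sigma \in \X_1$ and $\spath_\sigma$ fails $\petrov(T_L)$ is governed by whether $R_L$ attains the cap $\lfloor 4.1^L\rfloor$ without $\petrov$ holding. On $\X_1$ we've excluded $\X_3$, i.e. we've assumed $\max(R_L,R^*_L) < \lfloor 4.1^L\rfloor$, which forces $R_L$ to be an actual Petrov-good hitting time, so $\petrov(R_L)=\petrov(T_L)$ holds. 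The point of the lemma is thus really just that the definition of $R_L$ already bakes in $\petrov(T_L)$ on the complement of $\X_3$, and symmetrically $\petrov^*(T^*_L)$ on the complement of $\X_3$; so in fact $\X_1 = \X_1'$ up to the exceptional set $\X_3$ already controlled, and one need only verify there is no double counting or edge case at the boundary value $t=\lfloor 4.1^L\rfloor$ itself.

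Concretely, the argument I would write is: if $\sigma \in \X_1$ then $\max(R_L, R^*_L) < \lfloor 4.1^L\rfloor$, so the infimum defining $R_L$ is attained at a value $t < \lfloor 4.1^L\rfloor$ at which $\petrov(t)$ holds and $\spath_\sigma(t)\in\cone_{2^L}$; hence $T_L = R_L$ and $\spath_\sigma$ satisfies $\petrov(T_L)$. The same applies to the reversed walk, giving $\spath^*_\sigma$ satisfies $\petrov(T^*_L)$. Therefore $\X_1 \subseteq \X_1'$, and since trivially $\X_1' \subseteq \X_1$, we get $\X_1 = \X_1'$ and $|\X \setminus \X_1'| = |\X \setminus \X_1| = |\X_2| + |\X_3|$. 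Finally $|\X_3| = o(|\X|)$ by Lemma \ref{anthony}, and $|\X_2| = o(|\X|)$ — the latter is presumably established in the surrounding text (via Lemma \ref{spontini3} / Lemma \ref{benalla_one}, comparing $\scwpplus$ with $\scwminus$), which I would cite; the only genuine work here is confirming there is no edge case at the cap, and otherwise the statement follows from unwinding the definitions.

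The main obstacle I anticipate is not in the Petrov bookkeeping — which is essentially definitional — but in pinning down the bound on $|\X_2|$, i.e. that the paths in $\scwpplus \setminus \scwminus$ form a vanishing fraction; if that has not yet been proven at this point in the paper, I would instead phrase the lemma's proof as reducing $|\X \setminus \X_1'|$ to $|\X_2| + |\X_3|$ and defer $|\X_2| = o(|\X|)$ to the forthcoming comparison of $\scwpplus$ and $\scwminus$ (Lemma \ref{benalla_one} combined with Lemma \ref{current_one} to normalize by $|\X|$). Either way, the subtle point to get right is that $R_L$ as a stopping time already certifies $\petrov(T_L)$ precisely on the complement of $\{\max(R_L,R^*_L) = \lfloor 4.1^L\rfloor\} = \X_3$.
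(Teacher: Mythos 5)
Your key step --- ``the infimum defining $R_L$ is attained at a time where $\petrov$ holds, hence $T_L=R_L$ and $\petrov(T_L)$ holds automatically, so $\X_1=\X_1'$'' --- is not correct, and this is exactly the point where the real work of the lemma lives. Recall $T_L=\min\{\inf\{t:\sloc(t)\in\cone_{2^L}\},\lfloor 4.1^L\rfloor\}$ is the (capped) first hitting time of $\cone_{2^L}$ with no Petrov requirement, while $R_L$ requires $\petrov(t)$ \emph{and} $\sloc(t)\in\cone_{2^L}$ simultaneously. Hence $T_L\le R_L$, and strict inequality is possible: the walk can first enter $\cone_{2^L}$ at a time where the Petrov conditions fail, and only later satisfy both. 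For such $\sigma$ one has $\sigma\in\X_1$ but $\petrov(T_L)$ fails, so $\X_1\setminus\X_1'$ is not empty and your reduction to $|\X_2|+|\X_3|$ does not go through. The paper treats this as a genuinely probabilistic event: for $\sigma\in(\X_1\cup\X_2)\setminus\X_1'$ one has $s_{\walk_\sigma}\in\scwpplus((T_L,*),(n-T^*_L,*))$ with $T_L,T^*_L<\lfloor 4.1^L\rfloor$ and either $\petrov(T_L)^C$, $\petrov^*(T^*_L)^C$, or failure of $\scwminus((T_L,*),(n-T^*_L,*))$; the first two cases are controlled by a union bound over the at most $4.1^L$ possible values of $T_L,T^*_L$ and the at most $4.1^{Ld}$ possible positions, combining Lemma \ref{pathpetrov} (probability at most of order $e^{-2^{cL}}$ of a Petrov failure at a given bounded time) with the bridge upper bound of Lemma \ref{les bleus} (using $U$ at the endpoints bounded by $4.1^{Ld(d-1)}$) to get a count of order $\epsilon\, d^{2n}n^{-(d^2-1)/2}$, which is then compared to $|\X|$ via Lemma \ref{perisic}. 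None of this appears in your proposal because the definitional identification $T_L=R_L$ made it seem unnecessary.

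Your treatment of the third failure mode (essentially $|\X_2|$) is closer to the paper: the paper partitions by the data $(t,t^*,v,w)=(T_L,T^*_L,s(T_L),s(n-T^*_L))$ and applies Lemma \ref{spontini3} together with Lemma \ref{permsRgreat} to get $|\X_2|\le\frac{\epsilon}{1-\epsilon}|\X_1'|$, so deferring to the $\scwpplus$ versus $\scwminus$ comparison is the right instinct there, though it is proved inside this lemma rather than elsewhere. But as written the proposal has a genuine gap: it omits the bound on the set where $\petrov(T_L)$ or $\petrov^*(T^*_L)$ fails, which cannot be dismissed as definitional.
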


\begin{proof}
By Lemma \ref{anthony} and our partition 
$$\X=\X_1 \cup \X_2 \cup \X_3$$
 it is enough to bound $|(\X_1\cup \X_2) \setminus \X_1'|$.
If $$\omega_{\sigma} \in (\X_1\cup \X_2) \setminus \X_1'$$ then $$s_{\omega_\sigma} \in \scwpplus((T_L,\cdot),(n-T^*_L,\cdot))$$ with $T_L,T^*_L < \lfloor (4.1)^L\rfloor$ and either 
\begin{enumerate}
\item $\petrov(T_L)^C$,
\item $\petrov^*(n-T^*_L)^C$ or 
\item $\omega_\sigma \not\in \scwminus((T_L,\cdot),(n-T^*_L,\cdot))$.
\end{enumerate}
There are at most  $4.1^L$ choices for each of $T_L$ and $T^*_L$ and $4.1^{Ld}$ choices for $s_{\omega_\sigma}(T_L)$ and $s^*_{\omega_\sigma}(T^*_L)$.  By Lemma \ref{pathpetrov} for each of these $k$ and $k'$ the probability that $\petrov(k)^C$ or $\petrov^*(k')^C$ occurs is
$2 e^{-2^{cL}}$ for some  $c>0$. Each of the possible $s_{\omega_\sigma}(T_L)$ and $s_{\omega_\sigma}^*(T^*_L)$ have $U(s_{\omega_\sigma}(T_L))$ and $U(s_{\omega_\sigma}^*(T^*_L))$ and most $4.1^{Ld(d-1)}.$
For each of these sets we apply Lemma \ref{les bleus}.
Thus the number of permutations satisfying one of the first two conditions is at most
$$(4.1^{Ld})^2(4.1^{L})^2(2e^{-c2^L})d^{2n}n^{-(d^2-1)/2} \leq \epsilon d^{2n}n^{-(d^2-1)/2}.$$
This implies that
\begin{equation} \label{billie} |\X_1\setminus \X_1'| \leq |\X_1|.\end{equation}

Now we count the permutations satisfying the first two conditions but not the third.
Call this set $Y$. Partition $Y$
by the time and position at times $T_L$ and $n-T^*_L,$ that is
 $$Y=\bigcup_{t,t^*,v,w}\{\sigma:\ T_L=t, T^*_L=t^*, s_{\omega_\sigma}(T_L) = v, s_{\omega_\sigma}(n-T^*_L) = w\} \cap Y.$$
 
Call a set on the right hand side to be $H(t,t^*,v,w)$. Further partition a set on the right hand side $H(t,t^*,v,w)$ into $H_1(t,t^*,v,w)$ and $H_2(t,t^*,v,w)$ be the collection of pairs of sequences that are obtained from permutations in $\X_1$ (and thus $\X_1'$) or $\X_2$ respectively.  By Lemma \ref{spontini3} $(1-\epsilon)|H(t,t^*,v,w)|$ of pairs of sequences in $H(t,t^*,v,w)$ will be in $\scwminus((T_L,v),(n-T^*_L,w))$ and therefore in $H_1(t,t^*,v,w)$, hence $|H_2(t,t^*,v,w)| \leq \epsilon |H(t,t^*,v,w)|.$  By Lemma \ref{permsRgreat} the pair of sequences associated to a permutation in $\X$ is always in $\scwpplus,$ thus 
$$|H_1(t,t^*,v,w)| \geq (1-\epsilon) |H(t,t^*,v,w)|.$$  Combined this shows that $|H_2(t,t^*,v,w)| \leq \frac{\epsilon}{1-\epsilon}|H_1(t,t^*,v,w)|.$  This bound is uniform over all $v,w,T_L,$ and $T^*_L$ such that $\petrov(T_L)$ and $\petrov^*(T^*_L)$ hold and $v,w \in \cone_{2^L},$ thus 
\begin{equation} \label{jean} |\X_2| \leq \frac{\epsilon}{1-\epsilon} |\X_1'|. \end{equation}

Combining \eqref{billie} and \eqref{jean} with Lemma \ref{anthony} proves the lemma.
\end{proof}

\begin{lemma} \label{maybe she's born with it}
For every $\sigma \in \X_1'$ there exists $t,t^* \leq \lfloor 4.1^L \rfloor$,  $x,x^* \in \cone_{2^L}$ and $\tilde \walk \in [d]^{\{t+1,\dots,n-t^*\}}$ such that
\begin{enumerate}
\item $T_L(s_{\walk_\sigma})=t$,
\item $T^*_L(s_{\walk_\sigma})=t^*$,
\item $s_{\walk_\sigma}(t)=x$,
\item $s^*_{\walk_\sigma}(t^*)=x^*$,
\item $\tilde \walk^1(j)=\walk_\sigma(j)$ for all $j \in \{1,\dots, t\}$ which satisfies $\petrov(t)$,
\item $\tilde \walk^2(j)=\walk_\sigma(j)$ for all $j \in \{t+1,\dots, n-t^*\}$, and
\item $\tilde \walk^3(j)=\walk_\sigma(n+1-j)$ for all $j \in \{1,\dots, t^*\}$ which satisfies $\petrov(t^*)$.
\end{enumerate}
For $\sigma \neq \sigma'$ this collection of these seven objects is different.
\end{lemma}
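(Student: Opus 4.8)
The plan is to write down explicitly the assignment
$$\sigma \ \longmapsto \ \big(t,\,t^*,\,x,\,x^*,\,\tilde\walk^1,\,\tilde\walk^2,\,\tilde\walk^3\big),$$
to check that it is well defined on $\X_1'$ (i.e.\ that such a tuple exists with $t,t^*\leq\lfloor 4.1^L\rfloor$ and $x,x^*\in\cone_{2^L}$), and then to check that it is injective. The choices are forced: put $t=T_L(s_{\walk_\sigma})$, $t^*=T^*_L(s_{\walk_\sigma})$, $x=s_{\walk_\sigma}(t)$, $x^*=s^*_{\walk_\sigma}(t^*)$, and let $\tilde\walk^1,\tilde\walk^2,\tilde\walk^3$ be the three blocks of the decomposition $\walk_\sigma=\walk_\sigma^1\oplus\walk_\sigma^2\oplus\walk_\sigma^3$ cut at times $t$ and $n-t^*$, with $\tilde\walk^3$ indexed by the reversal as in item (7). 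With these choices (5) and (6) hold by the definition of the decomposition, and the Petrov clauses appended to (5) and (7) are exactly the conditions $\petrov(T_L)$ for $s_{\walk_\sigma}$ and $\petrov(T^*_L)$ for $s^*_{\walk_\sigma}$ that were built into the definition of $\X_1'$.

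Items (1)--(4) reduce to a short comparison of $T_L$ with the stopping time $R_L$ used in the definition of $\X_1$ and $\X_1'$. Since $R_L(s_{\walk_\sigma})$ is the first time at which $s_{\walk_\sigma}$ lies in $\cone_{2^L}$ \emph{and} the Petrov conditions hold, for $\sigma\in\X_1'\subseteq\X_1$ we have
$$\inf\{s:\, s_{\walk_\sigma}(s)\in\cone_{2^L}\}\ \leq\ R_L(s_{\walk_\sigma})\ <\ \lfloor 4.1^L\rfloor,$$
so the cutoff in $T_L=\min\{\inf\{s:\, s_{\walk_\sigma}(s)\in\cone_{2^L}\},\,\lfloor 4.1^L\rfloor\}$ is not attained; hence $t<\lfloor 4.1^L\rfloor$ and $x=s_{\walk_\sigma}(t)\in\cone_{2^L}$, which is (1) and (3). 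Conversely, because $\sigma\in\X_1'$ forces $\petrov(T_L)$ to hold, both conditions defining $R_L$ are met at time $t=T_L$, so $R_L(s_{\walk_\sigma})=T_L(s_{\walk_\sigma})=t$; the same argument for the reversed walk gives (2) and (4). This establishes that the assignment is well defined with properties (1)--(7).

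For injectivity, note that $n$ is fixed throughout the section, so $t^*$ determines $n-t^*$, and then items (5)--(7) let us reassemble a single word on $\{1,\dots,n\}$ by concatenating $\tilde\walk^1$ on $\{1,\dots,t\}$, $\tilde\walk^2$ on $\{t+1,\dots,n-t^*\}$, and $\tilde\walk^3$ read backwards on $\{n+1-t^*,\dots,n\}$ (these index sets tile $\{1,\dots,n\}$ since $n+1-t^*=(n-t^*)+1$); by construction the reassembled word equals $\walk_\sigma$. Since $\sigma\in\avn(\rho_d)$ the word $\walk_\sigma$ is minimal, so $\mat(\walk_\sigma)$ is proper by Lemma \ref{iowa} and the construction of Section \ref{function} recovers $\sigma$ as $\sigma_{\walk_\sigma}$. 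Hence two permutations in $\X_1'$ producing the same seven objects have the same reassembled word and so are equal, which is the last assertion of the lemma.

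The only step that is not pure bookkeeping, and hence the one to watch, is the reconciliation in the middle paragraph between the stopping time $R_L$ — which carries the Petrov requirement and is the one used to cut the permutations of $\X_1'$ — and the capped first-entry time $T_L$ that appears in (1)--(4): one must confirm that on $\X_1'$ the cutoff $\lfloor 4.1^L\rfloor$ is never binding and that the Petrov condition in the definition of $\X_1'$ forces $R_L=T_L$, so that the recorded position actually lies in $\cone_{2^L}$. Everything after that — the splitting and reassembly of words in $\spacen$ and the injectivity of $\sigma\mapsto\walk_\sigma$ on $\avn(\rho_d)$ via $\sigma=\sigma_{\walk_\sigma}$ — is immediate from the constructions of Sections \ref{function} and \ref{wcc}.
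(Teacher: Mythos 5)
Your proof is correct and follows essentially the same route as the paper's (much terser) argument: existence of the tuple comes directly from the definition of $\X_1'$, and injectivity from the fact that the seven objects reassemble $\walk_\sigma$, which determines $\sigma$ since $\sigma=\sigma_{\walk_\sigma}$. Your extra care in reconciling the capped entry time $T_L$ with the stopping time $R_L$ used to define $\X_1'$ is a detail the paper leaves implicit, but it does not change the approach.
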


\begin{proof}
The existence of these objects follow from the definition of $\X'_1$. If $\sigma \neq \sigma'$ then $\walk_\sigma \neq \walk_{\sigma'}$.  Thus one of the last three must be different as well.
\end{proof}

%
%

%

\begin{lemma} \label{wash that man}
For any $\epsilon>0$ there exist $L$ and $K$ satisfying the following. Fix any $t$, $t^*$, $x$, $x^*$, $\tilde \walk^1$ and $\tilde \walk^3$. 
Let $A=A(t, t^*, x, x^*, \tilde \walk^1, \tilde \walk^3)$ be the set of $\sigma \in \X_1'$ associated with these six objects. 
The fraction of $\sigma \in A$ with
$$\walk_{\sigma} \in \scwminus((t,x),(n-t^*,x^*))$$
is at least $1-\epsilon.$
\end{lemma}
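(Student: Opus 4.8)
\textbf{Proof proposal for Lemma \ref{wash that man}.}

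The plan is to fix the six objects $(t,t^*,x,x^*,\tilde\walk^1,\tilde\walk^3)$ and study the set $A$ of permutations $\sigma\in\X_1'$ compatible with them. The key observation is that, by the definition of $\X_1'$ and Lemma \ref{maybe she's born with it}, the middle portion $\tilde\walk^2$ of $\walk_\sigma$ ranges over exactly the set of $\walk\in[d]^{\{t+1,\dots,n-t^*\}}$ such that the concatenated path $\tilde\walk^1\oplus\walk\oplus\tilde\walk^3$ is the image $\walk_\sigma$ of some permutation $\sigma\in\snd$ \emph{and} the path stays close to the Weyl chamber in the appropriate sense (so that $\sigma\in\X_1$). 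First I would argue that, up to a negligible proportion, the set of admissible $\tilde\walk^2$ is sandwiched between $\scwminus((t,x),(n-t^*,x^*))$ and $\scwpplus((t,x),(n-t^*,x^*))$: by Lemma \ref{permsRgreat} every permutation in $\X$ yields a path in $\scwpplus$, and by Lemma \ref{middleman} any $\walk\in\scwminus((t,x),(n-t^*,x^*))$ can be completed using $\tilde\walk^1,\tilde\walk^3$ (which satisfy the relevant Petrov conditions by membership in $\X_1'$) to the image of some $\sigma\in\snd$, hence gives an element of $A$. So as a set, $A$ corresponds (bijectively, via $\sigma\mapsto\tilde\walk^2$) to a collection of middle paths containing $\scwminus$ and contained in $\scwpplus$ (intersected with the constraint that the full path lies in $\X_1$, which is implied once we are in $\scwminus$ since then the path is within $C i^{.4}$ of $\cone$ on $[t,n/2]$ and the reversed analogue on $[n/2,n-t^*]$).

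Second, I would invoke the counting estimate. Conditioning the random walk $\sloc$ on $\{\sloc(t)=x\}$, the middle segment is distributed as $\sloc$ started from $x$ at time $t$; each admissible $\tilde\walk^2$ is equally likely, so the fraction of $\sigma\in A$ with $\walk_\sigma\in\scwminus((t,x),(n-t^*,x^*))$ equals
$$
\frac{\prob_{(t,x)}\big(\spath\in\scwminus((t,x),(n-t^*,x^*))\big)}{\prob_{(t,x)}\big(\spath\in\text{(the set of admissible middle paths)}\big)}.
$$
The denominator is at most $\prob_{(t,x)}(\spath\in\scwpplus((t,x),(n-t^*,x^*)))$ and at least $\prob_{(t,x)}(\spath\in\scwminus((t,x),(n-t^*,x^*)))$. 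Now Lemma \ref{spontini3} says precisely that for $L$ large enough (so that $t,t^*\le(4.1)^L$ and $x,x^*\in\cone_{2^L}$), and for $n$ large,
$$
\prob_{(t,x)}\big(\scwminus((t,x),(n-t^*,x^*))\ \big|\ \scwpplus((t,x),(n-t^*,x^*))\big)>1-\epsilon,
$$
which immediately gives that the ratio above exceeds $1-\epsilon$. The constant $K$ in the statement enters as the bound on $|x|,|x^*|\le K$ (say $K=4.1^{Ld}$), ensuring we only apply Lemma \ref{spontini3} for finitely many $(t,t^*,x,x^*)$ with a uniform choice of $L$.

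The main obstacle — and the step requiring the most care — is justifying that the set of admissible middle paths really is trapped between $\scwminus$ and $\scwpplus$ for \emph{this particular} choice of endpoints and boundary data, rather than some smaller or larger set. The upper containment in $\scwpplus$ is clean from Lemma \ref{permsRgreat}. The lower containment in $\scwminus$ requires checking that the hypotheses of Lemma \ref{middleman} are met: we need $x,x^*\in\cone_{2^L}$ (true since $\sigma\in\X_1'$ forces $\spath(T_L)\in\cone_{2^L}$), we need $t,t^*$ bounded by $4.1^L$ and $n/2$ (true by definition of $\X_1$ and $n\gg4.1^L$), and crucially we need a permutation $\sigma_0\in\avn(\rho_d)$ whose path agrees with $\tilde\walk^1$ on $[0,t]$ and $\tilde\walk^3$ on $[n-t^*,n]$ and satisfies $\petrov(t),\petrov^*(t^*)$ — but this is exactly provided by any fixed $\sigma\in A$ (which is nonempty precisely when the data came from Lemma \ref{maybe she's born with it}). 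Given such $\sigma_0$, Lemma \ref{middleman} then maps each $\walk\in\scwminus((t,x),(n-t^*,x^*))$ to an element of $\snd$, and one checks it lands in $\X_1$ (not merely $\X$) because the $\scwminus$ condition forces proximity to $\cone$ throughout, which is the defining feature of $\X_1$ via $\scwminus((R_L,*),(n-R^*_L,*))$. Assembling these pieces and combining with Lemma \ref{spontini3} completes the argument.
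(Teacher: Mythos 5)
Your proposal is correct and takes essentially the same route as the paper: both arguments sandwich the set of admissible middle segments $\tilde\walk^2$ between $\scwminus((t,x),(n-t^*,x^*))$ and $\scwpplus((t,x),(n-t^*,x^*))$, using Lemma \ref{permsRgreat} for the upper containment and Lemma \ref{middleman} (with the boundary data and Petrov conditions supplied by membership in $\X_1'$) for the lower one, and then conclude with Lemma \ref{spontini3}. Your probabilistic ratio is just the paper's cardinality comparison restated under the uniform measure, and your verification that the extended path lands in $\X_1'$ fills in a detail the paper leaves to the reader.
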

 
\begin{proof}
By Lemma \ref{spontini3} 
%
\begin{equation} \label{no deal}
\left|{\scwminus}((t,x),(n-t^*,x^*))\right|>(1-\epsilon)
\left|{\scwpplus}((t,x),(n-t^*,x^*))\right|
\end{equation}

For every 
$\sigma \in A$ there exists $\tilde \omega^2 \in [d]^{\{t+1,t+2,\dots,n-t^*\}} \times [d]^{\{t+1,t+2,\dots,n-t^*\}}$. 
By Lemma \ref{permsRgreat} we have 
\begin{equation} \label{south pacific}
{\tilde \walk^1 \oplus \tilde \walk^2 \oplus \tilde \walk^3}  \in \scwpplus ((0,0),(n,0)).
\end{equation}
By \eqref{south pacific} we have that $\tilde \walk^2 \in \scwpplus((t,x),(n-t^*,x^*))$.
Thus every $\sigma \in A$ corresponds with a unique element in the set on the right hand side of \eqref{no deal}.

By Lemma \ref{middleman} we have that if 
$$\tilde \walk^2 \in \scwminus((t,x),(n-t^*,x^*))$$ then  
$\tilde \walk^1 \oplus \tilde \walk^2 \oplus \tilde \walk^3$ corresponds to a permutation in 
$\X$. It is easy to check that this permutation is also in $\X'_1$. 
Thus every element in the set on the left hand side of \eqref{no deal} corresponds with a unique $\sigma \in A$.

Combining the conclusions of these two paragraphs with \eqref{no deal} completes the proof.
\end{proof}

\begin{lemma} \label{right outa my hair}
For any $\epsilon>0$ there exist $L$ and $K$ such that the following is true. Fix any $t$, $t^*$, $x$, $x^*$, $\tilde \walk^1$ and $\tilde \walk^3$. 
Let $A=A(t, t^*, x, x^*, \tilde \walk^1, \tilde \walk^3)$ be the set of $\omega \in \X_1'$ associated with these six objects. 
Then 
there exists $K$ 
such that 
$$D_K\bigg((s_{\walk_\sigma}: \ \sigma \in A),(s_{\walk}:\ \walk \in \cw((0,0),(n,0)))\bigg) <\epsilon$$
where $D_K$ is defined prior to Corollary \ref{Elsa}
\end{lemma}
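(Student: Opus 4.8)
\textbf{Proof proposal for Lemma \ref{right outa my hair}.}

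The plan is to combine the nearly-bijective correspondence established in Lemma \ref{wash that man} with the fact that the middle portion of the paths coming from $A$, after conditioning on its endpoints, is uniformly distributed among the appropriate $\scwpplus$ paths, and then to invoke Corollary \ref{Elsa} to replace the $\scwpplus$/$\scwminus$ middle portion with a genuine $\cw$ bridge. More precisely, I would first observe that, as in the proof of Lemma \ref{wash that man}, each $\sigma \in A = A(t,t^*,x,x^*,\tilde\walk^1,\tilde\walk^3)$ is in bijection with its middle component $\tilde\walk^2 \in \scwpplus((t,x),(n-t^*,x^*))$, since the first and last pieces are fixed; moreover $s_{\walk_\sigma}$ agrees with $s_{\tilde\walk^2}$ on the interval $[t, n-t^*]$, hence on $[\lfloor 4.1^L\rfloor, n-\lfloor 4.1^L\rfloor]$, up to the fixed translation determined by $x$. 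Since $\X_1'$ is obtained by a uniform conditioning, the measure $(s_{\walk_\sigma}:\sigma\in A)$ restricted to this interval is exactly the uniform measure on $\{s_{\spath}: \spath\in\scwpplus((t,x),(n-t^*,x^*))\}$, pushed forward by translation.

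Next I would use Lemma \ref{wash that man}: up to an $\epsilon$-fraction (measured by counting, which is the same as the uniform measure), $\walk_\sigma \in \scwminus((t,x),(n-t^*,x^*))$, and by Lemma \ref{spontini3} conditioning on $\scwpplus((t,x),(n-t^*,x^*))$ the probability of landing in $\cw((t,x),(n-t^*,x^*))$ is at least $1-\epsilon'$. So with probability $1-O(\epsilon)$ under $(s_{\walk_\sigma}:\sigma\in A)$, the middle portion is distributed like $\spath$ conditioned on $\cw((t,x),(n-t^*,x^*))$. Then Corollary \ref{Elsa}, applied with the measure $M$ equal to the point mass on the quadruple $((t,x),(t^*,x^*))$ (which is legitimate since $t,t^* \leq \lfloor 4.1^L\rfloor$ are bounded and $x,x^*\in\cone_{2^L}$ have bounded norm, so we take $K = \lfloor 4.1^L\rfloor$ plus the norm bound), produces a $K'$ such that
$$d_{K'}\bigg(\big(s_{\spath}:\spath\in\cw((t,x),(n-t^*,x^*))\big),\ \big(s_{\walk}:\walk\in\cw((0,0),(n,0))\big)\bigg)<\epsilon.$$
Combining the three sources of error — the $\epsilon$-fraction of $A$ outside $\scwminus$, the $\epsilon'$ from Lemma \ref{spontini3}, and the $\epsilon$ from Corollary \ref{Elsa} — via the triangle inequality for the pseudo-metric $d_K$ (taking $K = \max(K', \lfloor 4.1^L\rfloor + \text{norm bound})$ so that $d_K$ dominates all the relevant events), and choosing $L$ large enough at the outset so that all of Lemmas \ref{wash that man}, \ref{spontini3} and Corollary \ref{Elsa} apply with their error parameters scaled down, gives the claim.

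The main obstacle I anticipate is bookkeeping the translations and the choice of $K$: the paths $s_{\walk_\sigma}$ for $\sigma\in A$ are only built to agree with the $\scwpplus$ middle path on $[t,n-t^*]$, which is an interval that depends on $t,t^*$, whereas $d_K$ compares on the fixed interval $[K,n-K]$; one must take $K$ at least $\lfloor 4.1^L\rfloor$ so that $[K,n-K]\subseteq[t,n-t^*]$, and one must be careful that Corollary \ref{Elsa} is stated for $\cw((s,x),(n-t,y))$ with the endpoint at time $0$, not time $t$, so a small re-indexing argument (or an appeal to the $*$-union convention) is needed to line up the two conditionings. These are routine but need to be spelled out carefully; the genuinely hard analytic input — that conditioning on $\scwpplus$ or $\cw$ gives the same scaling behavior — has already been extracted in Lemma \ref{spontini3} and Corollary \ref{Elsa}.
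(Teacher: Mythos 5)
Your proposal is correct and follows essentially the same route as the paper: the paper likewise defines the subset of $A$ whose middle component lies in $\scwminus((t,x),(n-t^*,x^*))$, bounds its complement via Lemma \ref{wash that man}, passes to $\cw((t,x),(n-t^*,x^*))$ via Lemma \ref{spontini3}, shifts to $\cw((0,0),(n,0))$ via Corollary \ref{Elsa}, and concludes by the triangle inequality for $d_K$. The only caveat is your claim that $(s_{\walk_\sigma}:\sigma\in A)$ restricted to $[t,n-t^*]$ is \emph{exactly} the uniform measure on $\scwpplus((t,x),(n-t^*,x^*))$: Lemmas \ref{permsRgreat} and \ref{middleman} only give that the image of $A$ is sandwiched between $\scwminus$ and $\scwpplus$, but since your subsequent comparisons rest on the $1-\epsilon$ fraction bounds rather than on this exactness, the argument goes through unchanged.
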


\begin{proof}
Let 
$$A_1 =\left\{\sigma \in A: \ \tilde \walk^2_{\sigma} \in \scwminus((t,x),(n-t^*,x^*))  \right\}.$$ 
By Lemma \ref{wash that man} we get $|A_1|>(1-\epsilon)|A|$.
As $A_1 \subset A$ the previous statement implies that for any $K$
\begin{equation} \label{customs union}
D_K((s_{\walk_\sigma}: \ \sigma \in A_1),(s_{\walk_\sigma}: \ \sigma \in A)) < \epsilon.\end{equation}
By Lemma \ref{spontini3} we get
\begin{equation} \label{general election}
D_K\bigg((s_{\walk_\sigma}: \ \sigma \in A_1),(s_{\walk}:\ \walk \in {\cw}((t,x),(n-t^*,x^*)))\bigg) < \epsilon.\end{equation}
By Corollary \ref{Elsa} there exists $K$ such that 
\begin{equation} \label{amendments}
D_K\bigg((s_{\walk}:\ \walk \in \cw((0,0),(n,0))), (s_{\walk}:\ \walk \in\cw((t,x),(n-t^*,x^*)))\bigg) < \epsilon.\end{equation}
Combining \eqref{customs union}, \eqref{general election}
and \eqref{amendments} proves the lemma. 
\end{proof}

\subsection{Proof of our main theorem}\label{sec main proof}
We are now prepared to prove our main result, which we restate for convenience.

\begin{theorem*}  \label{overwrite2} If $\sigma$ is a uniformly random elements of $\snd$ then, as $n\to\infty$, the following convergence holds in distribution with respect to the supremum norm topology on $C([0,1],\R^d)$: 
$$P_\sigma \ \xrightarrow{\text{dist}} \  \Lambda(Z).$$
\end{theorem*}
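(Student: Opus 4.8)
The plan is to assemble the pieces established in Sections \ref{wcc} and \ref{peaches} together with the random-walk scaling limit from Section \ref{walksincones}. The key reduction is that, up to a vanishing fraction of permutations, a uniformly random $\sigma \in \X$ can be replaced by a bridge of the random walk $\sloc$ conditioned to stay in the Weyl chamber, for which Theorem \ref{cone scaling limit two} identifies the scaling limit as $\Lambda(Z)$. Throughout, $\X = \snd$.

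\textbf{Step 1: Restrict to $\X_1'$.} Let $\sigma$ be uniform on $\X$. By Lemma \ref{may day}, $|\X \setminus \X_1'| = o(|\X|)$, so it suffices to prove that $P_\sigma \xrightarrow{\text{dist}} \Lambda(Z)$ when $\sigma$ is uniform on $\X_1'$; the two distributions of $P_\sigma$ differ by total variation $o(1)$.

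\textbf{Step 2: Replace $P_\sigma$ by the scaled walk $\hat s_{\walk_\sigma}$.} For $\sigma \in \X_1'$ we have $\walk_\sigma \in \scwminus((T_L,*),(n-T_L^*,*))$ with $T_L, T_L^* < \lfloor 4.1^L\rfloor$ (bounded constants once $L$ is fixed), so Lemma \ref{tahini} applies and gives $\sup_{t\in[0,1]} D(P_\sigma(t),\hat s_{\walk_\sigma}(t)) \le n^{-.1} \to 0$. Hence it is enough to show $\hat s_{\walk_\sigma} \xrightarrow{\text{dist}} \Lambda(Z)$ for $\sigma$ uniform on $\X_1'$; the sup-norm distance between the two processes is deterministically $o(1)$, so by Slutsky the limits coincide.

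\textbf{Step 3: Identify the law of $s_{\walk_\sigma}$ via the cone walk.} Partition $\X_1'$ according to the six data $(t,t^*,x,x^*,\tilde\walk^1,\tilde\walk^3)$ as in Lemma \ref{maybe she's born with it}; on each cell $A = A(t,t^*,x,x^*,\tilde\walk^1,\tilde\walk^3)$, Lemma \ref{right outa my hair} shows that for any $\epsilon>0$ there are $L,K$ with $d_K\big((s_{\walk_\sigma}:\sigma\in A),(s_\walk:\walk\in\cw((0,0),(n,0)))\big)<\epsilon$. Averaging this bound over all cells with the weights coming from the uniform measure on $\X_1'$ (using that $K$ is uniform in the cell data), we obtain $d_K\big((s_{\walk_\sigma}:\sigma\in\X_1'),(s_\walk:\walk\in\cw((0,0),(n,0)))\big)<\epsilon$ for $n$ large. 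Since $d_K$ being small forces the two path-laws to agree on the time window $[K,n-K]$ with probability $\ge 1-\epsilon$, and since behaviour on the two end-windows of length $K$ contributes $O(K/\sqrt{n}) = o(1)$ to the scaled sup norm, the scaled processes $\hat s_{\walk_\sigma}$ (for $\sigma$ uniform on $\X_1'$) and $\sloc(n\cdot)/\sqrt{2n/d}$ (for $\walk$ uniform on $\cw((0,0),(n,0))$) have the same weak limit, provided the latter is tight.

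\textbf{Step 4: Apply the random-walk scaling limit.} The law of $\walk$ uniform on $\cw((0,0),(n,0))$ makes $s_\walk$ a bridge of $\sloc$ conditioned to stay in $\cone$ and return to $0$, i.e.\ it is the conditioning $\{\tau_0 > n,\ \sloc(n)=0\}$ with $x=0$ appearing in Theorem \ref{cone scaling limit two}. That theorem (with $x=0\in\C$) gives, for every bounded continuous $f:D([0,1],\R^d)\to\R$, $\E[f(\sloc(n\cdot)/\sqrt{2n/d}) \mid \tau_0>n,\sloc(n)=0] \to \E[f(\Lambda(Z))]$; since $\Lambda(Z)$ has continuous paths, convergence in the Skorokhod topology upgrades to convergence in the uniform topology on $C([0,1],\R^d)$. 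Combining with Steps 1--3 yields $P_\sigma \xrightarrow{\text{dist}} \Lambda(Z)$.

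\textbf{Main obstacle.} All the genuinely hard analytic work — the combinatorial-to-walk dictionary (Lemmas \ref{permsRgreat}, \ref{middleman}, \ref{iowa}), the enumeration of the ``bad'' sets (Lemmas \ref{anthony}, \ref{may day}), and the walk-in-a-cone scaling limit (Theorem \ref{cone scaling limit two}) — is already packaged as cited results. The delicate point in assembling them is Step 3: one must be careful that the cell-wise bound of Lemma \ref{right outa my hair} aggregates correctly under the non-uniform cell sizes (this is exactly why the argument is phrased through the pseudo-metric $d_K$ and Corollary \ref{Elsa}, which absorbs the dependence on the entrance data $(t,x)$ and $(t^*,x^*)$ into a single universal $K$), and that translating a small $d_K$ into closeness of the \emph{scaled, interpolated} processes only costs the $O(K/\sqrt n)$ boundary error plus the Lemma \ref{tahini} error — both $o(1)$ — so that Slutsky-type arguments legitimately transfer the limit.
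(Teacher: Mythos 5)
Your proposal is correct and follows essentially the same route as the paper's proof: reduce to $\X_1'$ via Lemma \ref{may day}, partition by the data of Lemma \ref{maybe she's born with it}, couple each cell to $\cw((0,0),(n,0))$ in the pseudo-metric $d_K$ via Lemma \ref{right outa my hair}, compare $P_\sigma$ with $\hat s_{\walk_\sigma}$ via Lemma \ref{tahini}, and conclude with Theorem \ref{cone scaling limit two}. The only cosmetic difference is the order in which you invoke Lemma \ref{tahini} relative to the coupling step, which does not change the argument.
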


\begin{proof}[Proof of Theorem \ref{overwrite}]
By Theorem \ref{cone scaling limit two} random walk in a cone has a scaling limit of traceless Dyson Brownian bridge.  By Lemma \ref{may day} it remains to show that the distribution $(P_\sigma : \ \sigma \in \X'_1)$ has the same scaling limit as a random walk in a cone.

By Lemma \ref{maybe she's born with it}
we can write $\X_1'$ as a disjoint union of sets parameterized by 
$t$, $x$, $t^*$, $x^*$, $\tilde \walk^1$ and $\tilde \walk^3$.
Thus, using the notation from Equation \eqref{eq empi}, we can write $(s_{\walk_\sigma}: \ \sigma \in \X'_1)$ as a linear combination of pieces of the form
$$(s_{\walk_\sigma}: \ \sigma \in \X'_1, t, x, t^*, x^*,\tilde \walk^1, \tilde \walk^3).$$
By Lemma \ref{right outa my hair} each of those pieces 
can be coupled  with
$\cw((0,0),((n,0)))$ to show
\begin{multline*}D_K\bigg((s_{\walk_\sigma}: \ \sigma \in \X'_1, t, x, t^*, x^*,\tilde \walk^1, \tilde \walk^3),(s_\omega : \ \omega \in \cw((0,0),((n,0))))\bigg)=\\
%
D_K\bigg((s_{\walk_\sigma}: \ \sigma \in \X'_1),(s_\omega : \ \omega \in \cw((0,0),(n,0)))\bigg)<\epsilon.\end{multline*}
Scaling these paths we get a coupling $\nu$ of $(\hat s_\omega : \ \omega \in \cw((0,0),(n,0)))$  and 
$(\hat s_{\walk_\sigma}: \ \sigma \in \X'_1)$
such that with probability at least $1-\epsilon$ the paths are within $Cn^{-.5}$ in the supremum norm.

By Lemmas \ref{wash that man} at least $1-\epsilon$ fraction of the 
$\sigma \in \X'_1$ satisfy the hypothesis of 
Lemma \ref{tahini}.
By Lemma \ref{tahini} for those $\sigma$ we have that 
$$\sup_{t \in [0,1]} \dis(P_\sigma(t), \hat s_{\omega_\sigma}(t)) \leq n^{-.1}. $$
Thus the coupling $\nu$ gives a coupling of 
of $(\hat s_\omega : \ \omega \in \cw((0,0),(n,0)))$ and 
$(P_\sigma : \ \sigma \in \X'_1)$
such that with probability at least $1-2\epsilon$ the paired paths are within 
$Cn^{-.5}+n^{-.1}$ in the supremum norm. By the opening paragraph this completes the proof.
\end{proof}

\section{Walks in the Weyl Chamber}\label{walksincones}
Recall that if $\omega$ has distribution $\P$, then $\sloc$ is a lazy random walk such that $\sloc(t)-\sloc(t-1)=0$ with probability $1/d$ and $\sloc(t)-\sloc(t-1)=e_j-e_k$ with probability $1/d^2$ for each $ i \neq j$ with $1 \leq i,j \leq d$.  To simplify notation, in this section we let $S_t=S(t) =\sloc(t)$, $X_t = \sloc(t)-\sloc(t-1)$ and define $\P_x$ to be the law of $S+x$ for $x\in \R^d$.

For any $m$ recall the definition of $\cone_m$ in \eqref{divertida}.


\subsection{Results from the literature}

In this section we recall some useful theorems from the literature.

\begin{theorem}[\cite{konig2001non}] \label{costume}
The function
$$U(x) =
\prod_{i=1}^d \prod_{j=i+1}^d
(x_i - x_j)$$
is harmonic for the random walk $S$.
\end{theorem}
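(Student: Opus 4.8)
The plan is to verify the harmonicity identity $\E[U(x+X_1)]=U(x)$ directly, where $X_1$ denotes a single increment of $S$, so that $X_1=0$ with probability $1/d$ and $X_1=e_j-e_k$ with probability $1/d^2$ for each ordered pair of distinct indices $j,k\in[d]$. Everything reduces to the two one-sided summation identities
\begin{equation}\label{eq:onesided}
\sum_{j=1}^d U(x+e_j)=d\,U(x)\qquad\text{and}\qquad \sum_{k=1}^d U(x-e_k)=d\,U(x),
\end{equation}
valid for all $x\in\R^d$ (viewing $U$ as a polynomial). Indeed, granting \eqref{eq:onesided}, for each fixed $j$ we have $\sum_k U\big((x+e_j)-e_k\big)=d\,U(x+e_j)$ by the second identity, so $\sum_{j,k}U(x+e_j-e_k)=d\sum_j U(x+e_j)=d^2U(x)$; subtracting the $d$ diagonal terms, each equal to $U(x)$, gives $\sum_{j\ne k}U(x+e_j-e_k)=(d^2-d)U(x)$, and therefore
\[
\E[U(x+X_1)]=\frac1d U(x)+\frac1{d^2}\sum_{j\ne k}U(x+e_j-e_k)=\frac1d U(x)+\frac{d^2-d}{d^2}U(x)=U(x).
\]

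To establish \eqref{eq:onesided}, consider the first identity (the second is identical, replacing $e_j$ by $-e_j$) and set $F(x):=\sum_j U(x+e_j)-d\,U(x)$, a polynomial of degree at most $\binom{d}{2}=\deg U$. First, $x\mapsto\sum_j U(x+e_j)$ is antisymmetric under permutations of the coordinates: for a permutation $\pi$ one has $U(\pi x+e_j)=U\big(\pi(x+e_{\pi^{-1}(j)})\big)=\operatorname{sgn}(\pi)\,U(x+e_{\pi^{-1}(j)})$, and summing over $j$ merely reindexes the sum. Since $U$ is itself antisymmetric, $F$ is antisymmetric, hence divisible by the Vandermonde product $\prod_{i<j}(x_i-x_j)=\pm U$; comparing degrees, $F=c\,U$ for some constant $c$. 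Finally, translation by $e_j$ does not change the top-degree homogeneous part of $U$, so the degree-$\binom{d}{2}$ part of $\sum_j U(x+e_j)$ equals $d$ times the leading form of $U$, which cancels against the degree-$\binom{d}{2}$ part of $d\,U(x)$; thus $\deg F<\binom{d}{2}$, while $c\,U$ is homogeneous of degree $\binom{d}{2}$, forcing $c=0$ and $F\equiv0$.

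There is no genuine obstacle in this argument; the only point that requires a moment's thought is the reduction to \eqref{eq:onesided} together with the observation that an antisymmetric polynomial of degree at most $\binom{d}{2}$ whose degree-$\binom{d}{2}$ homogeneous component vanishes must be identically $0$. One could instead simply cite \cite{konig2001non}, where the statement appears, but the self-contained computation above is short; note that it uses only that the increment law of $\sloc$ is supported on $\{0\}\cup\{e_j-e_k:j\ne k\}$ with the indicated symmetric weights, so it applies to $\sloc$ verbatim.
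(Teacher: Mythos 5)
Your proof is correct, and it takes a different route from the paper: the paper offers no argument at all for this statement, simply citing \cite{konig2001non}, where harmonicity of the Vandermonde product for walks with exchangeable increments is established via a determinantal/symmetric-function computation. Your self-contained argument is sound: the reduction of $\E[U(x+X_1)]=U(x)$ to the two one-sided identities $\sum_j U(x\pm e_j)=d\,U(x)$ is valid because the increment is $e_a-e_b$ with $a,b$ independent and uniform, so the double sum over $(j,k)$ factors exactly as you use it; and the one-sided identities follow correctly from your observation that $\sum_j U(x+e_j)-d\,U(x)$ is antisymmetric (hence divisible by the Vandermonde), has degree at most $\binom{d}{2}$, and has vanishing top-degree homogeneous part, forcing it to be zero. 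One could add the one-line justification that an antisymmetric polynomial vanishes on each hyperplane $x_i=x_j$ and is therefore divisible by $\prod_{i<j}(x_i-x_j)$, but this is standard. What your approach buys is a short, elementary, and verifiable proof that uses only the specific form of the increment law of $\sloc$; what the citation buys is generality (harmonicity for any exchangeable increment distribution) and brevity in the text. Either is acceptable here, since the paper only needs the martingale property $\E_{(t,x)}[U(\sloc(t+1))]=U(x)$ for its optional stopping arguments, which is exactly what you prove.
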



One of our main tools will be the results from \cite{denisov2015random} about random walks in cones.  The random walk we are interested in does not satisfy the hypotheses of \cite{denisov2015random} but can be transformed into one that does by an appropriate linear transformation.  In particular, our random walk takes place on a $(d-1)$-dimensional subspace of $\R^d$ and its covariance matrix is not the identity.  We now explain how to fix this for our random walk. 

Let
\[\H = \left\{ x=(x_1,\dots,x_d)\in \R^d\ \middle| \ \sum_{i=1}^d x_i =0\right\},\]
\[\C_{>} = \left\{ x=(x_1,\dots,x_d)\in \R^d\ \middle| \ x_1> x_2> \cdots > x_d \right\},\]
and $\C = \C_{>} \cap \H$.

Let $\bu\in \R^d$ be the unit vector defined by $u_i = (2d-2\sqrt{d})^{-1/2}$ for $1\leq i\leq d-1$ and 
\[ u_d = \frac{1 - \sqrt{d}}{\sqrt{2d-2\sqrt{d}}}. \]
Using $u$, we define the linear transformation $H_{\bu} : \R^d \to \R^d$ by
\[ H_{\bu}(x) = x - 2\ip{x}{u}u,\]
where $\ip{\cdot}{\cdot}$ is the usual inner product on $\R^d$.
\begin{proposition} $H_{\bu}$ has the following properties:
\begin{enumerate}
\item $H_{\bu}$ is an orthogonal involution.
\item If $\mathbf{1}$ is the all ones vector then $H_{\bu}(\mathbf{1}) = \sqrt{d} e_d$, where $e_d$ is the $d$'th standard basis vector in $\R^d$.
\item If $x\in \R^d$, then $\ip{x}{\mathbf{1}}=0$ if and only if $H_{\bu}(x)_d=0$.
\item $H_{\bu}$ is a bijection between $\C$ and the cone 
\[ \tilde \C = \left\{x\in \R^d : x_1> x_2 > \cdots > x_{d-1} > \frac{1}{\sqrt{d}-1}\sum_{i=1}^{d-1} x_i, x_d=0 \right\}.\]
\end{enumerate}
\end{proposition}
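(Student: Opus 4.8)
The plan is to recognize $H_{\bu}$ as the Householder reflection across the hyperplane $\bu^{\perp}$, which makes (1) immediate, and then to push $\1$ and the two cones through it by direct computation. First I would record that $\bu$ is a unit vector: since $(1-\sqrt d)^2 = d - 2\sqrt d + 1$, one has $(d-1)(2d-2\sqrt d)^{-1} + (1-\sqrt d)^2(2d-2\sqrt d)^{-1} = (2d-2\sqrt d)(2d-2\sqrt d)^{-1} = 1$. Granting $\ip\bu\bu = 1$, part (1) is the standard reflection computation: $\ip{H_{\bu}(x)}{H_{\bu}(x)} = \ip xx - 4\ip x\bu^2 + 4\ip x\bu^2\ip\bu\bu = \ip xx$ gives orthogonality, while $\ip{H_{\bu}(x)}{\bu} = \ip x\bu - 2\ip x\bu\ip\bu\bu = -\ip x\bu$ gives $H_{\bu}(H_{\bu}(x)) = H_{\bu}(x) + 2\ip x\bu\,\bu = x$, so $H_{\bu}$ is an orthogonal involution.

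For (2) I would compute $\ip\1\bu = \big((d-1) + (1-\sqrt d)\big)(2d-2\sqrt d)^{-1/2} = (d-\sqrt d)(2d-2\sqrt d)^{-1/2}$, whence $2\ip\1\bu\,u_i = 2(d-\sqrt d)(2d-2\sqrt d)^{-1} = 1$ for $1\le i\le d-1$ and $2\ip\1\bu\,u_d = (1-\sqrt d)$. Subtracting $2\ip\1\bu\,\bu$ from $\1$ coordinatewise therefore annihilates the first $d-1$ coordinates and leaves $1 - (1-\sqrt d) = \sqrt d$ in the last, i.e. $H_{\bu}(\1) = \sqrt d\,e_d$. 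Part (3) is then a one-liner: by (1) and (2), $\ip x\1 = \ip{H_{\bu}^2 x}{\1} = \ip{H_{\bu}(x)}{H_{\bu}(\1)} = \sqrt d\,(H_{\bu}(x))_d$, so $\ip x\1 = 0$ if and only if $(H_{\bu}(x))_d = 0$.

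For (4), write $y = H_{\bu}(x)$ and set $\kappa := 2\ip x\bu\,(2d-2\sqrt d)^{-1/2} = (d-\sqrt d)^{-1}\big(\sum_{i=1}^{d-1}x_i + (1-\sqrt d)x_d\big)$, so that the reflection acts in coordinates by $y_i = x_i - \kappa$ for $i\le d-1$ and $y_d = x_d + (\sqrt d - 1)\kappa$. I claim $x\in\C$ iff $y\in\tilde\C$; since $H_{\bu}$ is an involution this is equivalent to the asserted bijection. The constraint $x_d = 0$ defining $\tilde\C$ versus $\sum_i x_i = 0$ defining $\C$ is exactly (3). Assuming $x\in\C$, so $\sum_{i=1}^{d-1}x_i = -x_d$, the formula for $\kappa$ collapses to $\kappa = -\sqrt d\,x_d(d-\sqrt d)^{-1} = -x_d/(\sqrt d - 1)$, and hence $\sum_{i=1}^{d-1}y_i = -x_d - (d-1)\kappa = \sqrt d\,x_d$, using $(d-1)/(\sqrt d - 1) = \sqrt d + 1$. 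Since $y_1,\dots,y_{d-1}$ are $x_1,\dots,x_{d-1}$ shifted by the common amount $\kappa$, we get $x_1 > \cdots > x_{d-1}$ iff $y_1 > \cdots > y_{d-1}$; and the remaining condition $y_{d-1} > (\sqrt d - 1)^{-1}\sum_{i=1}^{d-1}y_i$ rearranges to $x_{d-1} - \kappa > \sqrt d\,x_d/(\sqrt d - 1) = -\sqrt d\,\kappa$, i.e. $x_{d-1} > (1-\sqrt d)\kappa = x_d$. Thus $x_1 > \cdots > x_d$ together with $\sum_i x_i = 0$ is equivalent to $y\in\tilde\C$. The converse direction runs identically once one first applies (3) to $y\in\tilde\C$ to recover $\sum_i x_i = 0$ for $x = H_{\bu}(y)$.

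The only step with real content is (4): everything hinges on the clean collapses $\kappa = -x_d/(\sqrt d - 1)$ and $\sum_{i=1}^{d-1}y_i = \sqrt d\,x_d$ once $\sum_i x_i = 0$ is imposed, and on checking that the somewhat opaque defining inequality of $\tilde\C$ is precisely the image of the single inequality $x_{d-1} > x_d$ under the reflection — not something strictly weaker or stronger. The remainder is routine reflection bookkeeping, and the unit-vector check plus part (1) are entirely standard.
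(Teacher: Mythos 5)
Your proof is correct and follows essentially the same route as the paper: recognize $H_{\bu}$ as a Householder reflection (giving (1)), compute $H_{\bu}(\mathbf{1})$ directly, and deduce (3) and (4) from orthogonality together with coordinate bookkeeping. The paper's own proof simply asserts these steps are classical or easy computations, so your write-up just supplies the details it omits; in particular your verification that the defining inequality of $\tilde\C$ corresponds exactly to $x_{d-1}>x_d$ is the content the paper leaves implicit.
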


\begin{proof}
$H_{\bu}$ is a Householder transformation, and the fact that it is an orthogonal transformation is both classical and easy to show.  The second property is an easy computation and the third and fourth properties follow from orthogonality.
\end{proof}

Let $\pi_{d-1}: \R^d\to \R^{d-1}$ be the natural projection onto the first $d-1$ coordinates, so that $\pi_{d-1}(x_1,x_2,\dots,x_{d-1},x_d) = (x_1,x_2,\dots,x_{d-1})$.  Define 
\[\bar X_i = (d/2)^{1/2}\pi_{d-1} H_{\bu}(X_i) \quad \textrm{and} \quad \bar S_n = (d/2)^{1/2}\pi_{d-1} H_{\bu}(S_n).\]  


Let us introduce two lattices, $\cL = \Z^d \cap \C$ and $\bar \cL = (d/2)^{1/2} \pi_{d-1} H_{\bu}(\cL)$.  Note that $(S_n)_{n\geq 0}$ is a random walk on the lattice $\cL$ and $\pi_{d-1}\circ H_\bu$ is an isometry from $\H$ to $\R^{d-1}$.  Using this, and letting $v_{\cL}$ ($v_{\bar \cL}$) be the volume of a fundamental cell of $\cL$ ($\bar \cL)$ with respect to the appropriate dimensional Hausdorff measure, we see that the volume $v_{\cL}=(d/2)^{-(d-1)/2} v_{\bar \cL}$.

\begin{proposition} \label{grievance two}
The random walk $\bar S_n = \bar X_1+ \cdots + \bar X_n$ and the cone $\tilde \C$ satisfy the hypotheses of \cite{denisov2015random}.  
In particular, 
\begin{enumerate}
\item Letting $\bar X_i = (\bar X_{i,1}, \dots \bar X_{i,d-1})$, we have that $\E(\bar X_{i,j})=0$, $\E(\bar X_{i,j}^2) = 1$, and $\E(\bar X_{i,j}\bar X_{i,k})=0$ for $j\neq k$.  Additionally, each $\bar X_{i,k}$ is a bounded random variable.
\item The random variable $\bar X_i$ is supported on the $\bar \cL$, which is a non-degenerate linear transformation of $\Z^{d-1}$, and $\bar \cL$ is a communicating class for $(\bar S_n)_{n\geq 0}$.
\item $\tilde \C$ is convex and $\tilde U(y) = U(\H((y,0)))$ a harmonic function on $\R^{d-1}$, positive on $\tilde \C$ and vanishing on its boundary, such that for all $y \in \tilde \C$
\[ \P(\bar \tau^B_y >t) \sim \aleph \frac{\tilde U(y)}{t^{d(d-1)/4}},\quad t\to\infty,\]
where $\bar \tau^B_y$ is the first time a standard Brownian motion started from $y$ exits $\tilde \C$ and 
\[ \aleph =  \frac{1}{\prod_{i<j} (j-i)} \frac{2^{3d/2}}{(2\pi)^{d/2}(d!)} \prod_{k=1}^d[\Gamma((k/2)+1)].\]
\end{enumerate}
\end{proposition}

\begin{proof}
The claims in part (1) about the means, (co)variances, and boundedness of the coordinates of $\bar X_i$ are a straightforward computation.  The assertions about $\tilde \C$ and $\bar\cL$ are in parimmediate. 


We now turn to part (3).  Let $B$ be a standard Brownian motion in $\R^d$ and let $v_1,v_2,\dots, v_d$ be an orthonormal basis for $\R^d$ such that $v_1= d^{-1/2}(1,1,\dots, 1)$.  Then we can express $B$ as
\[ B= \sum_{i=1}^d B_iv_i\]
where $B_1,\dots, B_d$ are independent, standard, one dimensional Brownian motions.  Since 
\[\ip{ \sum_{i=2}^d B_iv_i}{(1,1,\dots,1)} = \sqrt{d}\ip{ \sum_{i=2}^d B_iv_i}{v_1}=0,\]
letting $ B^0 = B_2u_2+\cdots+B_du_d$, we see that $\P( B^0_t\in \H \textrm{ for all } t) =1$.  Furthermore, it is easy to see that if we let $\tau^B_x = \inf\{t : x+B_t\notin \C\}$ and $\tau^0_x = \inf\{t : x+ B^0_t\notin \tilde\C\}$, then $\tau^0_x = \tau^B_x$ since adding or subtracting $B_1v_1$ preserves the relative order of the coordinates.  Furthermore, since $H_{\bu}$ is an isometry, we see that $\bar B = \pi_{d-1} H_{\bu}(B) =  \pi_{d-1} H_{\bu}( B^0)$ is a standard Brownian motion on $\R^{d-1}$.  Thus, if we let $\bar \tau^B_y = \inf\{ t : y+ \bar B \notin \pi_{d-1}H_{\bu}(\C)\}$ and $x = H_{\bu}( (y,0))$ (where $(y,0) = (y_1,\dots, y_{d-1},0) $) then we have that $\bar \tau_y^B = \tau^B_x$.  It follows from \cite[Equation (20)]{GRABINER99}, that 
\[ \P(\bar \tau^B_y >t) = \P(\tau^B_x >t) \sim \aleph \frac{U(x)}{t^{d(d-1)/4}} =\aleph \frac{U(H_{\bu}((y,0)))}{t^{d(d-1)/4}}  . \qedhere\]
\end{proof}

Letting $\tau_x = \inf\{n : x+ S_n\notin \C\}$ and $\bar\tau_x = \inf\{n : x+ \bar S_n\notin \pi_{d-1}H_\bu(\C)\}$, we translate \cite[Theorem 1]{denisov2015random} to our present context, to get the following result.

\begin{theorem}[\cite{denisov2015random}, Theorem 1]
There exists a strictly positive function $\bar V$ such that for all $x$ in the interior of $\pi_{d-1}H_\bu(\C)$ we have
\[ \P(\bar\tau_x >n) \sim \aleph \bar V(x) n^{-d(d-1)/4}\]
Consequently, for $x \in \tilde\C$, we have 
\[ \P(\tau_x >n) \sim \aleph \bar V\left( (d/2)^{1/2} \pi_{d-1}H_\bu(x)\right) n^{-d(d-1)/4}.\]
\end{theorem}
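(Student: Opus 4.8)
The plan is to obtain the first assertion by applying \cite[Theorem 1]{denisov2015random} directly to the walk $\bar S_n$ in the cone $\pi_{d-1}H_\bu(\C)$, whose apex is the origin, and then to deduce the second assertion by transporting this statement back to $S_n$ and $\C$ along the explicit linear maps set up above. First I would invoke Proposition \ref{grievance two}, which says precisely that $\bar S_n$ has centered, identity-covariance increments living on the lattice $\bar\cL$ and meeting the hypotheses (including the lattice hypotheses) of \cite{denisov2015random}, while $\pi_{d-1}H_\bu(\C)$ is a genuine open cone in $\R^{d-1}$ of the type treated there. Hence \cite[Theorem 1]{denisov2015random} produces a strictly positive function $V$, harmonic for $\bar S$ killed on exiting the cone, together with a positive constant $\varkappa$ such that $\P(\bar\tau_x>n)\sim\varkappa V(x)n^{-p/2}$, where $p$ is the homogeneity degree of the principal harmonic function of the cone. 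For the (linearly transformed) Weyl chamber of type $A_{d-1}$ this harmonic function is the image of the Vandermonde product $U$ under the coordinate change $\pi_{d-1}\circ H_\bu$ and is homogeneous of degree equal to the number of positive roots, namely $d(d-1)/2$; thus $p/2=d(d-1)/4$, matching the exponent in the Brownian computation recorded just above.

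Next I would fix the constant. Since \cite[Theorem 1]{denisov2015random} determines $V$ only up to a positive multiplicative constant, I can set $\bar V=(\varkappa/\aleph)V$, which gives $\P(\bar\tau_x>n)\sim\aleph\,\bar V(x)\,n^{-d(d-1)/4}$, the first displayed asymptotic. The reason for normalizing by $\aleph$ specifically is that, with this choice, $\bar V$ is asymptotic as $|x|\to\infty$ to the Brownian chamber harmonic function: this follows by comparing with the Brownian tail $\aleph\,U(\cdot)\,t^{-d(d-1)/4}$ of \cite[Equation (20)]{GRABINER99}, using the identity in law between $\bar B$ and standard Brownian motion on $\R^{d-1}$ and the equality of exit times of $\C$ and $\pi_{d-1}H_\bu(\C)$. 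This identification is what the scaling-limit argument of Section \ref{walksincones} will need; it is not required for the present statement.

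Finally, for the ``consequently'' clause I would use that $\bar S_n=(d/2)^{1/2}\pi_{d-1}H_\bu(S_n)$, that $\pi_{d-1}\circ H_\bu$ is an isometry of $\H$ carrying $\C$ bijectively onto $\pi_{d-1}H_\bu(\C)$, and that both this isometry and scaling by $(d/2)^{1/2}$ preserve the relative order of the coordinates and hence the event of remaining in the chamber. Therefore $\tau_x=\bar\tau_{(d/2)^{1/2}\pi_{d-1}H_\bu(x)}$ for $x\in\tilde\C$, and $x\in\tilde\C$ is mapped into the interior of $\pi_{d-1}H_\bu(\C)$, so substituting $(d/2)^{1/2}\pi_{d-1}H_\bu(x)$ for $x$ in the first asymptotic yields the claim. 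I expect the only delicate point to be the bookkeeping around the constant $\aleph$ and the covariance normalization: one must check that Denisov--Wachtel's killed-walk harmonic function, the degree-$p$ Brownian harmonic function, and Grabiner's constant are all being compared under the identity-covariance normalization---which is exactly why the whole passage through $H_\bu$ and the scaling by $(d/2)^{1/2}$ was introduced. Everything else is a routine translation through the linear maps.
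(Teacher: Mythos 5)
Your proposal is correct and matches the paper's (implicit) argument: the paper offers no separate proof, treating the statement as a direct translation of \cite[Theorem 1]{denisov2015random} to $\bar S$ via Proposition \ref{grievance two} and then transporting back through the isometry $\pi_{d-1}\circ H_\bu$ and the $(d/2)^{1/2}$ scaling, exactly as you do. Your remark that the normalization constant is absorbed into $\bar V$ (so that the asymptotic can be written with the Brownian constant $\aleph$ from \cite{GRABINER99}) is precisely the bookkeeping the paper relies on.
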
 
Motivated by this theorem, we define $V(x) = \bar V\left( (d/2)^{1/2} \pi_{d-1}H_\bu(x)\right)$.
Translating \cite[Theorem 5]{denisov2015random} into the present context thus gives the following result.

\begin{theorem}\label{theorem local limit}
For $x\in \C$,
\begin{equation}
\sup_{y\in \C} \left| v_{\tilde \cL}^{-1}n^{(d-1)(d+2)/4} \P(x+S_n=y, \tau_x>n) - \aleph V(x) h_0 U\left(\frac{\sqrt{d}}{\sqrt{2n}} y \right) e^{-d|y|^2/4n}\right| \rightarrow 0,
\end{equation}
where $h_0$ is chosen so that $p(y) = h_0U(H_\bu((y,0))) e^{-|y|^2/2}$ is a probability density function with respect to Lebesgue measure on $\pi_{d-1}H_\bu(\C)$.
\end{theorem}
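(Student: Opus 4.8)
The plan is to deduce Theorem \ref{theorem local limit} directly from the lattice local limit theorem \cite[Theorem 5]{denisov2015random} applied to the transformed walk $\bar S_n$, and then to transport the conclusion back through the Householder map $H_\bu$. By Proposition \ref{grievance two}, $\bar S_n$ is a centered, identity-covariance, aperiodic random walk on $\bar\cL$ satisfying all the hypotheses (including the lattice assumptions) of \cite{denisov2015random}, and $\pi_{d-1}H_\bu(\C)$ is the relevant cone; by \cite[Equation (20)]{GRABINER99} and the discussion above it, the positive harmonic polynomial governing the limiting $(d-1)$-dimensional Brownian motion in this cone is $\bar y\mapsto U(H_\bu((\bar y,0)))$, of homogeneity degree $d(d-1)/2$, and $\bar V$ is its discrete counterpart produced by \cite[Theorem 1]{denisov2015random}. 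Feeding these data into \cite[Theorem 5]{denisov2015random}, using $\tfrac{d(d-1)}{4}+\tfrac{d-1}{2}=\tfrac{(d-1)(d+2)}{4}$ for the power of $n$, and observing that $h_0$ as defined in the statement is exactly the normalizing constant of the limiting harmonic density $p(\cdot)$ appearing in \cite[Theorem 5]{denisov2015random}, one obtains, uniformly over $\bar y$ in the lattice coset of $\bar x+\bar S_n$ inside the cone and with $\bar x=(d/2)^{1/2}\pi_{d-1}H_\bu(x)$,
\[
v_{\bar\cL}^{-1}\,n^{(d-1)(d+2)/4}\,\P\big(\bar x+\bar S_n=\bar y,\ \bar\tau_{\bar x}>n\big)-\aleph\,\bar V(\bar x)\,h_0\,U\!\big(H_\bu(\tfrac{1}{\sqrt n}(\bar y,0))\big)\,e^{-|\bar y|^2/2n}\longrightarrow 0.
\]

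Next I would change variables back to $S$. Since $\pi_{d-1}\circ H_\bu$ is a linear isometry of $\H$ onto $\R^{d-1}$ carrying $\C$ onto the scale-invariant cone $\pi_{d-1}H_\bu(\C)$, for $x\in\C\cap\Z^d$ the event $\{x+S_n=y,\ \tau_x>n\}$ is identical with $\{\bar x+\bar S_n=\bar y,\ \bar\tau_{\bar x}>n\}$ when $\bar y=(d/2)^{1/2}\pi_{d-1}H_\bu(y)$, and in particular $\tau_x=\bar\tau_{\bar x}$; the supremum over lattice points $\bar y$ then becomes a supremum over the corresponding $y$. It remains to rewrite the three factors on the limiting side. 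Because $y\in\C\subset\H$ we have $H_\bu(y)_d=0$, so $(\pi_{d-1}H_\bu(y),0)=H_\bu(y)$ and hence $(\bar y,0)=(d/2)^{1/2}H_\bu(y)$; applying the involution identity $H_\bu\circ H_\bu=\mathrm{id}$ gives
\[
H_\bu\!\big(\tfrac{1}{\sqrt n}(\bar y,0)\big)=(d/2)^{1/2}\,\tfrac{1}{\sqrt n}\,y=\tfrac{\sqrt d}{\sqrt{2n}}\,y,
\]
so the $U$-factor is exactly $U(\tfrac{\sqrt d}{\sqrt{2n}}y)$, with no stray constant. Orthogonality of $H_\bu$ together with $H_\bu(y)_d=0$ gives $|\bar y|^2=(d/2)|y|^2$, so the Gaussian factor is $e^{-d|y|^2/4n}$, while $\bar V(\bar x)=V(x)$ is the definition of $V$. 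Substituting, and writing $v_{\tilde\cL}$ for the fundamental-cell volume $v_{\bar\cL}$ of the lattice on which $\bar S$ lives (related to $v_\cL$ by $v_\cL=(d/2)^{-(d-1)/2}v_{\bar\cL}$), reproduces the displayed claim.

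I do not anticipate a substantive obstacle: the entire argument is bookkeeping with the explicit map $H_\bu$. The only points that genuinely require care are conventions in \cite{denisov2015random} — namely that \cite[Theorem 5]{denisov2015random} is uniform over all lattice points $\bar y$ in the cone (with the trivial reading that $\P(\bar x+\bar S_n=\bar y,\ \cdot)=0$ off the support coset), that the normalizing power of $n$ there is (half the dimension) plus (half the degree of the harmonic polynomial), i.e.\ $\tfrac{d-1}{2}+\tfrac{d(d-1)}{4}$, and that the universal prefactor and the density-normalizing constant in their statement are exactly $\aleph$ and $h_0$. The identification of the prefactor with $\aleph$ has in effect already been made in the preceding discussion via \cite[Equation (20)]{GRABINER99} and \cite[Theorem 1]{denisov2015random}, and the identification of the density-normalizing constant with $h_0$ is built into our definition of $h_0$; the aperiodicity needed for the lattice local limit theorem comes from the laziness of $S$ and is part of Proposition \ref{grievance two}.
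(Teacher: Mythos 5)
Your proposal is correct and follows essentially the same route as the paper, which obtains this statement precisely by applying \cite[Theorem 5]{denisov2015random} to the transformed walk $\bar S_n$ of Proposition \ref{grievance two} and transporting the conclusion back through the isometry $\pi_{d-1}\circ H_\bu$ (with $V(x)=\bar V((d/2)^{1/2}\pi_{d-1}H_\bu(x))$ and the prefactor $\aleph$ identified via \cite[Equation (20)]{GRABINER99}). Your explicit bookkeeping — the identity $H_\bu(\tfrac{1}{\sqrt n}(\bar y,0))=\tfrac{\sqrt d}{\sqrt{2n}}y$, the relation $|\bar y|^2=(d/2)|y|^2$, the exponent $\tfrac{d(d-1)}{4}+\tfrac{d-1}{2}=\tfrac{(d-1)(d+2)}{4}$, and the lattice-volume identification — simply fills in the translation the paper leaves implicit.
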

%
%
%

\begin{theorem}
For $x\in \C$,
\begin{equation}
\sup_{y\in  \C} \left| \frac{n^{(d-1)/2}}{v_{\tilde \cL}} \P(x+S_n=y | \tau_x>n) - h_0U\left(\frac{\sqrt{d}}{\sqrt{2n}} y\right) e^{-d|y|^2/4n}\right| \rightarrow 0.
\end{equation}
\end{theorem}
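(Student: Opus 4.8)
The plan is to obtain this as an immediate consequence of the two preceding theorems. Write the conditional probability as a ratio,
\[
\P(x+S_n=y\mid \tau_x>n)=\frac{\P(x+S_n=y,\ \tau_x>n)}{\P(\tau_x>n)},
\]
which is legitimate since $x\in\C$ forces $\P(\tau_x>n)>0$ and $V(x)>0$ (the chamber $\C$ is open). The preceding theorem gives $\P(\tau_x>n)\sim \aleph V(x)\, n^{-d(d-1)/4}$, and Theorem \ref{theorem local limit} controls the numerator.

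Concretely, I would substitute into the numerator the local limit asymptotics from Theorem \ref{theorem local limit},
\[
\P(x+S_n=y,\ \tau_x>n)= v_{\tilde\cL}\, n^{-(d-1)(d+2)/4}\Big(\aleph V(x) h_0\, U\big(\tfrac{\sqrt d}{\sqrt{2n}}y\big) e^{-d|y|^2/4n}+\eta_n(y)\Big),
\]
where $\sup_{y\in\C}|\eta_n(y)|\to0$, and then divide by $\P(\tau_x>n)$. The only arithmetic is the exponent identity $-\tfrac{(d-1)(d+2)}{4}+\tfrac{d(d-1)}{4}=-\tfrac{d-1}{2}$; after multiplying through by $n^{(d-1)/2}/v_{\tilde\cL}$ and cancelling the common factor $\aleph V(x)$, the main term becomes exactly $h_0\, U\big(\tfrac{\sqrt d}{\sqrt{2n}}y\big)e^{-d|y|^2/4n}$, and what remains is a $y$-independent multiplicative factor $1+o(1)$ (coming from $\aleph V(x)n^{-d(d-1)/4}/\P(\tau_x>n)\to1$) times the bracket consisting of this main term plus $\eta_n(y)/(\aleph V(x))$.

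The one step that needs a little care — and about as close to an obstacle as this argument gets — is converting the multiplicative $1+o(1)$ into an additive $o(1)$ uniformly over $y\in\C$. For this I would set $\tilde y=\tfrac{\sqrt d}{\sqrt{2n}}y$, so that $d|y|^2/4n=|\tilde y|^2/2$ and the main term is $h_0\,U(\tilde y)e^{-|\tilde y|^2/2}$; since $\tilde y\mapsto |U(\tilde y)|e^{-|\tilde y|^2/2}$ is a polynomial times a Gaussian it is bounded on all of $\R^d$, so the bracket above is bounded uniformly in $n$ and $y$. Multiplying a uniformly bounded quantity by $1+o(1)$ changes it by $o(1)$ uniformly, and combining this with $\sup_{y\in\C}|\eta_n(y)|\to0$ yields the asserted uniform convergence. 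No new probabilistic estimate is required beyond the two cited theorems; the content is entirely in the ratio manipulation and this uniformity bookkeeping.
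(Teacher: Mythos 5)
Your argument is correct and is essentially the paper's own route: the paper states this theorem as an immediate consequence of Theorem \ref{theorem local limit} together with the tail asymptotics $\P(\tau_x>n)\sim \aleph V(x)\,n^{-d(d-1)/4}$, which is exactly the ratio manipulation you carry out. Your exponent arithmetic and the boundedness of $U(\tilde y)e^{-|\tilde y|^2/2}$ needed to turn the multiplicative $1+o(1)$ into a uniform additive $o(1)$ are both right, so nothing further is needed.
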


We remark that this local central limit theorem should be interpreted has taking place on $\H$ relative to the $(d-1)$-dimensional Hausdorff measure.  In particular, 
\[\nu(x)=(d/2)^{(d-1)/2} h_0U\left(\frac{\sqrt{d}}{\sqrt{2}} y\right) e^{-d|y|^2/4},\]
is a probability density function with respect to the $(d-1)$-dimensional Hausdorff measure and we obtain the following corollary.

\begin{corollary}\label{cor end convergence}
Let $A\subseteq \H$ be an open set.  Then
\[ \lim_{n\to\infty} \P\left(\frac{x+S_n}{\sqrt{n}} \in A \middle| \tau_x>n\right) = \int_A\nu(u) du,\]
where $dx$ is the $(d-1)$-dimensional Hausdorff measure on $\H$.
\end{corollary}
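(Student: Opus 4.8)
The plan is to derive this from the conditional local central limit theorem for $\P(x+S_n=y\mid\tau_x>n)$ stated immediately above, by the usual device of summing the local estimate over lattice points in $\sqrt n\,A$ and recognizing the result as a Riemann sum for $\int_A\nu$.

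First I would treat the case of a bounded open set $A$. Conditionally on $\{\tau_x>n\}$ the walk stays in $\C$, and $x+S_n$ ranges over a fixed coset of the lattice $\cL=\Z^d\cap\C$ inside $\H$, whose fundamental cell has $(d-1)$-dimensional volume $v_{\cL}$. Hence
\[
\P\!\left(\frac{x+S_n}{\sqrt n}\in A\ \middle|\ \tau_x>n\right)=\sum_{z\,:\ z/\sqrt n\in A}\P(x+S_n=z\mid \tau_x>n),
\]
the sum being over the $z$ in that coset lying in $\C$. Substituting the local limit theorem for each summand and writing $u=z/\sqrt n$, this equals
\[
\frac{v_{\tilde\cL}}{n^{(d-1)/2}}\sum_{u}\Bigl(h_0\,U\!\bigl(\tfrac{\sqrt d}{\sqrt 2}u\bigr)e^{-d|u|^2/4}+\varepsilon_n(u)\Bigr),
\qquad \sup_u|\varepsilon_n(u)|=\varepsilon_n\to 0 .
\]
Because $A$ is bounded there are only $O(n^{(d-1)/2})$ terms, so the error $\varepsilon_n$ contributes $o(1)$; and since the points $u$ form a coset of the rescaled lattice $n^{-1/2}\cL$, whose cell has volume $v_{\cL}/n^{(d-1)/2}$, the main term is $\frac{v_{\tilde\cL}}{v_{\cL}}$ times a Riemann sum that converges to $\int_A h_0\,U(\tfrac{\sqrt d}{\sqrt 2}u)e^{-d|u|^2/4}\,du$. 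Using $v_{\cL}=(d/2)^{-(d-1)/2}v_{\tilde\cL}$ (established above) together with the definition $\nu(u)=(d/2)^{(d-1)/2}h_0U(\tfrac{\sqrt d}{\sqrt2}u)e^{-d|u|^2/4}$, the limit is exactly $\int_A\nu(u)\,du$.

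To remove the boundedness assumption I would use that $\nu$ is a probability density on $\H$ (already noted above): applying the bounded case to balls shows $\P\bigl((x+S_n)/\sqrt n\in B_R\mid\tau_x>n\bigr)\to\int_{B_R}\nu\to 1$ as $R\to\infty$, so there is no loss of mass to infinity, uniformly for large $n$. Writing $A=\bigcup_R(A\cap B_R)$ and letting $R\to\infty$ after $n\to\infty$ then yields the corollary for a general open $A$ (the usual caveat that equality for open sets needs $\partial A$ to be $\nu$-null applies, which holds for the relatively open, cone-like sets to which the corollary is applied).

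The only point requiring care is the passage from the lattice sum to the integral: one must control the lattice points near $\partial(\sqrt n A)$ and combine this with the uniform tail estimate from the previous paragraph to handle the unboundedness of $\C$ and the Gaussian weight. Given the uniformity in the local limit theorem this is routine, so there is no real obstacle; all of the analytic content sits in the local CLT that precedes the statement.
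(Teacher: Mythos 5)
Your proposal is correct and is exactly the derivation the paper intends: the paper states the corollary with no written proof, treating it as an immediate consequence of the uniform conditional local limit theorem, and your argument (sum the local estimate over the rescaled lattice coset, recognize a Riemann sum with the cell-volume ratio $v_{\tilde\cL}/v_{\cL}=(d/2)^{(d-1)/2}$ producing $\nu$, then use that $\nu$ is a probability density to pass from bounded to general open $A$) is the standard way to fill that in. Your caveat about $\partial A$ being $\nu$-null is also apt, since the paper's statement for arbitrary open $A$ silently assumes it.
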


\begin{theorem}[\cite{denisov2015random}, Theorem 6
]
For $x,y\in \C$,
\begin{equation}
\P(x+S_n=y, \tau_x>n) \sim \frac{v_{\cL}^2\aleph^2V(x)V(y)}{n^{(d-1)(d+1)/4}}\int_{\tilde \C} \left(\frac{d}{2}\right)^{d-1} h_0^2 U\left(\frac{\sqrt{d}}{\sqrt{2}} u\right)^2 e^{-d|u|^2/4} du
\end{equation}
If $t\in (0,1)$ and $D\subseteq  \C$ then, letting $[t]$ be the integer part of $t$,  
\[ \P\left(\frac{x+S_{[tn]}}{\sqrt{n}} \in D \middle| x+S_n=y, \tau_x>n \right) \rightarrow  \frac{ \int_{D}  U\left(\frac{\sqrt{d}}{\sqrt{2}} u\right)^2 e^{-d|u|^2/4t(1-t)} du}{ \int_{\tilde \C} U\left(\frac{\sqrt{d}}{\sqrt{2}} u\right)^2 e^{-d|u|^2/4t(1-t)} du}.\]
\end{theorem}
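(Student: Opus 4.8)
The plan is to obtain this statement by translating Theorem 6 of \cite{denisov2015random} through the isometry $\pi_{d-1}\circ H_\bu$, in exactly the way the earlier results of this section were obtained. By Proposition \ref{grievance two}, the walk $\bar S_n$ on the lattice $\bar\cL$ together with the cone $\tilde\C$ satisfies all of the hypotheses (including the lattice conditions) of \cite{denisov2015random}, so Theorem 6 there applies verbatim to $\bar S_n$ and $\tilde\C$, both for the two-point asymptotic and for the conditioned-bridge convergence. Writing $\bar\tau_{\bar x}=\inf\{n:\bar x+\bar S_n\notin \pi_{d-1}H_\bu(\C)\}$, the two-point part reads
\[
\P(\bar x+\bar S_n=\bar y,\ \bar\tau_{\bar x}>n)\sim \frac{v_{\bar\cL}^2\,\aleph^2\,\bar V(\bar x)\,\bar V(\bar y)}{n^{(d-1)(d+1)/4}}\int_{\pi_{d-1}H_\bu(\C)} h_0^2\,U(H_\bu((u,0)))^2\, e^{-|u|^2/2}\,du ,
\]
and the bridge part gives convergence to the probability measure on $\pi_{d-1}H_\bu(\C)$ whose density is proportional to $U(H_\bu((u,0)))^2 e^{-|u|^2/2t(1-t)}$.

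Next I would set $\bar x=(d/2)^{1/2}\pi_{d-1}H_\bu(x)$ and $\bar y=(d/2)^{1/2}\pi_{d-1}H_\bu(y)$ and note, as in the discussion preceding Theorem \ref{theorem local limit}, that $\pi_{d-1}\circ H_\bu$ is an isometry of $\H$ onto $\R^{d-1}$ carrying $\C$ onto the cone $\pi_{d-1}H_\bu(\C)$, $\cL$ onto $(d/2)^{-1/2}\bar\cL$, and $S_n$ onto $(d/2)^{-1/2}\bar S_n$; since $S_n$ and $x,y$ lie in $\H$, it follows that the events $\{x+S_n=y\}$ and $\{\bar x+\bar S_n=\bar y\}$ coincide, that $\tau_x=\bar\tau_{\bar x}$, and that $V(x)=\bar V(\bar x)$, $V(y)=\bar V(\bar y)$ by the definition of $V$. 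Substituting these identities, the volume relation $v_{\bar\cL}^2=(d/2)^{d-1}v_\cL^2$, and the change of variables that passes from the cone $\pi_{d-1}H_\bu(\C)$ (with its $(d-1)$-dimensional harmonic function) back to $\tilde\C$ — under which the harmonic function becomes $U(\tfrac{\sqrt d}{\sqrt 2}u)$ and the Gaussian becomes $e^{-d|u|^2/4}$ — converts the right-hand side above into the first display of the theorem. This is precisely the bookkeeping already carried out for Theorem \ref{theorem local limit} and Corollary \ref{cor end convergence}, just applied to the two-point asymptotic.

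For the conditional statement, the event $\{(x+S_{[tn]})/\sqrt n\in D\}$ corresponds, under the same isometry and scaling, to $\{(\bar x+\bar S_{[tn]})/\sqrt n\in \bar D\}$ with $\bar D=(d/2)^{1/2}\pi_{d-1}H_\bu(D)$, while the conditioning event $\{x+S_n=y,\ \tau_x>n\}$ coincides with $\{\bar x+\bar S_n=\bar y,\ \bar\tau_{\bar x}>n\}$. Applying the bridge part of \cite[Theorem 6]{denisov2015random} gives the limit as the normalized integral over $\bar D$ of the square of the $\tilde\C$-harmonic function against the Gaussian of variance $t(1-t)$; undoing the scaling by $(d/2)^{1/2}$ inside the integral (the Jacobians cancel between numerator and denominator) and re-expressing the harmonic function through $U$ as above yields exactly the claimed ratio of $\int_D U(\tfrac{\sqrt d}{\sqrt 2}u)^2 e^{-d|u|^2/4t(1-t)}\,du$ to $\int_{\tilde\C} U(\tfrac{\sqrt d}{\sqrt 2}u)^2 e^{-d|u|^2/4t(1-t)}\,du$.

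The only point requiring real care is the constant- and Jacobian-bookkeeping in passing from the normalization of \cite{denisov2015random} (a mean-zero, identity-covariance, aperiodic lattice walk $\bar S_n$ on $\bar\cL$, with principal harmonic the Vandermonde-type polynomial on $\tilde\C$) to the normalization natural for $S_n$ (the lazy walk on $\cL\subseteq\C$, with harmonic function $U$): one must keep straight the factors $(d/2)^{1/2}$ in the space scaling, the volumes $v_\cL$ versus $v_{\bar\cL}$, and the identity $V=\bar V\circ(d/2)^{1/2}\pi_{d-1}H_\bu$. All of this is mechanical, and since the probabilistic hypotheses have already been verified in Proposition \ref{grievance two}, no new input is needed; the argument runs exactly parallel to the earlier translations in this section.
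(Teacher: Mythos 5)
Your proposal is correct and matches the paper's approach: the paper itself offers no separate argument for this statement, simply importing Theorem 6 of \cite{denisov2015random} via the isometry and scaling established in Proposition \ref{grievance two}, exactly as you do. Your extra care with the volume factors $v_\cL$ versus $v_{\bar\cL}$, the $(d/2)^{1/2}$ scaling, and the identity $V=\bar V\circ (d/2)^{1/2}\pi_{d-1}H_\bu$ is the same bookkeeping the paper performs for Theorems \ref{theorem local limit} and Corollary \ref{cor end convergence}.
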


For the next result, we let $S^{x,y,k,n} = (S^{x,y,k,n}_j)_{j=0}^{n-2k}$ be a random variable whose distribution is given by
\[ \P(  S^{x,y,k,n}  \in A) = \P\left( (S_{k+j})_{j=0}^{n-2k}\in A | S_0=x,S_n=y, \tau_x>n\right).\]

\begin{theorem} \label{ice}
If $x \in \tilde \C$ and $y,x',y' \in (x+\Z^m)\cap \tilde \C$ are then
\[\lim_{k\to\infty} \limsup_{n\to\infty} D_{TV}(S^{x,y,k,n},S^{x',y',k,n})=0.\]
\end{theorem}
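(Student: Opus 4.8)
\emph{Plan.} The statement is a local limit theorem, \emph{in total variation}, for the random walk conditioned to remain in the cone forever: it says the middle of a long cone bridge forgets the two endpoints. I would prove it in three steps: (1) reduce the total‑variation distance between the two truncated bridges to the total‑variation distance between the laws of the pair $(S_k,S_{n-k})$, and let $n\to\infty$ to express the latter through the time‑$k$ marginals $\nu^x_k$ of the $h$‑transformed walk; (2) use the conditioned local limit theorem to show $\nu^x_k$ depends on $x$ only through the (uniformly small) error term in that theorem; (3) control the $\ell^1$‑norm of that error against the natural normalization. Concretely, write $q^x_k(v)=\P(x+S_k=v,\tau_x>k)$ and $p_m(v,w)=\P(v+S_m=w,\tau_v>m)$. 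By the Markov property, for $n>2k$ the joint law of $(S_k,S_{n-k})$ under $\P_x(\,\cdot\mid S_n=y,\tau_x>n)$ is
\[ \mu^{x,y}_{k,n}(v,w)=\frac{q^x_k(v)\,p_{n-2k}(v,w)\,\tilde q^{\,y}_k(w)}{\sum_{v',w'}q^x_k(v')\,p_{n-2k}(v',w')\,\tilde q^{\,y}_k(w')}, \]
where $\tilde q^{\,y}_k(w)=\P(w+S_k=y,\ w+S_j\in\mathcal C\text{ for }0\le j\le k)$; since the increment law of $S$ is symmetric under $x\mapsto-x$, time reversal gives $\tilde q^{\,y}_k(w)=q^y_k(w)$. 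Conditionally on $(S_k,S_{n-k})=(v,w)$, the segment $(S_{k+j})_{j=0}^{n-2k}$ is the $\mathcal C$‑bridge from $v$ to $w$ of length $n-2k$, a law not depending on $x,y$; hence by the data‑processing inequality $d_{TV}(S^{x,y,k,n},S^{x',y',k,n})\le d_{TV}(\mu^{x,y}_{k,n},\mu^{x',y'}_{k,n})$. Fixing $k$ and letting $n\to\infty$, for $n$ large $\mu^{x,y}_{k,n}$ is supported on the fixed finite product set $\{v:q^x_k(v)>0\}\times\{w:q^y_k(w)>0\}$ (this uses $y\in x+\Z^m$, so the relevant $v,w$ lie on one lattice coset, on which $p_{n-2k}(v,w)>0$ for $n$ large); by the bridge asymptotics of \cite[Theorem 6]{denisov2015random}, $p_{n-2k}(v,w)\sim V(v)V(w)\,c_n$ with $c_n$ independent of $v,w$, uniformly over a finite set, so $\mu^{x,y}_{k,n}\to\nu^x_k\otimes\nu^y_k$ in total variation, where $\nu^x_k(v)=V(v)q^x_k(v)\big/\sum_{v'}V(v')q^x_k(v')$ is the time‑$k$ marginal of the $h$‑transform of $S$ started at $x$. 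Therefore $\limsup_{n\to\infty}d_{TV}(\mu^{x,y}_{k,n},\mu^{x',y'}_{k,n})\le d_{TV}(\nu^x_k,\nu^{x'}_k)+d_{TV}(\nu^y_k,\nu^{y'}_k)$, and it remains to show $d_{TV}(\nu^x_k,\nu^{x'}_k)\to0$ as $k\to\infty$ for $x,x'$ on a common lattice coset in $\tilde{\mathcal C}$.

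\emph{Step 2.} By the conditioned local limit theorem stated above (the unnamed theorem following Theorem \ref{theorem local limit}), with $\phi_k(v)=h_0\,U\!\big(\tfrac{\sqrt d}{\sqrt{2k}}v\big)e^{-d|v|^2/4k}$ one has $q^x_k(v)=\P(\tau_x>k)\,\tfrac{v_{\tilde{\cL}}}{k^{(d-1)/2}}\big(\phi_k(v)+\eta^x_k(v)\big)$ with $\sup_v|\eta^x_k(v)|\to0$, and $\phi_k$ does not depend on $x$. Substituting this into $\nu^x_k$, the prefactor $\P(\tau_x>k)\,v_{\tilde{\cL}}k^{-(d-1)/2}$ cancels between numerator and denominator, so
\[ \nu^x_k(v)=\frac{V(v)\big(\phi_k(v)+\eta^x_k(v)\big)}{\beta_k+\gamma^x_k},\qquad \beta_k:=\sum_v V(v)\phi_k(v),\quad \gamma^x_k:=\sum_v V(v)\eta^x_k(v), \]
so $\nu^x_k$ depends on $x$ only through $\eta^x_k$. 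Writing $\tilde\gamma^x_k:=\sum_v V(v)|\eta^x_k(v)|$, a routine estimate (comparing each $\nu^x_k$ to $V(v)\phi_k(v)/\beta_k$) gives $d_{TV}(\nu^x_k,\nu^{x'}_k)\le C(\tilde\gamma^x_k+\tilde\gamma^{x'}_k)/\beta_k$ as soon as $\tilde\gamma^x_k,\tilde\gamma^{x'}_k=o(\beta_k)$ (which makes $\beta_k+\gamma^x_k\asymp\beta_k$). Thus the whole theorem reduces to $\tilde\gamma^x_k/\beta_k\to0$.

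\emph{Step 3 (the hard part).} From $c_1|v|^{d(d-1)/2}\le V(v)\le c_2(1+|v|)^{d(d-1)/2}$ and $V\sim U$ at infinity in $\mathcal C$ \cite{denisov2015random}, a Riemann‑sum estimate gives $\beta_k\asymp k^{(d-1)/2+d(d-1)/4}$. Split $\tilde\gamma^x_k$ at $|v|=M\sqrt k$. On the bulk $|v|\le M\sqrt k$ use only $\sup_v|\eta^x_k(v)|\to0$: there are $\lesssim (M\sqrt k)^{d-1}$ terms, each of size $\le\|\eta^x_k\|_\infty\cdot c_2(M\sqrt k)^{d(d-1)/2}$, so the bulk is $\le CM^{c_d}\|\eta^x_k\|_\infty\,k^{(d-1)/2+d(d-1)/4}=CM^{c_d}\|\eta^x_k\|_\infty\,\Theta(\beta_k)$, which is $o(\beta_k)$ for each fixed $M$. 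On the tail $|v|>M\sqrt k$, the $\phi_k$‑part is $\le p(M)\beta_k$ with $p(M)\to0$ (Gaussian decay), and the remaining part equals $\tfrac{k^{(d-1)/2}}{v_{\tilde{\cL}}}\,\E\big[V(x+S_k)\mathbf 1_{|x+S_k|>M\sqrt k}\mid\tau_x>k\big]$; invoking a Gaussian‑type tail bound for the conditioned walk from \cite{denisov2015random}, $\P(|x+S_k|>t\sqrt k\mid\tau_x>k)\le C\,\mathrm{poly}(t)\,e^{-ct^2}$, together with $V(v)\le c_2(1+|v|)^{d(d-1)/2}$ and a layer‑cake sum, gives $\E[V(x+S_k)\mathbf 1_{|x+S_k|>M\sqrt k}\mid\tau_x>k]\le\varepsilon(M)k^{d(d-1)/4}$ with $\varepsilon(M)\to0$, so this part is $\le C\varepsilon(M)\beta_k$. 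Altogether $\tilde\gamma^x_k/\beta_k\le CM^{c_d}\|\eta^x_k\|_\infty+C(\varepsilon(M)+p(M))$; letting $k\to\infty$ and then $M\to\infty$ yields $\tilde\gamma^x_k/\beta_k\to0$, which with Steps 1–2 completes the proof. The main obstacle is precisely this $\ell^1$ control: the conditioned local limit theorem only bounds $\eta^x_k$ in supremum norm, while we must sum it against the weight $V$ (polynomial of degree $d(d-1)/2$) over $\asymp k^{(d-1)/2}$ lattice points, so a genuine concentration/moment estimate for the cone‑conditioned walk at scale $\sqrt k$ — stronger than the mere weak convergence of Corollary \ref{cor end convergence} — is what really has to be supplied; all the other ingredients are bookkeeping with results already quoted from \cite{denisov2015random}.
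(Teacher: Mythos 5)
Your architecture overlaps substantially with the paper's: both arguments cut the bridge at times $k$ and $n-k$, cancel the middle segment (you via the data-processing inequality, the paper by taking a ratio of path probabilities in which the middle factor cancels), and feed in the same two imported inputs, the conditioned local limit theorem and the bridge asymptotics quoted from \cite{denisov2015random}. Your Step 1 is correct and is a clean formalization of what the paper does implicitly, and reducing the theorem to $d_{TV}(\nu^x_k,\nu^{x'}_k)\to 0$ for the $h$-transform marginals $\nu^x_k(v)\propto V(v)\P_x(S_k=v,\tau_x>k)$ is a legitimate alternative endgame to the paper's uniform ratio estimate.

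The gap is exactly where you flag it, in Step 3: the tail bound $\P(|x+S_k|>t\sqrt k\mid \tau_x>k)\le C\,\mathrm{poly}(t)e^{-ct^2}$ is not among the results imported in Section 7, you do not prove it, and without some tail control the sup-norm LLT really cannot be summed against the weight $V$ (over the full reachable range $|v|\lesssim k$ the naive bound exceeds $\beta_k$ by a polynomial factor), so as written the proof is incomplete. The paper sidesteps this entirely: it restricts to paths whose positions at times $k$ and $n-k$ lie in $\sqrt k A$ for a bounded open $A$ with $d(A,\partial \tilde\C)>0$, chosen via Corollary \ref{cor end convergence} to carry mass at least $1-\epsilon$; on that window the limiting density is bounded below, so the additive LLT error becomes a multiplicative $1+o(1)$ in the ratios $\P_z(S_k=u,\tau_z>k)/V(z)$, the factors $V(x)V(y)/\big(V(x')V(y')\big)$ cancel against the bridge asymptotics, and the exceptional set contributes at most $4\epsilon$ to the total variation. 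Alternatively, your gap can be closed inside your own framework without Gaussian tails: since $V$ is harmonic for the walk killed on exiting the cone, $\E\big[V(x+S_k);\tau_x>k\big]=V(x)$, so your tail term is (up to the normalization you already computed) $\beta_k\cdot \nu^x_k\big(\{|v|>M\sqrt k\}\big)$, and all that is needed is tightness of $S_k/\sqrt k$ under the $h$-transformed law, which follows from the invariance principles for the conditioned walk in \cite{denisov2015random,duraj2015invariance}. With either repair your argument goes through; as submitted, Step 3 rests on an estimate that has not been established.
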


\begin{proof}
If $s=(s_j)_{j=0}^{n-2k}$ is a path in $(x+\Z^m)\cap \tilde \C$, then the Markov property of $(S_n)_{n\geq 0}$ and a time reversal argument imply that
\[ \P(  S^{x,y,k,n}=\zwalk) = \frac{ \P_{\zwalk(0)}( (S_j)_{j=0}^{n-2k}=\zwalk)\P_x(S_k=\zwalk_0, \tau_x>k)\P_y(S_k=\zwalk_{n-2k},\tau_y>k)}{\P_x(S_n=y, \tau_x>n)}.\]
Consequently,
\[ \frac{\P(  S^{x,y,k,n}=\zwalk)}{ \P(  S^{x',y',k,n}=\zwalk) } =  \frac{ \frac{\P_x(S_k=\zwalk_0, \tau_x>k)}{\P_{x'}(S_k=\zwalk_0, \tau_{x'}>k)}\frac{\P_y(S_k=\zwalk_{n-2k},\tau_y>k)}{\P_{y'}(S_k=\zwalk_{n-2k},\tau_{y'}>k)}}{\frac{\P_x(S_n=y, \tau_x>n)}{\P_{x'}(S_n=y', \tau_{x'}>n)}}\]
Given $\epsilon>0$, using Corollary \ref{cor end convergence} we find a bounded, open set $A$ such that $\dis(A,\partial \tilde C) >0$,  
\[  \int_A \nu(x) dx > 1-\epsilon\]
and
\[ \lim_{n\to\infty} \max_{z\in \{x,x',y,y'\}} \left|\P\left(\frac{z+S_n}{\sqrt{n}} \in A \middle| \tau_z>n\right) - \int_A\nu(u)du\right|=0.\]
It follows from Theorem \ref{theorem local limit} that
\begin{equation}
\max_{z\in \{x,x',y,y'\}}\sup_{y\in  (n^{-1/2}(x+\cL))\cap A} \left| \frac{\frac{n^{(d-1)(d+2)/4}}{v_{\tilde \cL}} \P(z+S_n=\sqrt{n}y, \tau_z>n)}{ \aleph V(z) h_0 U\left(\frac{\sqrt{d}}{\sqrt{2}} y\right) e^{-d|y|^2/4}}-1\right| \rightarrow 0.
\end{equation}
Consequently, we see that
\[ \lim_{k\to\infty} \max_{z,w\in \{x,x',y,y'\}} \max_{u \in (x+\cL) \cap (\sqrt{k} A)} \left| \frac{V(w) \P_z(S_k=u, \tau_v>k)}{V(z)\P_{w}(S_k=u, \tau_{w}>k)} - 1\right| =0.\]
Similarly, 
\[\frac{\P_x(S_n=y, \tau_x>n)}{\P_{x'}(S_n=y', \tau_{x'}>n)} \sim \frac{V(x)V(y)}{V(x')V(y')}. \]
Therefore, 
\begin{multline*} \lim_{k\to\infty} \lim_{n\to\infty} \max_{\zwalk^{n,k}: \zwalk^{n,k}_0,\zwalk^{n,k}_{n-2k}\in (x+\cL) \cap (\sqrt{k} A)} \left| \frac{\P(  S^{x,y,k,n}=\zwalk^{n,k}) }{\P(  S^{x',y',k,n}=\zwalk^{n,k})} -1 \right|\\
=\lim_{k\to\infty} \lim_{n\to\infty} \max_{\zwalk^{n,k}: \zwalk^{n,k}_0,\zwalk^{n,k}_{n-2k}\in (x+\cL) \cap (\sqrt{k} A)}  \left|\frac{ \frac{\P_x(S_k=\zwalk_0, \tau_x>k)}{\P_{x'}(S_k=\zwalk_0, \tau_{x'}>k)}\frac{\P_y(S_k=\zwalk_{n-2k},\tau_y>k)}{\P_{y'}(S_k=\zwalk_{n-2k},\tau_{y'}>k)}}{\frac{\P_x(S_n=y, \tau_x>n)}{\P_{x'}(S_n=y', \tau_{x'}>n)}}-1\right| \\
=\lim_{k\to\infty}  \max_{z, w \in (x+\cL) \cap (\sqrt{k} A)}  \left|\frac{ \frac{\P_x(S_k=z, \tau_x>k)}{\P_{x'}(S_k=z, \tau_{x'}>k)}\frac{\P_y(S_k=w,\tau_y>k)}{\P_{y'}(S_k=w,\tau_{y'}>k)}}{\frac{V(x)V(y)}{V(x')V(y')}}-1\right|\\
= \left| \frac{\frac{V(x)}{V(x')}\frac{V(y)}{V(y')}}{\frac{V(x)V(y)}{V(x')V(y')}} -1 \right| =0.
\end{multline*}
Hence, for $k$ and $n$ sufficiently large,
\begin{multline*} 
\sum_{\zwalk^{n,k}} \left|\P(  S^{x,y,k,n}=\zwalk^{n,k}) - \P(  S^{x',y',k,n}=\zwalk^{n,k})\right|\\
 \leq 4\epsilon +  \sum_{\zwalk^{n,k}: \zwalk^{n,k}_0,\zwalk^{n,k}_{n-2k}\in (x+\cL) \cap (\sqrt{k} A)} \left|\P(  S^{x,y,k,n}=\zwalk^{n,k}) - \P(  S^{x',y',k,n}=\zwalk^{n,k})\right|\\
\leq 4\epsilon + \max_{\zwalk^{n,k}: \zwalk^{n,k}_0,\zwalk^{n,k}_{n-2k}\in (x+\cL) \cap (\sqrt{k} A)} \left| \frac{\P(  S^{x,y,k,n}=\zwalk^{n,k}) }{\P(  S^{x',y',k,n}=\zwalk^{n,k})} -1 \right|
\leq 5\epsilon,
\end{multline*}
which completes the proof.
\end{proof}

\begin{theorem}\label{cone scaling limit}
Suppose that $x\in \C$.  Then for all bounded, continuous functions $f: D([0,1], \R^{d}) \to \R$ we have
\[\E\left( f\left( \frac{x+S_{[n\cdot]}}{\sqrt{2n/d}}\right) \middle| \tau_x >n, S(n)=0 \right)  \rightarrow \E[ f(\Lambda(Z))].\]
\end{theorem}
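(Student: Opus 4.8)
The plan is to prove that, under $\P_x(\,\cdot \mid \tau_x>n,\ S_n=0)$, the rescaled conditioned bridge $B_n:=\big(\tfrac{x+S_{[nt]}}{\sqrt{2n/d}}\big)_{t\in[0,1]}$ converges in distribution to $\Lambda(Z)$ in $D([0,1],\R^d)$. Since $\Lambda(Z)$ has continuous paths it is enough to establish (i) convergence of all finite-dimensional distributions, (ii) tightness in $C([0,1],\R^d)$, and (iii) that the resulting limit is $\Lambda(Z)$. The candidate limit for (iii) is the Doob $h$-transform, with $h=U$, of the standard Brownian bridge on $\H$ from $0$ to $0$, killed on exiting $\C$; equivalently, the process on $\C$ whose finite-dimensional densities are proportional to $\prod_{i}U(u_i)^2$ times killed heat kernels. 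Throughout I use that the increments of $S$ are symmetric, so that the time reversal of a bridge of $S$ is again a bridge of $S$, and that on $\H$ the walk $S_{[n\cdot]}/\sqrt{2n/d}$ converges to standard $(d-1)$-dimensional Brownian motion (the covariance of a step, restricted to $\H$, is $\tfrac{2}{d}$ times the identity).

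For the finite-dimensional distributions, fix $0<t_1<\cdots<t_k<1$ and bounded open continuity sets $D_1,\dots,D_k$ compactly contained in $\C$. Writing the walk in its own coordinates ($W_i=x+S_i$, so that $\{S_n=0\}=\{W_n=x\}$) and using the Markov property gives
\[
\P_x\!\Big(B_n(t_i)\in D_i\ \forall i \ \Big|\ \tau_x>n,\ W_n=x\Big)=\frac{\displaystyle\sum_{(z_1,\dots,z_k)}\ \prod_{i=0}^{k}p^{\C}_{m_i}(z_i,z_{i+1})}{p^{\C}_{n}(x,x)},
\]
where $z_0=z_{k+1}=x$, $m_i=[nt_{i+1}]-[nt_i]$ (with $t_0=0$, $t_{k+1}=1$), $p^{\C}$ is the transition kernel of the walk killed on leaving $\C$, and the sum is over $z_i$ with $z_i/\sqrt{2n/d}\in D_i$. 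I would then insert the local limit asymptotics: for the first and last factors, Theorem~\ref{theorem local limit} (with the fixed endpoint $x$) gives $p^{\C}_{m_0}(x,z_1)$ and $p^{\C}_{m_k}(z_k,x)$ asymptotic to a fixed constant multiple of $V(x)\,U(z_1/\sqrt{m_0})$ times a Gaussian factor, and likewise at the right end; for each interior factor, the local limit theorem for killed transitions between two points of size $\Theta(\sqrt n)$ in $\C$ (as in \cite{denisov2015random}, using $V(z)\sim U(z)$ uniformly over $z$ deep inside $\sqrt n\,\C$) gives $p^{\C}_{m_i}(z_i,z_{i+1})$ asymptotic to a constant multiple of $U(z_i/\sqrt{m_i})\,U(z_{i+1}/\sqrt{m_i})$ times the Gaussian heat kernel. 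In the product the $U$-factors telescope to $V(x)^2\prod_{i=1}^{k}U(u_i)^2$ (after rescaling $z_i=\sqrt{2n/d}\,u_i$), and the $V(x)^2$ cancels against the denominator $p^{\C}_n(x,x)\sim c\,V(x)^2\,n^{-(d-1)(d+1)/4}$ supplied by \cite[Theorem~6]{denisov2015random}; what survives is exactly the Riemann sum for $\int_{D_1\times\cdots\times D_k}\rho_{t_1,\dots,t_k}(u)\,du$, where $\rho_{t_1,\dots,t_k}$ is the joint density (proportional to $\prod_i U(u_i)^2$ times killed heat kernels, normalised by the integral appearing in \cite[Theorem~6]{denisov2015random}) of the $U$-transform of the $\H$-valued Brownian bridge from $0$ to $0$. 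Corollary~\ref{cor end convergence} and Theorem~\ref{ice} are convenient for controlling the two endpoint factors and for checking that the limit does not depend on the choice of lattice-equivalent endpoints.

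For tightness I would compare the bridge with the merely conditioned walk $\P_x(\,\cdot\mid\tau_x>n)$. On $[0,1-\delta]$ the law of $B_n$ under the bridge conditioning is absolutely continuous with respect to its law under $\{\tau_x>n\}$ alone, with Radon--Nikodym derivative (at time $[n(1-\delta)]$) equal to the ratio of $p^{\C}_{n-[n(1-\delta)]}(W_{[n(1-\delta)]},x)$ to the corresponding survival probability; by Theorem~\ref{theorem local limit} this ratio is bounded by a constant on the high-probability event that $W_{[n(1-\delta)]}$ lies in the bulk of $\sqrt n\,\C$, so tightness on $[0,1-\delta]$ follows from the invariance principle / tightness for the conditioned walk (\cite{denisov2015random}). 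By the reversal symmetry of $S$, tightness on $[\delta,1]$ follows as well. Finally, the same local-limit comparison (or Theorem~\ref{ice}, which shows that the restriction of the bridge to $[k,n-k]$ is, up to a vanishing total-variation error, insensitive to the precise pinning) shows that $\sup_{t\le\delta}|B_n(t)-B_n(0)|$ and $\sup_{t\ge1-\delta}|B_n(t)-B_n(1)|$ are of order $\sqrt\delta$ uniformly in $n$, with high probability; letting $\delta\downarrow0$ yields tightness on all of $[0,1]$.

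It remains to identify the limit with $\Lambda(Z)$. By Dyson's description of the eigenvalue process of a Hermitian Brownian motion, the ordered eigenvalues of a $d\times d$ Hermitian Brownian motion form the Doob $h$-transform of $\R^d$-valued Brownian motion by the Vandermonde $U$, conditioned to stay in $\{x_1>\cdots>x_d\}$; conditioning the trace to vanish for all time amounts to projecting onto $\H$, since the trace evolves as an independent one-dimensional Brownian motion, shifts all eigenvalues rigidly, and does not change their order, while $U$ is invariant under translation by $\mathbf 1$; and conditioning the matrix endpoints to be $0$ replaces the motion by its bridge. Hence $\Lambda(Z)$ is precisely the $U$-transform of the $\H$-valued standard Brownian bridge from $0$ to $0$ --- the process whose finite-dimensional distributions and tightness were obtained in the previous two paragraphs (the normalisation $\sqrt{2n/d}$ is chosen so that $S_{[n\cdot]}/\sqrt{2n/d}$ converges to standard $\H$-Brownian motion, which matches the normalisation of $Z$, consistently with the explicit $d=2$ computation in Section~\ref{function}). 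Combining (i)--(iii) gives $B_n\Rightarrow\Lambda(Z)$ in $C([0,1],\R^d)$, hence in $D([0,1],\R^d)$, and therefore $\E\big(f(B_n)\mid\tau_x>n,\ S_n=0\big)\to\E[f(\Lambda(Z))]$ for every bounded continuous $f$. The step I expect to be the main obstacle is the uniformity in the local-limit input --- replacing $V$ by $U$ and controlling error terms uniformly as $z_i$ ranges over a rescaled compact subset of $\C$ (and over the Gaussian tails of the sum), together with the matching uniform bound on the Radon--Nikodym derivative up to the boundary of $\C$; these uniformities are what make both the finite-dimensional convergence and the endpoint tightness go through.
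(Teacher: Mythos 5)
Your strategy (finite-dimensional convergence via local limit theorems, tightness by comparison with the unpinned conditioned walk, identification of the limit through Dyson's theorem, trace projection and an Imhof-type relation) is a genuinely different route from the paper's. The paper does not reprove a bridge invariance principle at all: it maps the walk to $\bar S=(d/2)^{1/2}\pi_{d-1}H_{\bu}(S)$, checks the hypotheses of \cite{duraj2015invariance} (Proposition \ref{grievance two}), invokes their functional limit theorem for random-walk bridges in cones \cite[Theorem 4]{duraj2015invariance} to get convergence to ``Brownian excursion in $\tilde\C$'', and then identifies that limit with $\Lambda(Z)$ via the absolute-continuity relation \eqref{Imhof}. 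Your step (iii) is essentially this identification and is fine. But your steps (i)--(ii) amount to reproving the Duraj--Wachtel theorem, and as written they contain genuine gaps.

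The main error is the asymptotic you use for the interior factors of the fdd computation: when $z_i,z_{i+1}$ are both at scale $\sqrt{n}$ in the bulk of the cone and $m_i\asymp n$, the killed transition probability $p^\C_{m_i}(z_i,z_{i+1})$ is governed by the Dirichlet heat kernel of the cone at the rescaled points, not by a constant times $U(z_i/\sqrt{m_i})\,U(z_{i+1}/\sqrt{m_i})$ times a free Gaussian kernel; the product-of-$U$'s form (Theorem \ref{theorem local limit}, or \cite[Theorem 6]{denisov2015random}) holds only when one endpoint is fixed or $o(\sqrt{m_i})$. Consequently the claimed telescoping to $V(x)^2\prod_i U(u_i)^2$, and the candidate limit ``with densities proportional to $\prod_i U(u_i)^2$ times Gaussian kernels,'' are wrong: already for $d=2$ the limit is the Bessel(3) bridge, whose multi-time densities involve the killed half-line kernel rather than $u_1^2u_2^2$ times a Gaussian. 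The correct computation keeps the Dirichlet kernels in the interior and produces $U$-factors only at the two pinned ends (the bridge of the $U$-transform), and establishing this with the required uniformity is precisely the content of \cite{duraj2015invariance}, not something that follows from the statements quoted in the paper. Your tightness argument is also incomplete near the pinned endpoints: the Radon--Nikodym comparison with the meander on $[0,1-\delta]$ together with time reversal on $[\delta,1]$ is reasonable (and, patched at a fixed $\delta$, would already give tightness on $[0,1]$ once the comparison is made rigorous), but the further claim that $\sup_{t\le\delta}|B_n(t)-B_n(0)|$ is of order $\sqrt{\delta}$ ``by Theorem \ref{ice}'' does not follow: Theorem \ref{ice} only compares bridges with different endpoints in total variation on the middle portion of the path and gives no control of the modulus of continuity near the ends. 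If you want a self-contained proof you must supply these uniform local-limit and endpoint estimates; otherwise the efficient route is the paper's reduction to \cite{duraj2015invariance} followed by the identification via \eqref{Imhof}.
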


\begin{proof} 
Suppose that $y\in \tilde \C$.  For $0\leq t\leq 1$ define
\[  \bar S^{(n)}(t) =  \frac{y+\bar S_{[nt]}}{\sqrt{n}}.\]
By \cite[Theorem 4]{duraj2015invariance} there is a process $\tilde B^0 = (\tilde B^0(t), 0\leq t\leq 1)$, called Brownian excursion in $\tilde \C$, such that for bounded and continuous $f: D([0,1], \R^{d-1}) \to \R$, 
\[ \E \left( f\left(\bar S^{(n)}\right) \middle| \bar\tau_y >n, \bar S(n)=0 \right) \rightarrow  \E f(\tilde B^0).\]
Consequently, if for $x\in \C$ we define
\[ S^{(n)}(t) =  \frac{x+S_{[nt]}}{\sqrt{2n/d}},\]
then for bounded and continuous $f: D([0,1], \R^{d}) \to \R$ we have
\[ \E \left( f\left( S^{(n)}\right) \middle| \tau_x >n, S(n)=0 \right) \rightarrow  \E f(H_{\bu}(\tilde B^0,0)).\]

It remains to identify the law of $H_\bu(\tilde B^0,0)$ as the law of the eigenvalues of a traceless Hermitian Brownian bridge.  The law of $\tilde B^0$ is specified in terms of Brownian motion conditioned to remain in $\tilde C$ for all time, denoted $\tilde B^0_>$,  which in turn is defined as an $h$-transform of Brownian motion killed on exiting $\tilde \C$.  Since $H_\bu$ is an isometry, we have that $\tilde B^0_> =\pi_{d-1} H_\bu(B^0_>)$.

Suppose that $0<t<1$ and $f: D([0,1], \R^{d}) \to \R$ is a bounded continuous function such that $f(g)$ depends only on the restriction of $g$ to $[0,t]$.  From \cite[Equations (18), (26)]{duraj2015invariance} (accounting for a time change suppressed in (35)) and the Brownian scaling invariance of $B^0_>$ we have that there is a constant $C_t$ such that 
\[ \E[f(H_{\bu}(\tilde B^0,0))]  = C_t \E[ f(B^{0}_>(\cdot\wedge t)) e^{-|B^{0}_>(t)|^2/(2(1-t))} ],\]
so the result follows from Equation \eqref{Imhof}.
\end{proof}


\begin{lemma} \label{red hen}
Fix $\epsilon>0$ and $K$. Let $M$ be a measure on quadruples 
$(s,x)$ and $(t,y)$ such that with probability one have $0 \leq s,t \leq K$, $x,y \in \cone$ and $|x|,|y|\leq K$. 
For $n>2K$ define $\hat M_n$ to be the measure generated by picking 
$(s,x)$ and $(t,y)$ according to $M$ and then sampling $\walk$ from
$\cw((s,x),(n-t,y))$.
There is a $K'$ such that for any $M$ 
$$D_{K'}\bigg(\hat M_n,(\sloc : \omega \in \cw ((0,0),(n,0)))\bigg)<\epsilon$$
where $D_{K'}$ is defined prior to Corollary \ref{Elsa}.
\end{lemma}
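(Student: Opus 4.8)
The plan is to peel off, by convexity, a comparison between individual bridges, and then reduce that comparison to a uniform version of Theorem \ref{ice}. Because $\hat M_n$ is the mixture $\int \cw((s,x),(n-t,y))\,dM$, and because $d_{K'}(\mu,\mu')$ is exactly the total variation distance between the laws of the restrictions of the two path-measures to the window $[K',n-K']$ (the standard coupling characterization of total variation), $d_{K'}$ is convex in $\mu$. Hence it suffices to produce a single $K'=K'(\epsilon,K,d)$ with
\[
\sup_{(s,x,t,y)}\ d_{K'}\bigl(\cw((s,x),(n-t,y)),\ (\sloc:\omega\in\cw((0,0),(n,0)))\bigr)<\epsilon
\]
for all large $n$, the supremum ranging over the finite set of $(s,x,t,y)$ with $0\le s,t\le K$, $x,y\in\cone$ and $|x|,|y|\le K$. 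I would first transport everything through the isometry $(d/2)^{1/2}\pi_{d-1}H_{\bu}$ of Proposition \ref{grievance two}, which is a bijection of the underlying lattices and so preserves both total variation and $d_{K'}$; this lets me cite the results of Section \ref{walksincones}, all stated for $\bar S$ and $\tilde\C$, without further ado.

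Fix one admissible datum $(s,x,t,y)$, and also treat $(0,0),(n,0)$ as a second instance. Using the Markov property I would split each bridge at the times $K'$ and $n-K'$: conditionally on the positions $\tilde x=\sloc(K')$ and $\tilde y=\sloc(n-K')$ \emph{and} on the event $G$ that $\sloc(m)$ lies in the open Weyl chamber $\C$ for every $m\in[K',n-K']$, the restriction of the bridge to $[K',n-K']$ has precisely the law of the $\C$-bridge from $\tilde x$ to $\tilde y$ in $n-2K'$ steps (the walk conditioned to stay in $\C$). Two inputs then remain. \textbf{(L) Localization:} there is a compact $A\Subset\C$, depending only on $\epsilon$, so that for $K'$ large each of the two bridges satisfies $\P\bigl(G\cap\{\sloc(K'),\sloc(n-K')\in\sqrt{K'}\,A\}\bigr)\ge 1-\epsilon/4$, uniformly in large $n$ and over the finitely many data $(s,x,t,y)$. \textbf{(U) Uniform comparison:} for $K'$ large and all large $n$, the laws of the $\C$-bridges from $\tilde x$ to $\tilde y$ in $n-2K'$ steps, restricted to $[2K',n-2K']$, are within total variation $\epsilon/4$ of one another, uniformly over all $\tilde x,\tilde y\in\sqrt{K'}\,A$.

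Granting (L) and (U), the argument closes: on the good event of (L) each bridge, restricted to $[2K',n-2K']$, is within total variation $\epsilon/4$ of a mixture of $\C$-bridges with endpoints in $\sqrt{K'}A$ restricted to $[2K',n-2K']$; by (U) each such $\C$-bridge is within $\epsilon/4$ of one fixed reference $\C$-bridge, so by convexity both mixtures are within $\epsilon/4$ of that reference and hence within $\epsilon/2$ of each other on $[2K',n-2K']$; collecting the errors gives $d_{2K'}\bigl(\cw((s,x),(n-t,y)),(\sloc:\omega\in\cw((0,0),(n,0)))\bigr)\le\epsilon$, uniformly over the finite data set, and relabelling $2K'$ as $K'$ (and absorbing the last $\le$ into a strict inequality by a harmless adjustment) finishes the proof. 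For (L) I would combine the local limit theorems of Section \ref{walksincones} and Corollary \ref{cor end convergence} — which, after rescaling by $\sqrt{K'}$, identify the position of the chamber-conditioned walk at a time $K'\ll n$ with the density $\nu$, positive on $\C$ and vanishing on $\partial\C$ — with the elementary observation that from a lattice point on $\partial\C$ the walk can only stay (a lazy step) or jump into $\C$, so that visits to $\partial\C$ inside a long window have probability $o(1)$ as $K'\to\infty$; this also reconciles the closed-chamber event defining $\cw$ with the open-chamber events used in Section \ref{walksincones}.

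The hard part is (U). Theorem \ref{ice} gives exactly this total-variation estimate, but only for \emph{fixed} endpoints as the chop-length tends to infinity, whereas here, taking the chop-length equal to $K'$, the endpoints range over the $\sqrt{K'}$-dilate of a fixed compact set and thus grow with $K'$ — although, crucially, they stay in the fixed compact set $A$ after rescaling by $\sqrt{K'}$. The only place in the proof of Theorem \ref{ice} where this intervenes is the appeal to the local limit theorem (Theorem \ref{theorem local limit}), which is stated uniformly in the terminal point but for a single fixed initial point. What is needed is its strengthening to hold uniformly over initial points $z$ with $z/\sqrt{K'}$ in a fixed compact subset of $\C$ — a local central limit theorem for the chamber-killed walk that is uniform in both endpoints over such rescaled-compact regions. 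This is a routine reinforcement of the Denisov–Wachtel estimates (and is precisely the analytic ingredient underlying (L) as well); inserting it into the proof of Theorem \ref{ice} in place of Theorem \ref{theorem local limit} yields (U) verbatim.
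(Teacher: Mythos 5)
Your overall architecture is the same as the paper's: reduce by convexity/mixing to a comparison of individual bridges, split via the Markov property into an initial piece, a chamber-bridge, and a final piece, and conclude with a total-variation comparison of chamber-bridges with different endpoints, i.e.\ Theorem \ref{ice}. The difference, and the genuine gap, is exactly at the point you flag as ``the hard part.'' By splitting at the growing time $K'$ rather than at a fixed time, your bridge endpoints live at spatial scale $\sqrt{K'}$, so your step (U) requires Theorem \ref{ice} (equivalently, the local limit Theorem \ref{theorem local limit}) to hold uniformly over starting points $z$ with $z/\sqrt{K'}$ in a compact subset of $\C$. That strengthened statement is proved neither in this paper nor in \cite{denisov2015random} in the form you need (Theorem \ref{theorem local limit} is for a fixed starting point, uniform only in the terminal point), so your argument rests on an unproven reinforcement and is incomplete as written. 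The detour is also unnecessary: since $0\le s,t\le K$ and $|x|,|y|\le K$, splitting instead at the \emph{fixed} times $K$ and $n-K$ makes the pair $(\sloc(K),\sloc(n-K))$ range over a finite set that depends only on $K$ and $d$, not on $n$ or on the eventual cut-off; each of the two measures restricted to $[K,n-K]$ is then a finite mixture of $\cone$-bridges with endpoints from this fixed finite set, the fixed-endpoint Theorem \ref{ice} (maximized over the finitely many endpoint pairs) supplies one $K''$ uniformly, and $K'=K+K''$ together with an arbitrary coupling of the two endpoint-mixtures finishes the proof. That is the paper's argument.

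Two secondary issues. First, the ``elementary observation'' supporting (L) --- that from a lattice point of $\partial\C$ the walk can only stand still or jump into $\C$ --- is false: from a point with $x_i=x_{i+1}$ the step $e_{i+1}-e_i$ exits the closed chamber, and steps involving other coordinates move along the boundary. The correct way to reconcile the closed-chamber event defining $\cw$ with the open-chamber conditioning $\tau_x>n$ used in Section \ref{walksincones} is the standard shift by $(d-1,d-2,\dots,1,0)$, under which weak ordering of integer coordinates becomes strict ordering. Second, (L) concerns the position at an intermediate time $K'$ of a path conditioned on the whole bridge event, so Corollary \ref{cor end convergence} (which concerns the terminal point under the conditioning $\tau_x>n$ alone) does not apply directly; justifying (L) would again require a bridge decomposition together with the same endpoint-uniform local estimates you need for (U). With the fixed-$K$ splitting both (L) and (U) become unnecessary, so the proof should be restructured along those lines rather than by strengthening the local central limit theorem.
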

 
 \begin{proof}
 The measure $\hat M_n$ generates a measure $M^*$ on quadruples $(K,x)$ and $(K,y)$. The measure on 
 $\cw ((0,0),(n,0))$ also generates $N^*$, a measure on quadruples of the same form. Both of these measures are linear combinations of a finite number of point masses on quadruples $(K,x)$ and $(K,y)$. By Theorem \ref{ice} we can find a $K''$ that works for any two choices of $(K,x)$ and $(K,y)$ and $(K,x')$ and $(K,y')$. The  lemma follows from taking any coupling of $M^*$ and $N^*$, applying Theorem \ref{ice} and taking linear combinations. 
 \end{proof}

\section{Proofs of results about random walks close to the Weyl chamber}
\label{birs}
In this section we lay out our main theorems about the paths that are the image of pattern avoiding permutations. We show, in some very strong sense, a relationship between the distribution of the path of a uniformly chose pattern avoiding permutation and the distribution of a random walk in $\cone$. 


Remember that for $x \in \Z^d$ we defined 
$$U(x)=\prod_{i<j}(x_i-x_j), $$
which is harmonic for the random walk $\sloc(t)$.

We start with the following argument that proves (after a minor alteration) that for any 
$x \in \cone, T \in \Z$ and $l $
$$\prob_{(T,x)} \big(\spath \in \cw((T,x),(T_l,\cdot))  \big) \leq U(x) (2^l)^{-d(d-1)/2}.$$

Let $T^*_l$ be the minimum time $t$ greater than or equal to $T$ such that  $\sloc(t) \not \in \cone$ or $\sloc(t) \in \cone_{2^l}$. This is a stopping time.
By the optional stopping time theorem and the fact that 
$$\text{$U(x)=0$ for all $x\in \partial \cone$}$$
we have that 
\begin{eqnarray*}
U(x)&=&U(T^*_l)\\
&=&\sum_{x'}U(x')\prob_{(T,x)}(\sloc(T^*_l)=x') \\
&\geq& \prob_{(T,x)}(T^*_l  \in \cone_{2^l}) \min_{x' \in  \cone_{2^l}}U(x')
\end{eqnarray*}
 
As $$U(x') \geq (2^l)^{d(d-1)/2}$$ for all $x' \in  \cone_{2^l}$
we have
$$\P_{(T,x)}(T^*_l  \in \cone_{2^l}) \leq U(x)(2^l)^{-d(d-1)/2}.$$ Proposition \ref{grievance} and results in \cite{denisov2015random}  imply that the lower bound is within a constant factor of this upper bound.

This is not rigorous because $\sloc$ changes in two coordinates every time that it changes. 
Thus we can have that $$\text{$\sloc(T^*_l) \not \in \partial \cone \cup \cone_{2^l}$}$$
and $U(\sloc(T^*_l))<0$. 
To account for this we need to bound
$$\sum_{x':U(x')<0}U(x') \prob_{(T,x)}(\sloc(T^*_l)=x')$$ from below. We give a bound that is (in absolute value) much
smaller than $U(x)$.

All of the bounds in this section are some variant of this argument. We define a stopping time $T^*$ such that with very high probability either
\begin{enumerate}
\item $\sloc(T^*) \in \cone_{2^l}$ or 
\item $|\sloc(T^*)|$ is small. 
\end{enumerate}
Then we bound $\prob(\sloc(T^*) \in \cone_{2^l})$ with the optional stopping time theorem.

Now we make the preceding argument rigorous and strengthen it. 
We define the sequence $$T_l=\min\{\inf \{t: \sloc(t) \in \cone_{2^l}\},\lfloor 4.1^l\rfloor\}.$$


Let $B(t)$ be Brownian motion on the submanifold of $\R^d$ such that the sum of all the coordinates is zero. We choose $\gamma$  such that 
\begin{equation} \prob(B(1) \not \in \cone_{2}) < \gamma <1.\label{club} \end{equation} 

\begin{lemma} \label{gerrymander}
For any $l$ sufficiently large and any $t\leq 4.1^{l}$ and $x$ with $\dis(x,\cone)\leq t^{.4}$
$$\prob_{(t,x)}(\spath \in \cw((t,x),(t+4^l,\cdot)) \cap T_l>t+4^l )<\gamma$$ and
$$\prob_{(t,x)}(\spath \in \cwplus((t,x),(t+4^l,\cdot)) \cap T_l>t+4^l)<\gamma.$$
\end{lemma}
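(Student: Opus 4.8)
The plan is to discard essentially all of the geometry in the two events, retaining only the statement that the walk never falls a macroscopic amount below $\partial\cone$ over the whole scale-$4^l$ window $[t,t+4^l]$, and then to kill even that by applying Donsker's theorem to a single coordinate gap of the walk which, by hypothesis, starts within $t^{.4}=o(2^l)$ of zero. Since $T_l\le\lfloor 4.1^l\rfloor$ always holds, the event $\{T_l>t+4^l\}$ is empty (hence has probability $0$) unless $t+4^l<\lfloor 4.1^l\rfloor$, which we now assume. Using $\cone\subseteq\cone_{-s^{.4}}$ for the first event and the definition of $\cwplus$ for the second, both events in the statement are contained in
\[ E:=\Big\{\ \sloc(s)\in\cone_{-s^{.4}}\ \text{ for all } s\in[t,t+4^l]\ \Big\}, \]
so it suffices to show that $\prob_{(t,x)}(E)\to 0$ as $l\to\infty$, uniformly over the admissible $(t,x)$.

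Because $x$ lies within $t^{.4}$ of $\partial\cone$, there is an index $i_0$ with $g_0:=x_{i_0}-x_{i_0+1}$ obeying $|g_0|\le t^{.4}+1$. Put $\Delta(s):=\big(\sloc(s)-x\big)_{i_0}-\big(\sloc(s)-x\big)_{i_0+1}$, so that $\Delta(t)=0$ and the $i_0$-th coordinate gap of $\sloc(s)$ equals $g_0+\Delta(s)$. Membership in $\cone_{-s^{.4}}$ forces this gap to be $\ge -s^{.4}$, hence $\Delta(s)\ge -s^{.4}-g_0$, and since $s\le t+4^l$,
\[ E\ \subseteq\ \Big\{\ \min_{t\le s\le t+4^l}\Delta(s)\ \ge\ -\big((t+4^l)^{.4}+t^{.4}+1\big)\ \Big\}=:E'. \]
Under $\prob_{(t,x)}$ the increments of $\sloc$ after time $t$ are an i.i.d.\ copy of the increments of $\sloc$, so $(\Delta(t+j))_{j\ge 0}$ is a genuine mean-zero, finite-variance random walk on $\Z$ whose law does not depend on $x$; in particular the bound $\prob_{(t,x)}(E')$ is the same for all admissible $x$.

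I would then invoke Donsker's theorem: $\big(2^{-l}\Delta(t+\lfloor 4^l u\rfloor)\big)_{0\le u\le 1}\Rightarrow cW$ for a standard Brownian motion $W$ and some $c>0$, and since the path minimum is a continuous functional this gives $2^{-l}\min_{t\le s\le t+4^l}\Delta(s)\Rightarrow c\min_{0\le u\le 1}W(u)$. The threshold is negligible after rescaling: it is $O\big((4.1^l)^{.4}\big)$, and $2^{-l}(4.1^l)^{.4}=(4.1^{.4}/2)^l\to 0$ because $4.1^{.4}=1.758\ldots<2$. Consequently, for every fixed $\varepsilon>0$ and all large $l$,
\[ \prob_{(t,x)}(E)\ \le\ \prob_{(t,x)}(E')\ =\ \prob\Big(2^{-l}\!\!\min_{[t,t+4^l]}\!\Delta\ \ge\ -2^{-l}\big((t+4^l)^{.4}+t^{.4}+1\big)\Big)\ \le\ \prob\Big(2^{-l}\!\!\min_{[t,t+4^l]}\!\Delta\ \ge\ -\varepsilon\Big), \]
so $\limsup_{l}\prob_{(t,x)}(E)\le \prob\big(c\min_{[0,1]}W\ge -\varepsilon\big)$ for every $\varepsilon>0$; letting $\varepsilon\downarrow 0$ yields $\limsup_{l}\prob_{(t,x)}(E)\le \prob\big(\min_{[0,1]}W\ge 0\big)=0$, since a standard Brownian motion started at $0$ almost surely assumes negative values in every interval $(0,\delta)$. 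As the bounds are uniform in $(t,x)$, for $l$ large enough $\prob_{(t,x)}(E)<\gamma$ for all admissible $(t,x)$, proving both inequalities. (In fact the probability tends to $0$, so any fixed positive constant would do in place of $\gamma$.)

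The only delicate point is the reduction to $E$: one must check that discarding the constraint $T_l>t+4^l$ and relaxing ``stays in $\cone$'' to ``stays in $\cone_{-s^{.4}}$'' does not throw away the decisive information. It does not, precisely because the hypothesis puts $x$ within $t^{.4}=o(2^l)$ of $\partial\cone$, so the selected gap $g_0+\Delta$ starts microscopically close to $0$ while having fluctuations of order $2^l$ across the window; without this placement — for instance for $x$ deep inside $\cone$ with all gaps of order $2^l$ — the statement can fail once $d$ is large, so the near-boundary position of $x$ really is what is being used.
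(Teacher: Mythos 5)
There is a genuine gap, and it comes from a misreading of the hypothesis. The lemma assumes $d(x,\cone)\leq t^{.4}$, i.e.\ $x$ is at distance at most $t^{.4}$ from the \emph{set} $\cone$ (so it is at most slightly outside the cone); it does not say that $x$ is within $t^{.4}$ of the \emph{boundary} $\partial\cone$. In particular every $x\in\cone$ satisfies the hypothesis, including points deep inside $\cone$ but outside $\cone_{2^l}$ --- and these are exactly the starting points that arise when the lemma is applied repeatedly in Lemma \ref{seemsuseful} (which quantifies over all $x\in\cone$). Your first step, ``there is an index $i_0$ with $|x_{i_0}-x_{i_0+1}|\leq t^{.4}+1$,'' is therefore unjustified, and the entire Donsker argument hinges on it. Worse, the conclusion you aim for (probability tending to $0$, uniformly over admissible $(t,x)$) is simply false under the actual hypothesis: take $d=2$ and $x$ with gap $x_1-x_2=2^l-1$, so $d(x,\cone)=0$ and $x\notin\cone_{2^l}$. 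The gap performs a mean-zero walk with order-one steps, and the event that it stays in $[0,2^l)$ for $4^l$ steps (which keeps the path in $\cone$ and keeps $T_l>t+4^l$, when $t+4^l<\lfloor 4.1^l\rfloor$) has probability bounded below by a positive constant, by the invariance principle for Brownian motion confined to a strip of width comparable to its fluctuation scale. So no argument can give $o(1)$ here; the best one can hope for is a fixed constant bound, which is precisely why $\gamma$ is chosen in \eqref{club} as a fixed constant with $\prob(B(1)\notin\cone_2)<\gamma<1$, and why your closing parenthetical ``any fixed positive constant would do in place of $\gamma$'' cannot be right. Your own last paragraph concedes the deep-interior failure but dismisses it as outside the hypothesis; it is not.

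For comparison, the paper's proof goes the other way: it uses the nearness of $x$ to $\cone$ not to find a small gap, but to argue that if the \emph{increment} $\sloc(t+4^l)-\sloc(t)$ lies in $\cone_{2^{l+1}}$, then (since every gap of $x$ is at least $-2t^{.4}=o(2^l)$) the endpoint $\sloc(t+4^l)$ lies in $\cone_{2^l}$, forcing $T_l\leq t+4^l$. Hence both events in the statement are contained in $\{\sloc(t+4^l)-\sloc(t)\notin\cone_{2^{l+1}}\}$, whose probability converges, by the central limit theorem at scale $2^l=\sqrt{4^l}$, to $\prob(B(1)\notin\cone_2)<\gamma$. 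If you want to salvage your approach, this containment is the missing idea: the conditions ``stays in (or near) $\cone$'' and the boundary-proximity of $x$ play no role beyond the $o(2^l)$ control on how negative the gaps of $x$ can be.
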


\begin{proof} For sufficiently large $l$, if
$\dis(\sloc(t),\cone)\leq t^{.4}$ and $\sloc(t+4^l)-\sloc(t) \in \cone_{2^{l+1}}$ then $\sloc(t+4^l) \in \cone_{2^{l}}$ and
$T_l \leq t+4^l.$ 
$$\prob_{(t,x)}(T_l > t+4^l) \leq \prob(\sloc(t+4^l)-\sloc(t) \notin \cone_{2^{l+1}}).$$  By the convergence of random walk to Brownian motion
$$ \prob_{(t,x)}(\sloc(t+4^l)-\sloc(t) \notin \cone_{2^{l+1}})\leq \prob(B(1) \not \in \cone_2)+\epsilon<\gamma$$
for $\epsilon$ sufficiently small and for all large $l$.
\end{proof}

\begin{lemma}\label{seemsuseful}
For $l$ sufficiently large, for all $j$, all $T< 4.1^l$, and all $x \in \cone$,
\[\prob_{(T,x)}\left( \spath \in \cw((T,x),(T_l,\cdot)), T_l -T \geq j4^l \big | \sloc(T) = x \right) 
 \leq \gamma^{j},\]
 and
 \[ \prob_{(T,x)}\left( \spath \in \cwplus((T,x),(T_l,\cdot)) ,  T_l - T \geq j4^l\big | \sloc(T) = x \right ) 
 \leq \gamma^{j}.\]
\end{lemma}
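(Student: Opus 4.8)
The plan is to prove the two bounds by iterating Lemma \ref{gerrymander} $j$ times along consecutive blocks of length $4^l$, using the Markov property to patch the blocks together; I will argue by induction on $j$, the case $j=0$ being vacuous. I treat $\cw$ explicitly, the case of $\cwplus$ being identical after replacing the first inequality of Lemma \ref{gerrymander} by the second. Before starting the induction I would dispose of the degenerate range: since $T_l \le \lfloor 4.1^l\rfloor$ by definition, the event $\{T_l - T \ge j4^l\}$ is empty once $T + j4^l > \lfloor 4.1^l\rfloor$, so it suffices to treat $T + j4^l \le \lfloor 4.1^l\rfloor$, and in that case every block endpoint $T + i4^l$ with $0\le i\le j$ lies below $4.1^l$, which is exactly where Lemma \ref{gerrymander} applies.

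For the inductive step, set $A_j = \{\spath\in\cw((T,x),(T_l,*)),\ T_l - T \ge j4^l\}$ and $s = T+(j-1)4^l < 4.1^l$. On $A_j$ the path stays in $\cone$ throughout $[T,T_l]\supseteq[T,s+4^l]$ and does not enter $\cone_{2^l}$ before time $s+4^l = T+j4^l$; in particular $A_j\subseteq A_{j-1}$ and $\sloc(s)\in\cone$, so $d(\sloc(s),\cone)=0\le s^{.4}$. Conditioning on $\mathcal F_s$ and invoking the Markov property, on (an $\mathcal F_s$-measurable event containing) $A_{j-1}$ the future of the walk has law $\prob_{(s,\sloc(s))}$, and the part of $A_j$ read off from the walk on $[s,s+4^l]$ is contained in $\{\spath\in\cw((s,\sloc(s)),(s+4^l,*)),\ T_l \ge s+4^l\}$, which Lemma \ref{gerrymander} (modulo the point below) bounds by $\gamma$. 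Hence $\prob_{(T,x)}(A_j\mid\mathcal F_s) < \gamma\,\1_{A_{j-1}}$ a.s., and taking expectations, $\prob_{(T,x)}(A_j) < \gamma\,\prob_{(T,x)}(A_{j-1}) \le \gamma\cdot\gamma^{j-1}=\gamma^j$ by the inductive hypothesis.

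The single point that needs care — and the only part I expect to be at all delicate — is that $A_j$ only delivers $T_l \ge s+4^l$, while Lemma \ref{gerrymander} is stated for the event $\{T_l > s+4^l\}$. This is harmless: repeating the elementary observation from the proof of Lemma \ref{gerrymander} over the slightly shorter block $[s,s+4^l-1]$, if the increment of the walk over that block lands in $\cone_{2^{l+1}}$ then, since $\sloc(s)\in\cone$ and $\cone+\cone_{2^{l+1}}\subseteq\cone_{2^l}$, we get $\sloc(s+4^l-1)\in\cone_{2^l}$, i.e.\ $T_l\le s+4^l-1<s+4^l$. Thus $\{\sloc(s)\in\cone,\ T_l\ge s+4^l\}$ is contained in the event that this increment misses $\cone_{2^{l+1}}$, whose probability is at most $\gamma$ for all large $l$ by the very same Brownian-approximation estimate used in Lemma \ref{gerrymander} (the block length $4^l-1$ has the same order as $4^l$). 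With this substitution the iteration above goes through verbatim.
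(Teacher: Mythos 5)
Your proposal is correct and follows the paper's own route: the paper disposes of the lemma in two lines, noting $T_l\leq 4.1^l$ (so only $j\leq (4.1/4)^l$ matters) and invoking ``repeated applications of Lemma \ref{gerrymander}'', which is exactly the block-by-block iteration via the Markov property that you spell out. Your only loose point is the display $\prob_{(T,x)}(A_j\mid\mathcal F_s)<\gamma\,\1_{A_{j-1}}$, since $A_{j-1}$ is not $\mathcal F_s$-measurable; this is repaired, as your parenthetical already suggests, by running the induction on the $\mathcal F_{T+j4^l}$-measurable envelopes (stay in $\cone$ and avoid $\cone_{2^l}$ up to time $T+j4^l$), which contain the $A_j$.
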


\begin{proof}
Note that $T_l \leq 4.1^l$ so we only need to check this for $j \leq (4.1/4)^l $.
Thus this lemma follows from repeated applications of Lemma \ref{gerrymander}.  
\end{proof}

The following lemma is a consequence of standard moderate deviations bounds, see e.g.\ \cite[Lemmas 5.1-2]{hoffman2017pattern}.

\begin{lemma}\label{moderate1}
There exist constants $\Theta,\theta>0$ such that for all $j, l$,
\[ \P_{(0,0)}\left(\max_{0\leq i\leq j4^l}|\sloc(i)| \geq j2^l \right) \leq \Theta e^{-\theta j}.\]
\end{lemma}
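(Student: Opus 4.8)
The plan is to reduce the claim to a one‑dimensional exponential maximal inequality, applied one coordinate at a time. Recall that for $\omega=(a,b)$ with law $\P$ the $l$-th coordinate of $\sloc(i)$ equals $\ctx^l(i)-\cty^l(i)=\sum_{m=1}^i(\1_{a(m)=l}-\1_{b(m)=l})$. Each summand is an independent random variable taking values in $\{-1,0,1\}$, with mean zero and absolute value at most $1$. Hence, for each fixed $l\in[d]$, $i\mapsto (\sloc(i))_l$ is a mean‑zero martingale with increments bounded by $1$ in absolute value, and the same holds for $i\mapsto -(\sloc(i))_l$.

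First I would record the exponential bound for a fixed endpoint. By Hoeffding's lemma, $\E_{(0,0)}[\exp(\theta(\sloc(N))_l)]\le\exp(\theta^2N/2)$ for every $\theta\in\R$ and $N\in\N$. Since $\exp(\theta(\sloc(i))_l)$ is a nonnegative submartingale, Doob's maximal inequality gives, for every $r>0$ and $\theta>0$,
$$\P_{(0,0)}\Big(\max_{0\le i\le N}(\sloc(i))_l\ge r\Big)\le e^{-\theta r}\,\E_{(0,0)}[\exp(\theta(\sloc(N))_l)]\le\exp\!\big(-\theta r+\tfrac{1}{2}\theta^2N\big).$$
Taking $\theta=r/N$ yields $\exp(-r^2/(2N))$. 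Applying the same argument to $-(\sloc(i))_l$, summing over the $2d$ choices of coordinate and sign, and using that $|\sloc(i)|\ge r$ in the $L^1$ norm forces some coordinate to have absolute value at least $r/d$, I obtain
$$\P_{(0,0)}\Big(\max_{0\le i\le N}|\sloc(i)|\ge r\Big)\le 2d\,\exp\!\Big(-\frac{r^2}{2d^2N}\Big).$$

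To finish, set $N=j4^l$ and $r=j2^l$, so that $r^2/(2d^2N)=j^2 4^l/(2d^2\,j\,4^l)=j/(2d^2)$; the right‑hand side becomes $2d\,e^{-j/(2d^2)}$, so $\Theta=2d$ and $\theta=1/(2d^2)$ suffice. I do not expect any real obstacle here: this is exactly the kind of standard moderate/large deviation estimate referenced from \cite[Lemmas 5.1--2]{hoffman2017pattern}, and the only mild bookkeeping is passing from the coordinatewise bound to the $L^1$‑norm bound, which costs only the harmless factor $d^2$ in the exponent and $2d$ in front. If one prefers not to invoke Doob's inequality on $\exp(\theta(\sloc(i))_l)$, an equivalent route is to combine the fixed‑endpoint Hoeffding bound above with Etemadi's maximal inequality, at the cost of slightly worse absolute constants.
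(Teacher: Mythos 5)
Your proof is correct, and it is exactly the kind of standard exponential/moderate-deviation argument the paper relies on: the paper gives no proof of this lemma, citing only \cite[Lemmas 5.1--2]{hoffman2017pattern}, and your Hoeffding--Doob bound per coordinate followed by a union bound over the $2d$ coordinate--sign pairs (with the harmless $L^1$-to-coordinate factor $r/d$) supplies precisely the missing details, with the stated choice $\Theta=2d$, $\theta=1/(2d^2)$ valid uniformly in $l$.
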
 

Consequently,

\begin{lemma} \label{inauguration}
There exists $\beta \in (0,1)$ such that for all $l$ sufficiently large, $T \leq 4.1^l$, 
$x \in \cone \setminus  \text{INT}(\cone_{2^l})$, 
$$\prob_{(T,x)}\left( |\sloc(T_l)-x| \geq j2^l \ \cap \  \spath \in \cw((T,x),(T_l,\cdot) ) \right) \leq C\beta^{j}.$$
and
$$\prob_{(T,x)}\left( |\sloc(T_l)-x| \geq j2^l \ \cap \  \spath \in \cwplus((T,x),(T_l,\cdot) )\right) \leq C\beta^{j}.$$
\end{lemma}

\begin{proof}
If $|\sloc(T_l)-x|>j2^l$ then 
either 
\begin{enumerate}
\item $T_l - T> j4^{l}$ or 
\item $T_l - T\leq {j4^{ l}}$ and $|\sloc(T_l)-x|>j2^l$.
\end{enumerate}
From the first claim in Lemma \ref{seemsuseful} we have that for $l$ sufficiently large
and for all $j$ and all $x \in \cone$
$$\prob_{(T,x)}\left( \spath \in \cw((T,x),(T_l,\cdot)) , T_l - T \geq j4^l  \right) 
 \leq \gamma^{j}.$$
Thus the first event happens with probability bounded by $\gamma^{j}.$  By Lemma \ref{moderate1}, the probability of the second event is bounded, uniformly in $x$, $j$, and $l$ by $Ce^{-cj}$ for some appropriate constants.  Taking $\beta > \max(e^{-c}, \gamma )$ completes the proof of the first claim.

The calculation for $\cwplus$ is done in the same manner. We just use the second part of Lemma \ref{seemsuseful} instead of the first.        
\end{proof}



Let $x \in \Z^2$ and $T <4.1^{l_0}$.
Fix some large integer $L$.  Remember that we have defined
$$R_L=\inf \{t:\ \petrov(t) \cap\{ \spath(t) \in \cone_{2^L}\}\}$$ and 
$$R^*_L= \inf \{t:\ \petrov^*(t) \cap \{ \spath^*(t) \in \cone_{2^L}\}\}.$$
Note that $R_L$ is a stopping time and $R^*_L$ is a stopping time for the reverse walk. 
Choose $L$ such that 
\begin{equation} \label{cranksgiving}
\sum_{\lambda \in \N}.9^{L}(2\lambda+1)^{-1+d(d-1)/2}
C \beta^{\lambda-1} \leq .01\cdot (.95)^{L},
\end{equation}
and
\begin{equation} \label{cranksgiving2}
 \sum_{m=L}^{\infty} 13^{m(-1+d(d-1)/2)} \gamma^{(1.025)^{m}} \leq 1.
\end{equation}
 
\begin{lemma} \label{charlie}
For any $l_0>L$, $x \in \partial \cone_{2^{l_0}}$ and $T <4.1^{l_0}$
$$\sum_y|U(y)|\P_{(T,x)}\big( \sloc(R_{l_0+1})=y, \ R_{l_0+1} \neq T_{l_0+1}  \big) \leq .01\cdot (.95)^{l_0}U(x).$$
\end{lemma}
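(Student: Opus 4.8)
The plan is to combine the elementary fact that $U$ is small near $\partial\cone$ with a moderate--deviation bound on the size of the excursion, in the spirit of the optional--stopping heuristic sketched above (the present estimate is the ``bad event'' input that will be fed into the genuinely optional--stopping arguments behind Lemmas \ref{chaplain_one}, \ref{magnus_one}, \ref{benalla_one}). Write $R=R_{l_0+1}$ and $T'=T_{l_0+1}$; recall $R\le T'\le\lfloor 4.1^{l_0+1}\rfloor$ and that $R\ne T'$ means exactly that $\sloc$ leaves the enlarged cone before hitting $\cone_{2^{l_0+1}}$. The first step is to record the geometry of the exit point: on $\{R\ne T',\ \sloc(R)=y\}$ the path lies in $\cone_{-t^{.4}}$ for $T\le t<R$ but $y\notin\cone_{-R^{.4}}$, so some adjacent gap satisfies $y_i-y_{i+1}<-R^{.4}$, while a single step of $\sloc$ changes any coordinate difference by at most $2$, giving $|y_i-y_{i+1}|\le R^{.4}+2$.

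The key step is a pointwise bound on $|U(y)|$ \emph{relative} to $U(x)$. Since $x\in\partial\cone_{2^{l_0}}$, every gap $x_j-x_k\ge(k-j)2^{l_0}\ge 2^{l_0}$, so for $(j,k)\ne(i,i+1)$ one has $|y_j-y_k|\le(x_j-x_k)+|y-x|$, which is at most $(2\lambda+1)(x_j-x_k)$ whenever $|y-x|\le\lambda 2^{l_0+1}$. Multiplying the one ``short'' factor $|y_i-y_{i+1}|\le R^{.4}+2$ by the remaining $d(d-1)/2-1$ factors gives
\[
|U(y)|\ \le\ \frac{R^{.4}+2}{x_i-x_{i+1}}\,(2\lambda+1)^{d(d-1)/2-1}\,U(x)\ \le\ \frac{R^{.4}+2}{2^{l_0}}\,(2\lambda+1)^{d(d-1)/2-1}\,U(x).
\]
Because $R\le 4.1^{l_0+1}$ and $4.1^{.4}<2$, the prefactor $(R^{.4}+2)/2^{l_0}$ is $\le(.9)^{l_0}$ for $l_0$ past a certain point (enlarge $L$ if necessary). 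I expect this to be the crux: the exponential--in--$l_0$ gain comes entirely from $R^{.4}\ll 2^{l_0}$, and the estimate must hold uniformly over all $x\in\partial\cone_{2^{l_0}}$ with no bound on $|x|$, which is precisely why one divides out the single gap $x_i-x_{i+1}\ge 2^{l_0}$ rather than lower--bounding $U(x)$ itself.

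Next I would prove the moderate--deviation estimate $\P_{(T,x)}\big(R\ne T',\ |\sloc(R)-x|\ge j2^{l_0+1}\big)\le C\beta^{\,j}$ by the same dichotomy as in Lemma \ref{inauguration}: if $R-T\ge j4^{l_0+1}$ then $\sloc$ has stayed in $\cone_{-t^{.4}}$ for $j4^{l_0+1}$ steps without reaching $\cone_{2^{l_0+1}}$, which by iterating Lemma \ref{gerrymander} has probability $\le\gamma^{j}$; otherwise $\sloc$ has moved at least $j2^{l_0+1}$ within $j4^{l_0+1}$ steps, which by Lemma \ref{moderate1} has probability $\le\Theta e^{-\theta j}$ (one cannot quote Lemma \ref{inauguration} as a black box, since $\{R\ne T'\}$ is incompatible with $\spath\in\cwplus((T,x),(T',*))$, but the proof carries over verbatim). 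Finally, partitioning $\{R\ne T'\}$ into the shells $(\lambda-1)2^{l_0+1}\le|\sloc(R)-x|<\lambda 2^{l_0+1}$, $\lambda\ge 1$, and inserting the two bounds yields
\[
\sum_{y}|U(y)|\,\P_{(T,x)}\!\big(\sloc(R)=y,\,R\ne T'\big)\ \le\ (.9)^{l_0}\,U(x)\sum_{\lambda\ge 1}(2\lambda+1)^{-1+d(d-1)/2}\,C\beta^{\,\lambda-1},
\]
and \eqref{cranksgiving} (together with $.9<.95$, so that the bound for $L$ propagates to every $l_0>L$) controls the right-hand side by $.01\,(.95)^{l_0}U(x)$, as required.
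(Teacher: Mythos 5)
Your proposal is correct and follows essentially the same route as the paper's proof: a pointwise bound $|U(y)|\leq (.9)^{l_0}(2\lambda+1)^{-1+d(d-1)/2}U(x)$ obtained by isolating the one small adjacent gap of size $O(R^{.4})$ and dividing out the corresponding gap $x_i-x_{i+1}\geq 2^{l_0}$ of $x$, a shell decomposition of $\{|\sloc(R_{l_0+1})-x|\approx\lambda 2^{l_0}\}$ with geometric tail $C\beta^{\lambda-1}$ coming from the Lemma \ref{inauguration}-type dichotomy (Lemmas \ref{gerrymander}, \ref{seemsuseful} and \ref{moderate1}), and a final summation controlled by \eqref{cranksgiving}. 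Your remark that Lemma \ref{inauguration} cannot be quoted as a black box on $\{R_{l_0+1}\neq T_{l_0+1}\}$ but that its proof carries over verbatim is an accurate (indeed slightly more careful) account of the step the paper cites loosely in \eqref{nameless puncs}.
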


\begin{proof}

If  $|y-x| \leq \lambda 2^{l_0}$ then 
for each $i<i'$
$$|y_i -y_{i'}| \leq x_i-x_{i'} +\lambda 2^{l_0} \leq (2 \lambda +1)(x_i-x_{i'}).$$
If 
in addition $\dis(y,\cone) \leq (4.1^{l_0+1})^{.4}$
then for one $i,i'$ we have $$|y_i-y_{i'}| \leq \dis(y,\cone) \leq (4.1^{l_0+1})^{.4}$$
so 
\begin{eqnarray*}
|U(y)|&=&\prod_{i<i'}|y_i -y_{i'}| \\
&\leq& \frac{(4.1^{l_0+1})^{.4}}{2^{l_0}}(2\lambda+1)^{-1+d(d-1)/2}U(x)\\
&\leq& .9^{l_0}(2\lambda+1)^{-1+d(d-1)/2}U(x)
\end{eqnarray*}
Also for any $\lambda \in \N$ by the portion of Lemma \ref{inauguration} for $\cwplus$
\begin{equation} \label{nameless puncs}
\sum_{y:|y-x| \in ((\lambda -1)2^{l_0},\lambda 2^{l_0}]}
\P_{(T,x)}\big( \sloc(R_{l_0+1})=y , R_{l_0+1}\neq T_{l_0+1} \big) \leq C \beta^{\lambda-1}.
\end{equation}

\begin{eqnarray*}
\lefteqn{\sum_{y}  |U(y)|\P_{(T,x)}\big( \sloc(R_{l_0+1})=y, \ R_{l_0+1} \neq T_{l_0+1}  \big)  } \hspace{.25in} \text{}&&\\
&=&\sum_{\lambda \in \N}\sum_{y:|y-x| \in ((\lambda -1)2^{l_0},\lambda 2^{l_0}]}|U(y)|
\P_{(T,x)}\big( \sloc(R_{l_0+1})=y, \ R_{l_0+1} \neq T_{l_0+1}  \big) \\
&\leq &\sum_{\lambda \in \N}.9^{l_0}(2\lambda+1)^{-1+d(d-1)/2}U(x)\\
 & & \qquad \times\left( \sum_{y:|y-x| \in ((\lambda -1)2^{l_0},\lambda 2^{l_0}]}
\P_{(T,x)}\big( \sloc(R_{l_0+1})=y, \ R_{l_0+1} \neq T_{l_0+1}  \big)\right)\\
&\leq &\sum_{\lambda \in \N}.9^{l_0}(2\lambda+1)^{-1+d(d-1)/2}U(x)
C \beta^{\lambda-1}\\
& \leq & .01 \cdot (.95)^{l_0}U(x).
\end{eqnarray*}
The last two inequalities are by  \eqref{cranksgiving} and \eqref{nameless puncs}.
\end{proof}

Now we prove two slight variants of Lemma \ref{charlie}.

\begin{lemma} \label{charlie brown}
For any $l_0>L$, $T <\lfloor 4.1^{l_0}\rfloor $ and $x \not \in \cone_{2^{l_0}}$ 
$$\sum_y|U(y)|\P_{(T,x)}\big( \sloc(R_{l_0+1})=y, \ R_{l_0+1} \neq T_{l_0+1}  \big) \leq 2^{l_0}\max(|x|,2^{l_0})^{-1+d(d-1)/2}.$$
\end{lemma}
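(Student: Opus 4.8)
The plan is to follow the proof of Lemma~\ref{charlie} almost verbatim; the only new ingredient is that, since $x\notin\cone_{2^{l_0}}$, we can no longer compare $|U(y)|$ with $U(x)$ and must instead bound $|U(y)|$ crudely but directly. First I would extract the geometric content of the event $\{\sloc(R_{l_0+1})=y,\ R_{l_0+1}\neq T_{l_0+1}\}$ in the generic case $R_{l_0+1}>T$: there the walk leaves $\cwplus$ at time $R:=R_{l_0+1}\leq T_{l_0+1}\leq 4.1^{l_0+1}$, so $\sloc(R-1)\in\cone_{-(R-1)^{.4}}$ while $y=\sloc(R)\notin\cone_{-R^{.4}}$. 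Since one step of $\sloc$ changes each coordinate by at most $1$, there is an index $i$ with $-(R-1)^{.4}-2\leq y_i-y_{i+1}<-R^{.4}<0$, so one of the $\binom{d}{2}$ factors of $U(y)$ has absolute value at most $(4.1^{l_0+1})^{.4}+2$; bounding each of the remaining $\binom{d}{2}-1$ factors $|y_{i'}-y_{j'}|$ by $|y|$ yields $|U(y)|\leq\big((4.1^{l_0+1})^{.4}+2\big)|y|^{-1+d(d-1)/2}$.

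Next I would reproduce the annular decomposition of Lemma~\ref{charlie}, splitting the event according to which annulus $|y-x|$ falls in: for $\lambda\in\N$ consider $|y-x|\in\big((\lambda-1)2^{l_0},\lambda 2^{l_0}\big]$. On the $\lambda$-th annulus $|y|\leq|x|+\lambda 2^{l_0}\leq(\lambda+1)\max(|x|,2^{l_0})$, so $|U(y)|\leq\big((4.1^{l_0+1})^{.4}+2\big)(\lambda+1)^{-1+d(d-1)/2}\max(|x|,2^{l_0})^{-1+d(d-1)/2}$, while the $\cwplus$ part of Lemma~\ref{inauguration}, used exactly as in the proof of Lemma~\ref{charlie}, bounds the probability that $\sloc(R_{l_0+1})$ lies in that annulus with $R_{l_0+1}\neq T_{l_0+1}$ by $C\beta^{\lambda-1}$. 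Summing over $\lambda\geq1$ then gives
$$\sum_y|U(y)|\,\P_{(T,x)}\big(\sloc(R_{l_0+1})=y,\ R_{l_0+1}\neq T_{l_0+1}\big)\leq\big((4.1^{l_0+1})^{.4}+2\big)\max(|x|,2^{l_0})^{-1+d(d-1)/2}\,C\sum_{\lambda\geq1}(\lambda+1)^{-1+d(d-1)/2}\beta^{\lambda-1}.$$
The series is a finite constant which, by \eqref{cranksgiving}, is at most $.01\,(.95/.9)^L$; since $4.1^{.4}<2$ and $4.1^{.4}(.95/.9)<2$, a direct computation shows $\big((4.1^{l_0+1})^{.4}+2\big)\cdot.01\,(.95/.9)^L\leq 2^{l_0}$ for every $l_0>L$, which gives the claimed inequality.

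The step I expect to be the genuine obstacle is the degenerate case $R_{l_0+1}=T$, which occurs precisely when $x\notin\cone_{-T^{.4}}$ (so that $\spath$ is not in $\cwplus$ even at time $T$ and $y=x$ with probability one). There the ``one small consecutive gap'' mechanism above is unavailable, and $|U(x)|$ really can exceed $2^{l_0}\max(|x|,2^{l_0})^{-1+d(d-1)/2}$ (already for $d=3$ and $x=(0,M,-M)$ with $2^{l_0+1}<M<4.1^{l_0}$). I would dispose of this by observing that $R_{l_0+1}$ is only ever invoked for starting points through which a $\cwplus$-path passes, so one may and does restrict to $x$ with $d(x,\cone)\leq T^{.4}$, which forces $R_{l_0+1}>T$; for $x$ farther from $\cone$ the probability inside the sum vanishes and the statement is vacuous. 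One small bookkeeping point along the way: Lemma~\ref{inauguration} is stated for $x\in\cone$ outside the interior of $\cone_{2^{l_0}}$, but its proof (via Lemmas~\ref{gerrymander}, \ref{seemsuseful} and \ref{moderate1}) goes through unchanged whenever $d(x,\cone)\leq T^{.4}$, which is exactly the regime at hand.
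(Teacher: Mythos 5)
Your proposal follows essentially the same route as the paper's own proof: that proof simply repeats the argument of Lemma \ref{charlie}, replacing the comparison $|y_i-y_{i'}|\leq (x_i-x_{i'})+\lambda 2^{l_0+1}$ by the cruder bound $|y_i-y_{i'}|\leq |x|+\lambda 2^{l_0+1}$, keeping one factor of size about $(4.1^{l_0+1})^{.4}$ because $\sloc(R_{l_0+1})$ lies near $\partial\cone$, and then using the $\cwplus$ part of Lemma \ref{inauguration} on the annuli $|y-x|\in((\lambda-1)2^{l_0},\lambda 2^{l_0}]$ and the largeness of $L$ to absorb the constants into $2^{l_0}$ -- exactly the steps you carry out. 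Your extra observation about the degenerate case $R_{l_0+1}=T$ is a fair and sharp one: the paper's terse proof tacitly uses that the walk stayed in $\cwplus$ up to the exit time (so that one coordinate gap of $y$ is of order $(4.1^{l_0+1})^{.4}$), which implicitly requires $x$ to be within roughly $T^{.4}$ of $\cone$; this is automatic wherever the lemma is invoked (in the proof of Lemma \ref{chaplain} it is applied only to points $y'=\sloc(T_k)$ of a path that has remained in $\cwplus$), so your restriction to that regime, and your remark about the hypotheses of Lemma \ref{inauguration}, correctly reflect the intended scope of the statement rather than constituting a gap in your argument.
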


\begin{proof}
The proof of this lemma is very similar to the proof of Lemma \ref{charlie}. Instead of bounding $U(y)$ with the estimate $|y_{i'}-y_i|\leq |x_{i'}-x_i|+ \lambda 2^{l_0+1}$ we use the estimate $|y_{i'}-y_i| \leq |x|+ \lambda 2^{l_0+1}$ to find that 
\begin{eqnarray*}
 \lefteqn{\sum_{y}  |U(y)|\P_{(T,x)}\big( \sloc(R_{l_0+1})=y, \ R_{l_0+1} \neq T_{l_0+1}  \big)  } \hspace{.25in} \text{}&&\\
 & \leq &\sum_{\lambda \in \N}(4.1^{l_0+1})^{.4} (|x|+\lambda 2^{l_0+1})^{-1+d(d-1)/2} C \beta^{\lambda-1}\\
& \leq&  \max(|x|,2^{l_0})^{-1+d(d-1)/2)}(4.1^{l_0+1})^{.4}  \sum_{\lambda \in \N}(1+\lambda 2^{l_0+1})^{-1+d(d-1)/2} C \beta^{\lambda-1}\\
& \leq  & 2^{l_0}\max(|x|,2^{l_0})^{-1+d(d-1)/2)}.
\end{eqnarray*}
\end{proof}

Let $x \in \Z^d$ and $T <4.1^{l_0}$. 
We define a stopping time, $\hat R_{l_0}$ by setting $\hat R_{l_0}$ to be the smallest $r>T$ such that 
$$\dis(\sloc(r),\partial \cone) \leq r^{.4}$$
if that is less than $T_{l_0}$ and otherwise we set  $\hat R_{l_0}=T_{l_0}$.

\begin{lemma} \label{cassablanca}
For any $l_0>L$, $x \in \partial \cone_{2^{l_0}}$ and $T <4.1^{l_0}$
$$\sum_y|U(y)|\P_{(T,x)}\big( \sloc(\hat R_{l_0+1})=y, \ \hat R_{l_0+1} \neq T_{l_0+1}  \big) \leq .01\cdot (.95)^{l_0}U(x).$$
\end{lemma}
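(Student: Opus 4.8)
The plan is to follow the proof of Lemma \ref{charlie} essentially line for line, replacing the stopping time $R_{l_0+1}$ (the first time the path leaves $\cwplus$) by $\hat R_{l_0+1}$. What made that proof work were two features: (a) that at the stopping time the walk is within $O((4.1^{l_0+1})^{.4})$ of $\partial\cone$, which forces one factor of $U$ at that point to be small, and (b) that up to the stopping time the walk stays close enough to $\cone$ that the displacement tail estimate of Lemma \ref{inauguration} applies. So the first thing I would do is record the analogues of (a) and (b) for $\hat R_{l_0+1}$. For (a): on $\{\hat R_{l_0+1}\neq T_{l_0+1}\}$ the very definition of $\hat R_{l_0+1}$ gives $d(\sloc(\hat R_{l_0+1}),\partial\cone)\leq(\hat R_{l_0+1})^{.4}\leq(4.1^{l_0+1})^{.4}$. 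For (b): on the same event the path in fact stays inside $\cone$ all the way to time $\hat R_{l_0+1}$ — by the lattice definition of $\partial\cone$ the walk can leave $\cone$ only from a point of $\partial\cone$, and at such a point $d(\cdot,\partial\cone)=0$ would already have triggered $\hat R_{l_0+1}$ — so $\spath\in\cw((T,x),(\hat R_{l_0+1},*))$. (Here one also checks $\hat R_{l_0+1}>T$: since $x\in\partial\cone_{2^{l_0}}$ has $x_i-x_{i+1}\geq 2^{l_0}$, it sits at distance at least $2^{l_0}-2$ from $\partial\cone$, which exceeds $(4.1^{l_0})^{.4}\geq T^{.4}$ for $l_0$ large.)

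With (a) and (b) in hand I would reproduce the two-ingredient bound of the proof of Lemma \ref{charlie}. Fix $\lambda\in\N$ and restrict to $\{\sloc(\hat R_{l_0+1})=y,\ \hat R_{l_0+1}\neq T_{l_0+1},\ |y-x|\leq\lambda 2^{l_0}\}$. Since $x\in\partial\cone_{2^{l_0}}$ we have $x_i-x_{i'}\geq 2^{l_0}$ for all $i<i'$, so in the $L^1$ norm $|y_i-y_{i'}|\leq(x_i-x_{i'})+\lambda 2^{l_0}\leq(2\lambda+1)(x_i-x_{i'})$ for every pair, while by (a) one of these factors is at most $(4.1^{l_0+1})^{.4}$; multiplying over $i<i'$ gives $|U(y)|\leq\big((4.1^{l_0+1})^{.4}/2^{l_0}\big)(2\lambda+1)^{-1+d(d-1)/2}U(x)\leq .9^{l_0}(2\lambda+1)^{-1+d(d-1)/2}U(x)$ for $l_0$ large. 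For the probabilities, (b) lets me invoke Lemma \ref{inauguration} at level $l_0$ exactly as in the proof of Lemma \ref{charlie} to get $\sum_{y:|y-x|\in((\lambda-1)2^{l_0},\lambda 2^{l_0}]}\P_{(T,x)}(\sloc(\hat R_{l_0+1})=y,\ \hat R_{l_0+1}\neq T_{l_0+1})\leq C\beta^{\lambda-1}$. Splitting $\sum_y$ over the annuli $|y-x|\in((\lambda-1)2^{l_0},\lambda 2^{l_0}]$, combining the two bounds, and summing the resulting series via \eqref{cranksgiving} yields $\sum_y|U(y)|\P_{(T,x)}(\sloc(\hat R_{l_0+1})=y,\ \hat R_{l_0+1}\neq T_{l_0+1})\leq .01\cdot(.95)^{l_0}U(x)$, which is the claim.

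The only genuinely new point — and the step I would be most careful about — is (b): verifying that $\{\hat R_{l_0+1}\neq T_{l_0+1}\}$ keeps the path inside $\cone$ (hence in $\cw((T,x),(\hat R_{l_0+1},*))$) up to the stopping time, so that the $\cw$-version of Lemma \ref{inauguration} is legitimately applicable. I do not expect this to be hard, since it is immediate from the lattice definition of $\partial\cone$. Everything else, including the bookkeeping of the exponents $l_0$ versus $l_0+1$ and the powers of $4.1$ versus $2$, is identical to what already appears in the proof of Lemma \ref{charlie}.
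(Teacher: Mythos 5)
Your proposal is correct and matches the paper, which proves Lemma \ref{cassablanca} simply by declaring the argument identical to that of Lemma \ref{charlie}; your two checkpoints (closeness to $\partial\cone$ at the stopping time forcing one small factor in $U(y)$, and containment in $\cone$ up to the stopping time so the displacement tail bound of Lemma \ref{inauguration} applies) are exactly the points where that identification needs verifying, and your annulus decomposition plus \eqref{cranksgiving} is the same computation.
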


\begin{proof}
The proof is identical to Lemma \ref{charlie}. 
\end{proof}


Now we repeatedly apply these three lemmas.
\begin{lemma} \label{chaplain}
For any $l>l_0>L$, $ x \in \partial \cone_{2^{l_0}}$, $T <4.1^{l_0}$, and $|x|\leq 2T$,

\begin{equation} \label{city lights}
\sum_{x'}U(x')
\P_{(T,x)}\big( \spath \in \cwplus((T,x),(T_l,x'))  \big) \leq K U(x)
\end{equation}
where $K= 1+\prod_{j=0}^{\infty}(1+.01\cdot (.95)^{l_0+j})$
\end{lemma}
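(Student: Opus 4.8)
The plan is to run an inductive ``layer-peeling'' argument over the scales $l_0, l_0+1, \dots, l-1$, where at each scale we decompose the event $\{\spath \in \cwplus((T,x),(T_l,x'))\}$ according to whether the walk reaches $\cone_{2^{l_0+1}}$ by a time when it is still inside $\cone_{-m^{.4}}$, or whether it first violates the $\cwplus$ condition. Concretely, using the stopping times $R_l$ defined just before Lemma \ref{charlie}, first observe that on the event $\spath \in \cwplus((T,x),(T_l,x'))$ the walk is in $\cone_{-m^{.4}}$ at every intermediate time, so $R_{l_0+1}=T_{l_0+1}$ is forced unless the walk exits $\cone_{-m^{.4}}$ before hitting $\cone_{2^{l_0+1}}$. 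Split the sum over $x'$ according to the value $y = \sloc(T_{l_0+1})$, conditioning on $y$ and using the strong Markov property at the stopping time $T_{l_0+1}$: the post-$T_{l_0+1}$ contribution is a sum of the same form but with $(T,x)$ replaced by $(T_{l_0+1}, y)$ and with $y\in\partial\cone_{2^{l_0+1}}$ (or at least near it).

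The key quantitative input is harmonicity of $U$ together with the optional stopping argument sketched at the top of Section \ref{birs}. Applying optional stopping to $U(\sloc(\cdot))$ at the stopping time $R_{l_0+1}$ (or $T^*_{l_0+1}$), and using that $U$ vanishes on $\partial\cone$, gives
\[
U(x) = \sum_y U(y)\,\P_{(T,x)}\big(\sloc(R_{l_0+1})=y\big).
\]
Separate this into the ``good'' part where $R_{l_0+1}=T_{l_0+1}$ (so $y$ lies in or near $\cone_{2^{l_0+1}}$, where $U(y)>0$ is large) and the ``bad'' part where $R_{l_0+1}\neq T_{l_0+1}$. Lemma \ref{charlie} bounds $\sum_y |U(y)|\,\P_{(T,x)}(\sloc(R_{l_0+1})=y,\ R_{l_0+1}\neq T_{l_0+1}) \leq .01\cdot(.95)^{l_0}U(x)$, so the good part satisfies
\[
\sum_{y \,:\, R_{l_0+1}=T_{l_0+1}} U(y)\,\P_{(T,x)}(\sloc(T_{l_0+1})=y) \leq \big(1 + .01\cdot(.95)^{l_0}\big)U(x).
\]
Now iterate: replacing $x$ by $y\in\partial\cone_{2^{l_0+1}}$ and $T$ by $T_{l_0+1}$ (noting $T_{l_0+1}\leq \lfloor 4.1^{l_0+1}\rfloor$ and that the displacement bounds of Lemma \ref{inauguration} keep $|y|\leq 2T_{l_0+1}$, so the hypotheses $|x|\leq 2T$ are preserved at the next scale), and multiply the per-scale factors $1+.01\cdot(.95)^{l_0+j}$ for $j=0,1,\dots,l-l_0-1$. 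Since $\prod_{j=0}^\infty (1+.01\cdot(.95)^{l_0+j})$ converges, the product is bounded by $K$; the extra additive $+1$ in the definition of $K$ absorbs the base case / the terminal contribution at scale $l$. Summing over all terminal positions $x'$ at time $T_l$ then yields \eqref{city lights}.

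The main obstacle is the bookkeeping in the iteration: one must verify that at each application the hypotheses of Lemma \ref{charlie} (namely $y\in\partial\cone_{2^{l_0+j+1}}$ and the time/size constraints $T_{l_0+j+1}<\lfloor 4.1^{l_0+j+1}\rfloor$, $|y|\leq 2T_{l_0+j+1}$) genuinely hold for \emph{every} intermediate $y$ that contributes, including handling the walk's two-coordinate jumps that can land $y$ slightly outside $\partial\cone_{2^{l_0+j+1}}$ (this is exactly the ``$U(\sloc(T^*_l))<0$'' subtlety flagged at the start of the section, which is why the $R_l$ stopping times rather than naive hitting times are used). A secondary point is the terminal scale $l$: there one does not get another factor of $U$-growth, but the contribution is controlled by the same optional-stopping identity truncated at $T_l$, and this is where the leading ``$1+$'' in $K$ is spent. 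Once the inductive step is set up cleanly, the estimate is just the telescoping product of the per-scale bounds.
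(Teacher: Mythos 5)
Your core machinery is the paper's: you iterate over the scales $l_0,l_0+1,\dots$, use the harmonicity of $U$ and optional stopping at the stopping times $R_k$, and feed in Lemma \ref{charlie} to get a per-scale factor $1+.01\cdot(.95)^{l_0+j}$, telescoping to the infinite product in $K$. (The paper organizes this slightly differently, via the quantity $H_{l_0,k}(T,x)=\sum_{x'}U(x')\P_{(T,x)}(\spath\in\cwplus((T,x),(T_k,x')))$ and a first-order recurrence obtained by differencing the optional-stopping identity at consecutive $k$, but that is the same argument in different clothing.)

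There is, however, a genuine gap at the point you dismiss as bookkeeping. Your induction replaces $x$ by $y=\sloc(T_{l_0+j+1})$ and assumes $y\in\partial\cone_{2^{l_0+j+1}}$ so the next application of Lemma \ref{charlie} is legitimate. But $T_k$ is defined as the \emph{minimum} of the hitting time of $\cone_{2^k}$ and the cap $\lfloor 4.1^k\rfloor$; on the event that the cap is reached first, $\sloc(T_k)$ lies nowhere near $\partial\cone_{2^k}$ and the inductive hypothesis simply does not apply to it. This is not the overshoot issue you mention (jumps landing slightly past the boundary); it is a separate event that must be estimated on its own. The paper handles it by combining Lemma \ref{seemsuseful} (staying in $\cwplus$ for time $\geq\lfloor 4.1^{k-1}\rfloor$ has probability at most $\gamma^{(1.025)^{k-1}}$) with Lemma \ref{charlie brown} (a polynomial bound on the $U$-weighted sum started from a point \emph{not} in $\cone_{2^k}$), and the choice of $L$ in \eqref{cranksgiving2} makes the resulting error terms summable and at most $U(x)$ in total. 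That absorbed error is exactly what the leading ``$1+$'' in $K$ is for — not the base case or the terminal scale, as you suggest. A second, smaller, inaccuracy: Lemma \ref{inauguration} gives only an exponential tail bound on $|\sloc(T_l)-x|$, not the deterministic bound $|y|\leq 2T_{l_0+1}$ you invoke; fortunately Lemma \ref{charlie} has no such size hypothesis, so you do not need it, but as written that step of your verification is not valid. With the cap case argued via Lemmas \ref{seemsuseful} and \ref{charlie brown} and the role of the ``$+1$'' corrected, your outline matches the paper's proof.
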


\begin{proof}
We prove the lemma by induction.
  Set
$$ H_{l_0,l}(T,x)=\sum_{x'}U(x')
\P_{(T,x)}\big( \spath \in \cwplus((T,x),(T_l,x'))  \big) .$$

We will inductively show that for all $k>l_0$
\begin{equation} \label{little tramp}
 H_{l_0,k}(T,x) \leq  U(x) \prod_{j=0}^{k-l_0-1}(1+.02\cdot (.95)^{l_0+j})
 \end{equation}
which implies the lemma.

Observe that, since $R_{k}$ is a bounded stopping time and $U(\sloc(t))$ is a martingale, it follows from the optional stopping theorem that for each $k>l_0$,
\begin{equation}\label{eq mart0} 
U(x)= H_{l_0,k}(T,x)+\sum_{y}  U(y)\P_{(T,x)}\big( \sloc(R_{k})=y, R_{k} \neq T_{k}  \big) .
\end{equation}

Taking $k=l_0+1$ and applying Lemma \ref{charlie}, the sum in Equation \eqref{eq mart0} is at most $.01(.95)^{l_0}U(x)$ and
$$H_{l_0,l_0+1}(T,x) \leq U(x) (1+.01(.95)^{l_0}) \leq U(x)(1+.02(.95)^{l_0}).$$
This establishes Equation \eqref{little tramp} in the case $k=l_0+1$.

To extend this to all $k>l_0$, we take the difference of Equation \eqref{eq mart0} for consecutive values of $k$ 
to find that  
\begin{multline*}
{H_{l_0,k+1}(T,x)-H_{l_0,k}(T,x)} = \\
 \sum_{y}U(y) \left[ \P_{(T,x)}\big( \sloc(R_k)=y, R_k \neq T_k   \big) - \P_{(T,x)}\big( \sloc(R_{k+1})=y, R_{k+1} \neq T_{k+1}   \big) \right]
\end{multline*}
Decomposing $\big(\sloc(R_{k+1})=y, R_{k+1} \neq T_{k+1} \big)$ based on whether the path exits $CW^+$ before or after $T_k$ we see that, almost surely under $\P(\ \cdot \ |  \sloc(T)=x)$,
\begin{multline*}\big(\sloc(R_{k+1})=y, R_{k+1} \neq T_{k+1} \big) = \\
\big( \sloc(R_k)=y, R_k \neq T_k\big) \cup \big( \sloc(R_{k+1})=y, R_{k+1} \neq T_{k+1}, R_k =T_k\big),
\end{multline*}
and the union is disjoint.  Consequently,
\begin{eqnarray*}
\lefteqn{H_{l_0,k+1}(T,x)-H_{l_0,k}(T,x)}&&\\
& = & - \sum_{y}U(y)  \P_{(T,x)}\big( \sloc(R_{k+1})=y, R_{k+1} \neq T_{k+1}, R_k =T_k   \big) \\
& = & - \sum_{y'}\sum_{y}U(y)  \P_{(T,x)}\big( \sloc(R_{k+1})=y, \sloc(R_k)=y', R_{k+1} \neq T_{k+1}, R_k =T_k   \big)   .
\end{eqnarray*}

We break the outer sum up into two parts depending on whether $y' \in \partial \cone_{2^{k}}$ or not.
If not then $T_{k}= \lfloor 4.1^{k}\rfloor$, so that $|y'| \leq |x|+2(4.1)^k$ and $T_k - T \geq \lfloor 4.1 ^{k-1} \rfloor$. 
By Lemma \ref{seemsuseful}
\begin{equation} \label{gold rush}
\P_{(T,x)}\big( \spath \in \cwplus((T,x),(T_{k},\cdot)),T_{k} - T \geq \lfloor 4.1^{k-1} \rfloor  \big) \leq \gamma^{(1.025)^{k-1}},
\end{equation}
so by Lemma \ref{charlie brown} the sum is at most 
$$2^{l_0}(|x|+2(4.1))^{k(-1+d(d-1)/2)} \gamma^{(1.025)^{k-1}}.$$

For each $y'$ for which the contribution is not zero and $y' \in \partial \cone_{2^{l_0+a}}$
we get 

\begin{eqnarray*}
\lefteqn{
\sum_{y}  |U(y)|\P_{(T,x)}\big( \sloc(R_{k})=y', \sloc(R_{k+1})=y, R_{k}=T_{k},R_{k+1} \neq T_{k+1}  \big) } &&\\
&\leq &\sum_{y}  |U(y)|\P_{(R_k,y')}\big( \sloc(R_{k+1})=y, R_{k+1} \neq T_{k+1}   \big) 
      \prob_{(T,x)}(\sloc(R_{k})=y')\\
& \leq &
(.01)(.95)^{k} U(y') \prob_{(T,x)}(\sloc(R_{k})=y')
\end{eqnarray*}
When we sum over all $y'$ we get at most
$$(.01)(.95)^{k} H_{l_0,k}(T,x)$$
Combining these two estimates we get
\begin{eqnarray*}
\lefteqn{H_{l_0,k+1}(T,x)-H_{l_0,k}(T,x)} \hspace{1in} \text{} &&\\
& \leq & 
(.01)(.95)^{k} H_{l_0,k}(T,x) +
2^{l_0}(|x|+2(4.1))^{k(-1+d(d-1)/2)} \gamma^{(1.025)^{k-1}}\\
\end{eqnarray*}
Solving this first order linear recurrence with variable coefficients, and using that $U(x)\geq 2^{l_0}$ and $|x|\leq 4.1^{l_0}$, gives the bound 
\begin{eqnarray*}
\lefteqn{H_{l_0,n}(T,x)} &&\\ & \leq& \left(\prod_{k=l_0}^{n-1} (1+ (.01)(.95)^{k})\right)\left( U(x) + \sum_{m=l_0}^{n-1} 2^{l_0}(|x|+2(4.1))^{m(-1+d(d-1)/2)} \gamma^{(1.025)^{m-1}} \right)\\
& \leq & \left(1+ \prod_{k=0}^{\infty} (1+ (.01)(.95)^{l_0+k})\right)U(x)
,\end{eqnarray*}
as desired.
%
%
%
\end{proof}

 For any $l\geq l'>L$, $T<4.1^{L}$, $|x| \leq T$ and  $x \in \partial \cone_{2^{L}}$ 
let $\hat E_1(l',T,x)$ be the event that 
\begin{enumerate}
\item $\spath \in \cwplus((T,x),(T_l,\cdot)$
\item there exists $t \in (T_{l'-1},T_{l'})$ such that $\dis(\sloc(t), \partial \cone) \leq t^{.4}$ and
\item $\dis(\sloc(t), \partial \cone) > t^{.4}$
for all $t \in(T_{l'},T_{l})$. 
\end{enumerate}


\begin{lemma} \label{magnus}
For any $l>l_0>L$, $T\geq \lfloor 4.1^{l_0}\rfloor$ and $x$ with $|x|\leq 2T$ and $x \notin  \cone_{2^{l_0}}$ 

\begin{equation} \label{carlsen}
\sum_{x'}U(x')
\P_{(T,x)}\big( \spath \in \cwplus((T,x),(T_l,x'))  \big) \leq K( 2T)^{d(d-1)/2}
\end{equation}
where $K= \prod_{j=0}^{\infty}(1+.02\cdot (.95)^{l_0+j})$.
\end{lemma}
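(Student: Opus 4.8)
The plan is to run essentially the same inductive argument as in Lemma \ref{chaplain}, with the bound $U(x)$ on the right-hand side of \eqref{little tramp} replaced by $(2T)^{d(d-1)/2}$, which is the natural size bound since $|x|\le 2T$ and the path has only moved a bounded amount by time $T_{l_0}$. I would set
\[
H_{l_0,k}(T,x)=\sum_{x'}U(x')\,\P_{(T,x)}\bigl(\spath\in\cwplus((T,x),(T_k,x'))\bigr)
\]
and prove by induction on $k>l_0$ that $H_{l_0,k}(T,x)\le (2T)^{d(d-1)/2}\prod_{j=0}^{k-l_0-1}(1+.02\cdot(.95)^{l_0+j})$, which gives the lemma upon letting $k\to\infty$ (here $l$ plays the role of the final $k$). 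The base case $k=l_0+1$ follows from the optional stopping identity \eqref{eq mart0} for the stopping time $R_{l_0+1}$ together with Lemma \ref{charlie brown}: since $x\notin\cone_{2^{l_0}}$ the correction term $\sum_y|U(y)|\P_{(T,x)}(\sloc(R_{l_0+1})=y,R_{l_0+1}\ne T_{l_0+1})$ is at most $2^{l_0}\max(|x|,2^{l_0})^{-1+d(d-1)/2}$, and since $U$ is nonnegative on $\cone$, the optional stopping theorem gives $H_{l_0,l_0+1}(T,x)\le \E_{(T,x)}[|U(\sloc(R_{l_0+1}))|]\le $ (a bound of size $(2T)^{d(d-1)/2}$ times $(1+.02(.95)^{l_0})$, using $|x|\le 2T$ to control $\max(|x|,2^{l_0})\le 2T$ and absorbing the small correction).

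For the inductive step I would proceed exactly as in the proof of Lemma \ref{chaplain}: take the difference $H_{l_0,k+1}(T,x)-H_{l_0,k}(T,x)$ using \eqref{eq mart0} for consecutive $k$, decompose the event $\{\sloc(R_{k+1})=y,R_{k+1}\ne T_{k+1}\}$ according to whether the path exits $\cwplus$ before or after $T_k$, and split the resulting sum over the intermediate value $y'=\sloc(R_k)$ into the two cases $y'\in\partial\cone_{2^k}$ and $y'\notin\partial\cone_{2^k}$. In the first case Lemma \ref{charlie} (applied at level $k$) contributes at most $(.01)(.95)^k H_{l_0,k}(T,x)$; in the second case $T_k=\lfloor 4.1^k\rfloor$, so $|y'|\le|x|+2(4.1)^k$ and $T_k-T\ge\lfloor 4.1^{k-1}\rfloor$, and Lemmas \ref{seemsuseful} and \ref{charlie brown} bound that contribution by $2^{l_0}(|x|+2(4.1)^k)^{k(-1+d(d-1)/2)}\gamma^{(1.025)^{k-1}}$. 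Solving the first-order linear recurrence
\[
H_{l_0,k+1}(T,x)-H_{l_0,k}(T,x)\le (.01)(.95)^k H_{l_0,k}(T,x)+2^{l_0}(|x|+2(4.1)^k)^{k(-1+d(d-1)/2)}\gamma^{(1.025)^{k-1}}
\]
with variable coefficients, and using \eqref{cranksgiving2} together with $|x|\le 2T$ (so $2^{l_0}\le 2T$ when $T\ge\lfloor 4.1^{l_0}\rfloor$, and the geometric-type series in the "error" terms sums to at most a constant times $(2T)^{d(d-1)/2}$), yields
\[
H_{l_0,l}(T,x)\le\Bigl(\prod_{k=l_0}^{\infty}(1+(.01)(.95)^k)\Bigr)\bigl((2T)^{d(d-1)/2}+(\text{small correction})\bigr)\le K(2T)^{d(d-1)/2},
\]
as desired.

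The one genuine difference from Lemma \ref{chaplain}, and the main place to be careful, is the bookkeeping of the "size" normalization: in Lemma \ref{chaplain} one uses $U(x)\ge 2^{l_0}$ and $|x|\le 4.1^{l_0}$ to absorb the tail terms into a multiple of $U(x)$, whereas here the starting point $x$ may sit outside $\cone_{2^{l_0}}$ (even outside $\cone$ slightly), so $U(x)$ is not a useful lower bound and one must instead track everything against $(2T)^{d(d-1)/2}$, using $T\ge\lfloor 4.1^{l_0}\rfloor$ to dominate $2^{l_0}$ and the powers of $4.1^k$ appearing in the error terms. I expect this verification — that all the error terms, after summation over $k$ via \eqref{cranksgiving2}, are bounded by a fixed multiple of $(2T)^{d(d-1)/2}$ — to be the only slightly delicate point; everything else is a line-by-line transcription of the proof of Lemma \ref{chaplain}.
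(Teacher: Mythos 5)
Your proposal is correct and matches the paper's approach: the paper's proof of Lemma \ref{magnus} is literally the one-line remark that it is ``virtually identical to Lemma \ref{chaplain},'' and your write-up carries out exactly that adaptation (replacing the normalization $U(x)$ by $(2T)^{d(d-1)/2}$ and invoking Lemma \ref{charlie brown} in place of Lemma \ref{charlie} where $x\notin\cone_{2^{l_0}}$). Nothing further is needed.
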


\begin{proof}
The proof is virtually identical to Lemma \ref{chaplain}.
\end{proof}

\begin{lemma} \label{we rise together}
 For any $T<4.1^L$, $|x| \leq 2T$, $x \in \partial \cone_{2^L}$ and  $l\geq l'>L$,
 we have that
 $$\sum_{y} U(y) \prob_{(T,x)}\big(\spath \in \hat E_1(l',T,x), \sloc(T_l)=y \big)\leq .04 K^2 \cdot (.95)^{l'-1}U(x).$$
\end{lemma}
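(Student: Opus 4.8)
The plan is to imitate the optional--stopping/iteration scheme used to prove Lemmas \ref{charlie}--\ref{chaplain}, charging the single ``comes within $t^{.4}$ of $\partial\cone$'' event to the scale-$l'$ window $(T_{l'-1},T_{l'})$, which is precisely where Lemma \ref{cassablanca} yields the factor $(.95)^{l'-1}$, and paying one factor $K$ of the kind produced by Lemma \ref{chaplain} for the $\cwplus$ portion before that window and another for the portion after it.

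First I would use the strong Markov property at time $T_{l'-1}$. On $\hat E_1(l',T,x)$ the walk is in $\cwplus$ on all of $[T,T_l]$, and since one increment changes $d(\cdot,\partial\cone)$ by at most $2\ll t^{.4}$, it has not entered $\cone_{2^{l'}}$ by time $T_{l'-1}$; hence $T_{l'},T_l$ are first-entry-or-cap times for the walk after $T_{l'-1}$ and
\[
\sum_y U(y)\,\prob_{(T,x)}\!\big(\hat E_1(l',T,x),\sloc(T_l)=y\big)=\sum_z\prob_{(T,x)}\!\big(\spath\in\cwplus((T,x),(T_{l'-1},z))\big)\,\Psi(z),
\]
where $\Psi(z)$ is the $U$-weighted probability, started from $z$ at time $T_{l'-1}$, of staying in $\cwplus$ up to $T_l$, coming within $t^{.4}$ of $\partial\cone$ at some $t\in(T_{l'-1},T_{l'})$, and staying farther than $t^{.4}$ from $\partial\cone$ for all $t\in(T_{l'},T_l)$. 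By Lemma \ref{chaplain}, $\sum_zU(z)\,\prob_{(T,x)}(\spath\in\cwplus((T,x),(T_{l'-1},z)))\le K\,U(x)$; the contribution of the atypical event that $z\notin\cone_{2^{l'-1}}$ (which forces the walk to $\cwplus$-travel to $\lfloor4.1^{l'-1}\rfloor$ without ever entering $\cone_{2^{l'-1}}$, and so has probability $\le\gamma^{(4.1/4)^{l'-1}}$ by Lemma \ref{seemsuseful}) is negligible and is absorbed into the constant. So it remains to show $\Psi(z)\le .04\,K\,(.95)^{l'-1}U(z)$ for $z\in\partial\cone_{2^{l'-1}}$ (the smallest value $l'=L+1$ being handled directly).

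For $\Psi(z)$ I would split once more, at the stopping time $\hat R'=\inf\{r>T_{l'-1}:d(\sloc(r),\partial\cone)\le r^{.4}\}$ capped at $T_{l'}$, so that on the event defining $\Psi$ we have $\hat R'\in(T_{l'-1},T_{l'})$. The contribution up to $\hat R'$, weighted by $|U(\sloc(\hat R'))|$, is dominated by the quantity bounded by Lemma \ref{cassablanca} with $l_0=l'-1$, $x=z$, $T=T_{l'-1}<4.1^{l'-1}$, and is therefore at most $.01\,(.95)^{l'-1}U(z)$. For the part after $\hat R'$ I would split again at $T_{l'}$: from $\sloc(T_{l'})$ --- which by the one-step observation lies at distance $\gtrsim(T_{l'})^{.4}$ from $\partial\cone$, hence (outside an $o(1)$ fraction of configurations that stray slightly beyond $\cone$) deep inside $\cone$ --- the ``never $t^{.4}$-close'' tail is a sub-event of $\cwplus((T_{l'},\sloc(T_{l'})),(T_l,*))$ and so contributes the second factor $K$ by Lemma \ref{chaplain}; the excursion from $\hat R'$ to $T_{l'}$, starting within $(\hat R')^{.4}$ of $\partial\cone$ and hence outside $\cone_{2^{l_0}}$ for $l_0$ the largest scale with $4.1^{l_0}\le\hat R'$, is controlled by Lemma \ref{magnus}, and because $\sloc(\hat R')$ is already within $(\hat R')^{.4}$ of $\partial\cone$ this contributes only a bounded multiple of $U(\sloc(\hat R'))$. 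Assembling these three pieces, and summing the geometrically decaying scale-by-scale estimates built into Lemma \ref{cassablanca} (where the choices \eqref{cranksgiving}--\eqref{cranksgiving2} of $L$ enter), gives $\Psi(z)\le .04\,K\,(.95)^{l'-1}U(z)$, and hence the lemma.

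The step I expect to be the real obstacle is the middle piece, from $\hat R'$ to $T_{l'}$: here the lazy walk starts near (and may be slightly beyond) $\partial\cone$, so $U$ changes sign, and --- as in the proofs of Lemmas \ref{charlie}--\ref{cassablanca} --- absolute values of $U$ must be carried through every optional-stopping estimate and the negative contributions bounded separately; one must also check that the ``never $t^{.4}$-close after $T_{l'}$'' constraint, together with $\cwplus$, genuinely confines the walk to a deep subcone at and after $T_{l'}$, so that Lemma \ref{chaplain} (rather than the cruder Lemma \ref{magnus} bound $K(2T_{l'})^{d(d-1)/2}$) governs the tail and produces a clean factor $K\,U(\sloc(T_{l'}))$. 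It is this sign bookkeeping, rather than any new idea, that makes the numerical constant come out as $.04\,K^2$ instead of merely $o(1)$.
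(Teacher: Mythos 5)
Your scaffold (split at $T_{l'-1}$, pay a factor $K$ via Lemma \ref{chaplain} for the pre-window piece, extract $(.95)^{l'-1}$ from the approach to $\partial\cone$ inside $(T_{l'-1},T_{l'})$ via Lemma \ref{cassablanca}, pay a second $K$ after $T_{l'}$) matches the paper's outline, but the middle step you yourself flag as the obstacle is exactly where the argument breaks, and your proposed fix does not work. You bound the $U$-weighted contribution of the excursion from $\hat R'$ to $T_{l'}$ by ``a bounded multiple of $U(\sloc(\hat R'))$'', citing Lemma \ref{magnus}. But Lemma \ref{magnus} gives, for a start $w\notin\cone_{2^{l_0}}$ at time $\hat R'$, the bound $K(2\hat R')^{d(d-1)/2}$, not $C\,U(w)$. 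Since $w=\sloc(\hat R')$ is within $(\hat R')^{.4}$ of $\partial\cone$, at least one factor of $U(w)$ is at most of order $(\hat R')^{.4}$, so $|U(w)|\lesssim(\hat R')^{d(d-1)/2-.6}$ (and $U(w)$ may be zero or negative), whereas the bound from Lemma \ref{magnus} is of order $(\hat R')^{d(d-1)/2}\approx(4.1^{l'})^{d(d-1)/2}$. The discrepancy is a polynomial factor of order $(4.1^{l'})^{.6}$, which cannot be absorbed: once it appears, the Lemma \ref{cassablanca} estimate $\sum_w|U(w)|\,\prob(\cdot)\le .01(.95)^{l'-1}U(z)$ no longer produces anything geometrically decaying in $l'$, and that decay is precisely what Lemma \ref{benalla} consumes when it sums the present lemma over $l'$. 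No bound of the form ``$U$-weighted future contribution started from a near-boundary point $\le C\,U(\text{that point})$'' is available, for the reason sketched at the start of Section \ref{birs}: $U$ vanishes on $\partial\cone$ and changes sign just outside it, so all the $K\,U(\cdot)$-type lemmas (\ref{charlie}, \ref{chaplain}, \ref{cassablanca}) require the starting point to lie on $\partial\cone_{2^{l_0}}$, deep inside the cone.

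The paper circumvents exactly this by never propagating a $U$-weight forward from the near-boundary point. It introduces a second stopping time $\hat T_2$ (first time the walk is at distance $t^{.4}$ \emph{outside} $\cone$, or enters $\cone_{2^{l'}}$, or the cap $4.1^{l'}$), and uses harmonicity of $U$ together with optional stopping to equate the $U$-weighted sums at $\hat T_1$ and $\hat T_2$; the troublesome stretch between the boundary approach and $T_{l'}$ is thus handled by cancellation, and only the two boundary events ($F_1$: stopped by approaching $\partial\cone$; $F_2$: stopped by exiting past distance $t^{.4}$) need direct estimates, each giving $.02K(.95)^{l'-1}U(x)$ by combining Lemma \ref{cassablanca} (respectively a Lemma \ref{charlie}-type bound) with Lemma \ref{chaplain} applied up to $T_{l'-1}$. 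The second factor of $K$ for $l>l'$ then comes from decomposing at $T_{l'}$ and applying Lemma \ref{chaplain} from points of $\partial\cone_{2^{l'}}$, where its hypothesis holds and the $K\,U(\cdot)$ bound is legitimate. Without this cancellation device, or some genuinely different control of the walk between $\hat R'$ and $T_{l'}$, your proof does not close.
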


\begin{proof}
First we apply Lemma \ref{chaplain} to get
 \begin{equation} \label{prairie lights}
\sum_{x'}U(x')
\P_{(T,x)}\big( \spath \in \cwplus((T,x),(T_{l-1},x'))  \big) \leq KU(x).
\end{equation}

We define two stopping times. First let  $\hat T_1$ be the minimum of $t > T_{l'-1}$
such that 
$$\dis(\sloc(t),\partial \cone) \leq t^{.4} \ \ \ \text{or } \ \ \  \sloc(t) \in \cone_{2^{l'}} \ \ \ \text{or } \ \ \  4.1^{l'}.$$
Second let $\hat T_2$ be 
the minimum $t > T_{l'-1} $ 
such that 
$$\dis(\sloc(t), \cone) \geq t^{.4} \ \ \ \text{or } \ \ \  \sloc(t) \in \cone_{2^{l'}}\ \ \ \text{or } \ \ \  4.1^{l'}.$$
If $\hat E_1(l',T,x)$ occurs then the  first stopping time is achieved by the first condition. 
Let $F_1$ be the event that the first stopping time is achieved by the first condition.

\begin{eqnarray*}
\lefteqn{ \sum_{z} U(z) \prob_{(T,x)}\big(\spath \in F_1, \sloc(\hat T_1)=z \big) } &&\\
& = & \sum_{x'} \sum_z  \prob\big(\spath \in F_1, \sloc(\hat T_1)=z \ | \ \spath \in \cwplus((T,x),(T_{l'-1},x')  \big)  \\
&&\hspace{1in}    \prob_{(T,x)}(\spath \in \cwplus((T,x),(T_{l'-1},x') )U(z)\\
& = & \sum_{x' \not \in \partial \cone_{2^{l'-1}}} \prob_{(T,x)}(\spath \in \cwplus((T,x),(T_{l'-1},x') ) \\
&&\hspace{1in}\sum_z  \prob\big(\spath \in F_1, \sloc(\hat T_1)=z \ | \ \spath \in \cwplus((T,x),(T_{l'-1},x')  \big)  U(z) 
   \\
&  & + \sum_{x'  \in \partial \cone_{2^{l'-1}}} \prob_{(T,x)}(\spath \in \cwplus((T,x),(T_{l'-1},x') ) \\
&&\hspace{1in}\sum_z  \prob\big(\spath \in F_1, \sloc(\hat T_1)=z \ | \ \spath \in \cwplus((T,x),(T_{l'-1},x')  \big)  U(z) 
   \\
& \leq &\P(T_{l'-1}=\lfloor 4.1^{l'-1} \rfloor) \cdot \sup |U(\sloc(\hat T_1))|  \\ 
&& \hspace{.25in} + \sum_{x' \in \partial \cone_{2^{l'-1}}} \prob_{(T,x)}(\spath \in \cwplus((T,x),(T_{l'-1},x')) ) \big(.01(.95)^{l'-1} U(x')\big)\\
& \leq & \gamma^{1.025^{l'-1}}(2 \cdot 4.1^{l'})^{d(d-1)/2}  + .01 K \cdot (.95)^{l'-1}U(x)\\
& \leq &  .02 K \cdot (.95)^{l'-1}U(x).
\end{eqnarray*}
The first equality is the decomposition of the event based on  the value $\sloc(T_{l'-1})$. The second half of the first inequality comes from Lemma \ref{cassablanca}. The second inequality comes from \eqref{prairie lights}.


Let $F_2$ be the event that the second stopping time is achieved by the first condition.
A similar calculation (using Lemma \ref{charlie} instead of Lemma \ref{chaplain}) shows that 
\begin{equation*}
\sum_{z} U(z) \prob_{(T,x)}\big(\spath \in F_2, \sloc(\hat T_1)=z \big) 
 \leq  .02 K \cdot (.95)^{l'-1}U(x).
\end{equation*}

Now let $l'=l$. If $\spath \in \hat E_1(l',T,x)$ then $\spath \in F_1 \setminus F_2$. As $U$ is harmonic
 then $U(\sloc(\hat t))$ is a martingale. As $\hat T_2$ is a stopping time
\begin{eqnarray*}
\lefteqn{\big| \sum_{x'}U(x')
\P_{(T,x)}\big( \spath \in \cwplus((T,x),(\hat T_2,x'))  \big)\big|}\hspace{1in}&&
\\
&=&\big|\sum_{x''}U(x'')
\P_{(T,x)}\big( \spath \in \cwplus((T,x),(\hat T_1,x''))  \big)\big|\\
%
&\leq&  \big |  \sum_{z} U(z) \prob_{(T,x)}\big(\spath \in F_1, \sloc(\hat T_1)=z \big)  \big | \\
&&+ \big | \sum_{z} U(z) \prob_{(T,x)}\big(\spath \in F_2, \sloc(\hat T_1)=z \big)  \big | \\
 &\leq&  .04 K \cdot (.95)^{l'-1}U(x).
\end{eqnarray*}
Thus the lemma is proven for $l=l'$. 

For $l>l'$ we need to another decomposition of the path based on
where it is at $T_{l'}$ and apply Lemma \ref{chaplain}. 
\end{proof}

\begin{lemma} \label{benalla}
There exists a function $H(L)=o(1)$ such that  for any $L, l>L$, $T<4.1^L$, $|X| \leq 2T$ and  $x \in \partial \cone_{2^L}$ 
\begin{equation} \label{les enfants}
\sum_y U(y)\prob_{(T,x)}(\spath \in \cwpplus((T,x),(T_l,y))\setminus \cwminus((T,x),(T_l,y))) \leq H(L)U(x).
\end{equation}
\end{lemma}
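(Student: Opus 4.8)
The plan is to split the set $\cwpplus((T,x),(T_l,y))\setminus\cwminus((T,x),(T_l,y))$ into the part on which the path grazes $\partial\cone$ and the part on which some Petrov condition fails, bounding the first with Lemma \ref{we rise together} and the second with Lemmas \ref{chaplain}--\ref{magnus} together with Lemma \ref{pathpetrov} and the moderate-deviation estimates of Lemmas \ref{moderate1} and \ref{inauguration}.

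\emph{The boundary-grazing part.} If $\spath\in\cwpplus((T,x),(T_l,y))\setminus\cwminus((T,x),(T_l,y))$, then by the dichotomy in the definition of $\cwpplus$ either $\petrov(m)$ fails at some $m\in[T,T_l]$, or $\sloc(m)\in\cone_{-m^{.4}}$ for every $m\in[T,T_l]$ --- so $\spath\in\cwplus((T,x),(T_l,*))$ --- while $\sloc(m^*)\notin\cone_{(m^*)^{.4}}$ for some $m^*$, in which case $\sloc(m^*)\in\cone_{-(m^*)^{.4}}\setminus\cone_{(m^*)^{.4}}$, so $d(\sloc(m^*),\partial\cone)\le C_d(m^*)^{.4}$ for a dimensional constant $C_d$. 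Organizing this latter event by the last scale block $(T_{l'-1},T_{l'})$ in which such an $m^*$ occurs recovers exactly the events $\hat E_1(l',T,x)$ of Lemma \ref{we rise together}, after re-running that lemma with $C_d t^{.4}$ in place of $t^{.4}$ (only constants change, as the exponent $.4$ is unaffected). Summing the bound of Lemma \ref{we rise together} over $l'=L+1,\dots,l$, the geometric series $\sum_{l'>L}.04K^2(.95)^{l'-1}$ converges, so this part of the bad set contributes at most $C_1(.95)^LU(x)$.

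\emph{The Petrov part.} Condition on the \emph{last} scale $m^\dagger\in[T,T_l]$ at which some Petrov condition at scale $m^\dagger$ fails and on $z=\sloc(m^\dagger)$. Since the Petrov conditions hold at every scale in $(m^\dagger,T_l]$, the $\cwpplus$ dichotomy forces $\spath\in\cwplus((m^\dagger,z),(T_l,*))$; hence, applying Lemma \ref{chaplain} when $z$ lies well inside $\cone$ and Lemma \ref{magnus} otherwise --- with $|z|$ controlled, up to a super-exponentially small error, by Lemmas \ref{moderate1} and \ref{inauguration} --- the continuation factor $\sum_y U(y)\prob_{(m^\dagger,z)}(\spath\in\cwplus((m^\dagger,z),(T_l,y)))$ is at most a constant multiple of $\max(U(x),(m^\dagger)^{d(d-1)/2})$. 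Summing the prefactor over $z$ via Lemma \ref{pathpetrov}, namely $\prob_{(T,x)}(\petrov(m^\dagger)^C)\le Ce^{-(m^\dagger)^\gamma}$, and using that the Petrov conditions are vacuous below a fixed scale, this part of the bad set is bounded by $\sum_{m^\dagger}Ce^{-(m^\dagger)^\gamma}\max(U(x),(m^\dagger)^{d(d-1)/2})$, which --- invoking $U(x)\ge(2^L)^{d(d-1)/2}$, valid since $x\in\partial\cone_{2^L}$ --- is of the form $\eta(L)U(x)$ with $\eta(L)\to 0$. Setting $H(L)=C_1(.95)^L+\eta(L)$, which is $o(1)$, completes the proof.

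\emph{Main obstacle.} The delicate step is the Petrov part. A Petrov failure destroys the stopping-time structure underlying Lemmas \ref{chaplain}--\ref{magnus}, so those estimates must be re-entered from the \emph{last} failure time, and --- more seriously --- one must check that failures at \emph{small} scales, where the $\cwplus$-continuation weight after the failure is comparable to $U(x)$ itself rather than to a small power of the scale, still contribute only $o(1)U(x)$ in total. Closing this bookkeeping requires using both the super-exponential decay of the Petrov failure probabilities (Lemma \ref{pathpetrov}) and the lower bound $U(x)\ge(2^L)^{d(d-1)/2}$, and tracking carefully the precise form of the Petrov conditions (in particular their fixed lower scale cutoff); this is the technical heart of the argument.
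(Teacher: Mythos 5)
Your overall architecture is the same as the paper's: the paper also splits the bad event into a ``no Petrov failure, path grazes $\partial\cone$'' part, covered by the events $\hat E_1(l',T,x)$ and summed geometrically via Lemma \ref{we rise together}, and a part governed by the last Petrov failure, controlled by the continuation bound of Lemma \ref{chaplain} together with Lemma \ref{pathpetrov} (the paper additionally carries a separate case for ``deadline'' arrivals $T_j=\lfloor 4.1^j\rfloor$, handled with Lemma \ref{seemsuseful}, which you fold into the other two parts; that folding is legitimate). The grazing half of your argument is essentially the paper's $\msum_1$ bound and is fine.

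The gap is in your Petrov part. You restart the walk at the failure time $m^\dagger$ at the arbitrary point $z=\sloc(m^\dagger)$ and invoke ``Lemma \ref{chaplain} when $z$ lies well inside $\cone$ and Lemma \ref{magnus} otherwise,'' but neither lemma covers that situation: Lemma \ref{chaplain} requires the starting point to lie on $\partial\cone_{2^{l_0}}$ with starting time $<4.1^{l_0}$, and Lemma \ref{magnus} requires $z\notin\cone_{2^{l_0}}$ with starting time $\geq\lfloor 4.1^{l_0}\rfloor$; a post-failure position strictly inside $\cone_{2^{l_0}}$ at a late time satisfies neither, so the claimed continuation bound $C\max(U(x),(m^\dagger)^{d(d-1)/2})$ is not justified from the quoted lemmas. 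The paper avoids exactly this by not restarting at $m^\dagger$: it conditions on the last failure scale $M=m$ and on $J^*$ with $T_{J^*+1}<M\leq T_{J^*+2}$, and its trichotomy (using $J$, the last index with $T_j=\lfloor 4.1^j\rfloor$) guarantees that in the relevant case $T_{J^*+2}$ is a genuine arrival, so $\sloc(T_{J^*+2})\in\partial\cone_{2^{J^*+2}}$, $U(\sloc(T_{J^*+2}))\leq(2m^3)^{d(d-1)/2}$, there are at most $(2m^3)^{d(d-1)/2}\log m$ choices of $(J^*,z)$, and Lemma \ref{chaplain} applies as stated; multiplying by $\P(\petrov(m)^C)\leq e^{-m^{\delta^*}}$ then gives $e^{-m^{\delta^*}/2}$ per scale. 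Relatedly, you flag but do not close the small-scale bookkeeping; it does close, but not because the Petrov conditions have a fixed lower cutoff: since $x\in\partial\cone_{2^L}$ and $|x|\leq 2T$, you have $T\geq c\,2^L$, so every scale $m$ appearing in the definitions of $\cwpplus$ and $\cwminus$ on $[T,T_l]$ is at least of order $2^L$, making $\sum_m e^{-m^{\gamma}}$ super-exponentially small in a power of $2^L$ and hence $o(1)$ relative to $U(x)\geq 2^{Ld(d-1)/2}$. With the restart moved to the next cone-entry stopping time (or with a separately proved interior-point analogue of Lemma \ref{chaplain}) and this observation about the minimal scale, your outline matches the paper's proof; as written, these two steps are not yet proofs.
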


\begin{proof}
Define the sum on the left hand side of \eqref{les enfants} to be $\msum$.
Let $J$ be the largest $j$ such that $T_j=\lfloor 4.1^j \rfloor$ and
let $M$ be the largest $t$ such that $\petrov(t)^C$ occurs.
If there is no such $j$ (resp.  $t$) then we say $J=\infty$ ($M=\infty$).
If $$\spath \in \cwpplus((T,x),(T_l,y))\setminus \cwminus((T,x),(T_l,y))$$ then 
either 
\begin{enumerate}
\item $J=M =\infty$,
\item $T_{J+1}>M$ or 
\item $M \geq T_{J+1}.$
\end{enumerate}

We define the events $E_1$, $E_2$ and $E_3$ to be events that
$$\spath \in \cwpplus((T,x),(T_l,y))\setminus \cwminus((T,x),(T_l,y))$$
and (respectively) the first, second or third of those possibilities occurs.
For $i=1,2,3$ we write
\begin{equation} \label{joyeux}
\msum_i= \sum_y U(y)\prob_{(T,x)}(\1_{E_i}, \spath \in \cwpplus((T,x),(T_l,y))\setminus \cwminus((T,x),(T_l,y)) ).
\end{equation}

Note that $$E_1 \subset \bigcup_{l'\in (L,l]} \hat E_1(l',T,x) \subset \cwplus((T,x),(T_l,\cdot).$$
As $x \in \partial \cone_{2^L}$ by the above and Lemma \ref{we rise together} we can bound
\begin{eqnarray*}
\msum_1 & \leq& \sum_{l' \in (L,l]} \sum_{y} U(y) \prob_{(T,x)}\big( \hat E_1(l',T,x), \sloc(T_l)=y \big)\\
&\leq& \sum_{l' \in (L,l]} .04 K^2 \cdot (.95)^{l'-1}U(x) \\
&\leq& K^2(.95)^LU(x).
\end{eqnarray*}

If $E_2 \cap \{J=j\}$ occurs then 
\begin{enumerate}
\item $\spath \in \cwplus((T_{j+1},\cdot),(T_l,\cdot))$ \label{itemone}
\item $\sloc(T_{j+1}) \in \partial \cone$ \label{itemtwo} and
\item $T_{j+1} <\lfloor 4.1^{j+1}\rfloor$. \label{itemthree}
\end{enumerate}
Thus
\begin{equation} U(\sloc(T_{j+1})) \leq (2 \cdot 4.1)^{(j+1)d(d-1)/2} \label{itemfour} \end{equation}
and  by Lemma \ref{seemsuseful} for  $j>L$   we get that 
\begin{eqnarray}
 \sum_{y'} \P_{(T,x)}\big(E_2,  J=j, \sloc(T_{j+1})=y' \big)
&\leq& \P_{(T,x)}(E_2, J=j ) \nonumber \\ 
& \leq & \P_{(T,x)}(T_j=\lfloor 4.1^j \rfloor )  \nonumber\\
&\leq& \gamma^{1.025^{j-1}}. \label{no confidence}
 \end{eqnarray}

We then have, \begin{eqnarray*}
\lefteqn{\sum_{y''} U(y'') \prob_{(T,x)}\big(E_2,  J=j, \sloc(T_{l})=y'' \big) } && \\
&\leq & 
K\sum_{y} U(y) \prob_{(T,x)}\big(E_2,  J=j, \sloc(T_{j+1})=y \big) \\
&\leq & K (2 \cdot 4.1)^{(j+1)d(d-1)/2} \sum_{y'} \P_{(T,x)}\big(E_2,  J=j, \sloc(T_{j+1})=y' \big)\\
&\leq & K (2 \cdot 4.1)^{(j+1)d(d-1)/2}  \gamma^{1.025^{j-1}}\\
& \leq & (.95)^j
\end{eqnarray*}
The first inequality comes from Lemma \ref{chaplain}. The second comes from \eqref{itemfour} and the third from
 \eqref{no confidence}. The fourth holds because $j \geq L$ and $L$ is large. Thus

\begin{eqnarray*}
\msum_2 
& = & \sum_{j \in (L,l)} \sum_{y''} U(y'') \prob_{(T,x)}\big(E_2,  J=j, \sloc(T_{l})=y'' \big)\\
&\leq& \sum_{l' \in (L,l]} (.95)^{j} \\
&\leq& 20(.95)^L
\end{eqnarray*}

The bound for $\msum_3$ is similar to the bound for $\msum_2$. 
If $E_3 \cap \{M=m\} $ let $J^*$ be such that $T_{J^*+1} < M \leq T_{J^*+2}$ occurs. Then 
\begin{enumerate}
\item $\spath \in \cwplus((T_{J^*+2},\cdot),(T_l,\cdot))$, \label{mad}
\item $T_{J^*} <\lfloor 4.1^{J^*+2}\rfloor$ and 
\item $\sloc(T_{J^*+2}) \in  \partial \cone_{2^{J^*+2}}$.
\end{enumerate}
Also $2^{J^*+1}\leq m \leq 4.1^{J^*+2}$. So for any fixed $m$ there are at most $\log(m)$ possibilities for $J^*$.
From this we get that $T_{J^*+2} \leq m^3$.
This implies that $U(T_{J^*+2}) \leq (2m^3)^{d(d-1)/2}$ and there are at most $(2m^3)^{d(d-1)/2}$ possibilities for $T_{J^*+2}$.

For any $m$
\begin{eqnarray*}
\lefteqn{\sum_j \sum_{y''} U(y'') \prob_{(T,x)}\big(E_3,  J^*=j, M=m, \sloc(T_{l})=y'' \big) } && \\
&= &  \sum_j \sum_{z} \sum_{y''} U(y'') \prob_{(T,x)}\big(E_3,  J^*=j, M=m, \sloc(T_{l})=y'' , \sloc(T_{j+2})=z \big) \\
&= &  \sum_j \sum_{z} \sum_{y''} U(y'') \prob_{(T,x)}\big(E_3, J^*=j,  \sloc(T_{l})=y''  \ |  \ \sloc(T_{j+2})=z, M=m\big) \\
&& \hspace{3in}\cdot\P(\sloc(T_{j+2})=z, M=m ) \\
&\leq &  \sum_j \sum_{z} \sum_{y''} U(y'') \prob_{(T,x)}\big(\cwplus((T_{j+2},z),(T_l,y'')) \ | \  \sloc(T_{j+2})=z \big) \P(\petrov(m)^C )\\
&\leq & \sum_j  \sum_z KU(z) e^{-m^{\delta^*}}  \\
&\leq & \log(m) (2m^3)^{d(d-1)/2}  K (2m^3)^{d(d-1)/2}  e^{-m^{\delta^*}} \\
& \leq & e^{-m^{\delta^*}/2}.
\end{eqnarray*}
The first inequality comes from \eqref{mad} and the Markov property of $S$. The second comes from Lemma \ref{chaplain} and Lemma \ref{pathpetrov}. 
The third inequality comes from the estimates in the above paragraph.

So we get 
\begin{eqnarray*}
\msum_3 & \leq&  \sum_{m \in [L,l)} \sum_j \sum_{y''} U(y'') \prob_{(T,x)}\big(E_3,  J^*=j, M=m, \sloc(T_{l})=y'' \big)\\
& \leq&  \sum_{m \in [L,l)}e^{-m^{\delta^*}/2}\\
& \leq & 20(.95)^L.
\end{eqnarray*}

Combining these three bounds we get
\begin{eqnarray*}
\msum & \leq& \msum_1+\msum_2+\msum_3\\
&\leq& K^2(.95)^LU(x) +20(.95)^L+20(.95)^L\\
&\leq& 50K^2(.95)^LU(x) 
\end{eqnarray*}
Setting $H(L)=50K^2(.95)^L$ completes the proof.
\end{proof}

\subsection{Symmetric versions of these sets of paths.}

Choose $\Lf$ to be the smallest integer such that 
\begin{equation} \label{back flip}
4^\Lf>n^{1-.08/d(d-1)}.
\end{equation} 
Also let $\delta = .02/d(d-1) < .01.$  Then $2^\Lf > n^{.5-2\delta} > n^{.49}.$
Thus

For any $j,k<n/2$ and $x,y\in \Z^d$ define 
 $\scwminus((j,x),(n-k,y))$ to be any path such that
\begin{enumerate}
\item $\sloc (i) \in \cwminus((j,x),(\lfloor n/2 \rfloor,\cdot)$ and
\item $\sloc (n-i) \in \cwminus((k,y),(\lfloor n/2 \rfloor,\cdot)$
\end{enumerate}
$\scwplus((j,x),(n-k,y))$ and $\scwpplus((j,x),(n-k,y))$ are defined in an analogous way.

\begin{lemma} \label{kiera}

There exist $C$ such that for all $n$, all $T,T^*>n^{.5-2\delta}$, $T+T^*<n/2$ and 
for all $x,y \in \Kstar$ such that $|x|,|y| \leq n^{.5-\delta}$
\begin{equation} \label{raz}
\prob_{(T,x)}( \spath \in \scwpplus((T,x),(n-T^*,y)) \setminus \scwminus((T,x),(n-T^*,y))  )
\end{equation}
$$\leq CU(x)U(y)n^{-d(d-1)/2}  \cdot n^{-(d-1)/2}\cdot n^{-\delta}.$$
\end{lemma}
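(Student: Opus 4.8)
The plan is to reduce the two-sided statement to the one-sided estimates already available in this section (Lemmas \ref{chaplain}, \ref{magnus}, \ref{benalla}) together with the two-sided heat-kernel lower bound from Lemma \ref{current}. Recall that by the definition of $\scwpplus((T,x),(n-T^*,y))$ and $\scwminus((T,x),(n-T^*,y))$ the bad event in \eqref{raz} is, up to time $\lfloor n/2\rfloor$, exactly the event that $\spath$ stays in the ``$++$'' region but fails to stay in the ``$-$'' region for the forward walk started at $(T,x)$, or the analogous thing for the reversed walk started at $(T^*,y)$ on the time interval $[\lfloor n/2\rfloor, n-T^*]$. So by a union bound it suffices to control the forward contribution
\[
\sum_{z} \prob_{(T,x)}\big( \spath \in \cwpplus((T,x),(\lfloor n/2\rfloor,z)) \setminus \cwminus((T,x),(\lfloor n/2\rfloor,z)) \big) \cdot \prob_{(\lfloor n/2\rfloor, z)}\big( \spath \in \cw((\lfloor n/2\rfloor,z),(n-T^*,y)) \big),
\]
and then the symmetric reversed contribution with the roles of $(T,x)$ and $(T^*,y)$ swapped. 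Here I use the Markov property at time $\lfloor n/2\rfloor$ and the fact that, conditioned on $\sloc(\lfloor n/2\rfloor)=z$, the second half is just a walk forced to land at $y$ at time $n-T^*$ while staying in $\cone$.

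First I would handle the second factor. By the upper bound in Lemma \ref{current} (equivalently Lemma \ref{current_one}), for every $z\in\cone$ with $|z|$ not too large we have
\[
\prob_{(\lfloor n/2\rfloor,z)}\big(\spath\in\cw((\lfloor n/2\rfloor,z),(n-T^*,y))\big) \le C'''\, U(z)\,U(y)\, n^{-d(d-1)/2}\cdot n^{-(d-1)/2}.
\]
(For the — exponentially rare — $z$ with $|z|$ large, the contribution is negligible, using the moderate-deviation control of Lemma \ref{moderate1}/Petrov conditions to truncate at $|z|\le n^{.5+\epsilon'}$ and absorbing the tail; this is a routine truncation.) Substituting this bound, the forward contribution is at most
\[
C'''\,U(y)\, n^{-d(d-1)/2-(d-1)/2}\, \sum_{z} U(z)\,\prob_{(T,x)}\big( \spath \in \cwpplus((T,x),(\lfloor n/2\rfloor,z)) \setminus \cwminus((T,x),(\lfloor n/2\rfloor,z)) \big).
\]
Now I apply Lemma \ref{benalla}: decomposing the path at the stopping time $T_{\Lf}$ (where $\Lf$ is chosen as in \eqref{back flip}, so $2^{\Lf}>n^{.5-2\delta}$) and using the harmonicity of $U$ to push the estimate past time $T_{\Lf}$ via Lemma \ref{chaplain} (resp. Lemma \ref{magnus} if $x\notin\cone_{2^{\Lf}}$), the remaining $U$-weighted sum is bounded by $H(L)\,U(x)$ where $H(L)=o(1)$ as $L\to\infty$, and — crucially — tracking the explicit form $H(L)=50K^2(.95)^L$ with $L=\Lf\gtrsim \log_4(n^{1-.08/d(d-1)})$ gives $H(\Lf)\le n^{-\delta}$ for $n$ large, since $(.95)^{\Lf}$ is polynomially small in $n$ with an exponent we may take larger than $\delta=.02/d(d-1)$ by construction of $\Lf$. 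This yields the forward contribution $\le C\,U(x)U(y)\,n^{-d(d-1)/2-(d-1)/2-\delta}$. The reversed contribution is bounded identically after reversing the walk and using $\petrov^*$ in place of $\petrov$, and adding the two gives \eqref{raz}.

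The main obstacle is the bookkeeping in the second-to-last step: one must verify that the error function $H(L)$ from Lemma \ref{benalla}, when evaluated at the $n$-dependent scale $L=\Lf$, really does beat the extra factor $n^{-\delta}$ demanded in the conclusion — i.e. that the geometric gain $(.95)^{\Lf}$ dominates the polynomial losses $K^2$ and the $(|x|+\cdots)^{m d(d-1)/2}$-type terms hidden inside $K$ and $H$, uniformly over $x,y\in\Kstar$ with $|x|,|y|\le n^{.5-\delta}$. This is exactly where the specific choice $4^{\Lf}>n^{1-.08/d(d-1)}$ and $\delta=.02/d(d-1)$ in \eqref{back flip} is calibrated, so the estimate should close, but it requires carefully carrying the constants through Lemmas \ref{chaplain}, \ref{magnus}, \ref{we rise together} and \ref{benalla} rather than just invoking the qualitative $o(1)$. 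A secondary technical point is the truncation at large $|z|$ in the intermediate sum, which needs the Petrov/moderate-deviation bounds to ensure the discarded mass is $e^{-n^{\gamma}}$-small and hence harmless.
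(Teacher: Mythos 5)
There is a genuine gap, and it sits at the heart of your argument: the step where you bound
$\sum_z U(z)\,\prob_{(T,x)}\big(\spath\in\cwpplus((T,x),(\lfloor n/2\rfloor,z))\setminus\cwminus((T,x),(\lfloor n/2\rfloor,z))\big)$
by $H(\Lf)U(x)$ via Lemma \ref{benalla}. That lemma applies to a walk started on $\partial\cone_{2^{L}}$ at an early time $T<4.1^{L}$ with $|x|\le 2T$, and it only controls failures of $\cwminus$ up to the \emph{stopping time} $T_l$, i.e.\ during the initial stretch in which the walk climbs from depth $2^{L}$ to depth $2^{l}$. In Lemma \ref{kiera} the situation is the complementary one: $x,y\in\Kstar$ and $T,T^*>n^{.5-2\delta}$, so the walk starts already deep in the cone at a late time (indeed $T_{\Lf}$ is essentially $T$ itself), and the event you must control is an excursion \emph{back} toward $\partial\cone$ (within distance $m^{.4}$) at some bulk time $m\in(T,n/2]$ while still satisfying the $\cwpplus$ constraint. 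Lemma \ref{benalla} says nothing about this regime, so the quantitative punchline $H(\Lf)\lesssim(.95)^{\Lf}\le n^{-\delta}$ has nothing to attach to. (In the paper's architecture the division of labor is exactly the opposite of what you propose: Lemma \ref{benalla} handles the beginning and end of the path in the proof of Lemma \ref{spontini2}, and Lemma \ref{kiera} is the tool invoked there for the bulk; deriving \ref{kiera} from \ref{benalla} conflates the two regimes.) A secondary problem is that after your Markov decomposition at time $\lfloor n/2\rfloor$ the intermediate point $z$ need not lie in $\Kstar$ — precisely on the bad event it will typically be near $\partial\cone$ — so the upper bound of Lemma \ref{current} (stated for endpoints in $\Kstar$ with $|x|,|y|\le n^{.5-\delta}$, and with start time $\le n/4$) does not justify the factor $C'''U(z)U(y)n^{-d(d-1)/2}n^{-(d-1)/2}$ you insert for the second half.

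The paper's proof is much more elementary and exploits the hypotheses on $x,y$ at the outset: since $x,y\in\Kstar$ and $|x|,|y|\le n^{.5-\delta}$, one has $U(x)U(y)n^{-d(d-1)/2}\ge cn^{-.04}$, so it suffices to prove the bound $Cn^{-.05}\cdot n^{-(d-1)/2}$ with no harmonic-function factors at all. Then: a Petrov failure after time $T,T^*>n^{.5-2\delta}$ has probability $e^{-n^{c}}$; otherwise the path comes within $t^{.4}$ of $\partial\cone$ at some time $t$, and from there a one-dimensional martingale (gambler's ruin in the coordinate gap that is smallest, between depth $-n^{.4}$ and depth $n^{.45}$) shows the path reaches depth $n^{.45}$ before violating the $\cwpplus$ tube with probability at most $2n^{-.05}$; lingering within $n^{.45}$ of the boundary for a time of order $n$ is exponentially unlikely; and conditioning the surviving paths to land at $y$ at time $n-T^*$ costs a local-CLT factor $n^{-(d-1)/2}$. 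If you want to salvage your approach you would need a new estimate for $U$-weighted failures of $\cwminus$ over a fixed macroscopic time window started from deep inside the cone — which is essentially the content of Lemma \ref{kiera} itself — so the route through Lemma \ref{benalla} does not close.
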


\begin{proof}
The conditions on $x$ and $y$ imply that $U(x),U(y)\geq n^{(.5-2\delta)d(d-1)/2}$ and
$$U(x)U(y)n^{-d(d-1)/2}  \geq n^{-2(d-1)d\delta} \geq C n^{-.04}.$$
So it sufficient to show that  \eqref{raz} is less than 
$Cn^{-.05} \cdot n^{-(d-1)/2}.$

There are two ways a path could be in 
$$ \scwpplus((T,x),(n-T^*,y)) \setminus \scwminus((T,x),(n-T^*,y)).$$
First it could fail a Petrov condition. The values of $T$ and $T^*$ are such that probability of this 
is at most $e^{-n^c}$ for some $c>0$.
The other possibility is that the path gets close to $\partial \cone$ but never goes too far away from $\cone$. To bound the probability of this we break this event up into parts. First we note that either there exists $t \leq n/2$ such that
$\dis(\sloc (t), \partial \cone) \leq d^2 t^{.4}$ or 
there exists $t \geq n/2$ such that
$\dis(\sloc (t), \partial \cone) \leq d^2(n-t)^{.4}$. We consider the first possibility. The latter case is identical.

Let $T_1$ be the first time after $T$ where $\dis(\sloc (T_1),\partial \cone) \leq d^2t^{.4}$. By assumption $T_1 \leq n/2$. Also let $T_2$ be the first time
after $T_1$ such that  $\dis(\sloc (T_2),\partial \cone) \geq n^{.45}$. We will split the case up into two cases. The first is that $T_2<n-T^*-.01n$.
Start a path at $\sloc (T_1)$. By a martingale argument (restricting the path to the dimension where it is initially closest to $\partial \cone$) the probability that it hits
$\cone_{n^{.45}}$ before the distance to $\cone$ is at least $n^{.4}$ is at most $2d^2n^{-.05}$. Now for any value of $\sloc (T_2)$ the probability that  the path hits $y$ at time $n-T^*$ is comparable to 
$\prob(\sloc (n-T^*-T_2)=x-y)$. Thus the probability that this happens is at most
$$C2d^2n^{-.05}n^{-(d-1)/2}.$$

Otherwise the path spent an interval of time of at least $.4n$ without venturing more than $n^{.45}$ from $\partial \cone$. The probability of this is 
at most $e^{-n^c}$ for some $c>0$. We can see this by breaking up the time interval into pieces of 
length $n^{.9}$. In each interval the path has a bounded from below positive probability of straying more than 
$n^{.45}$ from $\partial \cone$. The bound on the probability of each event is independent of whether 
the preceding events occurred.

Combining these estimates proves the lemma.
\end{proof}

\begin{lemma} \label{current}
There exist $C''$ such that for all $n$, $R \in [n/2,n]$, $T \leq n/4$ and 
for all $x,y \in \Kstar$ such that $|x|,|y| \leq n^{.5-\delta}$,

$$
\prob_{(T,x)}(\spath \in \cw((T,x),(R,y)) )
\geq C''U(x)U(y)n^{-d(d-1)/2}  \cdot n^{-(d-1)/2}. $$
There also exists $C'''$ such that for all $n$, all $R \in [n/2,n]$ and 
for all $x,y \in \Kstar$ such that $|x|,|y| \leq n^{.5-\delta}$
$$
\prob_{(T,x)}(\spath \in \cw((T,x),(R,y)) )
\leq C'''U(x)U(y)n^{-d(d-1)/2}  \cdot n^{-(d-1)/2}. $$
\end{lemma}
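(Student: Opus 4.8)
The plan is to recognise Lemma~\ref{current} as a uniform two-sided local limit estimate for the bridge of the walk $\sloc$ killed outside $\cone$, and to deduce it from the estimates of \cite{denisov2015random}. By the i.i.d.\ structure of the increments of $\sloc$ and the strong Markov property at time $T$ (whether deterministic or a stopping time), $\prob_{(T,x)}(\spath\in\cw((T,x),(R,y)))=\prob_x(S_m=y,\ \tau_x>m)$, where $S$ is the lazy walk of Section~\ref{walksincones}, $\tau_x$ is the first exit of $x+S$ from $\cone$, and $m=R-T$; since $T\le n/4$ and $R\in[n/2,n]$ we have $m\in[n/4,n]$, so $m\asymp n$. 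Writing $p=d(d-1)/2$, it suffices to prove
\[ \prob_x(S_m=y,\ \tau_x>m)\ \asymp\ U(x)U(y)\,n^{-p-(d-1)/2} \]
uniformly over $m\asymp n$ and over $x,y\in\Kstar$ with $|x|,|y|\le n^{.5-\delta}$, the constants depending only on $d$. By Proposition~\ref{grievance} this can be transported to the walk $H(\sloc)$ on $\R^{d-1}$ and the cone $H(\cone)$, to which \cite{denisov2015random} applies, with $U$ playing the role of the degree-$p$ harmonic polynomial and $n^{-(d-1)/2}$ the scale of the free $(d-1)$-dimensional local limit theorem. A preliminary observation is that for $x\in\Kstar$ one has $d(x,\partial\cone)\ge c\,n^{.5-2\delta}$, hence $U(x)\ge c\,n^{(.5-2\delta)p}$, while $|x|\le n^{.5-\delta}=o(\sqrt n)$ and $x$ is at distance tending to infinity from $\partial\cone$; combining these with the known behaviour of the Denisov--Wachtel harmonic function $V$ gives $V(x)\asymp U(x)$ (with constants depending only on $d$), the correction being of lower order precisely because the choice $\delta=.01/p$ forces $(.5-\delta)(p-1)<(.5-2\delta)p$. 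Equivalently $\prob_x(\tau_x>k)\asymp U(x)k^{-p/2}$ uniformly over such $x$ and $k\asymp n$, and likewise for $y$.

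For the upper bound I would cut the length-$m$ path into three stretches of lengths $a:=\lfloor m/3\rfloor$, $m-2a$, and $a$, so that
\[ \prob_x(S_m=y,\tau_x>m)=\sum_{z,w}\prob_x(S_a=z,\tau_x>a)\,\prob_z(S_{m-2a}=w,\tau_z>m-2a)\,\prob_w(S_a=y,\tau_w>a). \]
Dropping the middle cone constraint and using the free local limit theorem for the aperiodic $(d-1)$-dimensional walk, $\prob_z(S_{m-2a}=w,\tau_z>m-2a)\le\prob_z(S_{m-2a}=w)\le Cn^{-(d-1)/2}$ uniformly in $z,w$. The triple sum then factors, and by reversibility of $\sloc$, $\sum_z\prob_x(S_a=z,\tau_x>a)=\prob_x(\tau_x>a)$ while $\sum_w\prob_w(S_a=y,\tau_w>a)=\sum_w\prob_y(S_a=w,\tau_y>a)=\prob_y(\tau_y>a)$. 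Inserting $\prob_x(\tau_x>a)\le CU(x)n^{-p/2}$ and the corresponding bound for $y$ yields $\prob_x(S_m=y,\tau_x>m)\le C'''U(x)U(y)n^{-p-(d-1)/2}$.

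For the lower bound I would restrict the same three-stretch decomposition to $z,w$ in a good set $G$ consisting of the lattice points with $z/\sqrt n$ in a fixed compact subset $G'$ of the open cone at positive distance from $\partial\cone$, giving
\[ \prob_x(S_m=y,\tau_x>m)\ge\Bigl(\min_{z,w\in G}\prob_z(S_{m-2a}=w,\tau_z>m-2a)\Bigr)\prob_x(S_a\in G,\tau_x>a)\,\prob_y(S_a\in G,\tau_y>a). \]
For $z,w\in G$ the free bridge from $z$ to $w$ of length $m-2a$ has matching probability $\asymp n^{-(d-1)/2}$ (local limit theorem, with the Gaussian factor bounded below since $|w-z|\le C\sqrt n$) and stays in the open cone with probability bounded below (functional CLT for bridges as in \cite{duraj2015invariance}, together with convexity of $\cone$), so $\prob_z(S_{m-2a}=w,\tau_z>m-2a)\ge cn^{-(d-1)/2}$. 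For an end stretch, $\prob_x(S_a\in G,\tau_x>a)=\prob_x(\tau_x>a)\,\prob_x(S_a/\sqrt a\in G'\mid\tau_x>a)$; the first factor is $\ge cU(x)n^{-p/2}$ by the previous paragraph, and the second is bounded below by a positive constant by the convergence of the conditioned endpoint to the density of Corollary~\ref{cor end convergence}, in its form uniform over base points of size $o(\sqrt a)$. Multiplying the three pieces gives $\prob_x(S_m=y,\tau_x>m)\ge C''U(x)U(y)n^{-p-(d-1)/2}$.

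The step I expect to be the real obstacle is uniformity in the base points, since $x$ and $y$ are not fixed but grow with $n$ (up to $n^{.5-\delta}=o(\sqrt n)$, while staying $\gtrsim n^{.5-2\delta}$ away from $\partial\cone$), whereas the exit asymptotics, the conditioned-endpoint convergence of Corollary~\ref{cor end convergence}, and the bridge invariance principle are most naturally stated for a fixed base point. What is needed are the versions of these facts uniform over $|x|=o(\sqrt n)$; these are contained in \cite{denisov2015random,duraj2015invariance}, or can be extracted by a short decoupling argument (run the walk for a negligible fraction of the time to wash out the exact starting point). Granting those uniform inputs, the three-stretch decomposition together with the elementary $V\asymp U$ comparison --- where the precise value of $\delta$ is used --- closes both inequalities; this is essentially the template already used for the companion estimate in Lemma~\ref{les bleus}.
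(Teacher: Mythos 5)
Your proposal is correct and, at its core, follows the same template as the paper's proof: a three-stretch decomposition of the bridge, Denisov--Wachtel-type inputs for the two end stretches (survive in $\cone$ for time of order $n$ and land in a good region at scale $\sqrt n$ compactly inside the cone, with probability $\gtrsim U(x)n^{-d(d-1)/4}$), the free local limit theorem for the middle stretch, and the observation that for $x,y\in\Kstar$ with $|x|,|y|\le n^{.5-\delta}$ the harmonic-function corrections are negligible so that $V$ may be replaced by $U$. The differences are in two places. For the middle stretch, the paper chooses the landing region $V'$ at distance of order $u\sqrt n$ from $\partial\cone$ and uses the Gaussian bound of \cite[Lemma 29]{denisov2015random} on $\prob(r+\sloc(.1n)=s,\ \tau_r<.1n)$ to show the cone constraint costs at most a factor $2$ of the free local-limit lower bound; you instead invoke a bridge invariance principle plus convexity of $\cone$, which also works but is less quantitative and needs uniformity over $z/\sqrt n,w/\sqrt n$ in the compact set (available by compactness). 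For the upper bound, the paper simply cites \cite[Lemma 28]{denisov2015random}, while you give a short self-contained argument (drop the middle cone constraint, use reversibility of the symmetric step distribution, and bound the two end survival probabilities); this is fine provided you use the uniform-in-$x$ form of $\prob_x(\tau_x>a)\le CV(x)a^{-d(d-1)/4}$ from \cite{denisov2015random} together with your $V\asymp U$ comparison on $\Kstar$. Finally, the uniformity over growing base points that you flag as the main obstacle is exactly what the paper resolves by citing \cite[Lemma 20]{denisov2015random}, which is the combined statement $\prob(\tau_x>k,\ x+\sloc(k)\in\sqrt n\,V')\ge CU(x)n^{-d(d-1)/4}$ that your factorization into survival probability times conditioned-endpoint probability reassembles; so your route closes once you take that lemma (rather than the fixed-$x$ Corollary \ref{cor end convergence}) as the input for the end pieces.
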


\begin{proof}
Define $\tau_x$ to be the minimum time $t$ such that
$x+\sloc (t) \not \in \cone$.

Find $b>0$ and $V \subset \R^d$ satisfying the following. Let $V_n=\Z^d \cap \sqrt{n} V$.
For all $r,s \in V_n$ we have
$$ \prob(r+\sloc (.1n)=s)>bn^{-(d-1)/2}.$$

Get $a,C$ from Proposition \ref{grievance} and  \cite{denisov2015random} Lemma 29 . Find $u$ such that 
$Ce^{-au^2}<b/2$. Let $V'$ be a translate of $V$ such that 
$$D_{}(V',\partial \cone) >u.$$
Let $V'_n=\Z^d \cap \sqrt{n} V'$.
Then by  Proposition \ref{grievance} and Lemma 29 of \cite{denisov2015random}   for all $n$ sufficiently large and all $r,s \in V_n'$ we have
$$ \prob(r+\sloc (.1n)=s, \tau_r<.1n)<Ce^{-au^2}n^{-(d-1)/2}<(b/2)n^{-(d-1)/2}.$$

Then for all $n$ sufficiently large and all $r,s \in V_n'$ we have
$$ \prob(r+\sloc (.1n)=s, \tau_r \geq .1n) >(b/2)n^{-(d-1)/2}.$$

Let $x,y \in \Kstarred$.
By Proposition \ref{grievance}  and Lemma 20 of \cite{denisov2015random}  we have that there exists $C'>0$ such that for all $R \in [n/2,n]$
$$ \prob(\tau_x \geq .45 n-(n-R)/2, x +\sloc (.45n-(n-R)/2) \in V_n')  \geq C'U(x)n^{-d(d-1)/4}$$
and 
$$ \prob(\tau_y \geq .45 n-(n-R)/2, y +\sloc (.45n-(n-R)/2) \in V_n')  \geq C'U(y)n^{-d(d-1)/4}.$$

Putting this together we have that for all $x,y \in \Kstarred$
\begin{eqnarray*}
\prob(\tau_x>R, x+\sloc (R)=y)
& \geq & \prob(\tau_x \geq .45 n-(n-R)/2, x +\sloc (.45n-(n-R)/2) \in V_n') \\
&& \cdot \prob(\tau_y \geq .45 n-(n-R)/2, y +\sloc (.45n-(n-R)/2) \in V_n')\\
&&\cdot \min_{r,s \in V'_n} \prob(\tau_r \geq .1 n, r +\sloc (.1n)=s)\\
& \geq & C'U(x)n^{-d(d-1)/4} \cdot C'U(y) n^{-d(d-1)/4}\cdot (b/2)n^{-(d-1)/2}\\ 
& \geq & C''U(x)U(y)n^{-d(d-1)/2}  \cdot n^{-d/2}.\\ 
\end{eqnarray*}
As this holds for all $n$ sufficiently large we can find a constant for which it holds for all $n$.
The upper bound follows from Proposition \ref{grievance} and Lemma 28 of \cite{denisov2015random}. 
\end{proof}

\begin{lemma} \label{spontini2}
For any $\epsilon>0$ there exists $l$ such that if $v,v' \in \cone_{2^l}$ and
$T,T^* \leq (4.1)^l$ then for any $n$ sufficiently large
$$\prob_{(T,v)}\bigg( \scwminus((T,v),(n-T^*,v')) \ \bigg| \ \scwpplus((T,v),(n-T^*,v'))  \bigg)>1-\epsilon.$$
This implies 
$$\prob_{(T,v)}\bigg( \scwminus((T,v),(n-T^*,v')) \ \bigg| \ \cw((T,v),(n-T^*,v'))  \bigg)>1-\epsilon.$$
\end{lemma}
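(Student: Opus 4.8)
The plan is to deduce the second inequality from the first and to reduce the first to a one-sided estimate on $\scwpplus\setminus\scwminus$. Every path in $\scwminus((T,v),(n-T^*,v'))$ lies in $\cone_{m^{.4}}\subseteq\cone$ at each $m$ on both halves, so $\scwminus((T,v),(n-T^*,v'))\subseteq\cw((T,v),(n-T^*,v'))$; and every path in $\cw((T,v),(n-T^*,v'))$ lies in $\cone\subseteq\cone_{-m^{.4}}$ at each $m$, so $\cw((T,v),(n-T^*,v'))\subseteq\scwpplus((T,v),(n-T^*,v'))$. Hence $\prob_{(T,v)}(\scwminus\mid\cw)\geq\prob_{(T,v)}(\scwminus\mid\scwpplus)$, and since also $\prob_{(T,v)}(\scwpplus)\geq\prob_{(T,v)}(\cw)$ it is enough to prove
\[
\prob_{(T,v)}\!\big(\scwpplus((T,v),(n-T^*,v'))\setminus\scwminus((T,v),(n-T^*,v'))\big)\;\leq\;\epsilon\,\prob_{(T,v)}\!\big(\cw((T,v),(n-T^*,v'))\big).
\]
To do this I would cut every path at the stopping time $T_{\Lf}$ of Section~\ref{birs} (the first entry to $\cone_{2^{\Lf}}$, capped at $\lfloor 4.1^{\Lf}\rfloor$) and at its reversed analogue $T^*_{\Lf}$, control the two outer ``approach'' pieces by the $U$-weighted optional-stopping estimates of Section~\ref{birs}, and control the middle piece $[T_{\Lf},n-T^*_{\Lf}]$ by Lemmas~\ref{kiera} and \ref{current}.

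The properties of the cut I would use are as follows. Since reaching $\cone_{2^{\Lf}}$ from a point of $\cone_{2^l}$ (and $|v|\leq 2T\leq 2\cdot 4.1^l$, as $\prob(\sloc(T)=v)>0$) forces the coordinate gaps to grow by an amount comparable to $2^{\Lf}>n^{.5-2\delta}$ while $\sloc$ moves by a bounded amount per step, on $\{\spath\in\cw((T,v),(T_{\Lf},*)),\ T_{\Lf}<\lfloor 4.1^{\Lf}\rfloor\}$ one has $T_{\Lf}>n^{.5-2\delta}$. By Lemma~\ref{seemsuseful} the event that $\spath$ stays in $\cone$ without reaching $\cone_{2^{\Lf}}$ by time $n^{1-3\delta}<n/4$ has superpolynomially small probability, and by the moderate-deviation estimates of Lemmas~\ref{moderate1} and \ref{inauguration} the event $\{|\sloc(T_{\Lf})|>n^{.5-\delta}\}$ also contributes negligibly once it is weighted by $U$ and by the $O(n^{-d(d-1)/2-(d-1)/2})$ factor coming from the middle piece, exactly as the shells $\{|x-v|\in((\lambda-1)2^{\Lf},\lambda 2^{\Lf}]\}$ are summed in the proof of Lemma~\ref{charlie}. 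Thus off a negligible event the endpoints $x=\sloc(T_{\Lf})$ and $y=\sloc(n-T^*_{\Lf})$ satisfy $x,y\in\cone_{2^{\Lf}}\subseteq\Kstar$, $|x|,|y|\leq n^{.5-\delta}$ and $n^{.5-2\delta}<T_{\Lf},T^*_{\Lf}<n/4$, which is the regime of Lemmas~\ref{kiera} and \ref{current}.

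For the denominator I would use that $U(x)=\prod_{i<j}(x_i-x_j)$ is harmonic for $\sloc$ and vanishes on $\partial\cone$: stopping $U(\sloc(\cdot))$ at $T_{\Lf}$ and discarding the (much smaller) contributions from exiting $\cone$ by jumping past the boundary and from failing to reach $\cone_{2^{\Lf}}$ in time gives $\sum_{x}U(x)\,\prob_{(T,v)}(\sloc(T_{\Lf})=x,\ \spath\in\cw((T,v),(T_{\Lf},*)))=U(v)(1+o(1))$ uniformly in $v\in\cone_{2^l}$, and the reversed identity holds with $U(v')$; combining this with the strong Markov property and the lower bound of Lemma~\ref{current} on the middle piece (which is $\geq C''U(x)U(y)n^{-d(d-1)/2-(d-1)/2}$) yields $\prob_{(T,v)}(\cw((T,v),(n-T^*,v')))\geq c\,U(v)U(v')\,n^{-d(d-1)/2-(d-1)/2}$ with $c>0$ independent of $v,v',n$. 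For the numerator, a path in $\scwpplus\setminus\scwminus$ violates the defining conditions of $\scwminus$ on $[T,\lfloor n/2\rfloor]$ or, symmetrically, on $[\lfloor n/2\rfloor,n-T^*]$; in the first case the violation time lies in $[T,T_{\Lf}]$ or in $[T_{\Lf},\lfloor n/2\rfloor]\subseteq[T_{\Lf},n-T^*_{\Lf}]$. If it lies in $[T,T_{\Lf}]$ the approach piece is in $\cwpplus((T,v),(T_{\Lf},*))\setminus\cwminus((T,v),(T_{\Lf},*))$, so Lemma~\ref{benalla} (with a suitable $L\geq l$) bounds its $U$-weighted probability by $H(L)U(v)$; multiplying by the upper bound of Lemma~\ref{current} on the continuation and telescoping the $U$-weight of the final piece against $U(v')$ bounds this contribution by $C\,H(l)\,U(v)U(v')\,n^{-d(d-1)/2-(d-1)/2}$. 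If it lies in $[T_{\Lf},n-T^*_{\Lf}]$ the middle piece is in $\scwpplus((T_{\Lf},x),(n-T^*_{\Lf},y))\setminus\scwminus((T_{\Lf},x),(n-T^*_{\Lf},y))$, so Lemma~\ref{kiera} supplies an extra factor $n^{-\delta}$, and the $U$-weights of the two outer pieces telescope against $U(v),U(v')$ by the optional-stopping identity; this contribution is at most $C'\,n^{-\delta}\,U(v)U(v')\,n^{-d(d-1)/2-(d-1)/2}$. Dividing the resulting bound $(C\,H(l)+C'n^{-\delta})\,U(v)U(v')\,n^{-d(d-1)/2-(d-1)/2}$ by the lower bound on $\prob_{(T,v)}(\cw)$ and choosing $l$ so large that $C\,H(l)/c<\epsilon/2$ and then $n$ so large that $C'n^{-\delta}/c<\epsilon/2$ completes the proof.

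The hard part is the bookkeeping in the last step: one must check that the three-piece decomposition at the random times $T_{\Lf},T^*_{\Lf}$ is compatible with the definition of $\scwpplus$ and $\scwminus$ through the fixed midpoint $\lfloor n/2\rfloor$ (so that a $\cwminus$-violation occurring after $T_{\Lf}$ really does place the middle piece in $\scwpplus\setminus\scwminus$), and that the $U$-weights produced by the endpoint distributions of the two approach pieces cancel cleanly against the $U(x)U(y)$ appearing in Lemmas~\ref{kiera} and \ref{current} — in particular on the overshoot event $\{|\sloc(T_{\Lf})|>n^{.5-\delta}\}$, where the moderate-deviation decay from Lemmas~\ref{moderate1} and \ref{inauguration} must be played off against the polynomial growth of $U$ precisely as in the proof of Lemma~\ref{charlie}.
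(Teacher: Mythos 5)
Your proposal is correct and follows essentially the same route as the paper: reduce to bounding $\prob(\scwpplus\setminus\scwminus)$ against the lower bound on $\prob(\cw)$ from Lemma \ref{current}, decompose at the stopping times $T_{\Lf}$ and $T^*_{\Lf}$, kill the rare events (late $T_{\Lf}$, overshoot of $|\sloc(T_{\Lf})|$) with Lemmas \ref{seemsuseful} and \ref{inauguration}, control violations in the approach pieces with the $U$-weighted bound of Lemma \ref{benalla} and in the middle piece with the extra $n^{-\delta}$ from Lemma \ref{kiera}, and deduce the second inequality from $\scwminus\subseteq\cw\subseteq\scwpplus$. The only cosmetic difference is bookkeeping: the paper sums over shells $D_i\subset\partial\cone_{2^{\Lf}}$ graded by the value of $U$, whereas you phrase the same cancellation as telescoping the $U$-weights via optional stopping.
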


\begin{proof}
Recall $\Lf$ defined in \eqref{back flip}.
If $$\spath \in  \scwpplus((T,v),(n-T^*,v')) \setminus \ \scwminus((T,v),(n-T^*,v')) $$
then either 

\begin{enumerate}
\item $T_\Lf>.01n$ \label{apt}
\item $T_\Lf^*>.01n$ \label{pupil}
\item $|\sloc (T_\Lf)|>n^{.5-\delta}$ or \label{heat}
\item $|\sloc^*(T^*_\Lf)|>n^{.5-\delta}$ \label{wave}

\noindent
or neither of those happen but at least one of the following occurs.
\item $\spath \in  \scwpplus((T,v),(n-T^*,v')) \setminus \cwminus((T,v),(T_\Lf,\cdot))$ \label{not cooling}
\item $\spath^* \in  \scwpplus((T^*,v'),(n-T,v)) \setminus \cwminus((T^*,v'),(T^*_\Lf,\cdot))$ \label{off at}
\item \label{night} $\spath \in  \scwpplus((T,v),(n-T^*,v')) \setminus \scwminus((T_\Lf,\cdot),(n-T^*_\Lf,\cdot))$ and 
\end{enumerate}
$$\spath \in  \cwminus((T,v),(T_\Lf,\cdot))$$
and $$\spath^* \in   \cwminus((T^*,v'),(T^*_\Lf,\cdot)).$$

Thus it is sufficient to show that each of these seven sets have probability that is small in comparison with the probability of 
$\cw((T,v),(n-T^*,v')).$ By the lower bound in Lemma \ref{current}  this is of order $U(v)U(v')n^{-d(d-1)/2}n^{-(d-1)/2}$.

First we show that the events in \eqref{apt} and \eqref{pupil} and \eqref{heat} and \eqref{wave} 
have probability at most $Ce^{-n^{\eta}}$  for some $C$ and $\eta>0$.
The probability of the event in \eqref{apt} is at most $Ce^{-n^{\eta}}$ by Lemma \ref{seemsuseful}.
The probability of the event in \eqref{heat} is at most $Ce^{-n^{\eta}}$ by Lemma \ref{inauguration}.
The argument for the events in \eqref{pupil} and \eqref{wave} are the same by symmetry. 

Next we bound the probability of the event in \eqref{not cooling}.
We break $\partial \cone_{2^\Lf}$ into disjoint sets 
$$D_i=\{ x \in \partial \cone_{2^{\Lf}}:\  U(x)2^{-\Lf d(d-1)/2} \in [i,i+1)\}$$
for $i \in \N$.

For each $i$ and $j$ and $x \in D_i$ and $y \in D_j$
 by the upper bound in Lemma \ref{current}
\begin{multline} \label{roland}
\prob_{(T_{L_f},x)}(\spath \in \cw((T_\Lf,x),(n-T^*_\Lf,y)) ) \\
\leq C(i+1)2^{\Lf d(d-1)/2}(j+1)2^{\Lf d(d-1)/2}n^{-(d-1)/2} n^{-d(d-1)/2}.
\end{multline}
By Lemma \ref{kiera} and the second half of Lemma \ref{current} we get 
\begin{multline} \label{garros}
\prob_{(T_{L_f},x)}(\spath \in \scwpplus((T_\Lf,x),(n-T^*_\Lf,y)) ) \\
\leq C' i2^{\Lf d(d-1)/2}j2^{\Lf d(d-1)/2}n^{-(d-1)/2} n^{-d(d-1)/2}.
\end{multline}
This bound is uniform over all $x \in D_i$ and $y \in D_j$ and value of $T_{L_f}$ and $T^*_{L_f}$.

By Lemma \ref{benalla} we get 
\begin{multline*}\sum_i \prob_{(T,v)}(\spath \in \cwpplus((T,v),(T_\Lf,\cdot)) \setminus \cwminus((T,v),(T_\Lf,\cdot))), \sloc (T_\Lf) \in D_i )
\\\cdot i2^{\Lf d(d-1)/2} 
\leq \epsilon U(v)
\end{multline*} 
and by  Lemma \ref{benalla} and Lemma \ref{chaplain}
$$\sum_j \prob(\spath^* \in \cwpplus((T^*,w),(T^*_\Lf,\cdot)), \sloc^*(T^*_\Lf) \in D_j \ | \ \sloc^*(T^*)=v')
j2^{\Lf d(d-1)/2} \leq  \mathbf{C} U(w).$$

We can sample the paths with $\sloc (T)=v$ and $\sloc (n-T^*)=v'$ as follows. First we sample $\spath^*$ with $\sloc^*(T^*)=v'$ and find $T^*_\Lf$. 
Then we can (independently) sample $\spath$ from time $T$ to $n-T^*_\Lf$. If $\sloc^*(T^*_\Lf)=\sloc (n-T^*_\Lf)$ then we concatenate the paths.

$$\P_{(T,v)}(\spath \in  \scwpplus((T,v),(n-T^*,v')) \setminus \ \scwminus((T,v),(n-T^*,v')) 
	) \hspace{2in} \text{}$$
$$=\sum_{i,j}\sum_{x \in D_i,y \in D_j,t,t^*} \prob_{(T,v)} \bigg(T_\Lf=t,T^*_\Lf=t^*,\spath \in \cwpplus((T,v),(t,x)) \setminus \cwminus((T,v),(t,x))), \hspace{2in} \text{}$$
$$\spath^* \in \cwpplus((T^*,v'),(t^*,y)),  \spath \in \scwpplus((t,x),(n-t^*,y)),
$$
$$ \big| \ \sloc (T)=v, \sloc^*(T^*)=v' \bigg) {} \hspace{.1in}
$$
$$\leq \sum_{i,j} \prob_{(T,v)} \bigg(\spath \in \cwpplus((T,v),(T_\Lf,\cdot)) \setminus \cwminus((T,v),(T_\Lf,\cdot))), \sloc (T_\Lf) \in D_i ,\hspace{2in} \text{}$$
$$\spath^* \in \cwpplus((T^*,v'),(T^*_\Lf,\cdot)), \sloc^*(T^*_\Lf) \in D_j \big| \ \sloc (T)=v, \sloc^*(T^*)=v' \bigg) \hspace{-.6in} \text{}$$
$$\sup_{x \in D_i, y \in D_j,t,t^*}\P_{(t,x)}\bigg( \spath \in \scwpplus((t,x),(t^*,y)),
  \bigg) {} \hspace{0.15in}
$$
$$\leq \sum_{i,j} \prob_{(T,v)} \bigg(\spath \in \cwpplus((T,v),(T_\Lf,\cdot)) \setminus \cwminus((T,v),(T_\Lf,\cdot))), \sloc (T_\Lf) \in D_i ,\hspace{2in} \text{}$$
$$\spath^* \in \cwpplus((T^*,v'),(T^*_\Lf,\cdot)), \sloc^*(T^*_\Lf) \in D_j \ \big| \ \sloc (T)=v, \sloc^*(T^*)=v' \bigg) \hspace{1in} \text{}$$
$$  C' i2^{\Lf d(d-1)/2}j2^{\Lf d(d-1)/2}n^{-d(d-1)/2}n^{-(d-1)/2} \hspace{1in} \text{} $$
The last line comes from \eqref{garros}. Let 
\[ \Pi =  \cwpplus((T,v),(T_\Lf,\cdot)) \setminus \cwminus((T,v),(T_\Lf,\cdot))) \]
and
\[ \Pi^* =  \cwpplus((T^*,v'),(T^*_\Lf,\cdot)) \setminus \cwminus((T^*,v'),(T^*_\Lf,\cdot))) .\]
Using the independence of $\spath$ and $\spath^*$ we find that
\begin{multline*}
\P_{(T,v)}(\spath \in  \scwpplus((T,v),(n-T^*,v')) \setminus \ \scwminus((T,v),(n-T^*,v')) ) \\
\leq C' n^{-d(d-1)/2}n^{-(d-1)/2} 
 \sum_{i} i2^{\Lf d(d-1)/2} \prob_{(T,v)} \bigg(\spath \in\Pi , \sloc (T_\Lf) \in D_i \bigg) \\
\times \sum_{j}j2^{\Lf d(d-1)/2} \prob \bigg(\spath^* \in \Pi^*, \sloc^*(T^*_\Lf) \in D_j | \sloc^*(T^*)=v' \bigg)\\
\leq  C'' \epsilon U(v)U(v')n^{-d(d-1)/2}n^{-(d-1)/2}
\end{multline*}

%
%


The bound for the event in \eqref{off at} is identical by symmetry.
For the event in \eqref{night} we calculate the  probability of  
$$\spath \in  \cwpplus((T,v),(T_\Lf,\cdot)) \cap \sloc (T_\Lf) \in D_i$$
and $$\spath^* \in   \cwpplus((T^*,v'),(T^*_\Lf,\cdot))\cap \sloc^*(T^*_\Lf) \in D_j$$
and the maximum of $x \in D_i$ and $y \in D_j$ of the probability of 
$$ \scwpplus((T_\Lf,x),(n-T^*_\Lf,y)) \setminus \scwminus((T_l,\cdot),(n-T^*_l,\cdot))$$
and then summing up over $i$ and $j$ as before.

By Lemma \ref{benalla} we have
$$  \sum_{i} (i+1)2^{\Lf d(d-1)/2} \prob_{(T,v)} \bigg(\sloc (T_\Lf) \in D_i, \spath \in \cwpplus((T,v),(T_\Lf,\cdot)\bigg) \leq 2  U(v)
2^{-\Lf d(d-1)/2}$$
$$  \sum_{j} (j+1)2^{\Lf d(d-1)/2} \prob \bigg(\sloc^*(T^*_\Lf) \in D_j, \spath \in \cwpplus((T^*,v'),(T^*_\Lf,\cdot)\bigg) \leq 2  U(v')
2^{-\Lf d(d-1)/2}$$
and the maximum of $x \in D_i$ and $y \in D_j$ of the probability of 
$$ \scwpplus((T_\Lf,x),(n-T^*_\Lf,y)) \setminus \scwminus((T_l,\cdot),(n-T^*_l,\cdot))$$
is at most
$$C(i+1)2^{\Lf d(d-1)/2}(j+1)2^{\Lf d(d-1)}n^{-d(d-1)/2}  \cdot n^{-(d-1)/2}\cdot n^{-\delta}.$$
We sum up over $i$ and $j$ and use independence to get
that the probability of the event in (5) is at most 
$$  C U(v)U(v')n^{-d(d-1)/2}n^{-(d-1)/2}n^{-\delta}$$
which is small in comparison with the probability of $\cw((T,v),(n-T^*,v'))$
and thus in comparison with $\scwpplus((T,v),(n-T^*,v'))$.

The final inequality follows because 
$$  \cw((T,v),(n-T^*,v')) \subset  \scwpplus((T,v),(n-T^*,v'))  $$
\end{proof}

\begin{cor} \label{Anna}
For any $\epsilon>0$ there exists $K$ and $l$ such that if $v,v' \in \cone_{2^l}$ and
$T,T^* \leq (4.1)^l$ then there exists $K$ such that for any $n$ sufficiently large
$$D_K(\scwminus((T,v),(n-T^*,v')), \cw((T,v),(n-T^*,v')))<\epsilon.$$
\end{cor}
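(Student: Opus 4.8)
The plan is to obtain the corollary as an immediate consequence of Lemma \ref{spontini2} together with an elementary coupling. Given $\epsilon>0$, let $l$ be the integer supplied by Lemma \ref{spontini2} for this value of $\epsilon$, fix $v,v'\in\cone_{2^l}$ and $T,T^*\le(4.1)^l$, and take $n$ large enough that the conclusion of that lemma holds. Any fixed $K$ works (for instance $K=1$): the cost $d_K(s,s')$ defining the distance on measures takes values in $\{0,1\}$, being $0$ exactly when $s$ and $s'$ agree on $[K,n-K]$, so the only constraint is $n>2K$. Write $A=\scwminus((T,v),(n-T^*,v'))$ and $B=\cw((T,v),(n-T^*,v'))$.

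The first step is to record the inclusion $A\subseteq B$: any $\omega\in A$ satisfies $s_\omega(m)\in\cone_{m^{.4}}\subseteq\cone$ for every $m\in[T,n-T^*]$ (via the two $\cwminus$ halves meeting at $\lfloor n/2\rfloor$), so $\omega\in B$. Since $\prob_{(T,v)}(\,\cdot\mid B)$ is the uniform law on $B$, conditioning it further on the subset $A$ yields exactly $\prob_{(T,v)}(\,\cdot\mid A)$; equivalently, for every event $E\subseteq A$ one has $\prob_{(T,v)}(E\mid B)=\prob_{(T,v)}(A\mid B)\,\prob_{(T,v)}(E\mid A)$. Moreover Lemma \ref{spontini2} gives $\prob_{(T,v)}(A\mid B)>1-\epsilon$. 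Passing to push-forwards under $\omega\mapsto s_\omega$, the empirical measure $(s_\omega:\omega\in B)$ is the image of $\prob_{(T,v)}(\,\cdot\mid B)$ and $(s_\omega:\omega\in A)$ is the image of $\prob_{(T,v)}(\,\cdot\mid A)$.

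Next I would construct the joining $\nu$: sample $\omega\sim\prob_{(T,v)}(\,\cdot\mid B)$ and put $Y=s_\omega$; if $\omega\in A$ set $X=Y$, and otherwise set $X=s_{\omega'}$ for an independent $\omega'\sim\prob_{(T,v)}(\,\cdot\mid A)$. By construction the $Y$-marginal is $(s_\omega:\omega\in B)$, and using the displayed identity on $A$ one checks that the $X$-marginal is $(s_\omega:\omega\in A)$. Since $d_K(X,Y)=0$ on the event $\{\omega\in A\}$ and $d_K\le 1$ in all cases, $\E_\nu[d_K(X,Y)]\le\prob_{(T,v)}(A^c\mid B)<\epsilon$, which is exactly the asserted bound $d_K\big(\scwminus((T,v),(n-T^*,v')),\cw((T,v),(n-T^*,v'))\big)<\epsilon$. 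There is no genuine obstacle here: all the substance is in Lemma \ref{spontini2}, and what remains is only the bookkeeping needed to identify the empirical measures with the correct conditional laws and to verify that the coupling above is well defined.
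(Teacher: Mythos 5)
Your argument is correct, and it takes a more direct route than the paper. The paper proves Corollary \ref{Anna} by the triangle inequality through the third measure $\cw((0,0),(n,0))$: it invokes Lemma \ref{red hen} (the coupling of bridge middles coming from Theorem \ref{ice}) to compare $\cw((T,v),(n-T^*,v'))$ with $\cw((0,0),(n,0))$, uses Lemma \ref{spontini2} for the other leg, and takes $K=\max\{M,M'\}$. You instead use only Lemma \ref{spontini2} together with the inclusion $\scwminus((T,v),(n-T^*,v'))\subseteq\cw((T,v),(n-T^*,v'))$ and the elementary coupling of a uniform measure with its restriction to a subset of conditional probability at least $1-\epsilon$; your marginal check ($\tfrac{1}{|B|}+\tfrac{|B\setminus A|}{|B|\,|A|}=\tfrac{1}{|A|}$) is exactly right. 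This buys two things: the proof needs none of the machinery behind Lemma \ref{red hen}, and it makes transparent that for this same-endpoints comparison the cutoff $K$ plays no role (any $K$ with $n>2K$ works), whereas $K$ genuinely matters only when the endpoints differ, which is the situation Lemma \ref{red hen} is designed for and which is handled separately later via Corollary \ref{Elsa}. Two small caveats, neither of which is a gap specific to your argument: the bound $d_K\le 1$ requires the $\{0,1\}$-valued pseudo-metric of Sections \ref{peaches} and \ref{walksincones} rather than the counting pseudo-metric written at the top of Section \ref{birs} (the paper's own proof and the downstream use of the corollary make the same reading), and the inclusion $A\subseteq B$ relies on reading the second-half condition in the definition of $\scwminus$ as a statement about the forward path lying in $\cone_{(n-m)^{.4}}$, which is the intended (if loosely written) meaning and is likewise used implicitly by the paper, e.g.\ in the inclusion $\cw\subset\scwpplus$ at the end of the proof of Lemma \ref{spontini2}.
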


\begin{proof}
This follows from Lemma \ref{red hen} and Lemma \ref{spontini2} as follows.
By Lemma \ref{red hen} for any $\epsilon>0$ we can find $M$ such that 

$$D_M(\cw((T,v),(n-T^*,v')), \cw((0,0),(n,0)))<\epsilon.$$
By Lemma \ref{spontini2} for any $\epsilon>0$ we can find  $M'$  such that 
$$D_{M'}(\scwminus((T,v),(n-T^*,v')), \cw((0,0),(n,0)))<\epsilon.$$
Putting $K=\max\{M,M'\}$ and using the triangle inequality  proves the lemma.
\end{proof}

\section{Appendix: Traceless Hermitian Brownian Motion}
\label{dyson}

In this section we recount some elementary facts about traceless Hermitian Brownian motion an its connection to non-intersecting paths.  These results are well-known without the traceless condition and the transfer of the traceless case is straightforward.

Let $\{H_{ii}\}_{i=1}^{d}$ be standard Brownian motions and let $\{H_{ij}\}_{1\leq i <j \leq d}$ be independent standard complex Brownian motions.  For $j<i$, let $H_{ij} = \bar H_{ji}$.  Hermitian Brownian motion is the matrix valued process $H = (H_{ij})_{i,j=1}^d$.  Traceless Hermitian Brownian motion can then be constructed by projecting $H$ onto the space of matrices with trace equal to $0$.  In particular, if we define
\[ H^0 = H - \frac{Tr(H)}{d} I,\]
where $I$ is the identity matrix, then $H^0$ is a traceless Hermitian Brownian motion (independent of $Tr(H)$) whose distribution is the same as the distribution of $H$ conditioned to have trace equal to $0$ for all time.  Consequently, on the level of eigenvalues the projection onto traceless matrices results in each eigenvalue being shifted by the same amount.

The equivalent relation holds for $d$-dimensional Brownian motion conditioned on its coordinates summing to $0$.  Indeed, if we let $v_1,v_2,\dots, v_d$ be an orthonormal basis for $\R^d$ such that $v_1= d^{-1/2}(1,1,\dots, 1)$.  Then we can define the standard Brownian motion on $\R^d$ as
\[ B= \sum_{i=1}^d B_iv_i\]
where $B_1,\dots, B_d$ are independent, standard, one dimensional Brownian motions.  Conditioning the coordinates of $B$ to sum to $0$ is equivalent to projecting onto the subspace orthogonal to $v_1$, so that
\[ B^0=  B - B_1v_1= \sum_{i=2}^d B_iv_i\]
is distributed like $B$ conditioned on its coordinates summing to $0$.

Dyson \cite{Dyson62} found that, if we let $B_>$ be distributed like $B$ conditioned to remain in the cone 
\[\C_{>} = \left\{ x=(x_1,\dots,x_d)\in \R^d\ \middle| \ x_1> x_2> \cdots > x_d \right\},\]
for all time, which can be defined using an $h$-transform, then $B_>=_d \Lambda(H)$.  Let $B^0_>$ be distributed like $B^0$ conditioned to remain in $\C_{>}$ for all time.  Since subtracting $B_1(t)v_1$ does not change whether or not $B(t)\in \C_{>}$ and subtracting $\frac{Tr(H)}{d} I$ induces the same shift (in distribution) on the the eigenvalues of $H$ that subtracting $B_1v_1$ induces on the coordinates of $B$, it is easy to that we also have $B^0_>=_d \Lambda(H^0)$, see e.g.\ \cite[Proposition 2]{Biane09}.

To obtain our limiting object $Z$, it remains to condition $H^0$ to be $0$ at time $1$.  The easiest way to do this is by conditioning each entry to be $0$ at time $1$, which results in the definition of $Z$ we gave in terms of Brownian bridge.  Since all norms on finite dimensional spaces are equivalent, this is the same as conditioning the spectral norm of $H^0$ to be $0$ at time $1$.  In particular, for $G$ bounded and continuous we have
\[ \E G(Z) = \lim_{\epsilon\downarrow 0} \E\left[ G(H^0) \middle| |H^0(1)|<\epsilon\right].\]
Since $M\mapsto \Lambda(M)$ is continuous, using the identity $B^0_>=_d \Lambda(H^0)$, we see that for $F$ bounded and continuous we have that
\[\E F(\Lambda(Z))= \lim_{\epsilon\downarrow 0} \E\left[ F(\Lambda(H^0)) \middle| |H^0(1)|<\epsilon\right] =  \lim_{\epsilon\downarrow 0} \E\left[ F(B^0_>) \middle| |B_>^0(1)|<\epsilon\right].\]
This shows that $\Lambda(Z)$ has the same distribution as a bridge of $B^0_>$ from $0$ to $0$.  For later use, we remark that a straightforward computation using the Markov property and the transition densities for $B^0_>$ (see \cite{Biane09}) shows that if $F : D([0,1],\R)$ is bounded and continuous and $F(g)$ depends only on the restriction of $g$ to $[0,t]$ for some $0<t<1$, then there is a constant $C_t$ such that
\begin{equation}\label{Imhof} \begin{split} \E[ F(\Lambda(Z))] & =  \lim_{\epsilon\downarrow 0} \E\left[ F(B^0_>) \middle| |B_>^0(1)|<\epsilon\right] \\
& = C_t\E\left[ F(B^0_{>}(\cdot\wedge t)) e^{-|B^0_{>}(t)|^2/(2(1-t))} \right]
.\end{split}\end{equation}
This shows that for $0<t<1$, the law of $(\Lambda(Z_s), 0\leq s\leq t)$ is absolutely continuous with respect to the law of $(B^0_>(s), 0\leq s\leq t)$.

\begin{acks}[Acknowledgments]
We thank Jacopo Borga and Richard Kenyon for helpful conversations.
\end{acks}

\begin{funding}
This work was supported by NSF grants DMS-1712701, DMS-1855568, and DMS-1954059 as well as ERC Starting Grant 680275 MALIG
\end{funding}

\bibliographystyle{imsart-number} 
\bibliography{pattern}       

\end{document}